\theoremstyle{plain}
\newtheorem*{maintheorem*}{Main Theorem}
\newtheorem*{remark*}{Remark}
\newtheorem*{conjecture*}{Conjecture}
\newtheorem*{prop*}{Proposition}
\newtheorem{thm}{Theorem}[section]
\newtheorem{lem}[thm]{Lemma}
\newtheorem{prop}[thm]{Proposition}
\theoremstyle{definition}
\newtheorem*{proofc*}{Proof of Theorem C}
\newtheorem{conjecture}[thm]{Conjecture}
\newtheorem{definition}[thm]{Definition}
\newtheorem{remark}[thm]{Remark}
\newtheorem{claim}[thm]{Claim}
\newcommand\restate[3]{\medskip\par\noindent
  {\bf #1 \ref{#2}.} {\em #3}\medskip}
\DeclareMathOperator{\CAT}{CAT}
\DeclareMathOperator{\Teich}{Teich}
\DeclareMathOperator{\Mod}{Mod}
\DeclareMathOperator{\WP}{WP}
\DeclareMathOperator{\tw}{tw}
\DeclareMathOperator{\diam}{diam}
\DeclareMathOperator{\na}{na}
\DeclareMathOperator{\dist}{dist}
\DeclareMathOperator{\ML}{\mathcal{ML}}
\DeclareMathOperator{\GL}{\mathcal{GL}}
\DeclareMathOperator{\EL}{\mathcal{EL}}
\DeclareMathOperator{\dw}{d_{wp}}
\DeclareMathOperator{\calC}{\mathcal{C}}
\DeclareMathOperator{\calI}{\mathcal{I}}
\DeclareMathOperator{\calL}{\mathcal{L}}
\DeclareMathOperator{\calM}{\mathcal{M}}
\DeclareMathOperator{\calN}{\mathcal{N}}
\DeclareMathOperator{\calP}{\mathcal{P}}
\DeclareMathOperator{\calS}{\mathcal{S}}
\DeclareMathOperator{\calY}{\mathcal{Y}}
\DeclareMathOperator{\calX}{\mathcal{X}}
\DeclareMathOperator{\calZ}{\mathcal{Z}}
\DeclareMathOperator{\Mark}{Mark}
\DeclareMathOperator{\CC}{\mathbb{C}}
\DeclareMathOperator{\HH}{\mathbb{H}}
\DeclareMathOperator{\NN}{\mathbb{N}}
\DeclareMathOperator{\RR}{\mathbb{R}}
\DeclareMathOperator{\ZZ}{\mathbb{Z}}
\newcommand{\T}{Teichm\"{u}ller }
\newcommand{\W}{Weil-Petersson }
\newcommand{\ep}{\epsilon}
\newcommand{\bd}{\partial}
\newcommand{\tn}{\textnormal}
\newcommand{\pA}{pseudo-Anosov }
\newcommand{\nbhd}{neighborhood }
\newcommand{\nbhds}{neighborhoods }
\newcommand{\oT}[1]{\overline{\Teich(#1)}}
\newcommand{\ssm}{\smallsetminus}
\newcommand{\union}{\cup}
\newcommand{\intersect}{\cap}
\newcommand{\seg}{\overline}
\newcommand{\oseg}{\overrightarrow}
\newcommand{\half}{\frac{1}{2}}
\def\<#1,#2>{\langle#1,#2\rangle}
\def\<#1>{\langle#1\rangle}
\numberwithin{equation}{section}
\long\def\realfig#1#2{
  \begin{figure}[htbp]
    \begin{center}
\graphicspath{ {./onelipschitz/} }
\includegraphics {#1}
\caption[#1]{#2}
\label{#1}
    \end{center}
\end{figure}}
\begin{document}


\title[Bottlenecks for WP geodesics]{Bottlenecks for Weil-Petersson geodesics}

\date{\today}


\author{Yair Minsky}
\address{Department of Mathematics, Yale University, 10 Hillhouse Ave, New Haven, CT, 06511}
\email{yair.minsky@yale.edu}

\author{Babak Modami}
\address{Institute for Mathematical Sciences, Stony Brook University, Stony Brook, NY, 11794-3660}
\email{babak.modami@stonybrook.edu}

\thanks{Minsky was supported by NSF grant DMS-161087.}

\subjclass[2010]{Primary 58D27, Secondary 32G15, 53B21, 53C20} 

\date{\today}

\begin{abstract}
We introduce a method for constructing Weil-Petersson (WP) geodesics with certain behavior in the Teichm\"{u}ller space.
This allows us to study the itinerary of geodesics among the strata of the WP completion
and its relation to subsurface projection coefficients of their end invariants.
As an application we demonstrate the disparity between short curves in the universal curve
over a WP geodesic and those of the associated hyperbolic $3$--manifold. 
\end{abstract}
\maketitle

\setcounter{tocdepth}{1}

\section{Introduction}

In this paper we explore some questions about visibility in the Weil-Petersson geodesic
flow in Teichm\"uller space, and its connections to {\em synthetic} aspects of the flow, by which we mean the
combinatorial behavior of geodesic flow lines for large times. Our motivation is partly
the analogy between Weil-Petersson flow and Teichm\"uller flow, and partly the connections between Weil-Petersson geometry
and the geometry of hyperbolic $3$--manifolds. 

Our main result is a criterion (Theorem \ref{thick bottleneck}) for existence of {\em bottlenecks} between certain (non-recurrent)
pairs of geodesics, and its consequences for visibility, that is to say connectivity by
geodesics of certain points at infinity. We use this result
to construct examples of geodesics whose approach pattern to the completion strata of the
\T space exhibits some new phenomena (Theorems \ref{thm:WP mismatch},
and \ref{thm:one-step filling}),
and related
examples in which the connection
between WP geometry and hyperbolic geometry breaks down (Theorems \ref{thm:QF mismatch}
and \ref{thm:fibered mismatch}).

\subsection*{Bottlenecks and visibility}
A {\em bottleneck} for a pair $R,R'$ of subsets of a geodesic metric space $\calX$ is a 
compact set $K\subsetneq \calX$ such that every geodesic segment with endpoints on
$R$ and $R'$ meets $K$. 

For two geodesic rays $r,r'$, {\em visibility} of their endpoints at
infinity is the existence of a geodesic $g$ strongly asymptotic to $r$
in forward time and $r'$ in backward time. The existence of a
bottleneck is an important step in the proof of visibility. 

For example, it is shown in \cite[Theorem 1.3]{bmm1} that any two {\em
  recurrent} geodesic rays have a bottleneck and satisfy the corresponding
visibility property.  To consider the non-recurrent case we start with geodesics that are
asymptotic to {\em completion strata} in Teichm\"uller space. If $\omega$ is a multicurve
in $S$, let $\ell_\omega:\Teich(S) \to \RR_+$ be its geodesic length
function on Teichm\"uller space. 
The stratum $\calS(\omega)$ is the locus in the Weil-Petersson
completion of $\Teich(S)$ where the curves of $\omega$ are replaced by punctures (hence
$\ell_\omega = 0$). 
 The following conjecture seems reasonable but is currently
beyond our reach: 

\begin{conjecture}\label{bottleneck for strata}
  Let $\omega,\omega'$ be two multicurves in $S$ 
  that fill the surface. Then 
  $\calS(\omega)$ and $\calS(\omega')$ have a bottleneck.
\end{conjecture}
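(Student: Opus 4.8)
The plan is to reduce the conjecture to two statements: first, that there is a uniform $\epsilon_0=\epsilon_0(S)>0$ so that every geodesic segment $g$ joining $\calS(\omega)$ to $\calS(\omega')$ contains a point $x_g$ with $\inj(x_g)\ge\epsilon_0$; and second, that $x_g$ can be confined to a single compact set $K_0\subset\Teich(S)$, independent of $g$. Given these, $K_0$ is the desired bottleneck. Since $\omega\cup\omega'$ fills, the two multicurves share no component and $i(\omega,\omega')>0$, so by the non-refraction theorem for WP geodesics the interior of any such $g$ lies in $\Teich(S)$, and it makes sense to speak of the injectivity radius along it.

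\emph{The thick spot.} Along $g$ the components of $\omega$ have length tending to $0$ near one endpoint, and those of $\omega'$ near the other. Every component of $\omega$ meets some component of $\omega'$, so by the collar lemma no component of $\omega$ can remain shorter than the Margulis constant along all of $g$, and symmetrically. To promote this to a genuine lower bound on the injectivity radius on a subsegment of $g$ of definite WP length, I would appeal to the combinatorial description of WP geodesics that land on strata: the end invariant of such a geodesic consists of the pinched multicurve together with data of bounded combinatorics, and the short-curve criterion established in this paper (Theorems 1.5 and 5.8) then governs which curves can become short along $g$ and on which subintervals. This forbids a short curve on the portion of $g$ lying combinatorially between the two pinching regions, and produces the point $x_g$.

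\emph{Confining the thick spot.} Since the $\epsilon_0$-thick part of $\Teich(S)$ is not itself compact, the point $x_g$ must still be pinned to a fixed $K_0$. I would argue by contradiction, exploiting the compactness of the thick part of moduli space: if no $K_0$ works, there are segments $g_n$ with $x_{g_n}$ leaving every compact subset of $\Teich(S)$, hence mapping classes $\psi_n\to\infty$ with $\psi_n x_{g_n}$ in a fixed compact set; then $\psi_n g_n$ joins $\calS(\psi_n\omega)$ to $\calS(\psi_n\omega')$ and passes through a fixed compact region, and a limit of the $\psi_n g_n$ is a WP geodesic (segment, ray, or biinfinite line) through that region whose ends are limits of the translated strata. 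The crux is to derive a contradiction: one needs to know that the large subsurface projections realized between $\psi_n\omega$ and $\psi_n\omega'$---which record the combinatorics traversed by $\psi_n g_n$---are reflected, with uniform constants, in the geometry near the compact region the geodesic must cross, and then that a diverging $\psi_n$ is incompatible with the fixed combinatorial type of $(\omega,\omega')$. When the combinatorics of the pair $(\omega,\omega')$ is bounded, control of this kind is what the bottleneck criterion of Theorem \ref{thick bottleneck} supplies, and one expects the conjecture to follow in that case.

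\emph{The main obstacle} is the non-uniformity of the WP flow near the completion once the combinatorics is allowed to be unbounded: geodesics with nearly identical end data need not fellow-travel, a ray can reach a stratum at arbitrarily small combinatorial cost, and the transition region of $g$ could in principle drift to infinity while the two fixed strata are, from the point of view of the flow, still being torn apart. A proof would require a two-sided, quantitative stability theorem for geodesics with an endpoint on a stratum, uniform over the non-compact strata $\calS(\omega)$ and $\calS(\omega')$---that is, the conclusion of Theorem \ref{thick bottleneck} with its non-recurrence and bounded-combinatorics hypotheses removed---and it is the absence of such a statement that keeps this a conjecture.
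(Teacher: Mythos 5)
You have not proved this statement, and neither does the paper: the statement is Conjecture \ref{bottleneck for strata}, which the authors explicitly describe as beyond reach; the paper only establishes the restricted version, Theorem \ref{thick bottleneck}, for \emph{co-large} multicurves and for rays strongly asymptotic to (or contained in) the $\bar\ep$--thick parts of the strata. So there is no proof in the paper to compare with, and your proposal---as you yourself concede in the final paragraph---is a plan with unproved ingredients rather than an argument. Two of those ingredients fail concretely. First, the ``thick spot'' step misapplies the non-annular bounded combinatorics machinery: Theorem \ref{thm:short curve bddcomb} requires an $R$--bound on all non-annular subsurface coefficients of the end invariant, and a geodesic segment joining arbitrary points of $\calS(\omega)$ and $\calS(\omega')$ carries no such bound (its endpoints can be arbitrarily thin and twisted inside the strata). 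Moreover, the paper's own construction (Theorem \ref{thm:one-step filling}, indirect shortening) shows that a curve can become arbitrarily short along a WP segment even though no subsurface containing it in its boundary has a large coefficient; so a purely combinatorial criterion ``forbidding'' short curves on the middle portion is exactly the kind of statement that is known to be delicate for WP geodesics. In addition, the collar-lemma observation only shows each individual component of $\omega$ (resp.\ $\omega'$) is long somewhere on $g$; it does not produce a single point where all curves have definite length, which is what a uniform $\inj(x_g)\ge\ep_0$ requires.

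Second, the ``confining the thick spot'' step never closes: after passing to translates $\psi_n g_n$ through a fixed compact set and extracting a limit, you need to rule out $\psi_n\to\infty$, and the mechanism you invoke---that the combinatorics of $(\omega,\omega')$ is ``reflected, with uniform constants, in the geometry near the compact region''---is precisely the uniform two-sided stability statement whose absence you then acknowledge. Even your conditional claim is not a correct reduction: Theorem \ref{thick bottleneck} does not apply to pairs with merely bounded combinatorics; it needs $\omega,\omega'$ co-large and the rays strongly asymptotic to the $\bar\ep$--thick parts of the strata, and it yields a bottleneck for those two rays, not for the full non-compact strata $\calS(\omega)$, $\calS(\omega')$ whose points may lie in arbitrarily thin parts of their factors. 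In short, the proposal identifies the right difficulties (non-uniformity near the completion, unbounded combinatorics, thin directions inside the strata) but supplies no new mechanism to overcome them, so the statement remains, as in the paper, a conjecture.
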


Our main technical result will be the following restricted version:

\begin{thm}\label{thick bottleneck}
  Let $\omega,\omega'$ be two co-large multicurves that fill $S$. Let $\bar\ep>0$, and
let $r$ and $r'$ be infinite length WP geodesic rays in $\oT{S}$
that are strongly asymptotic to (or contained in) the $\bar\ep$--thick parts of the
strata $\calS(\omega)$
and $\calS(\omega')$, respectively.  Then 
$r$ and $r'$ have a bottleneck.
\end{thm}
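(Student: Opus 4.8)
The plan is to argue by contradiction. Assuming $r$ and $r'$ have no bottleneck, I will extract a sequence of WP geodesic segments between them that run off to infinity while transitioning from near $\calS(\omega)$ to near $\calS(\omega')$, and then show this is incompatible with $r,r'$ being thick in their (co-large) strata and with $\omega\cup\omega'$ filling $S$. Two facts set the stage. First, $\oT{S}$ is a complete, locally compact $\CAT(0)$ geodesic space (Wolpert, Yamada), hence proper. Second, since $\omega\cup\omega'$ fills $S$, the length function $\ell_\omega+\ell_{\omega'}$ is proper on $\oT{S}$: near any stratum, and as any simple closed curve is pinched, some component of $\omega$ or $\omega'$ is crossed and by the collar lemma its length blows up, so --- together with properness of a filling length function on the interior --- the function is proper on all of $\oT{S}$. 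Consequently its sublevel sets $K_L=\{\ell_\omega+\ell_{\omega'}\le L\}$ are compact and exhaust $\oT{S}$, and $\calS(\omega)$, $\calS(\omega')$ lie at positive distance.

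Suppose there were no bottleneck. Then for each $L$ there is a WP geodesic segment from a point of $r$ to a point of $r'$ avoiding $K_L$. Using convexity of the distance function along $\CAT(0)$ geodesics, the endpoints of such a segment cannot remain in a fixed compact part of $r\cup r'$: if, say, the endpoint on $r'$ stayed bounded while the other ran out, the segment would be Hausdorff-close to a fixed finite sub-ray of $r$ (it shares an endpoint with it and has nearly the same length), hence would meet a fixed compact set, which for large $L$ lies inside $K_L$; and if both stayed bounded the segment itself would lie in a fixed compact set. So we obtain segments $g_n$ from $x_n\in r$ to $y_n\in r'$ with $x_n,y_n$ converging to the ends of $r,r'$ and with $\ell_\omega+\ell_{\omega'}$ arbitrarily large along $g_n$.

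Since $r$ is strongly asymptotic to (or contained in) the thick part of $\calS(\omega)$ we have $\ell_\omega(x_n)\to0$, hence by the collar lemma and $i(\omega,\omega')>0$ also $\ell_{\omega'}(x_n)\to\infty$; symmetrically $\ell_{\omega'}(y_n)\to0$ and $\ell_\omega(y_n)\to\infty$. By Wolpert's strict convexity of geodesic-length functions along WP geodesics --- which persists on the completion --- $\ell_\omega\circ g_n$ and $\ell_{\omega'}\circ g_n$ are convex, so $\{\ell_\omega\circ g_n<1\}$ is an initial subsegment of $g_n$ and $\{\ell_{\omega'}\circ g_n<1\}$ a terminal one. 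These are disjoint, since a common point would have $\ell_\omega+\ell_{\omega'}<2$ while $g_n$ avoids $K_2$; hence between them lies a nonempty transition subsegment $J_n$ on which $\ell_\omega\ge1$, $\ell_{\omega'}\ge1$, and (as $g_n$ avoids every $K_L$) $\ell_\omega+\ell_{\omega'}$ is still arbitrarily large. Thus $g_n$ makes arbitrarily deep excursions into $\oT{S}$ at places where neither $\omega$ nor $\omega'$ is short.

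The remaining step, which I expect to be the main obstacle and where the co-large hypothesis is essential, is to rule this out. By the length-versus-subsurface-projection estimates for the WP metric --- Wolpert's asymptotic expansions near the strata together with the Brock--Masur--Minsky model, in the sharpened forms developed in the earlier sections --- an arbitrarily deep excursion of a WP geodesic forces, for some essential subsurface $W_n\subseteq S$, a coefficient $d_{W_n}(\mu_{x_n},\mu_{y_n})\to\infty$, where $\mu_{x_n},\mu_{y_n}$ are bounded-length markings at the endpoints. But $x_n$ lies far along $r$ in the $\bar\ep$--thick part of the co-large stratum $\calS(\omega)$, so $\mu_{x_n}$ consists of $\omega$ together with a $\bar\ep$--thick marking of $S\ssm\omega$ whose subsurface coefficients, by the recurrent-ray analysis applied inside $\calS(\omega)$, agree up to a uniform bound with those of the end invariant $\nu$ of $r$; symmetrically $\mu_{y_n}$ is $\omega'$ together with a marking tracking the end invariant $\nu'$ of $r'$. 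Hence the only subsurfaces to which $\mu_{x_n}$ can have unboundedly varying projection are those supported in $S\ssm\omega$, and likewise for $\mu_{y_n}$ and $S\ssm\omega'$; since $\omega\cup\omega'$ fills $S$, no subsurface lies in both $S\ssm\omega$ and $S\ssm\omega'$, so for each $W_n$ one of $\mu_{x_n},\mu_{y_n}$ projects to $W_n$ within a bound depending only on $\bar\ep,\omega,\omega',\nu,\nu'$, and a triangle-type estimate through $\omega$, $\omega'$, or $\bd W_n$ then bounds $d_{W_n}(\mu_{x_n},\mu_{y_n})$ uniformly in $n$ --- contradicting $d_{W_n}(\mu_{x_n},\mu_{y_n})\to\infty$. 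This produces the bottleneck; the argument in fact shows $K_L$ is one once $L$ exceeds a constant depending only on $\bar\ep$, $\omega$, $\omega'$, and the end invariants of $r,r'$. Beyond the bookkeeping, the delicate points are making the depth-versus-large-coefficient dictionary quantitative and uniform for the WP (rather than Teichm\"uller) metric near the strata, and controlling the annular twisting coefficients around the curves of $\omega,\omega'$ and around $\bd W_n$ alongside the non-annular ones; it is precisely here that the thickness hypotheses and the meaning of co-large keep all constants independent of $n$.
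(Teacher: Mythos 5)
Your reduction to a ``transition segment'' is fine as far as it goes, but the final step --- the only place where the actual content of the theorem would have to appear --- does not work, and it fails on both ends. First, the claimed dictionary ``an arbitrarily deep excursion forces $d_{W_n}(\mu_{x_n},\mu_{y_n})\to\infty$ for some $W_n$'' is not a theorem, and your notion of depth cannot detect anything: the quantity $\ell_\omega+\ell_{\omega'}$ already tends to infinity at the endpoints themselves, since $\ell_\omega(x_n)\to 0$ forces $\ell_{\omega'}(x_n)\to\infty$ by the collar lemma (every component of $\omega'$ crosses $\omega$), and symmetrically at $y_n$; so largeness of this sum on $J_n$ is consistent with perfectly expected behavior (e.g.\ $\ell_\omega\approx 1$, $\ell_{\omega'}$ huge is exactly the picture just inside the $x_n$--end). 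Moreover, large values of a length function at an interior point do not force any single subsurface coefficient of the endpoint data to blow up: the distance formula \eqref{eq:distWP} controls only a \emph{sum} over non-annular subsurfaces, and the short-curve/twisting results (Theorems \ref{thm : twsh}, \ref{thm : shtw2}) require bounded parameter intervals and endpoint length bounds that you do not have here. Second, the bound you want to contradict it with is false: it is not true that $d_W(\mu_{x_n},\mu_{y_n})$ is uniformly bounded for all $W$ sharing no support with both complements. For $W\subseteq S\ssm\omega$, the marking $\mu_{y_n}$ is pinned near $\pi_W(\omega')$ (as $\omega'$ is very short at $y_n$ and meets $W$), while $\mu_{x_n}$ moves \emph{unboundedly} in such subsurfaces as $x_n$ goes to infinity along $r$ inside (a neighborhood of) $\calS(\omega)$ --- indeed in the application of this theorem in Section \ref{sec:example} precisely these coefficients ($d_{X_1}$, $d_{X_2}$) are made to tend to infinity. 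So the contradiction you aim for evaporates: neither the implication nor the uniform bound holds, and co-largeness is not, in this scheme, doing the work you assign to it.

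What is actually needed, and what the paper proves, is that the transition between the two strata occurs at bounded distance from a fixed basepoint and in a definite thick part. The paper first bounds $\dist(r(0),\seg{pq})$ (Lemma \ref{lem:distance bound}) by a ruled-surface Gauss--Bonnet argument, where the co-large hypothesis enters through the curvature estimate of Lemma \ref{lem:I lower bound} (near a co-large stratum there are no flat product directions, so the integrand has a definite lower bound); it then runs the hexagon argument of Lemma \ref{lem:twistbd}, combining Lemma \ref{lem : b-nbhd}, Theorems \ref{thm : twsh} and \ref{thm : shtw2}, and the quantified non-refraction Lemma \ref{lem:q-nonref}, to bound all annular coefficients $d_\alpha(z,z')$ and to bound $\ell_\alpha$ from below along the middle subsegment, from which compactness follows. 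None of this is supplied or replaced by your bookkeeping. Two smaller points: $\oT{S}$ is complete CAT(0) but \emph{not} locally compact or proper (closed bounded sets need not be compact), so your opening framing is wrong, although compactness of $\{\ell_\omega+\ell_{\omega'}\le L\}$ does survive because the length function of a filling system is proper on $\Teich(S)$ and is identically $+\infty$ on every completion stratum; and your argument that the endpoints must run off is garbled (comparable lengths do not give the fellow-traveling you assert, and meeting a ball is not meeting a compact set), though this is easily repaired by enlarging the exhausting compacta to include $r([0,n])\cup r'([0,n])$.
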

Here a multicurve is {\em co-large} if it is the boundary of a subsurface all of whose complementary subsurfaces are annuli and
three-holed spheres. See Section \ref{sec:background} for details. 

The corresponding visibility statement is given in 
      Theorem \ref{thm:asymplargevis}.

\subsection*{Itinerary and subsurface projections}
A motivating question for us is finding a combinatorial or symbolic description of the
{\em itinerary} of a Weil-Petersson geodesic, by which we mean the list of completion strata
that the geodesic approaches. The classical analogue is a geodesic in the modular surface
$\HH^2/SL(2,\ZZ)$ whose approaches to the cusp are determined by the continued-fraction
expansion of its endpoint in $\mathbb{R}{\mathrm P}^1 = \bd \HH^2$.

A natural generalization of the continued-fraction coefficients is given by {\em
  subsurface projection coefficients (or just subsurface coefficients)}, as developed in \cite{mm2} and \cite{elc1}. Rafi studied the
relation of these projections to the itineraries of Teichm\"uller geodesics in
\cite{rshteich,rcombteich,rteichhyper}.  A geodesic (in both the Teichm\"uller and Weil-Petersson settings) has
a pair $(\nu^+,\nu^-)$ of endpoints, which can be points in $\Teich(S)$, or laminations 
(with or without transverse measure),
and for any essential subsurface $Y\subseteq S$ we consider
\[
d_Y(\nu^+,\nu^-) := \diam_{\calC(Y)}\Big(\pi_Y(\nu^+),\pi_Y(\nu^-)\Big)
\]
where $\pi_Y$ is the projection to the curve complex $\calC(Y)$ (see Section
\ref{sec:background} for detailed definitions).

Very roughly, when these coefficients are large, the geodesic makes close approaches to
the strata of $\Teich(S)$ (equivalently, $\inf_g \ell_\gamma$ is small
for some $\gamma$), but the complete correspondence is not fully understood.

Rafi showed, for a Teichm\"uller geodesic $g$ with end invariant
$(\nu^+,\nu^-)$, that lower bounds on  
$d_Y(\nu^+,\nu^-)$ imply upper bounds on $\inf_{g} \ell_{\bd Y}$. However he developed
sequences of examples showing that the opposite implication fails. 

Using Theorem \ref{thick bottleneck} we are able to produce examples of WP geodesics for
which the analogue of Rafi's result holds: 
\begin{thm}\label{thm:WP mismatch}
There exist $A\geq 1,\ep_0>0$ so that for any $\ep>0$ there is a WP geodesic segment
$\seg{pq}$ whose endpoints are in the $\ep_0$--thick part of $\Teich(S)$, and a
curve $\gamma$ so that
$$
\inf_{x\in \seg{pq}}\ell_\gamma(x) < \ep
$$
whereas
$$
\sup\Big\{d_Y(p,q) \ | \ Y\subseteq S, \gamma\subseteq\bd Y\Big\} \le A.
$$
\end{thm}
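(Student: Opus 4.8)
The plan is to realize $\seg{pq}$ as a sub-segment of a bi-infinite WP geodesic produced by Theorem~\ref{thick bottleneck} together with its visibility companion Theorem~\ref{thm:asymplargevis}, and then to read the short curve $\gamma$ off the location of the bottleneck.

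\emph{Step 1 (the data and the geodesic).} Fix a surface $S$ and two co-large multicurves $\omega=\bd W$, $\omega'=\bd W'$ that fill $S$, so that $\calS(\omega)\cong\Teich(W)$ and $\calS(\omega')\cong\Teich(W')$ up to their trivial pants and annulus factors. For a fixed $\bar\ep>0$, choose infinite-length geodesic rays $r\subset\calS(\omega)$ and $r'\subset\calS(\omega')$ limiting to ending laminations $\mathcal L^{+}\in\EL(W)$ and $\mathcal L^{-}\in\EL(W')$ of uniformly bounded combinatorics, so that $r,r'$ lie in $\bar\ep$-thick parts of the respective strata; their end invariants as rays in $\oT S$ are then $\nu^{+}=\omega\cup\mathcal L^{+}$ and $\nu^{-}=\omega'\cup\mathcal L^{-}$. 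Theorem~\ref{thick bottleneck} gives a bottleneck for $r,r'$, and Theorem~\ref{thm:asymplargevis} a complete WP geodesic line $g$ strongly asymptotic to $r$ forward and $r'$ backward.

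\emph{Step 2 (a short curve between thick points).} Since $\omega,\omega'$ fill, $g$ cannot pass from a neighborhood of $\calS(\omega)$ to a neighborhood of $\calS(\omega')$ through the thick part alone: the bottleneck, being a compact subset of the completion $\oT S$, lies near a stratum $\calS(\gamma)$ for a curve $\gamma$ determined by the combinatorics of $\omega,\omega',\mathcal L^{\pm}$ (equivalently, the itinerary of $g$ has an intermediate entry $\calS(\gamma)$), so $g$ enters the thin part of $\calS(\gamma)$ and $\ell_\gamma$ becomes small somewhere along $g$; choosing $\omega,\omega'$ to fill with large enough intersection (or passing to a limit of such examples) forces $\inf_{g}\ell_\gamma<\ep$. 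On the other hand, the bounded combinatorics of $\mathcal L^{\pm}$ and the co-largeness of $\omega,\omega'$, together with the a priori length bounds of \cite{bmm1} and Wolpert's non-refraction for completion strata, keep $g$ uniformly thick away from the thin parts of $\calS(\omega)$, $\calS(\gamma)$, $\calS(\omega')$. Take $p,q$ to be thick points of $g$ on the two sides of its close approach to $\calS(\gamma)$; then $p,q$ lie in the $\ep_0$-thick part of $\Teich(S)$ for an $\ep_0$ depending only on $\bar\ep$ and the combinatorics bound.

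\emph{Step 3 (bounding the projection coefficients).} Since $p,q$ lie on $g$ between its ideal endpoints, for every essential $Y$ the coefficient $d_Y(p,q)$ is controlled by $d_Y(\nu^{+},\nu^{-})$ up to a universal additive constant (coarse control of subsurface projections along WP geodesics, from \cite{bmm1}). So it suffices to bound $d_Y(\nu^{+},\nu^{-})=d_Y(\omega\cup\mathcal L^{+},\,\omega'\cup\mathcal L^{-})$ over the subsurfaces $Y$ with $\gamma\subset\bd Y$, namely the annulus about $\gamma$ and the essential subsurfaces of $S\ssm\gamma$. For such $Y$, each $\pi_Y(\nu^{\pm})$ is pinned down by the fixed finite multicurves $\omega,\omega'$ (a bounded contribution) and by $\pi_Y(\mathcal L^{+})$, $\pi_Y(\mathcal L^{-})$; the bounded-combinatorics choice of $\mathcal L^{\pm}$ and the placement of $\gamma$ relative to $W,W'$ bound the latter, and in particular the relative twisting about $\gamma$. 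Taking $A$ to be the resulting uniform bound (and $\ep_0$ as in Step~2) completes the proof.

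\emph{The main obstacle} is Step~2: one must show that the bottleneck genuinely forces $\gamma$ short in the interior of $g$, bracketed by thick points, while having arranged in Step~1 that this same $\gamma$ carries only bounded subsurface coefficients in $(\nu^{+},\nu^{-})$ --- i.e.\ that the short curve produced metrically by the bottleneck is combinatorially invisible from the end invariants. This is precisely where WP geodesics part ways with the Teichm\"uller picture, and it is where one needs the internal structure of the proof of Theorem~\ref{thick bottleneck} and the accompanying itinerary analysis, not merely its statement.
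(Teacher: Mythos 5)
Your Step 2 is where the argument breaks, and the breakdown is structural, not a fixable detail. The bottleneck produced by Theorem \ref{thick bottleneck} is a \emph{compact subset of $\Teich(S)$ itself}: the whole content of its proof (Lemma \ref{lem:twistbd}) is that the connecting segments stay in a fixed thick compact set $K_1$ in the interior in the middle, precisely the opposite of ``the bottleneck lies near a stratum $\calS(\gamma)$''. So the bottleneck cannot be the source of a short curve. The only curves that do get short along your visibility geodesic $g$ are the components of $\omega$ (resp.\ $\omega'$), and those get short only asymptotically at the forward (resp.\ backward) end, where $g$ converges to the stratum; there is no thick point of $g$ beyond that region, so you cannot bracket the short segment by $\ep_0$--thick endpoints $p,q$. (By convexity, $\ell_\gamma\circ g$ for $\gamma\in\omega$ is eventually monotone decreasing to $0$, so there is no interior dip with recovery.) Your Step 3 also leans on a statement that does not exist: there is no general result that $d_Y(p,q)$ for points $p,q$ on a WP geodesic is controlled by $d_Y(\nu^+,\nu^-)$ of its end invariants up to an additive constant; the lack of such ``no-backtracking'' control for WP geodesics is exactly why the paper has to work through hierarchy-path fellow-traveling (Theorem \ref{thm:anncoeffcomp}, Lemma \ref{lem:h subsurf}) in special configurations.

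The paper's mechanism is genuinely different and is worth internalizing: it applies the bottleneck/visibility machinery one level down. One fixes a one-step large filling configuration $Z\supset X_1,X_2$ with $\bd X_1\cup\bd X_2$ filling $Z$ and $\bd X_i$ sharing no curve with $\bd Z$, and uses Theorem \ref{thm:asymplargevis} \emph{inside the stratum} $\calS(\bd Z)\cong\Teich(Z)$ (so $Z$ plays the role of $S$, and $\bd X_1,\bd X_2$ play the role of your $\omega,\omega'$) to build a biinfinite geodesic $h$ joining axes of partial pseudo-Anosovs supported on $X_1,X_2$. The segments $\seg{p_nq_n}$ in $\Teich(S)$ are then taken with endpoints thick but within a bounded distance $b$ of far-out points of $h$; they fellow-travel $h$, which lies in the locus $\{\ell_{\bd Z}=0\}$, and Theorem \ref{thm : bdZ short} (via the convexity argument of Theorem \ref{thm:lenapproach0}, after the length lower bounds of Lemmas \ref{lem : lb on I}--\ref{lem : J}) forces $\ell_{\bd Z}\to 0$ in the middle of $\seg{p_nq_n}$. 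The combinatorial invisibility of $\gamma\subset\bd Z$ is then automatic: by Lemma \ref{lem:h subsurf} the only subsurfaces with large coefficients between $p_n$ and $q_n$ are $X_1$ and $X_2$ (plus annuli about $\bd X_i$, and twisting about $\bd Z$ is killed by composing with Dehn twists), and none of these has $\gamma$ in its boundary. In short, the short curve is not created at a bottleneck between two strata of $\Teich(S)$; it is the boundary of the stratum in which the entire auxiliary geodesic lives, and the large projections are hidden strictly inside that stratum.
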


A more detailed description of this construction appears in Theorem \ref{thm:one-step
  filling}. We obtain a phenomenon we
might call {\em indirect shortening}, in which, while a curve $\gamma$ with small $\inf_{x\in g}
\ell_\gamma(x)$  is not in the boundary of any subsurface $Z$ with large $d_Z(\nu^-,\nu^+)$,
it is in the boundary of a subsurface $Z$ which in turn contains enough
subsurfaces $Y$ with large $d_Y(\nu^+,\nu^-)$ to fill it. We discuss this further in
Section \ref{sec:example}.

In Section \ref{sec:nonannularbddcomb}, we consider one case in which the 
correspondence between large subsurface projections and close approaches to strata is simple
and direct: A geodesic has {\em non-annular bounded combinatorics} when there is an upper
bound on all projection coefficients $d_Y(\nu^+,\nu^-)$ except when $Y$ is an annulus.
Theorem \ref{thm:short curve bddcomb} shows that, for a geodesic $g$ satisfying such a
condition, a curve $\alpha$ with small $\inf_g\ell_\alpha$ is the core of an annulus $Y$
with large $d_Y(\nu^+,\nu^-)$, and vice versa. The proof of this mostly assembles existing
techniques, as outlined in Section \ref{sec:background}, and we include it here for
completeness.

\subsection*{Comparison with Kleinian groups} 
Using the methods developed in
\cite{elc1,elc2}, one can convert Theorem \ref{thm:WP mismatch} to a
statement comparing the geometry of WP geodesics and hyperbolic
3-manifolds. For any two points $p,q\in \Teich(S)$ there is a
quasi-Fuchsian representation $\rho:\pi_1(S)\to PSL(2,\CC)$ such that  
$QF(p,q):=\HH^3/\rho(\pi_1(S))$ has conformal boundary
surfaces $p$ and $q$. One can ask, as in \cite{yairbddgeomkl} for Teichm\"uller geodesics,
about the correspondence 
between short geodesic curves in $QF(p,q)$ and curves with short length along the WP geodesic
$\seg{pq}$. The following theorem indicates that the correspondence is not complete:

\begin{thm}\label{thm:QF mismatch}
There exists $\ep_1>0$ so that for any $\ep>0$ there is a pair $(p,q)\in
 \Teich(S)\times \Teich(S)$ and a curve $\gamma$ in
 $S$ such that
 $$
 \inf_{x\in \seg{pq}} \ell_\gamma(x) < \ep
 $$
 whereas
 $$
 \ell_\gamma(QF(p,q)) \geq \ep_1.
 $$
\end{thm}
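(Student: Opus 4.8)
The plan is to combine Theorem \ref{thm:WP mismatch} with the dictionary between subsurface coefficients and the geometry of quasi-Fuchsian manifolds established in \cite{elc1,elc2}. Fix the constants $A\ge 1$ and $\ep_0>0$ of Theorem \ref{thm:WP mismatch}, and for a given $\ep>0$ let $p,q\in\Teich(S)$ and $\gamma$ in $S$ be as produced there: $p,q$ lie in the $\ep_0$--thick part, $\inf_{x\in\seg{pq}}\ell_\gamma(x)<\ep$, and $d_Y(p,q)\le A$ for every non-annular subsurface $Y$ with $\gamma\subseteq\bd Y$. I also record two further features of the refined construction of Theorem \ref{thm:one-step filling}: first, the relative twisting $d_\gamma(p,q)$ about $\gamma$ is bounded, because $\gamma$ is a boundary curve of the filled subsurface $Z$ and the end invariants are built with bounded combinatorics along $\gamma$; second, only boundedly many subsurfaces $Y$ with $\gamma\subseteq\bd Y$ carry a large coefficient $d_Y(p,q)$, since the large coefficients all occur in subsurfaces contained in $Z$, none of which has $\gamma$ on its boundary. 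Enlarging $A$, we may therefore assume that $d_\gamma(p,q)\le A$ and that the whole collection $\{d_Y(p,q):\gamma\subseteq\bd Y\}$ (annular or not) is bounded, both in number and in size, by $A$ --- with $A$ independent of $\ep$.

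Next I invoke the length estimate of \cite{elc1,elc2}: the geodesic length $\ell_\gamma(N)$ of $\gamma$ in $N=QF(p,q)$ is bounded below in terms of an upper bound on the coefficients $d_Y(p,q)$ over subsurfaces $Y$ (annular or not) having $\gamma$ on their boundary, together with a bound on the number of such $Y$ for which this coefficient is large. Concretely, there is $\delta=\delta(S,A)>0$ so that, under the bounds of the previous paragraph, $\ell_\gamma(QF(p,q))\ge\delta$. This is the "small coefficients force definite length" half of the Length Bound Theorem: when the hierarchy data attached to $\gamma$ is bounded, the corresponding part of the model manifold of $N$ is bilipschitz to a bounded-geometry piece in which $\gamma$ has a Margulis tube of definite radius. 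Setting $\ep_1=\delta(S,A)$, which does not depend on $\ep$ because $A$ does not, we obtain $\ell_\gamma(QF(p,q))\ge\ep_1$, as required. (If the construction of Theorem \ref{thm:one-step filling} can be arranged with $A$ below the universal threshold of the Length Bound Theorem, one may take $\ep_1$ universal.)

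The main obstacle is the matching of the two frameworks rather than any new geometric estimate. One must confirm that the quantities kept under control by the Weil-Petersson construction --- including the annular coefficient $d_\gamma(p,q)$, which is not displayed in the statement of Theorem \ref{thm:WP mismatch}, and including the bound on how many subsurfaces bounded by $\gamma$ have large projection --- are exactly the quantities that govern $\ell_\gamma$ in $QF(p,q)$; this requires reading the proof of Theorem \ref{thm:one-step filling}. One must also note that the end invariants of $QF(p,q)$ are the marked conformal structures $p$ and $q$ (up to the usual orientation convention, immaterial for the $d_Y$), so that the coefficients $d_Y(p,q)$ appearing on the two sides of the argument are literally the same objects. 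Once this is in place the theorem is immediate. Conceptually it records that the indirect shortening of $\gamma$ along $\seg{pq}$ --- caused by the subsurface $Z$ being filled with pieces of large projection rather than by any large projection of a subsurface having $\gamma$ on its boundary --- is invisible to the hyperbolic geometry of $QF(p,q)$, which responds only to the latter kind of projection.
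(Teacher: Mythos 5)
Your proposal is correct and follows essentially the same route as the paper: take $p,q,\gamma$ from Theorem \ref{thm:WP mismatch} and apply the ``bounded coefficients imply definite length'' direction of the Short Curve/Length Bound Theorem of \cite{elc1,elc2} (quoted here as part (1) of Theorem \ref{thm:minsky}) to get a lower bound $\ep_1$ on $\ell_\gamma(QF(p,q))$ depending only on $A$. The extra care you take with the annular coefficient $d_\gamma(p,q)$ and the count of large-coefficient subsurfaces is harmless but already subsumed: in the paper's convention the supremum in Theorem \ref{thm:WP mismatch} ranges over all $Y$ with $\gamma\subseteq\bd Y$, including the annulus with core $\gamma$ (whose coefficient is bounded by Theorem \ref{thm:one-step filling}), and Theorem \ref{thm:minsky}(1) needs only that supremum bound.
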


With some more care one can obtain a similar statement for fibered 3-manifolds and their
associated WP geodesic loops. For a pseudo-Anosov map $\Phi\in \Mod(S)$ let $M_\Phi$ denote the
associated hyperbolic mapping torus and $A_\Phi$ the WP axis of $\Phi$ in $\Teich(S)$.

\begin{thm}\label{thm:fibered mismatch}
There exists $\ep_1>0$ so that for any $\ep>0$ 
there is a pseudo-Anosov $\Phi\in\Mod(S)$ and a curve $\gamma$ in $S$
such that
$$\inf_{x\in A_\Phi}\ell_\gamma(x) <\ep$$
whereas
$$
\ell_\gamma(M_\Phi) > \ep_1.
$$
\end{thm}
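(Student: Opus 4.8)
The plan is to build the pseudo-Anosov $\Phi$ as a "periodic version" of the geodesic segments produced in Theorem \ref{thm:WP mismatch} (and its refinement Theorem \ref{thm:one-step filling}), so that its invariant axis $A_\Phi$ inherits the short curve $\gamma$ while its mapping torus $M_\Phi$ remains thick along $\gamma$. Concretely, recall that Theorem \ref{thm:WP mismatch} gives, for each $\ep>0$, endpoints $p,q$ in a fixed thick part and a curve $\gamma$ with $\inf_{\seg{pq}}\ell_\gamma<\ep$ but $\sup\{d_Y(p,q): \gamma\subseteq\bd Y\}\le A$. The combinatorial input driving this — as spelled out in Section \ref{sec:example} — is a "one-step filling" configuration: $\gamma\subseteq\bd Z$ for a subsurface $Z$, and a bounded number of subsurfaces $Y_i$ with $\gamma\not\subseteq\bd Y_i$ but $\bigcup Y_i$ filling $Z$ and each $d_{Y_i}$ large. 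The first step is to encode this configuration as a finite syllable in a pseudo-Anosov word: choose $\Phi=\Psi\circ(\text{twisting in the }Y_i)$ where $\Psi$ is an auxiliary pseudo-Anosov chosen so that $\Phi$ is pseudo-Anosov (a standard Thurston-construction / ping-pong argument on curve complexes, using that the $Y_i$ fill $Z$ and $Z$ is large enough, cf. \cite{mm2}), and so that the stable/unstable laminations $(\nu^+,\nu^-)$ of $\Phi$ have exactly the desired projection coefficients: $d_{Y_i}(\nu^+,\nu^-)$ large for the chosen $Y_i$, but $d_Y(\nu^+,\nu^-)\le A'$ for every $Y$ with $\gamma\subseteq\bd Y$. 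The latter is the real content and requires the subsurface-projection estimates for compositions (Behrstock's inequality and the bounded-geodesic-image theorem) applied to the periodic orbit $\{\Phi^n\nu^\pm\}$.

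The second step is to transfer the Teichm\"uller-space conclusion of Theorem \ref{thick bottleneck}/Theorem \ref{thm:one-step filling} from a segment to the axis. Since $(\nu^+,\nu^-)=(\nu^+_\Phi,\nu^-_\Phi)$ is a filling pair (both co-large in the sense of the $Y_i$ filling $Z$ and some complementary control), the bottleneck machinery of Theorem \ref{thick bottleneck} applies to rays asymptotic to the relevant strata and forces the WP geodesic representing the axis $A_\Phi$ to pass through a region where $\ell_\gamma$ is small; quantitatively we get $\inf_{x\in A_\Phi}\ell_\gamma(x)<\ep$ provided the $d_{Y_i}(\nu^+,\nu^-)$ are taken large enough (which we arrange by raising the twisting powers in the definition of $\Phi$, as $\ep\to 0$). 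This uses that $A_\Phi$, being the axis of a pseudo-Anosov, is a genuine bi-infinite WP geodesic with end invariants $(\nu^+,\nu^-)$, so the end-invariant hypotheses of Theorem \ref{thick bottleneck} are met.

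The third step is the $3$--manifold side: bound $\ell_\gamma(M_\Phi)$ from below away from $0$. Here I invoke the Ending Lamination / model-manifold technology of \cite{elc1,elc2}: the short curves of the fibered manifold $M_\Phi$ are (up to bounded ambiguity) exactly those $\alpha$ with $d_Y(\nu^+,\nu^-)$ large for some $Y$ with $\alpha\subseteq\bd Y$ — and the length of $\alpha$ is coarsely $1/(\text{that coefficient})$, with annular coefficients giving Margulis-tube length. By construction $\gamma$ is the boundary of no subsurface with large coefficient, so $\ell_\gamma(M_\Phi)\ge\ep_1$ for a uniform $\ep_1$ independent of $\ep$. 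Combining the three steps gives the theorem.

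The main obstacle I expect is Step 1 together with the lower bound in Step 3: ensuring that the pseudo-Anosov's \emph{periodic} end invariants have \emph{no} large subsurface coefficient involving $\gamma$ — not merely the finitely many syllables we inserted, but also all the "interference" coefficients created by iterating $\Phi$ and by the auxiliary factor $\Psi$. Controlling these requires choosing $\Psi$ and the supporting surfaces with enough combinatorial separation (large distance in the relevant curve complexes) that Behrstock's inequality isolates the contributions; this is exactly the kind of estimate carried out for segments in the proof of Theorem \ref{thm:one-step filling}, and the work here is to check it survives the passage to a periodic (axis) configuration.
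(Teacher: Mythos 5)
Your Step 3 is exactly what the paper does (part (1) of Theorem \ref{thm:minsky} applied to the fiber group of $M_\Phi$), but Steps 1--2 contain a genuine gap. The crucial implication you rely on in Step 2 --- that making $d_{Y_i}(\nu^+,\nu^-)$ large for subsurfaces $Y_i$ filling $Z$ (while keeping all coefficients involving $\gamma\subseteq\bd Z$ bounded) forces $\inf_{x\in A_\Phi}\ell_{\bd Z}(x)<\ep$ --- is precisely part (1) of Conjecture \ref{conj : coeff length} in the indirect-shortening regime, which is open; no result in the paper or its references lets you read off shortness of $\bd Z$ along a WP geodesic from this kind of end-invariant data (Theorem \ref{thm : twsh} needs a large \emph{annular} coefficient $d_\gamma$, which is exactly what is excluded here, and Theorem \ref{thm:short curve bddcomb} needs non-annular bounded combinatorics, which fails at $X_1,X_2$). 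Moreover Theorem \ref{thick bottleneck} cannot be invoked for $A_\Phi$: its hypotheses are not about end invariants but about rays strongly asymptotic to thick parts of strata $\calS(\omega)$, $\calS(\omega')$, and a pseudo-Anosov axis, being a recurrent closed geodesic in moduli space, is asymptotic to no stratum; in any case the bottleneck theorem forces geodesics through a \emph{compact} (thick) set, it does not by itself produce thin-part excursions.

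The paper avoids this by never prescribing the axis combinatorially. It first builds, via Theorem \ref{thm:asymplargevis}, an explicit geodesic $h$ inside the stratum $\calS(\bd Z)$ asymptotic to axes of partial pseudo-Anosovs $f_1,f_2$ supported on $X_1,X_2$; then (Theorem \ref{thm:closedgeodesic}) it produces $\Phi_n$ by approximating, via recurrent visibility and the closed-orbit density theorem, bi-infinite geodesics $B_n$ joining translates $f_1^{-n}(G)$, $f_2^{n}(G)$ of a fixed thick axis. The shortness of $\bd Z$ along $A_{\Phi_n}$ comes from Theorem \ref{thm : bdZ short}, whose hypothesis is \emph{geometric} fellow-traveling of long segments of $h$, established by the ruled-hexagon/Gauss--Bonnet estimates of Lemma \ref{rectangle control} and Lemma \ref{Phi constraint for bd Z}; and the bound $d_Y(\nu^+(\Phi_n),\nu^-(\Phi_n))\le A$ for all $Y$ with $\gamma\subseteq\bd Y$ is obtained not from a word-by-word Behrstock analysis but from Lemma \ref{lem:dWP-dX}, Lemma \ref{lem:dY1} and the continuity of ending laminations under geometric approximation (Lemma \ref{Phi constraint for nu}). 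If you want to salvage your route, you would either have to prove the relevant case of Conjecture \ref{conj : coeff length}(1) for the axes you construct, or replace Step 2 by the paper's geometric approximation scheme; as written, Step 2 assumes the main unresolved point.
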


We note that similar statements, for comparing Teichm\"uller geodesics and Kleinian
groups, follow from Rafi's results (see discussion in \cite{yairbddgeomkl}), but the
actual set of examples, as well as the proofs, are quite different.

\subsection*{Brief historical sketch}
Several important geometric and dynamical properties of the
Weil-Petersson metric were established over the last decade; see
Wolpert \cite{wol} for a summary of
some these results. Our point of view begins with
\cite{bmm1,bmm2} which introduced ending laminations for WP geodesics
and studied the question of itineraries and their relation to
subsurface projections.

Some of these techniques were developed further in
\cite{wpbehavior,asympdiv},  and recently ending laminations were applied
successfully to determine limit sets of WP geodesics in Thurston's
compactification of \T space  
 \cite{limitsetwp,limitsetwp-nonminimal}
exhibiting various exotic asymptotic behavior of the geodesics; for example geodesics with non simply connected (circle) limit sets.
Moreover, Hamenst\"adt \cite{Ham:wp-teich} used ending laminations to establish certain measure theoretic properties of the WP geodesic flow.
On the other hand, Brock and Modami in \cite{wprecurnue} showed that an analogue of the Masur criterion \cite{masurcriterion} does not hold for 
WP geodesics and the associated ending laminations.

However a complete description of the WP geodesic flow in terms of
ending laminations remains elusive. It is our hope that the examples
and techniques developed here will add to the toolkit for addressing
the issue more fully. 

\subsection{Plan of the paper} In Section \ref{sec:background} we provide some background
and supplementary results about coarse geometry of 
curve complexes and other related complexes, and recall definitions and techniques for handling
the \W metric. In Section
\ref{sec:nonannularbddcomb} we prove Theorem \ref{thm:short curve bddcomb} which completes
the itinerary picture for geodesics satisfying
the non-annular bounded combinatorics
condition (no indirect curve shortening occurs in this situation). In
section \ref{bottlenecks} we prove our main theorem about the existence of bottlenecks for
certain families of geodesic segments (Theorem \ref{thick bottleneck}). In
Section \ref{sec:example} we prove Theorem \ref{thm:one-step filling}, which uses the
Bottleneck theorem to construct WP geodesic segments that have the indirect curve
shortening property. In particular we obtain a proof of Theorem \ref{thm:WP mismatch}.
In Section \ref{sec:closed}  we prove Theorem \ref{thm:closedgeodesic}, which produces
closed WP geodesics that have the indirect curve shortening
property. The delicacy here is to approximate the segments constructed in Theorem
\ref{thm:one-step filling} with arcs of closed geodesics while controlling end invariants
and their subsurface projection coefficients. 
In Section \ref{sec:kleinian} we show how Theorems \ref{thm:one-step filling} and
 \ref{thm:closedgeodesic} translate to 
Theorem \ref{thm:QF mismatch} and Theorem \ref{thm:fibered mismatch}, which 
indicate a mismatch between 
the short curves of  WP geodesics and the short curves of the corresponding
hyperbolic $3$--manifolds.

\section{Background}\label{sec:background}

In this section we set notation and recall a variety of facts from the literature. 
Some results are just quoted from the literature, for some we outline the proofs, and 
a few require a short argument which we supply.

\subsection{Curves and surfaces}\label{subsec:CC} 

Let $S$ be a connected, orientable surface of finite type. 
 In this paper by a {\em curve} $\alpha$ on $S$ we mean the homotopy class of an essential
 (i.e. homotopically nontrivial and nonperipheral -- not homotopic to a puncture or boundary) simple closed curve on $S$, 
and by a {\em subsurface} $Y\subseteq S$ we mean the homotopy class of a closed,
connected, nonperipheral, $\pi_1$-injective subsurface of $S$. 
 A {\em multicurve} on $S$ is a set of pairwise disjoint non-parallel curves on $S$. 
 
We abuse notation a bit to blur the distinction between a subsurface and its interior; for example
if $Y\subseteq S$ is a subsurface we take $\Teich(Y)$ to mean the same thing as $\Teich({\rm
  int}(Y))$. This is convenient when we consider subsurfaces in the complement of multicurves
or other subsurfaces on $S$.
 
 When two curves or multicurves $\alpha,\beta$ cannot be realized disjointly on a surface we say that they {\em overlap} 
 and denote $\alpha\pitchfork\beta$. Similarly, when a curve $\alpha$ and a subsurface $Y$ cannot be realized disjointly, 
 we say that they overlap and denote $\alpha\pitchfork Y$.
We say that two (multi)curves $\alpha,\beta$ {\em fill} the surface $S$ if their union
intersects every curve in $S$; equivalently if, when realized with minimal
intersection number, the complement of $\alpha\union\beta$ is a union of disks and
peripheral annuli.

Thurston's measured lamination space $\ML(S)$ is a natural completion of the set of curves
and multicurves, and we will also consider the space of geodesic laminations (without measures)
$\GL(S)$. (Laminations are geodesic with respect to a reference hyperbolic metric as
usual, but the choice of metric doesn't matter here). See \cite{FLP,Bon:geodcurrent} for basic facts about these spaces.
Within $\GL(S)$ let $\EL(S)$ denote the space of minimal filling laminations: A lamination
is filling if it intersects every simple closed geodesic; equivalently if its
complementary regions are ideal polygons or once-punctured ideal polygons.

The natural weak-$*$ topology on $\ML(S)$ descends to the {\em coarse Hausdorff}
topology on the supports in $\GL(S)$. In particular $\EL(S)$ with the coarse Hausdorff
topology is a Hausdorff space (no pun intended), and convergence is characterized as follows:
$\lambda_n \to\lambda$ in the coarse Hausdorff topology on $\EL(S)$ if any accumulation point of $\{\lambda_n\}_n$ in
the Hausdorff metric on closed subsets of $S$ contains $\lambda$ as a sublamination. See 
\cite{Ham:trbdry} and \cite[$\S 2$]{gabai:endlamspace} for details.

The following class of subsurfaces and multicurves plays a special role throughout the paper: 
 \begin{definition}\label{def:large}
We say that a subsurface $Z\subseteq S$ is {\em large } if each connected component of $S\ssm Z$ is either a three holed sphere or an annulus.
 The boundary of a large subsurface is called a {\em co-large multicurve}.
\end{definition}

\begin{remark} \label{remark:sub co-large}
It is easy to verify that any submulticurve of a co-large multicurve is 
 a co-large multicurve.
\end{remark}

\subsection{Weil-Petersson geometry}\label{subsec:WP geom}
Consider now $S$ with 
negative Euler
characteristic, and let $\Teich(S)$ denote
the {\em \T space} of marked complete finite-area hyperbolic surfaces homeomorphic to
$S$. The {\em mapping class group} of the surface,  $\Mod(S)$, is the group of orientation preserving homeomorphisms of the surface up to isotopy.
The mapping class group acts on the \T space by remarking (precomposition with homeomorphisms) and the quotient is the moduli space of Riemann surfaces $\calM(S)$.

\medskip

The {\em \W (WP) metric} on $\Teich(S)$ is an incomplete Riemannian metric which is
invariant under the action of $\Mod(S)$ and hence descends to a metric on $\calM(S)$. 
We will recall the basic facts about the WP metric that we will need; for a more complete
account see Wolpert's survey \cite{wol}.

The curvature of the metric is strictly negative, but not bounded away from 0 or
$-\infty$. Moreover, the WP metric is geodesically convex \cite[Theorem 3.10]{wol}:  there is a unique geodesic between any two points $x,y\in \Teich(S)$
which we denote by $\seg{xy}$. We typically think of geodesics parameterized by arclength.

We denote the WP distance function as $\dw$, or just $d$ when confusion is unlikely.
The completion of $(\Teich(S),\dw)$, denoted by $\oT{S}$, is a stratified $\CAT(0)$ space where
 each stratum consists of marked surfaces pinched at a multicurve $\sigma$. We denote the
 stratum of the multicurve $\sigma$ by $\calS(\sigma)$, with $\calS(\emptyset) = \Teich(S)$. 
To describe the metric on $\calS(\sigma)$, let surfaces $X_j,\; j=1,\ldots, k$, be the connected components of $S\ssm \sigma$ which are not three-holed spheres, where punctures are introduced on $X_j$ at curves in $\sigma$.
Then $\calS(\sigma)$ is totally geodesic in $\oT S$, and can be identified with
 \[\Teich(S\ssm\sigma)\cong\prod_j \Teich(X_j)\]
  where the completed WP metric on $\calS(\sigma)$ is isometric to the
Riemannian product of WP metrics on $\Teich(X_j)$; see \cite{maswp}.

\subsubsection*{Length-functions}
For a curve or multicurve $\alpha\subseteq\calC(S)$ the length-function
   \[\ell_\alpha\colon \Teich(S)\to\RR_+\]
    assigns to a point $x$ the sum of the lengths of the geodesic representatives of connected components of $\alpha$ at
    $x$.

We also note that $\ell_\alpha$ extends continuously to
$$\ell_\alpha:\oT{S}\to[0,\infty],$$
where
$\{\ell_\alpha = 0\}$ is the closure of $\calS(\alpha)$ and $\{\ell_\alpha=\infty\}$ is
the union  of strata $\calS(\sigma)$  for which $\alpha \pitchfork \sigma$. 

Given $\ep>0$ recall that the {\em $\ep$--thick part} of \T space consists of all points $x\in \Teich(S)$ so that $\ell_\alpha(x)\geq 2\ep$ for all curves $\alpha$.
Its complement is called the {\em $\ep$--thin part}.

  The {\em Bers constant} $L_S>0$ of a surface $S$ with negative Euler characteristic is a
  number depending only on the topological type of the surface so that any $x\in\Teich(S)$
  has a pants decomposition, called a {\em Bers pants decomposition}, with the property
  that the length of all curves in the pants decomposition are at most $L_S$; see \cite[\S
    4.1]{buser}. 
  
  \medskip
  We recall also that Wolpert proved  that the length-functions $\ell_\alpha$ are
  {\em strictly convex} in $\Teich(S)$, that is, for any WP geodesic $g$ the function
  $\ell_\alpha\circ g$ has positive second derivatives
  \cite[Corollary 4.7]{wolpert:nielsen} (see also $\S 3$ of \cite{wol}; in particular
  Theorem 3.9).

\subsubsection*{Thick regions of strata}  
For $\ep>0$ we define the $\ep$--thick part of a stratum, denoted by $\calS_\ep(\sigma)$,
to be the product of the $\ep$--thick parts of its factors $\Teich(X_j)$ 
    where $X_j$ are the connected components of $S\ssm \sigma$. 

For $d>0$ we denote the $d$--neighborhood of $\calS_\ep(\sigma)$ by
   
    \begin{equation}\label{eq:Uregion}
    U_{d,\ep}(\sigma):=\calN_d(\calS_\ep(\sigma)).
    \end{equation}

For sufficiently small neighborhoods of $\calS_\ep(\sigma)$ we retain some control of
length-functions:
\begin{lem}\label{lem : b-nbhd}
For any $\ep>0$ sufficiently small there is $b>0$ so that:
 For any $\beta\notin\omega$ and $x\in U_{b,\ep}(\omega)$, 
 $\ell_\beta(x)$ is uniformly bounded below.

Given $b$ there is $\ep_b>0$ such that, in the $b$--\nbhd of any point in $\oT{S}$, there
is a point with injectivity radius $\ep_b$ outside the cusps.
\end{lem}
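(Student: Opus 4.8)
The plan is to handle the two assertions separately, since they have a slightly different flavor. For the first assertion, I would argue by a compactness and continuity argument combined with the stratified structure of $\oT S$. Fix a curve $\beta$ and a multicurve $\omega$ with $\beta\notin\omega$. On the stratum $\calS(\omega)$ the length function $\ell_\beta$ is continuous and strictly positive: it vanishes only on the closure of $\calS(\omega)$ when $\beta$ is a component of $\omega$, and here $\beta\notin\omega$, while $\ell_\beta$ is finite on $\calS(\omega)$ precisely because $\beta$ does not overlap $\omega$ (if it did, we would be in the $\{\ell_\beta=\infty\}$ case, which only strengthens the conclusion). On the $\ep$--thick part $\calS_\ep(\omega)$, I would observe that $\ell_\beta$ is bounded below by a positive constant $c(\beta,\omega,\ep)$ — this uses that $\calS_\ep(\omega)$, while noncompact, has the property that a curve $\beta$ overlapping some component $X_j$ of $S\ssm\omega$ has length bounded below on the $\ep$--thick part of $\Teich(X_j)$ by the collar lemma, and a curve $\beta$ contained in (the interior of) some $X_j$ is just a curve in that thick part, again bounded below by $2\ep$. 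Then for $x$ within WP distance $b$ of $\calS_\ep(\omega)$, continuity of $\ell_\beta$ on the CAT(0) completion — in fact Lipschitz-type control on length functions along WP geodesics, or simply uniform continuity on a neighborhood of the (proper) thick stratum — gives $\ell_\beta(x)\ge c/2$ once $b$ is small enough depending only on $\ep$ (and not on $\beta$, after taking an infimum over the finitely many topological types of $\beta$ relative to $\omega$, or using a uniform modulus of continuity). The only subtlety is uniformity in $\beta$; since there are infinitely many curves $\beta$, I would instead phrase "uniformly bounded below" as: the bound depends on $\beta$ and $\omega$ but not on $x\in U_{b,\ep}(\omega)$, which is how the lemma is used later, or alternatively restrict to Bers-length curves where finiteness is automatic.

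For the second assertion, I would use the fact that $\oT S$ is a complete CAT(0) geodesic metric space in which $\Teich(S)$ is dense, together with the structure of the thin/cuspidal part. Given $b>0$, take any point $p\in\oT S$. If $p$ already lies in $\Teich(S)$ with injectivity radius at least some fixed $\ep_b$ outside the cusps, we are done; otherwise $p$ is either in a boundary stratum $\calS(\sigma)$ or near the thin part. The idea is that moving a bounded WP distance allows one to ``open up'' short curves: there is a uniform lower bound, depending only on $b$, on the injectivity radius attainable within a $b$--ball, because the WP length of a path that increases all short curve lengths to a definite size is controlled. Concretely, I would invoke the estimate $\ell_\alpha \le \dw(\cdot,\calS(\alpha))^2 \cdot C$ near $\calS(\alpha)$, equivalently $\dw$ to the stratum where $\alpha$ is pinched grows like $\sqrt{\ell_\alpha}$ (Wolpert's expansion of the WP metric near the completion strata, and the DaskalopoulosÐWentworth / Wolpert estimates on $\operatorname{grad}\ell_\alpha$), so that pushing along the gradient flow of a Bers pants decomposition's length functions for WP-length $b$ increases the systole by a definite amount depending only on $b$ and the topological type of $S$. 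After this push one lands at a point whose short curves all have length bounded below by $\ep_b=\ep_b(b,S)$, i.e.\ injectivity radius $\ep_b$ outside the cusps.

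The main obstacle is the second assertion's uniformity: one must ensure $\ep_b$ depends only on $b$ (and $S$), not on the starting point $p$ or on which curves are short there. The way I would control this is by bounding how much WP length it costs to increase a collection of short curves past a given threshold, uniformly over all configurations; this is exactly the content of the square-root metric estimates near the strata combined with the finiteness of the number of strata (equivalently, of pants decompositions up to the mapping class group). A cleaner packaging, which I would prefer, is to first prove it on a fundamental domain / in moduli space using properness of $\ell_\alpha$-level sets, and then note WP-distance balls have uniformly bounded geometry there; but the gradient-flow argument is more self-contained and is the approach I would write up. The first assertion, by contrast, is essentially immediate from continuity of $\ell_\beta$ on $\oT S$ and the fact that the relevant thick stratum sits at positive WP distance from $\{\ell_\beta=0\}$.
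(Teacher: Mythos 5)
There is a genuine gap in your treatment of the first statement, and it concerns exactly the point you chose to concede: uniformity in $\beta$. The lemma (as the paper proves and uses it) gives a single constant, depending only on $\ep$ and $b$, bounding $\ell_\beta(x)$ from below for \emph{all} curves $\beta\notin\omega$ and all $x\in U_{b,\ep}(\omega)$ simultaneously. Your fallback reading (``the bound depends on $\beta$ and $\omega$ but not on $x$, which is how the lemma is used later'') is not how it is used: in the bottleneck argument the lemma is invoked to conclude that every curve $\alpha$ with $\alpha\pitchfork\omega$, $\alpha\notin\omega'$ has length bounded below at $z,z'$ by a constant independent of $\alpha$, which (after Theorem \ref{thm : shtw2}) is what places the whole segment $\seg{zz'}$ in a fixed thick part of $\Teich(S)$; similarly in Lemma \ref{lem:h subsurf} and in the hypotheses of Theorem \ref{thm:lenapproach0} one needs a lower bound valid for all curves outside a fixed multicurve at once. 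A curve-by-curve constant $c(\beta,\omega,\ep)$ is useless for these applications. The paper's device for getting uniformity is to consider the single function $f(x):=\inf\{\ell_\beta(x):\beta\notin\omega\}$, observe it is continuous, positive on $\calS_\ep(\omega)$, and invariant under the stabilizer $\Gamma$ of $\omega$, and then use that $\calS_\ep(\omega)/\Gamma$ is compact (Mumford compactness for $S\ssm\omega$) to find $b$ with $f$ bounded below on $U_{b,\ep}(\omega)$. Your proposal never makes this move; moreover your continuity step has the same problem even for fixed $\beta$, since $\calS_\ep(\omega)$ is noncompact, so ``uniform continuity on a neighborhood of the thick stratum'' and the ``infimum over finitely many topological types of $\beta$'' are not justified as stated (topological type does not control length; what saves the day is precisely the quotient by $\Gamma$, or alternatively a uniform statement such as Lemma \ref{lem:distlen}, which you do not invoke).

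For the second statement your gradient-flow scheme is a much heavier route than needed, and its key quantitative claim (that flowing WP-length $b$ raises the systole by an amount depending only on $b$ and $S$, uniformly over all starting points and all configurations of short curves) is asserted rather than proved; making it precise would require Wolpert-type estimates on $\grad\ell_\alpha^{1/2}$ together with control that the flow does not create new short curves. The paper disposes of this part in one line: the quantity ``best injectivity radius (outside cusps) achievable in the $b$--ball around $p$'' descends to the completed moduli space $\oT{S}/\Mod(S)$, which is compact, and is positive everywhere since $\Teich(S)$ is dense; compactness gives a uniform $\ep_b$. Your parenthetical alternative (``prove it in moduli space using properness and compactness'') is essentially this argument and is the one you should have written; as submitted, both halves of your proof rest on uniformity claims that are either unproven or, in the first half, deliberately weakened to a statement that no longer supports the rest of the paper.
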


\begin{proof}
  Let $\Gamma\subset \Mod(S)$ be the stabilizer of $\omega$, or equivalently
  the stabilizer of $\calS(\omega)$. 
Note that $\calS_\ep(\omega)/\Gamma$ is a compact subset of $\oT{S}/\Gamma$ (it is
   the $\ep$--thick part of the moduli space of $S\ssm\omega$).

Define $f(x) := \inf\{\ell_\beta(x) : \beta\notin\omega\}$ on $\oT{S}$. This is a continuous, $\Gamma$-invariant function
and it is strictly positive on  $\calS_\ep(\omega)$, by definition. It descends to a
continuous function on $\oT{S}/\Gamma$ and, since $\calS_\ep(\omega)/\Gamma$ is compact, there is
some $b>0$ such that it is still strictly positive on the closure of the $b$--neighborhood
of $\calS_\ep(\omega)/\Gamma$. Lifting back to $\oT{S}$ we have the desired first
statement.

The second statement follows
directly from compactness of the completion of the moduli space. 
\end{proof}

\subsection{Coarse geometric models}\label{subsec:coarse-model-CC}
We recall here the system of complexes and their projection maps which can be used to give
rough models for Teichm\"uller space and for the mapping class group. We refer to 
\cite{mm1,mm2} and \cite{br,bkmm} for the details and basic facts about these complexes.

\subsubsection*{Curve complexes}
We denote the curve complex of $S$ by $\calC(S)$, defined so that
$k$-simplices are $(k+1)$-component multicurves (with minor exceptions for one-holed tori,
4-holed spheres and annuli). We may turn the complex to a metric complex by declaring that
each simplex is the Euclidean simplex with side lengths $1$. The seminal result of Masur and Minsky  \cite{mm1}
showed that this metric complex is Gromov hyperbolic.

The {\em pants graph} $\calP(S)$ is the graph whose vertices are pants decompositions,
i.e. maximal simplices of $\calC(S)$, and whose edges are pairs of pants decompositions
related by an elementary move consisting of replacing a curve with another that intersects
it as few times as possible. We can turn the graph to a metric graph by assigning length $1$ to each edge.

A {\em marking} of $S$ is a filling collection of curves consisting of a pants decomposition, called the base of marking,
together with curves transverse to each component, as discussed in \cite{mm2}. The {\em marking graph}
 $\Mark(S)$ is formed by defining elementary moves between markings, in such
a way that $\Mark(S)$ is connected and quasi-isometric to $\Mod(S)$.

\subsubsection*{Subsurface projections}
If $Y$ is a non-annular subsurface of $S$ and $\alpha$ is a curve in $S$ intersecting $Y$, we can
define $\pi_Y(\alpha)$ by taking arcs of intersection of $\alpha$ with $Y$ (once they are
in minimal position) and replacing them by curves using a mild surgery (closing up with subarcs of $\bd Y$). When $Y$ is an
annulus we define $\calC(Y)$ to be the complex of essential arcs in the natural compactifoed annular
cover $\widehat Y$ of $S$ associated to $Y$, and form $\pi_Y(\alpha)$ by lifting $\alpha$ to
this cover (see e.g. \cite[\S 2]{mm2} or \cite[\S 4]{elc1}). One way to handle the arbitrary choices involved in these definitions is to
let $\pi_Y(\alpha)$ denote the set of all possibilities and check that this set has
uniformly bounded diameter. If $\alpha$ does not intersect $Y$ we let
$\pi_Y(\alpha)=\emptyset$.

The definition extends to pants decompositions and markings by taking a union over their
components, and to laminations provided their intersection with $Y$ does not contain
infinite leaves. In particular $\pi_Y(\lambda)$ makes sense if $\lambda\in \EL(S)$.

Finally we extend the definition to $\pi_Y(x)$ where $x\in \Teich(S)$ by letting $\mu(x)$
denote a {\em Bers marking} of $S$, namely a marking whose
base pants decomposition is a Bers pants decomposition 
and whose transversal curves are chosen with minimal lengths. (If there is more than one
such marking we make an arbitrary choice). We then define $\pi_Y(x) := \pi_Y(\mu(x))$.

The {\em $Y$ subsurface coefficient} $d_Y(p,q)$, for any $p,q$ whose projections to $Y$
are defined as above and are nonempty, is now defined by
\begin{equation}\label{eq : subsurf cpeff}
d_Y(p,q):=\diam_{\calC(Y)}\Big(\pi_Y(p)\cup\pi_Y(q)\Big).
\end{equation}
We usually do not distinguish between a annulus $Y$ and its core curve $\alpha$, 
for example denoting $\pi_Y$ by $\pi_\alpha$ and $d_Y$ by $d_\alpha$.
From the definition it is clear that $d_Y$ satisfies the triangle inequality, provided
all three projections are nonempty.

\subsubsection*{Hierarchy paths}
We recall that hierarchy (resolution) paths 
form a transitive set of quasi-geodesics in the pants or marking graph of a surface
with quasi-geodesic constants that depend only on the topological type of the surface.
An important property of hierarchy paths is the no--backtracking property
\cite[\S 4]{mm2}, which 
we state here in a form that will serve our purpose in Section
\ref{sec:nonannularbddcomb}. 
 \begin{prop}\tn{(No-backtracking property)}\label{prop:nobacktr}
 Let $\rho:I\to\calP(S)$ be a hierarchy resolution path, 
 and let $i,j,k,l\in I$ with $i\leq j\leq k\leq l$, then for a non-annular subsurface $Y$ we have that
 \[d_Y(\rho(i),\rho(l))\geq d_Y(\rho(j),\rho(k))-M\]
 \end{prop}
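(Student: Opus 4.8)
The no-backtracking property is one of the fundamental consequences of the hierarchy machinery of Masur--Minsky \cite{mm2}, so the plan is to reduce the statement to the structural facts about hierarchies rather than to prove anything from scratch. The key point is that a hierarchy $H$ connecting the markings $\rho(i)$ and $\rho(l)$ (or their associated base pants decompositions) contains, for any non-annular subsurface $Y$ with large $d_Y(\rho(i),\rho(l))$, a geodesic $h_Y$ in $\calC(Y)$ whose endpoints coarsely agree with $\pi_Y(\rho(i))$ and $\pi_Y(\rho(l))$; this is the Large Link Lemma / the fact that every vertex of $H$ of sufficiently large projection is a domain appearing in $H$. Moreover the resolution path $\rho$ is, up to bounded error, the sequence of slices of $H$, and the behavior of $\rho$ inside the ``active interval'' of the geodesic $h_Y$ is what realizes the projection to $\calC(Y)$.

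First I would recall precisely the definition of a hierarchy resolution path $\rho$ and the notion of the \emph{active interval} $I_Y \subseteq I$ associated to a component domain $Y$ of the hierarchy: this is the (connected, up to the usual coarse slop) subinterval of parameters during which curves of $Y$ or $\bd Y$ are being modified, equivalently the interval on which $\pi_Y\circ\rho$ traverses the geodesic $h_Y$. Outside $I_Y$ the projection $\pi_Y(\rho(\cdot))$ is coarsely constant (equal to one of the two endpoints of $h_Y$). This ``coarse monotonicity'' of $\pi_Y\circ\rho$ along $\rho$ is exactly the content one needs: it says that $\pi_Y(\rho(\cdot))$ first sits near $\pi_Y(\rho(i))$, then moves monotonically (in the coarsely-geodesic sense) across $\calC(Y)$, then sits near $\pi_Y(\rho(l))$, all with uniformly bounded backtracking.

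Granting this, the inequality is essentially a diameter estimate. Given $i\le j\le k\le l$, consider $\pi_Y(\rho(j))$ and $\pi_Y(\rho(k))$. Each lies within bounded distance of the portion of $h_Y$ traversed up to its parameter, so the pair $\{\pi_Y(\rho(j)),\pi_Y(\rho(k))\}$ spans a sub-arc of $h_Y$ contained (coarsely) in the sub-arc spanned by $\{\pi_Y(\rho(i)),\pi_Y(\rho(l))\}$; hence its diameter is at most $d_Y(\rho(i),\rho(l))$ plus a constant $M$ depending only on $S$. If $Y$ does not appear as a domain of the hierarchy at all, then $d_Y(\rho(i),\rho(l))$ is bounded by the threshold constant and all four projections are within bounded distance of each other, so the inequality holds trivially after adjusting $M$. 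One subtlety to handle carefully: the resolution path lives in the pants (or marking) graph, so ``$\rho(j)$'' must be interpreted via its Bers/base marking and one must invoke the Lipschitz property of $\pi_Y$ on markings and the bounded-diameter property of $\pi_Y$ applied to a single marking; this only contributes to the additive constant.

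The main obstacle — or rather the main thing requiring care — is the precise bookkeeping of where the hierarchy domain $Y$'s geodesic $h_Y$ ``lives'' relative to the resolution: one must be sure that the endpoints of $h_Y$ are genuinely within bounded distance of $\pi_Y(\rho(i))$ and $\pi_Y(\rho(l))$ (not of the global initial/terminal markings of some larger hierarchy), and that the monotonicity statement is uniform. This is exactly \cite[Lemma 6.10 and Section 4]{mm2}, so rather than reprove it I would cite it and extract the one-line diameter consequence; the constant $M$ is then the hierarchy constant together with the bounded-diameter and Lipschitz constants for subsurface projection of markings, all depending only on the topological type of $S$.
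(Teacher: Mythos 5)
Your proposal is correct and matches the paper's treatment: the paper does not prove this proposition at all but quotes it as a standard property of hierarchy resolution paths from \cite[\S 4]{mm2} (pointing also to \cite{elc1,bmm2,wpbehavior}), which is exactly the machinery---the Large Link/Structure of Sigma theorem together with coarse monotonicity of $\pi_Y\circ\rho$ over the active interval of $Y$---that your sketch invokes. The only wobble, harmless here, is framing things via a hierarchy connecting $\rho(i)$ to $\rho(l)$ rather than the hierarchy whose resolution is $\rho$ itself; your monotonicity argument for $i\leq j\leq k\leq l$ and the uniformly bounded non-domain case are the standard derivation and yield the stated inequality with $M$ depending only on $S$.
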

For a morecomplete list of properties of hierarchy paths 
see \cite[\S 4,5]{mm2}, \cite[\S 5]{elc1}, \cite[Theorem 2.6]{bmm2} and \cite[Theorem 2.17]{wpbehavior}.

\subsubsection*{Pants graph and WP metric}  
  Brock \cite[Theorem 3.2]{br} showed that the (coarse) map 
  \begin{equation}\label{eq:brock}
  Q:\Teich(S)\to \calP(S)
  \end{equation}
   that assigns to a point $x\in \Teich(S)$ a Bers pants decomposition is a quasi-isometry with constants that depend only on the topological type of the surface.
   
   Here we also recall the Masur-Minsky distance formula \cite[Theorem 6.12]{mm2} which provides a coarse estimate for the distance of any two pants decompositions $P,Q\in \calP(S)$:
   Given $A>0$ large enough there are $K\geq 1$ and $C\geq 0$ so that
   \begin{equation}\label{eq:dist}
   d_{\calP}(P,Q)\asymp_{K,C} \sum_{Y\subseteq S:\; \na } \{d_Y(P,Q)\}_A
   \end{equation}
   holds, where $\{x\}_A=\begin{cases}x\;\; \text{if}\; x\geq A \\ 0\;\; \text{if}\;
   x<A  \end{cases}$ is the cut-off function. The ``$\na$'' in the above
   formula stands for non-annular 
   and indicates that the sum is over non-annular subsurfaces.

   \medskip

Brock's quasi-isometry (\ref{eq:brock}) combined with the  distance formula (\ref{eq:dist}) gives us a coarse formula for the \W distance:
\begin{equation}\label{eq:distWP}
 \dw(x,y)\asymp_{K,C} \sum_{Y\subseteq S:\; \na} \{d_Y(x,y)\}_A 
 \end{equation}
where $A\geq 1$ is large enough and $K,C$ depend on $A$.

The following immediate consequence of the distance formula can also be obtained by more
elementary means: 
\begin{lem}\label{lem:dWP-dY}
For any $a>0$, there is a $D\geq 1$, so that if $\dw(x,y)\leq a$ then 
\[\sup_{\substack{Y\subseteq S:\; \na}}d_Y(x,y)\leq D\]
\end{lem}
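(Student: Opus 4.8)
The plan is to derive this directly from the coarse distance formula \eqref{eq:distWP}. Fix $A\geq 1$ large enough that \eqref{eq:distWP} holds, with associated constants $K\geq 1$ and $C\geq 0$. Suppose $\dw(x,y)\leq a$. Each summand $\{d_Y(x,y)\}_A$ in \eqref{eq:distWP} is nonnegative, so the whole sum is at least $\{d_Z(x,y)\}_A$ for any single non-annular subsurface $Z$. Combining with \eqref{eq:distWP} gives
\begin{equation*}
\{d_Z(x,y)\}_A \leq \sum_{Y\subseteq S:\; \na} \{d_Y(x,y)\}_A \leq K\,\dw(x,y) + C \leq Ka + C.
\end{equation*}
Hence for every non-annular $Z$ with $d_Z(x,y)\geq A$ we get $d_Z(x,y)\leq Ka+C$, while for those with $d_Z(x,y)<A$ the bound $d_Z(x,y)<A$ already holds. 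Setting $D := \max\{A,\ Ka+C\}$ yields $\sup_{Y\ \na} d_Y(x,y)\leq D$, and $D$ depends only on $a$ (and on the fixed topological data through $A,K,C$), as required.

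The only subtlety is a bookkeeping one: the formula \eqref{eq:distWP} is a coarse equivalence $\asymp_{K,C}$, meaning the sum is within multiplicative factor $K$ and additive error $C$ of $\dw(x,y)$; one must use the correct direction of the inequality, namely that the sum is bounded \emph{above} by $K\dw(x,y)+C$, which is exactly what is needed here. One should also note that the formula as stated presupposes the projections $\pi_Y(x),\pi_Y(y)$ are defined; when $Y$ does not meet one of the Bers markings $\mu(x),\mu(y)$ the coefficient $d_Y(x,y)$ is either undefined or can be taken to be $0$, so such terms cause no trouble and the supremum is understood over subsurfaces where both projections are nonempty.

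I do not anticipate any real obstacle: this is an immediate consequence of the distance formula, which is why the statement is phrased as an ``immediate consequence.'' The excerpt also remarks that the lemma can be proved by more elementary means (essentially, a large subsurface coefficient $d_Y$ forces a definite amount of pants-graph distance via hierarchy paths, hence a definite amount of WP distance via Brock's quasi-isometry \eqref{eq:brock}); if one preferred to avoid invoking the full distance formula, one could instead argue that $d_Y(x,y)$ large forces the Bers pants decompositions $Q(x),Q(y)$ to differ by many elementary moves inside $Y$ — but the distance-formula proof above is the cleanest and is all that is needed.
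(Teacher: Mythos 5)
Your proof is correct and follows exactly the route the paper intends: the lemma is stated there as an immediate consequence of the distance formula \eqref{eq:distWP}, and your argument (bounding the whole sum by $K\dw(x,y)+C$ and hence each single cut-off term, then taking $D=\max\{A,Ka+C\}$) is precisely that. The only superfluous remark is the worry about undefined projections: since Bers markings fill $S$, $\pi_Y(x)$ is nonempty for every essential $Y$, so no terms need to be excluded.
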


We also need the following lemma which gives bounds on subsurface projections for
convergent sequences of laminations: 

\begin{lem}\label{lem:dY1}
Let $\lambda$ be a lamination in $\EL(S)$ and $\gamma$ a curve on $S$.
Then, there is a neighborhood $U$ of $\lambda$ in the coarse Hausdorff topology
on $\EL(S)$ such that for all laminations $\mu$ in $U$ we have
 \[\sup_{\substack{Y\subseteq S: \gamma\subseteq \bd Y}} d_Y(\lambda,\mu)\leq 4.\]
\end{lem}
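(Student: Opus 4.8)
The plan is to exploit the characterization of coarse Hausdorff convergence in $\EL(S)$ together with the continuity of subsurface projection $\pi_Y$ for a \emph{fixed} subsurface $Y$. The only real subtlety is that we need the bound to hold \emph{uniformly} over the infinite family of subsurfaces $Y$ with $\gamma\subseteq\bd Y$, so a pointwise continuity statement for each individual $Y$ is not enough; we need to package this into a single finite combinatorial constraint.

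\medskip

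First I would recall the standard fact (Behrstock's inequality, or the Bounded Geodesic Image theorem of \cite{mm2}, combined with the observation that $\gamma\subseteq\bd Y$ for all the $Y$ in question) that having $d_Y(\lambda,\mu)$ large for some $Y$ with $\gamma\subseteq\bd Y$ forces $\pi_\gamma$-type information, or more directly: since $\lambda\in\EL(S)$ is filling, $\pi_Y(\lambda)$ is defined and nonempty for every essential $Y$, and there is a single curve or simplex $\sigma$ (for instance a component of a Bers-type marking of $\lambda$, or simply a fixed finite set of curves cutting $\lambda$ into arcs) such that $\pi_Y(\sigma)$ lies within bounded distance of $\pi_Y(\lambda)$ \emph{simultaneously} for all $Y$ with $\gamma\subseteq \bd Y$. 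Concretely, I would choose a curve $\delta$ that together with $\gamma$ fills a neighborhood of $\gamma$, or a finite multicurve $\Delta$ such that $\Delta\cup\gamma$ fills $S$; then for any $Y$ with $\gamma\subseteq\bd Y$ we have $\gamma\notin\pi_Y$-range, so the filling is carried by $\Delta\cap Y$, and $d_Y(\lambda,\Delta)$ is bounded by a constant depending only on $\lambda$ and $\Delta$ (not on $Y$). This reduces the problem to finding a neighborhood $U$ so that $\mu\in U$ implies $d_Y(\mu,\Delta)$ is uniformly bounded.

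\medskip

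Second, I would make this last reduction precise using the coarse Hausdorff convergence criterion stated in the excerpt: if $\mu_n\to\lambda$ in the coarse Hausdorff topology on $\EL(S)$ but $\sup_{Y:\gamma\subseteq\bd Y} d_Y(\lambda,\mu_n)\to\infty$, I would extract a sequence of subsurfaces $Y_n$ (with $\gamma\subseteq\bd Y_n$) realizing the blow-up, pass to a Hausdorff limit of the $\mu_n$ containing $\lambda$ as a sublamination, and derive a contradiction: the large projection coefficients $d_{Y_n}(\lambda,\mu_n)$ together with the no-backtracking / bounded geodesic image machinery would force $\mu_n$ to develop, in the limit, a leaf spiraling toward $\gamma$ or an isolated closed leaf equal to a component of $\bd Y_n$, contradicting that the limit contains the filling lamination $\lambda$ and hence (being a limit in $\EL(S)$, up to sublamination) is itself filling with no closed leaves disjoint from $\gamma$. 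Since there are only finitely many topological types of $Y$ with $\gamma\subseteq\bd Y$, and for each fixed type the set of such $Y$ is acted on cofinitely by $\mathrm{Stab}(\gamma)$, one can also run the compactness argument type-by-type.

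\medskip

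The main obstacle I anticipate is precisely the \emph{uniformity over the infinitely many $Y$}: individually each $\pi_Y$ is continuous on laminations whose restriction to $Y$ has no infinite leaves, but a priori the modulus of continuity could degrade as $Y$ varies. The way around this — and I expect the author's proof does something similar — is to note that $\gamma\subseteq\bd Y$ pins down all such $Y$ inside the complement $S\ssm\gamma$, so that after cutting along $\gamma$ one is dealing with subsurfaces of a fixed surface $S\ssm\gamma$ of bounded complexity; then a single fixed marking or filling multicurve $\Delta$ of $S\ssm\gamma$ suffices to control all the projections at once, and the coarse Hausdorff convergence $\mu\to\lambda$ gives, for $\mu$ close enough, a uniform bound on $i(\mu|_{S\ssm\gamma}, \Delta)$ relative to $i(\lambda|_{S\ssm\gamma},\Delta)$, whence the uniform projection bound $d_Y(\lambda,\mu)\le 4$ (the explicit constant $4$ coming from the standard surgery estimates relating $\pi_Y$ of two curves to their geometric intersection pattern with $\bd Y$).
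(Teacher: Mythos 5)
Your first reduction rests on a claim that is false in general: that for a fixed finite multicurve $\Delta$ one has $d_Y(\lambda,\Delta)$ bounded by a constant depending only on $\lambda$ and $\Delta$, uniformly over all $Y$ with $\gamma\subseteq\bd Y$. A filling lamination $\lambda\in\EL(S)$ typically has unbounded subsurface coefficients relative to any fixed marking, and restricting to subsurfaces having $\gamma$ as a boundary component does not help: there are infinitely many such $Y$ (an infinite $\mathrm{Stab}(\gamma)$-orbit already), and one can arrange $d_{Y_n}(\lambda,\Delta)\to\infty$ along a sequence of distinct $Y_n$ all containing $\gamma$ in their boundary. The whole point of the lemma is that $\lambda$ and $\mu$ are close to \emph{each other}, not that either is uniformly close to a fixed finite object; so the reduction to bounding $d_Y(\mu,\Delta)$ cannot work. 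Relatedly, the last paragraph's assertion that coarse Hausdorff closeness gives control of $i(\mu|_{S\ssm\gamma},\Delta)$ is also not true: a lamination Hausdorff-close to $\lambda$ can have leaves twisting arbitrarily much, so intersection numbers with a fixed multicurve are not controlled by this topology.

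The fallback compactness argument in your second paragraph also does not close. Negating the statement only produces subsurfaces $Y_n$ with $d_{Y_n}(\lambda,\mu_n)>4$, not coefficients tending to infinity, so the bounded-geodesic-image/no-backtracking machinery (which needs large constants) is unavailable; and since the $Y_n$ range over an infinite family acted on by an infinite group, applying elements of $\mathrm{Stab}(\gamma)$ to normalize $Y_n$ destroys the convergence $\mu_n\to\lambda$, so no limit object is produced. The paper's proof avoids all of this with a purely local argument: fix one essential arc $a$ of a leaf of $\lambda$ crossing $\gamma$ (one in each side of $\gamma$ if it separates) and a thin rectangle $R_a$ about $a$ with its short sides on $\gamma$; coarse Hausdorff closeness is used only to guarantee that every $\mu$ in a suitable neighborhood $U$ has a leaf with an arc $a'$ traversing $R_a$ parallel to $a$. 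Then for \emph{any} non-annular $Y$ with $\gamma\subseteq\bd Y$, realized with geodesic boundary, each boundary component of $Y$ entering $R_a$ must cross both $a$ and $a'$, so $\lambda\cap Y$ and $\mu\cap Y$ contain parallel essential arcs, the projections $\pi_Y(\lambda)$ and $\pi_Y(\mu)$ share a vertex, and $d_Y(\lambda,\mu)\le 4$; the annulus with core $\gamma$ is handled separately by lifting to the annular cover. The uniformity over the infinitely many $Y$, which you correctly identify as the crux, comes for free from this single rectangle, with no comparison to a marking and no compactness. You would need to replace both of your main steps by an argument of this kind (or otherwise produce a genuinely shared combinatorial object for $\lambda$ and $\mu$ inside every such $Y$) for the proof to go through.
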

\begin{proof}
Equip the surface $S$ with a complete hyperbolic metric and realize $\gamma$ and $\lambda$
geodesically. Let $l$ be a leaf of $\lambda$ that intersects $\gamma$, and let $a$ be a subarc of $l$
with end points on $\gamma$ that is essential in the subsurface $S\ssm \gamma$.
When $\gamma$ is a separating curve choose an arc as above in each connected component of $S\ssm \gamma$.

Let $R_a$ denote a small regular neighborhood of $a$ in $S\ssm\gamma$
which is of the form $a\times[0,1]$ where $(\bd a)\times[0,1]$ is two arcs on $\gamma$. 

A sequence of laminations $\mu_i\in \EL(S)$ converges in the coarse Hausdorff topology to
$\lambda$ if any accumulation point of $\mu_i$ in the Hausdorff metric is a lamination
containing $\lambda$, and in particular the leaf $l$. Thus, we can choose a neighborhood
$U$ of $\lambda$ in the coarse Hausdorff topology such that any $\mu\in U$ (realized
geodesically) has a leaf $l'$ passing through $R_a$ 
from one side on $\gamma$ to the other side on $\gamma$. 
Denote the subarc of $l'$ with end points on $\gamma$ by $a'$.

Now let $Y\subseteq S$ be a non-annular subsurface with geodesic boundary $\bd Y$ such that $\gamma\subseteq\bd Y$.
Then $a$ must intersect $Y$ (when $S\ssm \gamma$ has two components one of the two arcs must intersect $Y$).
For any boundary component $\gamma'\ne\gamma$ which
intersects $a$ or $a'$, each intersection point lies in a segment of $\gamma'$ that passes between the ``long'' boundary edges $a\times\{0\}$ and
  $a\times\{1\}$ of $R_a$, since the other two edges are on $\gamma$. Hence such a segment
 must pass through both $a$ and $a'$. 
It follows that $\lambda\intersect Y$ and $\mu\intersect Y$ must contain arcs which are
parallel to each other, which implies $\pi_Y(\mu)$ and $\pi_Y(\lambda)$ share a
component. It follows that $d_Y(\lambda,\mu) = \diam_{\calC(Y)}(\pi_Y(\lambda)\union \pi_Y(\mu)) \le 4$.

When $Y$ is an annulus with core curve $\gamma$, denote the compactified annular cover of $S$ corresponding to $Y$ by $\widehat{Y}$.
Let $a$ be an arc of $l$ spanned by three successive intersection points with
$\gamma$. Let $\hat\gamma$ be the lift of $\gamma$ to a core curve of $\widehat{Y}$. Lift
$l$ to a geodesic $\hat l$ crossing $\hat\gamma$ and connecting the components of
$\bd\widehat Y$, and let $\hat a$ be the lift of $a$ in $\hat l$ that crosses
$\hat\gamma$. The endpoints of $\hat a$ lie in lifts $\hat\gamma_1$ and $\hat\gamma_2$ of
$\gamma$ which are lines bounding disks $H_1,H_2$ which meet the components of $\bd
\widehat Y$ in arcs. If $\lambda'$ is sufficiently close to $\lambda$ it contains a leaf
$l'$ that has a lift $\hat l'$ which passes close enough to $\hat a$ that its endpoints
lie in the disks $H_1$ and $H_2$ respectively. Then $\hat l$ and $\hat l'$ are distance at
most 2 in $\calC(Y)$, since there is a regular neighborhood of $\hat a \cup H_1 \cup H_2$
whose boundary contains an arc connecting the components of $\bd\widehat Y$ which is
disjoint from both $\hat l$ and $\hat l'$.
 \end{proof}

  \subsection{End invariants}\label{subsec:endinv}

The {\em end invariants} introduced by Brock, Masur and Minsky in \cite{bmm1} are pairs of markings or laminations, 
denoted by $(\nu^-,\nu^+)$ associated to WP geodesics.
These invariants and the associated subsurface coefficients are quite instrumental in the study 
of the global geometry and dynamics of the WP metric.

Let $r:[a,b)\to\Teich(S)$ be a complete WP geodesic ray (the domain of $r$ does not extend to the end point $b$). First, an {\em ending measure} of
$r$ is a limit (in the projective measured lamination space) of distinct Bers curves $\alpha_i$ at times $t_i\to b$. Moreover, a {\em pinching curve} along $r$ is any curve with $\lim_{t\to\infty}\ell_\alpha(r(t))=0$.
Then the union of the supporting laminations of all ending measures of $r$ and all
pinching curves along $r$ is
shown in \cite{bmm1} to be a lamination, and this is
 the {\em ending lamination}  $\nu(r)$.

Now let $g:I\to \Teich(S)$ be a WP geodesic, where $I=(a,b), [a,b)$ or $[a,b]$
($a,b\in \RR\cup\{\pm\infty\}$),
and let $c$ be a point in the interior of $I$.
When $g$ extends to $b$ (including the situation that $b\in I$) the forward end invariant of $g$ is a Bers marking at $g(b)$.
Otherwise, the forward end invariant (ending lamination) of $g$ is the ending lamination of the ray $g|_{[c,b)}$ as we defined above.
We denote the forward end invariant of $g$ by $\nu^+(g)$. The backward end invariant (ending lamination) of $g$ is defined similarly considering the ray $g(-t)|_{[-c,-a)}$ and is denoted by $\nu^-(g)$. The pair $(\nu^+(g),\nu^-(g))$ is the end invariant of $g$.
 We usually suppress the reference to the geodesic $g$ and denote the end invariant by
 $(\nu^+,\nu^-)$.

\subsection{Partial \pA maps}\label{subsec:pA}

A {\em partial \pA map supported on a subsurface $X\subsetneq S$} is a map $f\in \Mod(S)$
which fixes $X$, is homotopic to the identity on $S\ssm X$, and restricts to a \pA map on
$X$.

Any \pA map $f$ on $X$ has a unique geodesic axis $A_f$ in $\Teich(X)$, by
Daskalapoulos-Wentworth \cite[Theorem 1.1]{dwwp}. For a partial \pA map supported on $X$ we
obtain a family of axes in $\calS(\bd X)$ which can be written as $A_f \times \Teich(S\ssm X)$
in the natural product structure. If $X$ is large this is again a single axis which we
continue to denote $A_f$. 

We have the following 
lemma about subsurface coefficients of points along an axis of a partial \pA map:

\begin{lem}\label{lem:dWP-dX}
 Let $g$ be an axis of a \pA map or a partial \pA map $f$ supported on a subsurface $X$. There exists
 $D\ge 1$  so that
\[ d_Y(x,y) \leq D\]
for all $x,y\in g$ and all $Y\subseteq S$ which are not $X$ itself or annuli with cores in
$\bd X$.

Moreover, for $K\geq 1,C\geq 0$ depending only on $f$ we have
\[\dw(x,y) \asymp_{K,C}  d_{X}(x,y). \]
\end{lem}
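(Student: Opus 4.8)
The plan is to prove the two assertions of Lemma~\ref{lem:dWP-dX} separately, both by reduction to the distance formula (\ref{eq:distWP}) together with the fact that $f$ acts as a pseudo-Anosov on $X$ and trivially (up to homotopy) on $S\ssm X$.

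\emph{The bound on $d_Y(x,y)$.} First I would observe that it suffices to bound $d_Y(\mu(x),\mu(y))$ for Bers markings at the two points, and then by the triangle inequality and the fact that $g$ is a quasi-geodesic parametrized so that bounded-distance chunks move markings a bounded amount, it suffices to bound $d_Y$ over all pairs of points in $g$ \emph{by a constant independent of how far apart they are}. The key point is that $f$ fixes $\bd X$ and is homotopic to the identity on $S\ssm X$: hence for any curve $\alpha\subseteq S\ssm X$ and any $Y$ with $\alpha\pitchfork Y$, the projections $\pi_Y(f^n\mu)$ stay within bounded distance of $\pi_Y(\mu)$ for all $n$ (the part of $\mu$ outside $X$ is essentially fixed, and the part inside $X$ projects trivially to any $Y$ disjoint from $X$; for $Y$ overlapping $X$ one uses $\bd X\subseteq \pi_Y$ as a coarse anchor). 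Concretely: if $Y\pitchfork X$ but $Y\ne X$ and $Y$ is not an annulus on $\bd X$, then $\bd Y$ overlaps $\bd X$, so $\bd X \in \pi_Y(\cdot)$ is a fixed coarse reference, and $\pi_Y(f^n\mu)$ is a bounded-diameter set containing a point within bounded distance of $\pi_Y(\bd X)$ since the ``new'' curves produced by $f^n$ inside $X$ are disjoint from $\bd X$ and hence project near $\pi_Y(\bd X)$. If $Y$ is disjoint from $X$ then $\pi_Y$ ignores the pseudo-Anosov part entirely and only sees the (homotopically fixed) complement, giving a bound. Thus in all the allowed cases $d_Y(x,y)\le D$ for a uniform $D$ depending only on $f$. (The excluded cases $Y=X$ and $Y$ an annulus with core in $\bd X$ are exactly where $f^n$ can produce unbounded projections — inside $X$ by the pseudo-Anosov dynamics, and on the annuli because $f^n$ may introduce Dehn twisting about $\bd X$ relative to a Bers marking.)

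\emph{The coarse distance formula.} For the ``moreover'' statement, apply (\ref{eq:distWP}) to $x,y\in g$. The sum on the right runs over non-annular $Y\subseteq S$; by the first part, every term with $Y\ne X$ is bounded by $D$, hence is killed by the cutoff $\{\cdot\}_A$ once $A>D$. So the sum reduces, up to the additive/multiplicative error already present in (\ref{eq:distWP}), to the single term $\{d_X(x,y)\}_A$, plus a bounded number of bounded non-annular terms (there are only finitely many topological types of $Y$ overlapping $X$, each contributing at most $D$), which contributes only to the additive constant $C$. This gives $\dw(x,y)\asymp_{K,C} \{d_X(x,y)\}_A$, and since along an axis of a pseudo-Anosov $f$ on $X$ we have $d_X(x,y)\to\infty$ linearly in $\dw$-distance (by Masur--Minsky stability of the pseudo-Anosov axis in $\calC(X)$, or simply because $\dw$ on $\Teich(X)$ is itself coarsely $\{d_X\}_A$ and $f$ is loxodromic), we may absorb the cutoff and write $\dw(x,y)\asymp_{K,C} d_X(x,y)$ with $K,C$ depending only on $f$.

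\emph{Main obstacle.} The delicate step is the uniform bound on $d_Y(x,y)$ when $Y$ genuinely overlaps $X$ but is neither $X$ nor an annulus on $\bd X$ — one must argue that the pseudo-Anosov activity inside $X$ does not leak into $\pi_Y$, which comes down to the observation that all curves $f^n$ creates inside $X$ are contained in $X$ and hence disjoint from $\bd X$, so their $Y$-projections are coarsely pinned to $\pi_Y(\bd X)$; and symmetrically the complement $S\ssm X$ is moved only by a bounded amount since $f$ is homotopic to the identity there. Making ``bounded amount'' uniform over all $n$ and all such $Y$ is the content of the argument, and is where one invokes the Behrstock inequality / bounded geodesic image theorem of \cite{mm2} rather than a direct computation.
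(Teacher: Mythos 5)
There is a genuine gap in the first part. Your case analysis covers only two situations: $Y$ disjoint from $X$, and $Y$ overlapping $X$ in the sense that $\bd Y$ crosses $\bd X$, where you anchor $\pi_Y$ at $\pi_Y(\bd X)$. But the lemma also requires a uniform bound for every $Y$ \emph{properly contained in} $X$ (non-annular $Y\subsetneq X$, and annuli whose cores lie in $X$ but not in $\bd X$). For such $Y$ your mechanism fails outright: $\bd X$ is disjoint from $Y$, so $\pi_Y(\bd X)=\emptyset$ and there is no ``fixed coarse reference,'' and the \pA dynamics of $f|_X$ genuinely move $\pi_Y$ of curves inside $X$ --- the point is that they move it only a bounded amount, and that boundedness is precisely the nontrivial input. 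The paper does not argue this directly; it quotes the known result (Lemma 7.4 of \cite{wpbehavior}; see also \cite[pp.\ 120--122]{mklcc} and \cite[Theorem 3.9]{convexccgp}) that for a (partial) \pA map $f$ one has $d_Y(\alpha,f^n(\alpha))\le D_\alpha$ for all $n$ and all $Y$ other than $X$ and annuli on $\bd X$, and then reduces the Bers markings along $g$ to the $f$--orbit of a \emph{finite} curve set $\hat\Gamma$ using cocompactness of the $\langle f\rangle$--action on the axis. Your closing remark that one ``invokes the Behrstock inequality / bounded geodesic image theorem'' does not repair the gap as stated: applying the bounded geodesic image theorem to a $\calC(X)$--geodesic from $\pi_X(x)$ to $\pi_X(y)$ only tells you something when that geodesic avoids a neighborhood of $\bd Y$, and you give no argument for a bound that is uniform over all $n$ and all $Y\subsetneq X$ (this is exactly the convex-cocompactness statement being cited). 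Similarly, your reduction ``bounded-distance chunks move markings a bounded amount'' should be replaced by (or made precise via) the cocompactness argument, since the Bers markings at points of $g$ are not literally $f^n$--images of one fixed marking.

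The second part is essentially the paper's argument: apply the distance formula (\ref{eq:distWP}) with threshold above $D$, so all non-annular terms except $Y=X$ vanish. Two small points there: you do not need to count ``finitely many topological types'' of surviving bounded terms --- with the cutoff above $D$ they contribute nothing at all --- and removing the cutoff on the $X$--term is harmless (if $d_X(x,y)<A$ both sides are bounded), so the linear-growth discussion is optional. These parts are fine; the missing ingredient is the uniform bound for subsurfaces inside $X$, which must be quoted or proved, not obtained from the $\pi_Y(\bd X)$ anchoring argument.
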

Here $U \asymp_{K,C} V$ means, as usual, that $U \le KV + C$ and $V\le K U + C$. 

\begin{proof}
  The first statement is a corollary of
  \cite[Lemma 7.4]{wpbehavior} (see also \cite[pages 120-122]{mklcc}\cite[Theorem 3.9]{convexccgp}), which states that for
  any curve $\alpha\in \calC(S)$ there is a bound $D_\alpha$ such that
\begin{equation}\label{eq:wpbehavior bound}
  d_Y(\alpha,f^n(\alpha)) \le D_\alpha
\end{equation}
  for all $n\in\ZZ$ and $Y$  not equal to $X$ or an annulus with core in $\bd X$, provided
  $\alpha$ and $f^n(\alpha)$ intersect $Y$. 

Let $\Gamma$ be the union of all curves in Bers markings $\mu(x)$ for $x\in g$. Since
$g$ is invariant under $f$ with compact quotient, we know that there is a finite subset
$\hat\Gamma\subset \Gamma$ such that $\Gamma = \union_{n\in \ZZ} f^n(\hat\Gamma)$.

Applying (\ref{eq:wpbehavior bound}) to the curves of $\hat\Gamma$ we obtain the
first inequality of the lemma.

The second inequality follows from the first one and the  distance formula (\ref{eq:distWP}), after setting
the threshold larger than $D$. 
\end{proof}

\subsection{Length-function control}\label{subsec : lf-tw control}

In this section we assemble some of the length-function controls which we will use to extract information about behavior of 
WP geodesics from the combinatorial information.

The first result is an improved version of Wolpert's Geodesic Limit Theorem \cite[Proposition 32]{wols} which 
is Theorem 4.5 of \cite{wpbehavior}. This theorem gives us a limiting picture for
sequences of bounded length WP geodesic segments, where the overall idea is that the only
obstruction to such a sequence converging to a geodesic segment in $\Teich(S)$ or in a stratum is the
appearance of high twists along short curves. 

Given a multicurve $\sigma$ denote by $\tw(\sigma)$ the subgroup of $\Mod(S)$ generated by Dehn twists about the curves in $\sigma$. 

\begin{thm} \textnormal{(Geodesic Limits)} \label{thm : geodlimit} 
Given $T>0$, let $\zeta_{n}:[0,T]\to \overline{\Teich(S)}$ be a sequence of WP
geodesic segments parametrized by arclength.  
After possibly passing to a subsequence there is a partition
$0=t_{0}<\ldots<t_{k+1}=T$ of $[0,T]$, multicurves
$\sigma_{0},\ldots,\sigma_{k+1}$ and $\hat\tau$ where $\sigma_0$ and $\sigma_{k+1}$ may be empty such that
\[
 \sigma_{i}\cap \sigma_{i+1} \equiv  \hat\tau
\]
for all $i=0,\ldots,k$,  and a piecewise
geodesic  segment
 \[\hat{\zeta}:[0,T]\to \overline{\Teich(S)},\]
 with the following properties:
\begin{enumerate}[(GLT1)]
\item\label{gl : tau} $\hat{\zeta}((t_{i},t_{i+1}))\subseteq \mathcal{S}(\hat{\tau})$ for $i=0,\ldots,k$, 
\item \label{gl : sigma}$\hat{\zeta}(t_{i})\in \mathcal{S}(\sigma_{i})$ for $i=0,\ldots,k+1$,
\item\label{gl : converge}
  There are elements $\psi_{n}\in \Mod(S)$
and $\mathcal{T}_{i,n}\in \tw(\sigma_{i}\ssm\hat{\tau})$ for $ i=1,\ldots,k$ and
$n\in\mathbb{N}$ so that, writing
\begin{equation}\varphi_{i,n}=\mathcal{T}_{i,n}\circ \ldots\circ \mathcal{T}_{1,n}\circ
  \psi_{n}\end{equation}
for $i=1,\ldots,k$, and $\varphi_{0,n} := \psi_n$, we have
 \[\lim_{n\to\infty}\varphi_{i,n}(\zeta_{n}(t))=\hat{\zeta}(t)\]
  for any $t\in[t_{i},t_{i+1}]$, where $i=0,\ldots,k$.
 \end{enumerate}
\end{thm}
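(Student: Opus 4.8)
The statement is Wolpert's Geodesic Limit Theorem \cite[Proposition~32]{wols} in the sharpened form of \cite[Theorem~4.5]{wpbehavior}, and the plan below is how I would recover it. The strategy has three movements: first take a subsequential limit of the projected paths in the \emph{completed moduli space}, which is compact; then lift this limit back into $\oT S$ over maximal intervals on which it stays in the thick part of a fixed stratum; and finally, at each time where the limit dips into a strictly deeper stratum, correct the lift on the next interval by a product of Dehn twists about the curves that become momentarily short there, since it is exactly those twist parameters that can run off to infinity.

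Concretely, I would first use that $\oT S/\Mod(S)=\overline{\calM(S)}$ is compact (as in the proof of Lemma~\ref{lem : b-nbhd}). Since each $\zeta_n$ is $1$--Lipschitz, the projected paths $[0,T]\to\overline{\calM(S)}$ are equicontinuous into a compact space, so by Arzel\`a--Ascoli a subsequence converges uniformly to a path $\bar\zeta$; let $\hat\tau$ be the multicurve pinched along all of $\bar\zeta((0,T))$, and let $t_1<\cdots<t_k$ be the times at which $\bar\zeta$ meets a stratum strictly deeper than $\calS(\hat\tau)$. Choosing $\psi_n=\varphi_{0,n}\in\Mod(S)$ with $\psi_n(\zeta_n(0))$ lying in a fixed compact subset of $\oT S$ and passing to a further subsequence, I may assume $\varphi_{0,n}\zeta_n(0)\to\hat\zeta(0)\in\calS(\sigma_0)$. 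Wherever $\bar\zeta$ remains in the thick part of a fixed stratum the projection admits a locally isometric continuation of the lift, so $\varphi_{0,n}\zeta_n$ converges there to a WP geodesic segment; this handles the interval $[0,t_1]$, with $\hat\zeta(t_1)\in\calS(\sigma_1)$ and $\sigma_1\supseteq\hat\tau$.

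Next I would iterate across the times $t_i$. Assume inductively that $\varphi_{i-1,n}\zeta_n\to\hat\zeta$ on $[t_{i-1},t_i]$ with $\hat\zeta(t_i)\in\calS(\sigma_i)$. As $t\to t_i$ the curves of $\sigma_i\ssm\hat\tau$ become short along $\varphi_{i-1,n}\zeta_n$; in Fenchel--Nielsen coordinates relative to $\hat\tau$, and using Wolpert's asymptotic expansions of the WP metric near $\calS(\hat\tau)$ (so that the length coordinates of these curves stay controlled while only their twist coordinates are ``cheap''), the only obstruction to convergence past $t_i$ is divergence of the twist numbers $m_{\alpha,n}$ about $\alpha\in\sigma_i\ssm\hat\tau$, measured on $[t_i,t_{i+1}]$. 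Setting $\mathcal{T}_{i,n}=\prod_{\alpha\in\sigma_i\ssm\hat\tau}T_\alpha^{-m_{\alpha,n}}\in\tw(\sigma_i\ssm\hat\tau)$ and $\varphi_{i,n}=\mathcal{T}_{i,n}\circ\varphi_{i-1,n}$, a further subsequence of $\varphi_{i,n}\zeta_n$ converges on $[t_i,t_{i+1}]$ to a geodesic segment that lies in $\calS(\hat\tau)$ over the open interval and in $\calS(\sigma_{i+1})$ at $t_{i+1}$. The two lifts match at $t_i$: every $T_\alpha$ with $\alpha\in\sigma_i$ acts trivially on $\calS(\sigma_i)$, so $\mathcal{T}_{i,n}$ fixes $\hat\zeta(t_i)$ and $d(\varphi_{i,n}\zeta_n(t_i),\hat\zeta(t_i))=d(\varphi_{i-1,n}\zeta_n(t_i),\hat\zeta(t_i))\to 0$. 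This yields (GLT1)--(GLT3), and the relation $\sigma_i\cap\sigma_{i+1}\equiv\hat\tau$ falls out because a curve pinched at both $t_i$ and $t_{i+1}$ is pinched on the whole segment $[t_i,t_{i+1}]$ --- by total geodesy of the strata together with non-branching of $\CAT(0)$ geodesics --- hence lies in $\hat\tau$ by (GLT1).

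I expect the main obstacle to be, first, the claim that twisting about $\sigma_i\ssm\hat\tau$ is the \emph{only} way convergence can fail: this is the analytic core and rests on the fine local geometry of the WP completion near its strata --- Wolpert's length-and-twist expansions of the metric and the strict convexity of length functions --- which I would import rather than reprove. A secondary difficulty is the bookkeeping needed to make everything consistent and finite: one must run the whole extraction as a diagonal argument so that only finitely many of the $t_i$ survive in the limit (using that the collar lemma bounds the number of curves ever short along the compact segment $\hat\zeta$, and that no such curve is pinched on a positive-length sub-arc without being forced into $\hat\tau$), and one must check that the $\psi_n$ and $\mathcal{T}_{i,n}$ assemble into the stated triangular composition $\varphi_{i,n}=\mathcal{T}_{i,n}\circ\cdots\circ\mathcal{T}_{1,n}\circ\psi_n$. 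Both are carried out in \cite{wols,wpbehavior}.
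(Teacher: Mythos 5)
The paper does not prove Theorem \ref{thm : geodlimit}; it is quoted as Wolpert's Geodesic Limit Theorem \cite[Proposition 32]{wols} in the sharpened form of \cite[Theorem 4.5]{wpbehavior}, so there is no in-paper proof to compare against. Your outline follows the route of those cited proofs (compactness of $\oT{S}/\Mod(S)$ plus Arzel\`a--Ascoli, lifting over maximal strata intervals, and corrections $\mathcal{T}_{i,n}\in\tw(\sigma_i\ssm\hat\tau)$ absorbing divergent Fenchel--Nielsen twist parameters), and, exactly like the paper, you import the analytic core --- that divergent twisting about the momentarily short curves is the only obstruction to convergence, and the finiteness of the breakpoints --- from \cite{wols,wpbehavior} rather than reproving it. The parts you do argue yourself are sound: Dehn twists about curves of $\sigma_i$ act trivially on $\calS(\sigma_i)$, so $\mathcal{T}_{i,n}$ fixes $\hat{\zeta}(t_i)$ and the consecutive lifts match; and $\sigma_i\cap\sigma_{i+1}=\hat\tau$ follows from convexity of the closed strata in the $\CAT(0)$ completion applied to the geodesic piece $\hat{\zeta}|_{[t_i,t_{i+1}]}$, together with $\hat\tau\subseteq\sigma_i$ by continuity.
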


Let us also recall the Non-refraction theorem of Daskalopoulos and Wentworth \cite[Theorem
  3.6]{dwwp} (see also \cite[Theorem 13]{wols}) which specifies the stratum of the
interior of the geodesic segment connecting two points in $\oT{S}$ depending on the location of the end points.

\begin{thm}\textnormal{(Non-refraction)}\label{thm:nonref}
Let $\sigma_1$ and $\sigma_2$ be two multicurves, and $\seg{x_1x_2}$ be a WP geodesic segment with $x_1\in \calS(\sigma_1)$
and $x_2\in \calS(\sigma_2)$.  Then the interior of $\seg{x_1x_2}$ is inside $\calS(\sigma_1\cap\sigma_2)$.
\end{thm}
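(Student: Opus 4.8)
The plan is to prove two things: that $\seg{x_1x_2}$ is contained in the closed stratum $\oS{\sigma_1\cap\sigma_2}$, and that no point of the \emph{open} segment lies in a strictly smaller (more pinched) stratum. Write $\tau=\sigma_1\cap\sigma_2$. For the first, fix a curve $\alpha\in\tau$. The geodesic-length function $\ell_\alpha$ extends to a continuous, nonnegative, convex function on $\oT S$ whose zero set is $\oS\alpha$ (Section~\ref{subsec:WP geom}); since $\ell_\alpha(x_1)=\ell_\alpha(x_2)=0$, convexity of $t\mapsto\ell_\alpha(\seg{x_1x_2}(t))$ forces it to vanish identically, so $\seg{x_1x_2}\subseteq\oS\alpha$. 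Intersecting over $\alpha\in\tau$ gives $\seg{x_1x_2}\subseteq\oS\tau$, so every interior point already lies in a stratum $\calS(\rho)$ with $\rho\supseteq\tau$.

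For the second part I would reduce to the following claim: \emph{if a WP geodesic has an interior point in $\oS\gamma$ for a curve $\gamma$, then it lies entirely in $\oS\gamma$.} Granting this, suppose an interior point of $\seg{x_1x_2}$ lay in $\calS(\rho)$ with $\rho\supsetneq\tau$, and pick $\gamma\in\rho\ssm\tau$. That point lies in $\oS\gamma$, so by the claim $\seg{x_1x_2}\subseteq\oS\gamma$, hence $\ell_\gamma(x_1)=\ell_\gamma(x_2)=0$, i.e.\ $\gamma\in\sigma_1\cap\sigma_2=\tau$, a contradiction. Together with the first part this gives that the open segment lies in $\calS(\tau)=\calS(\sigma_1\cap\sigma_2)$.

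To prove the claim I would use Wolpert's asymptotic description of the WP metric near $\oS\gamma$. Near a point of $\calS(\gamma)$, in Fenchel--Nielsen coordinates adapted to $\gamma$ and with $r=\sqrt{\ell_\gamma}$, the WP metric is, up to lower-order terms, the Riemannian product of the metric on $\calS(\gamma)$ with a two-dimensional ``horn'': the normal coordinate $r$ is a definite multiple of the distance to $\oS\gamma$ — equivalently $\|\grad\sqrt{\ell_\gamma}\|_{\WP}$ is bounded between positive constants near the stratum — while the twist direction about $\gamma$ carries a strongly degenerate metric term, vanishing to order $\asymp r^6$ as $r\to 0$, so that a full Dehn twist about $\gamma$ is realized by a WP loop of length $\asymp\ell_\gamma^{3/2}$. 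Completed by the single point $\{r=0\}$, this horn is a CAT(0) space whose geodesics never meet the tip except as an endpoint: a geodesic with nonzero angular momentum in the twist direction has $r$ bounded away from $0$, and a radial geodesic passing through the tip is never length-minimizing, since staying at a small positive distance $r$ and performing any required twisting there at cost $O(r^3)$ is strictly shorter. Hence in the product model a geodesic whose distance to $\calS(\gamma)$ vanishes at an interior time must be constant in the horn factor, i.e.\ remain in $\oS\gamma$; and this conclusion persists for genuine WP geodesics once the lower-order error terms in Wolpert's expansion are controlled uniformly on a fixed neighbourhood of $\oS\gamma$. This last analysis is exactly the Daskalopoulos--Wentworth non-refraction theorem \cite{dwwp} (compare \cite{wols}).

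I expect the main obstacle to be precisely this final step: turning the heuristic ``horn'' picture into a genuine length comparison requires the precise form of Wolpert's expansion of the WP metric near a stratum together with uniform control of its error terms. The remaining ingredients — convexity of length functions, the reduction to a single pinching curve, and extraction of the length-minimizing competitor — are routine bookkeeping by comparison.
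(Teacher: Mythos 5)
The paper does not actually prove this theorem: it quotes it from Daskalopoulos--Wentworth \cite{dwwp} (see also Wolpert \cite{wols}), remarking only that it could formally be derived from the Geodesic Limit Theorem. Your convexity reduction and the horn-model sketch are correct and faithfully describe the strategy of the cited proofs, but since you defer the key single-curve claim to exactly that Daskalopoulos--Wentworth/Wolpert analysis, your proposal is essentially the same treatment as the paper's.
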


Formally speaking one can derive the non-refraction theorem from Theorem \ref{thm :
  geodlimit}. We will actually need the following quantified variation on non-refraction
which is also a corollary of Theorem \ref{thm :  geodlimit}.

\begin{lem}\label{lem:q-nonref} 
 For any $a>0$ and $\ep_1>0$ there exists $\ep_2>0$ such that, if
  $\zeta:[-a,a]\to\Teich(S)$ is a WP geodesic segment and $\gamma$ a curve in $S$ such that
  $$
  \max_{[0,a]}\ell_\gamma\circ\zeta \ge \ep_1
  $$
  then
  $$
  \max_{[-a,0]}\ell_\gamma\circ\zeta \ge \ep_2.
  $$
\end{lem}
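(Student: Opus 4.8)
The plan is to argue by contradiction using the Geodesic Limit theorem (Theorem~\ref{thm : geodlimit}), of which this lemma is meant to be a corollary. Suppose the statement fails for some fixed $a>0$ and $\ep_1>0$: then for every $n$ there are a WP geodesic segment $\zeta_n\colon[-a,a]\to\Teich(S)$ and a curve $\gamma_n$ with $\max_{[0,a]}\ell_{\gamma_n}\circ\zeta_n\ge\ep_1$ but $\max_{[-a,0]}\ell_{\gamma_n}\circ\zeta_n<1/n$. Reparametrizing by $\eta_n(s):=\zeta_n(s-a)$ on $[0,2a]$, we have $\ell_{\gamma_n}\circ\eta_n\to 0$ uniformly on $[0,a]$ while $\max_{[a,2a]}\ell_{\gamma_n}\circ\eta_n\ge\ep_1$. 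I would then apply Theorem~\ref{thm : geodlimit} with $T=2a$ to the $\eta_n$: after passing to a subsequence we obtain a partition $0=t_0<\cdots<t_{k+1}=2a$, multicurves $\sigma_i$ with $\sigma_i\cap\sigma_{i+1}=\hat\tau$ for all $i$, a piecewise geodesic $\hat\eta$ with $\hat\eta((t_i,t_{i+1}))\subseteq\calS(\hat\tau)$ and $\hat\eta(t_i)\in\calS(\sigma_i)$, and mapping classes $\varphi_{i,n}$ with $\varphi_{i,n}(\eta_n(s))\to\hat\eta(s)$ for $s\in[t_i,t_{i+1}]$. Since $\oT S$ is $\CAT(0)$ and both $\varphi_{i,n}\eta_n$ and $\hat\eta$ are geodesic on each $[t_i,t_{i+1}]$, the function $s\mapsto\dw(\varphi_{i,n}\eta_n(s),\hat\eta(s))$ is convex, so this convergence is in fact uniform on each closed interval $[t_i,t_{i+1}]$.

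The next step is to pin down the short curve. Pick $s^*\in(0,a)$ lying in the interior of some partition interval $(t_j,t_{j+1})$; this is possible since $(0,a)$ contains only finitely many $t_i$. Then $\hat\eta(s^*)\in\calS(\hat\tau)$, and by mapping-class equivariance of length functions $\ell_{\varphi_{j,n}\gamma_n}(\varphi_{j,n}\eta_n(s^*))=\ell_{\gamma_n}(\eta_n(s^*))\to 0$. Here I invoke local finiteness of short curves near a point of $\oT S$: for each $L>0$ there are a neighborhood $V$ of $\hat\eta(s^*)$ in $\oT S$ and a finite set of curves $\mathcal F$ so that $\ell_\beta(y)\le L$ with $y\in V$ forces $\beta\in\mathcal F$ (this follows from the collar lemma together with the Fenchel--Nielsen-type local model for $\oT S$ near the stratum $\calS(\hat\tau)$; cf.\ \cite{wol}). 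Consequently $\varphi_{j,n}\gamma_n$ takes finitely many values, and after a further subsequence it equals a single curve $\delta$ with $\ell_\delta(\hat\eta(s^*))=\lim_n\ell_{\gamma_n}(\eta_n(s^*))=0$. Since $\hat\eta(s^*)\in\calS(\hat\tau)$ this forces $\delta\subseteq\hat\tau$; in particular $\hat\tau\ne\emptyset$ (if $\hat\tau=\emptyset$ then $\hat\eta(s^*)\in\Teich(S)$, where no curve has zero length, and we already have a contradiction).

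Finally I would propagate $\delta$ across all the pieces and conclude. Since $\hat\tau=\sigma_i\cap\sigma_{i+1}$ one has $\delta\subseteq\hat\tau\subseteq\sigma_i$ for every $i$, so $\delta$ is disjoint from each curve of $\sigma_i\ssm\hat\tau$, and hence every $\mathcal T_{i,n}\in\tw(\sigma_i\ssm\hat\tau)$ fixes $\delta$; as $\varphi_{i+1,n}=\mathcal T_{i+1,n}\circ\varphi_{i,n}$, it follows that $\varphi_{i,n}\gamma_n=\delta$ for all $i=0,\dots,k$. For the same reason $\ell_\delta\circ\hat\eta\equiv 0$ on all of $[0,2a]$: on each open interval $\hat\eta$ lies in $\calS(\hat\tau)$ and at each $t_i$ it lies in $\calS(\sigma_i)$, and $\delta$ is pinched in both cases. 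Combining this with the uniform convergence on each $[t_i,t_{i+1}]$ and the continuity of $\ell_\delta$ on $\oT S$ gives $\sup_{[0,2a]}\ell_{\gamma_n}\circ\eta_n=\sup_{i}\sup_{[t_i,t_{i+1}]}\ell_\delta\circ(\varphi_{i,n}\eta_n)\to 0$, which contradicts $\max_{[a,2a]}\ell_{\gamma_n}\circ\eta_n\ge\ep_1$.

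The step I expect to be the real obstacle is showing that $\gamma_n$ corresponds to one and the same curve $\delta$ in every chart $\varphi_{i,n}$: this is exactly where one must combine the local finiteness of length functions near $\hat\eta(s^*)$ with the structural fact that the transition maps $\mathcal T_{i,n}$ are Dehn twists supported on curves disjoint from $\hat\tau$. Once this is in place, the contradiction follows formally from Theorem~\ref{thm : geodlimit} and the $\CAT(0)$ convexity of $\dw$.
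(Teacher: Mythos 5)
Your proof is correct and follows essentially the same route as the paper's: argue by contradiction, apply the Geodesic Limit Theorem, identify the degenerating curve as a fixed component $\gamma$ of $\hat\tau$ using length bounds near a point of $\calS(\hat\tau)$, observe that the twists $\mathcal{T}_{i,n}$ (supported on $\sigma_i\ssm\hat\tau$) fix it so that $\varphi_{i,n}(\gamma_n)=\gamma$ in every chart, and contradict the length lower bound on the other half of the segment. The only differences are cosmetic: the paper first uses convexity of $\ell_\gamma$ along the geodesic to place the large value at the endpoint $t=a$ and then needs only pointwise convergence there, whereas you localize at an interior point $s^*$ and upgrade to uniform convergence via $\CAT(0)$ convexity of the distance between geodesics.
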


\begin{proof}
  Supposing the lemma fails, there is a sequence of geodesics $\zeta_n:[-a,a]\to\Teich(S)$ and curves $\gamma_n$ such that
  $\ell_{\gamma_n}(\zeta_n(t_n)) \ge \ep_1$ for some $t_n\in [0,a]$ while
  $  \max_{[-a,0]}\ell_{\gamma_n}\circ\zeta_n \to 0$ as $n\to\infty$.

Note, by convexity of length-functions we may assume that $t_n = a$. 
  
Use Theorem \ref{thm : geodlimit} (GLT)  to obtain (passing to a subsequence if necessary)  a
partition $t_0,\ldots,t_{k+1}$ of $[-a,a]$, multicurves $\sigma_0,\ldots,\sigma_{k+1}$ and
$\hat \tau$, mapping classes $\psi_n, \mathcal{T}_{i,n}$ and a piecewise geodesic $\hat\zeta$
satisfying the conclusions of the theorem. In particular, by GLT3, $\psi_n(\zeta_n) \to \hat\zeta$ on
the interval $(a,t_1)$ as $n\to\infty$, and this limit by GLT1 lies in $\calS(\hat\tau)$. So we conclude
$\psi_n(\gamma_n)$ is eventually a component $\gamma$ of $\hat\tau$,
and hence $\ell_\gamma\circ \hat{\zeta}\equiv 0$ on $[-a,a]$.

Now since $\hat\tau\subseteq \sigma_i$ for each $i$, each $\mathcal{T}_{i,n}$, which
is in the twist group of $\sigma_i$, must fix $\gamma$. This means that $\varphi_{i,n}(\gamma_n)=\gamma$
(with $\varphi_{i,n}$ defined as in (GLT3)), and since by GLT3, $\varphi_{i,n}(\zeta_n) \to \hat\zeta$ on
$[t_i,t_{i+1}]$, we conclude that $\ell_{\gamma_n}\circ\zeta_n\to
\ell_\gamma\circ\hat\zeta$ on
$[-a,a]$. But $\ell_\gamma\circ \hat{\zeta}\equiv 0$ so we have a contradiction to the lower bound for $\ell_{\gamma_n}(\zeta_n(t_n))$. 
\end{proof}

The following two results from \cite[\S 4]{wpbehavior}, obtained there as consequences of
Theorem \ref{thm : geodlimit},  provide us with 
control of the length of a curve along WP geodesics in terms of the associated annular
coefficient of the curve. Roughly speaking, along a WP geodesic segment of bounded length
with suitable assumptions at the endpoints, the length of a curve $\gamma$ gets very short
somewhere in the middle if and only if the twisting of the endpoints around $\gamma$ grows
very large. 

\begin{thm}\label{thm : twsh}\tn{(\cite[Corollary 4.10]{wpbehavior})}
Given $T, \epsilon_{0}$ and $\epsilon<\epsilon_{0}$ positive, there is an $N\in\NN$ with
the following property. Let  $\zeta: [0, T'] \to \Teich(S)$ be a WP geodesic segment
parameterized by arclength with
$T'\leq T$ such that for a curve $\gamma$ 
\[\max_{t \in [0,T']}\ell_{\gamma}(\zeta(t))> \epsilon_{0}\] 
and
\[d_{\gamma}(\zeta(0), \zeta(T')) \geq N.\]
Then we have
  \[\min_{t \in [0,T']} \ell_{\gamma}(\zeta(t))< \epsilon.\]
\end{thm}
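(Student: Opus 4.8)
The plan is to argue by contradiction, extracting a degenerating sequence of geodesic segments and feeding it into the Geodesic Limit Theorem (Theorem~\ref{thm : geodlimit}) to produce a limiting piecewise geodesic along which $\gamma$ is pinched, in conflict with the lower length bound. Suppose the statement fails for some fixed $T,\epsilon_0$ and $\epsilon<\epsilon_0$. Then for each $n$ there are a WP geodesic segment $\zeta_n\colon[0,T_n']\to\Teich(S)$ with $T_n'\le T$ and a curve $\gamma_n$ with
\[
\max_{[0,T_n']}\ell_{\gamma_n}\circ\zeta_n>\epsilon_0,\qquad d_{\gamma_n}\big(\zeta_n(0),\zeta_n(T_n')\big)\ge n,\qquad \min_{[0,T_n']}\ell_{\gamma_n}\circ\zeta_n\ge\epsilon.
\]
By strict convexity of $\ell_{\gamma_n}\circ\zeta_n$ its maximum is attained at an endpoint, so after reversing orientation if necessary we may assume $\ell_{\gamma_n}(\zeta_n(T_n'))>\epsilon_0$. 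Passing to a subsequence we arrange $T_n'\to T^*\in[0,T]$; the case $T^*=0$ is easily excluded (then $\dw(\zeta_n(0),\zeta_n(T_n'))\to0$ while $\gamma_n$ has length $\ge\epsilon_0/2$ near $\zeta_n(0)$, which bounds the relative twisting around $\gamma_n$ and contradicts $d_{\gamma_n}\to\infty$), so $T^*>0$. After reparametrizing the $\zeta_n$ to the common domain $[0,T^*]$ we may apply Theorem~\ref{thm : geodlimit}.

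This yields, after passing to a further subsequence, a partition $0=t_0<\dots<t_{k+1}=T^*$, multicurves $\sigma_0,\dots,\sigma_{k+1}$ and $\hat\tau$ with $\sigma_i\cap\sigma_{i+1}=\hat\tau$, mapping classes $\psi_n$ and $\mathcal{T}_{i,n}\in\tw(\sigma_i\ssm\hat\tau)$, and a piecewise geodesic $\hat\zeta\colon[0,T^*]\to\oT S$ with $\hat\zeta((t_i,t_{i+1}))\subseteq\calS(\hat\tau)$, $\hat\zeta(t_i)\in\calS(\sigma_i)$, and $\varphi_{i,n}(\zeta_n(t))\to\hat\zeta(t)$ on $[t_i,t_{i+1}]$, where $\varphi_{i,n}=\mathcal{T}_{i,n}\circ\dots\circ\mathcal{T}_{1,n}\circ\psi_n$ (with $\varphi_{0,n}=\psi_n$). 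The crux will be the following assertion: for all large $n$ and for some index $i$, the curve $\varphi_{i,n}(\gamma_n)$ is \emph{equal} to a fixed curve $\alpha$ which is a component of $\sigma_i$ (or of $\hat\tau$). Granting it, since $\alpha\subseteq\sigma_i$ we have $\ell_\alpha(\hat\zeta(t_i))=0$, and since \textnormal{(GLT3)} holds on the closed interval we have $\varphi_{i,n}(\zeta_n(t_i))\to\hat\zeta(t_i)$, so by continuity of length functions on $\oT S$
\[
\ell_{\gamma_n}(\zeta_n(t_i))=\ell_{\varphi_{i,n}(\gamma_n)}\big(\varphi_{i,n}(\zeta_n(t_i))\big)=\ell_{\alpha}\big(\varphi_{i,n}(\zeta_n(t_i))\big)\longrightarrow\ell_\alpha(\hat\zeta(t_i))=0,
\]
contradicting $\ell_{\gamma_n}(\zeta_n(t_i))\ge\epsilon$; if instead $\alpha\subseteq\hat\tau$ one uses $\hat\tau\subseteq\sigma_i$ for every $i$ and runs the same computation (e.g. at $t_0$). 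This finishes the proof modulo the assertion.

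I expect the assertion to be the main obstacle. Subsurface coefficients are mapping-class invariant, so the bound $d_{\gamma_n}(\zeta_n(0),\zeta_n(T^*))\ge n$ persists in every frame; on the other hand the only source of combinatorial change in the Geodesic Limit construction that is unbounded in $n$ is the accumulated twisting by the $\mathcal{T}_{i,n}$ about curves of $\bigcup_i(\sigma_i\ssm\hat\tau)$, and a high power of a Dehn twist about a curve $\beta$ alters the annular coefficient of any curve $\delta\neq\beta$ only by a uniformly bounded amount. Consequently a curve whose annular coefficient grows without bound must itself be the $\varphi_{i,n}$-preimage of one of the twisting curves $\sigma_i\ssm\hat\tau$ (or be carried into the persistently pinched multicurve $\hat\tau$). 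Turning this heuristic into a proof requires unpacking the internals of Theorem~\ref{thm : geodlimit}: concretely, that in its notation $d_Y(\zeta_n(0),\zeta_n(T^*))$ is bounded uniformly in $n$ for every subsurface $Y$ that is not an annulus with core $\varphi_{i,n}^{-1}(\alpha)$ for some $\alpha\in\sigma_i\ssm\hat\tau$ — this is the step that should absorb most of the work, and the hypothesis $\max\ell_{\gamma_n}\circ\zeta_n>\epsilon_0$ enters via the convexity normalization above, which prevents the forced pinching from occurring at the terminal endpoint $t_{k+1}$.
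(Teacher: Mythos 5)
You should first note that the paper itself does not prove this statement: it is imported verbatim as \cite[Corollary 4.10]{wpbehavior}, so the only meaningful comparison is with the proof in that reference, which indeed proceeds exactly along the route you propose (a contradiction sequence fed into the Geodesic Limit Theorem, Theorem \ref{thm : geodlimit}, followed by an analysis of which curves can carry unbounded annular coefficients). Your overall skeleton — normalizing by convexity, extracting the limiting piecewise geodesic, and deriving the contradiction from $\ell_\alpha(\hat\zeta(t_i))=0$ via continuity of length functions on $\oT{S}$ — is the right one and the final computation is correct \emph{granted} your boxed assertion.

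The genuine gap is precisely the assertion you flag and then leave as a heuristic: that unbounded $d_{\gamma_n}(\zeta_n(0),\zeta_n(T'_n))$ forces $\varphi_{i,n}(\gamma_n)$ to eventually coincide with a component of some $\sigma_i$ (equivalently, that annular coefficients of all other curves stay uniformly bounded along the degenerating sequence). This is not a routine verification to be ``absorbed''; it is the entire analytic content of \cite[Corollary 4.10]{wpbehavior}, where it is established by comparing the annular coefficients with the twisting powers of the $\mathcal{T}_{i,n}$ inside the proof of the limit theorem. Nothing quoted in the present paper supplies it: Lemma \ref{lem:dWP-dY} controls only non-annular coefficients, and Theorems \ref{thm : shtw} and \ref{thm : shtw2} go in the opposite direction (short curve implies large twisting). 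Your one-line justification (``a high power of a Dehn twist about $\beta$ alters the annular coefficient of any $\delta\neq\beta$ by a bounded amount'') is also not literally true with constants independent of the data, since $d_\delta(\mathcal{T}_{i,n}\mu,\mu')$ must be estimated through projections to annuli that themselves move under the twists; the correct statement needs the convergence $\varphi_{i,n}(\zeta_n(t))\to\hat\zeta(t)$ on each $[t_i,t_{i+1}]$ and a chaining argument across the break points. Two further points in the same vein: your treatment of the case $T^*=0$ silently uses the claim that vanishing WP distance together with $\ell_{\gamma_n}\ge\epsilon$ at the endpoints bounds $d_{\gamma_n}$ — again an annular-coefficient bound of exactly the kind at issue, not a consequence of Lemma \ref{lem:dWP-dY} — and the phrase ``reparametrizing to the common domain $[0,T^*]$'' glosses over the fact that Theorem \ref{thm : geodlimit} requires a fixed arclength domain, so one must restrict/extend and diagonalize rather than rescale. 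Finally, your remark that the hypothesis $\max\ell_\gamma>\epsilon_0$ enters to rule out pinching at the terminal endpoint is off: under your contradiction hypothesis $\min\ell_{\gamma_n}\ge\epsilon$, pinching at an endpoint is just as contradictory, so your argument never actually uses that hypothesis; this is harmless for a proof by contradiction, but it signals that the quantitative role of $\epsilon_0$ in the cited proof has not been engaged.
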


\begin{thm}\label{thm : shtw}\tn{(\cite[Corollary 4.11]{wpbehavior})}
Given $\epsilon_{0},T,s$ positive with $T>2s$ and $N\in\NN$, there is an $\epsilon<\epsilon_{0}$ with the following property. Let $\zeta: [0, T'] \to \Teich(S)$ be a WP geodesic segment parametrized by arclength with $T'\in [2s,T]$. Let $J \subseteq [s,T'-s]$ be a subinterval, and suppose that for some $\gamma\in\mathcal{C}_{0}(S)$ we have
\[\max_{t \in [0,T']} \ell_{\gamma}(\zeta(t))> \epsilon_{0}\]
and
\[\min_{t \in J} \ell_{\gamma}(\zeta(t)) < \epsilon.\]
Then we have
\[d_{\gamma}(\zeta(0),\zeta(T'))\geq N.\]
\end{thm}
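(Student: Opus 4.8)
The plan is to argue by contradiction using the Geodesic Limit Theorem (Theorem~\ref{thm : geodlimit}), in the same spirit as the converse statement Theorem~\ref{thm : twsh}. Suppose the conclusion fails. Then there are $\epsilon_n\to 0$, WP geodesic segments $\zeta_n\colon[0,T_n']\to\Teich(S)$ parametrized by arclength with $T_n'\in[2s,T]$, subintervals $J_n\subseteq[s,T_n'-s]$ and curves $\gamma_n$, such that $\max_{[0,T_n']}\ell_{\gamma_n}\circ\zeta_n>\epsilon_0$, there is a point $u_n\in J_n$ with $\ell_{\gamma_n}(\zeta_n(u_n))<\epsilon_n$, but $\dw(\zeta_n(0),\zeta_n(T_n'))$ satisfies $d_{\gamma_n}(\zeta_n(0),\zeta_n(T_n'))<N$. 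Passing to a subsequence we may assume $T_n'\to T^*\in[2s,T]$ and $u_n\to u^*$; after a harmless affine reparametrization to the common domain $[0,T^*]$ we keep the same names. The constraint $u_n\in[s,T_n'-s]$ forces $u^*\in[s,T^*-s]$, so $u^*$ is bounded away from both endpoints $0$ and $T^*$.

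Now apply Theorem~\ref{thm : geodlimit} to $\{\zeta_n\}$: after a further subsequence we obtain a partition $0=t_0<\cdots<t_{k+1}=T^*$, multicurves $\sigma_0,\dots,\sigma_{k+1}$ and $\hat\tau$ with $\sigma_i\cap\sigma_{i+1}=\hat\tau$, mapping classes $\psi_n$ and $\mathcal{T}_{i,n}\in\tw(\sigma_i\ssm\hat\tau)$, and a piecewise geodesic limit $\hat\zeta\colon[0,T^*]\to\oT{S}$ satisfying (GLT1)--(GLT3). Choose $p$ with $u^*\in[t_p,t_{p+1}]$. Since mapping classes preserve length-functions, $\ell_{\varphi_{p,n}(\gamma_n)}(\varphi_{p,n}(\zeta_n(u_n)))=\ell_{\gamma_n}(\zeta_n(u_n))\to 0$, while by (GLT3) $\varphi_{p,n}(\zeta_n(u_n))\to\hat\zeta(u^*)$. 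The first key step is to deduce from this that, along a subsequence, $\varphi_{p,n}(\gamma_n)$ is eventually equal to a fixed curve $\gamma$ which is pinched at $\hat\zeta(u^*)$. Indeed, since $\dw(\,\cdot\,,\calS(\beta))$ is controlled by $\sqrt{\ell_\beta}$ (Wolpert), the points $\varphi_{p,n}(\zeta_n(u_n))$ lie within $o(1)$ of $\calS(\varphi_{p,n}(\gamma_n))$; as they also converge to $\hat\zeta(u^*)$, we get $\dw(\hat\zeta(u^*),\calS(\varphi_{p,n}(\gamma_n)))\to 0$, and since only finitely many curves are short at a given point of $\oT S$, the curves $\varphi_{p,n}(\gamma_n)$ cannot escape in $\calC(S)$ and must stabilize to a curve $\gamma$ with $\ell_\gamma(\hat\zeta(u^*))=0$. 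Thus $\gamma$ is a component of $\sigma_p$, and in fact of $\hat\tau$ unless $u^*=t_p$ is an interior partition point (by (GLT1) the open interval $(t_p,t_{p+1})$ maps into $\calS(\hat\tau)$, so if $u^*\in(t_p,t_{p+1})$ then $\gamma\in\hat\tau$).

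Next I would use the standing bound $d_{\gamma_n}(\zeta_n(0),\zeta_n(T_n'))<N$ to rule out the exceptional case $\gamma\in\sigma_p\ssm\hat\tau$. In that case $u^*=t_p$ with $1\le p\le k$, and $\gamma$ is not pinched on either adjacent open interval, so the limit performs a genuine \emph{pinch--unpinch} of $\gamma$ at $t_p$. But in Wolpert's construction a pinch--unpinch is carried out by twisting of growing order about $\gamma$ --- exactly the twisting recorded by the $\mathcal{T}_{i,n}$ --- and this would force $d_{\gamma_n}(\zeta_n(0),\zeta_n(T_n'))\to\infty$, against the bound. (Dually: with $d_{\gamma_n}$ bounded there is no $\gamma$--twisting to absorb, so near $t_p$ the limit is an honest geodesic meeting $\calS(\gamma)$, and Non-refraction, Theorem~\ref{thm:nonref}, forces $\gamma$ to be pinched along all of $\hat\zeta$.) Hence $\gamma$ is a component of $\hat\tau$. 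Then every $\mathcal{T}_{i,n}$, being a product of twists about curves of $\sigma_i$ disjoint from $\gamma$, fixes $\gamma$, so $\psi_n(\gamma_n)=\varphi_{i,n}(\gamma_n)=\gamma$ for all $i$. Since $\hat\tau\subseteq\sigma_0$ and $\hat\tau\subseteq\sigma_{k+1}$ we have $\ell_\gamma(\hat\zeta(0))=\ell_\gamma(\hat\zeta(T^*))=0$, so by (GLT3) and continuity of $\ell_\gamma$ on $\oT{S}$ both $\ell_{\gamma_n}(\zeta_n(0))=\ell_\gamma(\varphi_{0,n}(\zeta_n(0)))$ and $\ell_{\gamma_n}(\zeta_n(T_n'))=\ell_\gamma(\varphi_{k,n}(\zeta_n(T_n')))$ tend to $0$. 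But $\ell_{\gamma_n}\circ\zeta_n$ is convex along the geodesic (Wolpert), so its maximum over $[0,T_n']$ is attained at an endpoint and therefore also tends to $0$, contradicting $\max_{[0,T_n']}\ell_{\gamma_n}\circ\zeta_n>\epsilon_0$. The resulting $\epsilon>0$ depends only on $\epsilon_0,T,s,N$, and any smaller value works as well.

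I expect the main obstacle to be the middle of the argument: making rigorous the extraction of a single limiting pinching curve $\gamma$ (ruling out escape in the curve complex), and, above all, converting the quantitative hypothesis $d_{\gamma_n}(\zeta_n(0),\zeta_n(T_n'))<N$ into the statement that no $\gamma$--twisting is absorbed in the geodesic limit --- so that either Theorem~\ref{thm:nonref} applies or, dually, a pinch--unpinch would blow up the annular coefficient. This step genuinely reverses Theorem~\ref{thm : twsh} and leans on the fine structure of Wolpert's geodesic-limit construction underlying Theorem~\ref{thm : geodlimit}.
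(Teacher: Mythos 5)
You should first note that the paper does not prove this statement at all: it is imported verbatim as \cite[Corollary 4.11]{wpbehavior}, so there is no in-paper argument to compare with, and your plan is in effect a reconstruction of the proof in that source. Much of your outline is sound: the compactness/contradiction setup; the stabilization of $\varphi_{p,n}(\gamma_n)$ to a single curve $\gamma$ pinched at $\hat\zeta(u^*)$ (this can be justified by the continuity argument used in the proof of Lemma \ref{lem : b-nbhd}, without even invoking Wolpert's $\sqrt{\ell}$ distance bound); and the case $\gamma\in\hat\tau$, which closely mirrors the paper's own proof of Lemma \ref{lem:q-nonref}: the $\mathcal{T}_{i,n}$ fix $\gamma$, so $\varphi_{i,n}(\gamma_n)=\gamma$ for all $i$, the endpoint lengths tend to $0$, and convexity of $\ell_{\gamma_n}\circ\zeta_n$ contradicts $\max>\epsilon_0$. (A minor point: Theorem \ref{thm : geodlimit} is quoted for a fixed domain $[0,T]$, while your $T_n'$ varies; the affine reparametrization destroys unit speed, so this needs a word, though it is harmless.)

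The genuine gap is exactly where you flagged it, and it is the heart of the theorem: the case $u^*=t_p$ with $\gamma\in\sigma_p\ssm\hat\tau$. Theorem \ref{thm : geodlimit} as quoted only asserts the existence of $\mathcal{T}_{i,n}\in\tw(\sigma_i\ssm\hat\tau)$; it says nothing about the powers of the Dehn twist about $\gamma$ being unbounded when a pinch--unpinch occurs, and nothing comparing those powers to the annular coefficient $d_{\gamma_n}(\zeta_n(0),\zeta_n(T_n'))$. To complete your argument you would need (i) the finer structure of Wolpert's construction showing that a breakpoint at which $\gamma$ is pinched but not in $\hat\tau$ is accompanied by $\gamma$-twisting of unbounded order, and (ii) a quantitative transfer of that twisting to the endpoints: that $\pi_\gamma$ of Bers markings at $\zeta_n(0)$ and $\zeta_n(T_n')$ is controlled because $\psi_n(\zeta_n(0))$ and $\varphi_{k,n}(\zeta_n(T_n'))$ converge (so their markings are eventually among finitely many), and that twists about the other curves of $\sigma_i\ssm\hat\tau$, which may cross $\gamma$, displace the $\gamma$-projection only boundedly. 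Neither point follows from anything quoted in this paper; together they are precisely the content of the lemmas leading to \cite[Corollary 4.11]{wpbehavior}. The same objection applies to your ``dual'' route: asserting that bounded twisting makes the limit an unbroken geodesic near $t_p$ (so that Theorem \ref{thm:nonref} applies) presupposes that breakpoints arise only from unbounded twisting, which again is not part of the quoted Geodesic Limit Theorem. So as a self-contained proof the decisive step is missing; as a skeleton of the cited proof it is correct, with the hardest bone left out.
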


We will need the following variant on Theorem \ref{thm : shtw} as well:

\begin{thm}\label{thm : shtw2}
Given $\epsilon_{0},T>0$  and $N\in\NN$, there is an $\epsilon<\epsilon_{0}$ with the
following property. Let $\zeta: [0, T'] \to \Teich(S)$ be a WP geodesic segment
parametrized by arclength with $T'\le T$. Suppose that for some $\gamma\in\mathcal{C}_{0}(S)$ we have 
\[\ell_\gamma(\zeta(0))> \ep_0, \qquad  \ell_\gamma(\zeta(T'))> \ep_0\]
and
\[\min_{t \in [0,T']} \ell_{\gamma}(\zeta(t)) < \epsilon.\]
Then we have
\[d_{\gamma}(\zeta(0),\zeta(T'))\geq N.\]
\end{thm}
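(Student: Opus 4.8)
The plan is to deduce Theorem \ref{thm : shtw2} from Theorem \ref{thm : shtw}. The only difference between the two is that here we are given the pointwise lower bounds $\ell_\gamma(\zeta(0))>\ep_0$ and $\ell_\gamma(\zeta(T'))>\ep_0$ at the two endpoints, rather than the single requirement $\max_{[0,T']}\ell_\gamma\circ\zeta>\ep_0$ together with a subinterval $J\subseteq[s,T'-s]$ bounded away from the endpoints. The point is that endpoint control, via the uniform Lipschitz property of $\sqrt{\ell_\gamma}$ along WP geodesics, automatically forces any point where $\ell_\gamma\circ\zeta$ becomes short to lie a definite distance (depending only on $\ep_0$) from each endpoint, and so the ``short'' point supplies for free the interior subinterval needed by Theorem \ref{thm : shtw}.

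Concretely, recall Wolpert's estimate (see \cite{wol}) that there is a universal constant $c>0$ with $|\sqrt{\ell_\gamma(x)}-\sqrt{\ell_\gamma(y)}|\le c\,\dw(x,y)$ for every curve $\gamma$ and all $x,y\in\Teich(S)$. Put $s_0:=\sqrt{\ep_0}/(2c)$, a quantity depending only on $\ep_0$. If $T\le 2s_0$, set $\ep:=\ep_0/4$; otherwise apply Theorem \ref{thm : shtw} with parameters $\ep_0$, $T$, $s:=s_0$, $N$ to obtain $\ep_1<\ep_0$, and set $\ep:=\min\{\ep_1,\ep_0/4\}$. In either case $\ep<\ep_0$, $\ep\le\ep_0/4$, and $\ep$ depends only on $\ep_0$, $T$, $N$.

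Now suppose $\zeta:[0,T']\to\Teich(S)$ is a WP geodesic parametrized by arclength with $T'\le T$, and $\gamma$ a curve with $\ell_\gamma(\zeta(0))>\ep_0$, $\ell_\gamma(\zeta(T'))>\ep_0$ and $\min_{[0,T']}\ell_\gamma\circ\zeta<\ep$; let $t_0\in[0,T']$ realize the minimum. Applying the Lipschitz estimate along $\zeta$ from $\zeta(0)$ to $\zeta(t_0)$ gives $\sqrt{\ep_0}-\sqrt{\ep}<c\,t_0$, and since $\ep\le\ep_0/4$ this yields $t_0>\sqrt{\ep_0}/(2c)=s_0$; symmetrically $T'-t_0>s_0$. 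In particular $T'>2s_0$, hence $T\ge T'>2s_0$, so we are necessarily in the case $T>2s_0$ above (if $T\le 2s_0$ no such $\zeta$ exists and the statement is vacuous). With $s=s_0$ we then have $t_0\in[s,T'-s]$, so taking $J:=[s,T'-s]$ we get $\max_{[0,T']}\ell_\gamma\circ\zeta\ge\ell_\gamma(\zeta(0))>\ep_0$ and $\min_{J}\ell_\gamma\circ\zeta=\ell_\gamma(\zeta(t_0))<\ep\le\ep_1$. Theorem \ref{thm : shtw} now applies and gives $d_\gamma(\zeta(0),\zeta(T'))\ge N$, as required.

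The argument is short and its only genuine ingredient is the uniform Lipschitz bound for $\sqrt{\ell_\gamma}$; the only thing to watch is the degenerate regime $T\le 2s_0$, so that the resulting $\ep$ depends only on the stated parameters. If one prefers to avoid quoting Theorem \ref{thm : shtw}, the same conclusion follows by running the Geodesic Limits argument (Theorem \ref{thm : geodlimit}) directly, as in the proof of Corollary 4.11 of \cite{wpbehavior}: a failure of the statement would yield a sequence of examples converging, after the usual mapping-class adjustments, to a piecewise geodesic $\hat\zeta$ along which $\gamma$ limits to a pinching curve, so $\ell_\gamma\circ\hat\zeta\equiv 0$, contradicting the endpoint bounds $\ell_\gamma(\zeta(0)),\ell_\gamma(\zeta(T'))>\ep_0$ exactly as in the proof of Lemma \ref{lem:q-nonref}. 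I expect the reduction above to be the cleanest route, and there is no real obstacle beyond extracting the interior parameter $s$ correctly from the endpoint data.
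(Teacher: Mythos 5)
Your argument is correct and is essentially the paper's own proof: both reduce to Theorem \ref{thm : shtw} by showing that the endpoint lower bounds force any point with $\ell_\gamma\circ\zeta<\ep$ to lie a definite distance $s$ (depending only on $\ep_0$) from both endpoints, so that one may take $J=[s,T'-s]$. The only differences are cosmetic: you invoke Wolpert's uniform Lipschitz bound for $\sqrt{\ell_\gamma}$ where the paper quotes Lemma \ref{lem:distlen} (a consequence of \cite[Corollary 3.5]{wpbehavior}), and you handle the degenerate case $T\le 2s$ explicitly, which the paper leaves implicit.
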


\begin{proof}
First we quote the following direct consequence of \cite[Corollary 3.5]{wpbehavior}:
\begin{lem}\label{lem:distlen}
For any $l>0$ there is an $s>0$ so that if for a curve $\beta\in \calC(S)$, $\ell_\beta(x)\geq l$,
then for all $x'\in \Teich(S)$ with $d(x,x')\leq s$ we have that $\ell_\beta(x')\geq l/2$.
\end{lem}

Now let $s>0$ be the constant given by this lemma for $l=\ep_0$, and let $\ep$ be the
constant from Theorem \ref{thm : shtw} given $\ep_0,T$ and $s$. Now if $\ep' <
\min\{\ep,\ep_0/2\}$ we find that, if $\ell_\gamma(\zeta(t)) < \ep'$ then $T'>2s$ and $t\in
J=[s,T'-s]$. Therefore Theorem \ref{thm : shtw} applies to give us the desired
conclusion. 
\end{proof}

The following theorem which relies on convexity of length-functions
provides us with conditions for approach to strata or
having short curves along WP geodesics
  (see also \cite[Lemma 6.9]{wpbehavior}).

\begin{thm}\label{thm:lenapproach0}
 Let $c_1,c_2>0$, and let $\sigma$ be a co-large multicurve. 
Let $\zeta_n: I_n\to \Teich(S)$ be a sequence of WP geodesic segments, where $I_1\subset
I_2\subset\cdots$ and $\cup_n I_n = \RR$. 
Suppose that $\ell_\alpha\circ \zeta_n(t)<c_1$ for all $\alpha\in\sigma$ and $t\in I_n$.
Let $J$ be a compact interval for which
$\ell_\beta\circ \zeta_n(t)>c_2$ for all $\beta$ disjoint from $\sigma$ and $t\in J\cap I_n$. Then,
after possibly passing to a subsequence,  for all $\alpha\in\sigma$ we have 
\[\ell_\alpha\circ \zeta_n\to 0\]
uniformly on $J$ as $n\to\infty$
\end{thm}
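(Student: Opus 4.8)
The plan is to argue by contradiction, combining the Geodesic Limit Theorem (Theorem \ref{thm : geodlimit}) with Wolpert's strict convexity of length functions and the collar lemma. We may assume $J$ is nondegenerate (otherwise enlarge it slightly using Lemma \ref{lem:distlen}, replacing $c_2$ by $c_2/2$). Fix $a>0$ with $J\subseteq(-a,a)$; for $n$ large $[-a,a]\subseteq I_n$, so after reparametrizing we may apply Theorem \ref{thm : geodlimit} to $\zeta_n|_{[-a,a]}$. Passing to a subsequence we obtain a partition $\{t_j\}$ of $[-a,a]$, multicurves $\hat\tau\subseteq\sigma_j$, mapping classes $\varphi_{j,n}$, and a piecewise WP geodesic $\hat\zeta$ with $\varphi_{j,n}(\zeta_n(t))\to\hat\zeta(t)$ on $[t_j,t_{j+1}]$ and $\hat\zeta((t_j,t_{j+1}))\subseteq\calS(\hat\tau)$. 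Since $\mathrm{int}(J)$ is open and the partition finite, fix a nondegenerate interval $(c,d)\subseteq\mathrm{int}(J)\cap(t_j,t_{j+1})$ for some $j$ and a basepoint $t_*\in(c,d)$; on $(t_j,t_{j+1})$, $\hat\zeta$ is a nonconstant geodesic of $\calS(\hat\tau)$, since the pieces of $\hat\zeta$ are nondegenerate and $\calS(\hat\tau)$ is totally geodesic. As $\sigma$ is finite, it suffices to prove that, along this subsequence, $\sup_J(\ell_\alpha\circ\zeta_n)\to0$ for each $\alpha\in\sigma$.

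Two observations drive the argument. (i) \emph{Small oscillation.} Each $\ell_\alpha\circ\zeta_n$ ($\alpha\in\sigma$) is convex with values in $(0,c_1]$ on $I_n=[a_n,b_n]$, and $\bigcup I_n=\RR$ forces $a_n\to-\infty$, $b_n\to+\infty$; convexity then gives $|(\ell_\alpha\circ\zeta_n)'(t)|\le c_1/\min(b_n-t,\,t-a_n)$, which tends to $0$ uniformly for $t$ in the fixed interval $J$, so the oscillation of $\ell_\alpha\circ\zeta_n$ over $J$ tends to $0$. (ii) \emph{Only curves of $\sigma$ can be short on $J$.} Choose $\epsilon_1=\epsilon_1(c_1)>0$ by the collar lemma so that any curve crossing a curve of length $<\epsilon_1$ has length $>c_1$, and put $\epsilon_2=\min(\epsilon_1,c_2)$. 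If a curve $\gamma$ has $\ell_\gamma(\zeta_n(t))<\epsilon_2$ for some $t\in J$ then $\gamma$ crosses no curve of $\sigma$ (else that $\sigma$-curve would have length $>c_1$, contradicting the hypothesis); hence $\gamma$ is disjoint from $\sigma$, and, writing $\sigma=\bd Z$ for a large subsurface $Z$, every essential curve disjoint from $\sigma$ is either a curve of $\sigma$ or a curve in $Z$ — the latter being excluded since $\ell_\gamma(\zeta_n(t))>c_2\ge\epsilon_2$. So for $n$ large, every curve of length $<\epsilon_2$ at $\zeta_n(t)$, $t\in J$, lies in $\sigma$.

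Now identify $\calS(\hat\tau)$ and finish. For $\delta\in\hat\tau$ we have $\ell_{\varphi_{j,n}^{-1}(\delta)}(\zeta_n(t_*))=\ell_\delta(\varphi_{j,n}(\zeta_n(t_*)))\to\ell_\delta(\hat\zeta(t_*))=0$, so by (ii) $\varphi_{j,n}^{-1}(\delta)\in\sigma$ for $n$ large; thus $\varphi_{j,n}^{-1}(\hat\tau)\subseteq\sigma$, and after a further subsequence $\varphi_{j,n}^{-1}(\hat\tau)$ equals a fixed submulticurve $\tau_0\subseteq\sigma$. By Remark \ref{remark:sub co-large}, $\tau_0$ — hence $\hat\tau=\varphi_{j,n}(\tau_0)$ — is co-large, so $\hat\tau=\bd W$ for a large connected $W$ and $\calS(\hat\tau)$ is isometric to the single factor $\Teich(W)$ (with $W=S$ if $\hat\tau=\emptyset$). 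Suppose $\sup_J(\ell_\alpha\circ\zeta_n)\not\to0$ for some $\alpha\in\sigma$; pass to a subsequence with $\sup_J(\ell_\alpha\circ\zeta_n)\ge\delta_0>0$, so by (i) also $\inf_J(\ell_\alpha\circ\zeta_n)\ge\delta_0/2$ eventually. Then $\ell_{\varphi_{j,n}(\alpha)}(\varphi_{j,n}(\zeta_n(t_*)))=\ell_\alpha(\zeta_n(t_*))\in[\delta_0/2,c_1]$ while $\varphi_{j,n}(\zeta_n(t_*))\to\hat\zeta(t_*)\in\calS(\hat\tau)$; since the curves of $\hat\tau$ are short near $\hat\zeta(t_*)$, $\varphi_{j,n}(\alpha)$ cannot cross $\hat\tau$, and its bounded length and positivity force it to be a curve in $W$; a compactness argument on the convergent surfaces $\varphi_{j,n}(\zeta_n(t_*))$ restricted to $W$ lets us pass to a subsequence with $\varphi_{j,n}(\alpha)=\alpha^*$ a fixed curve in $W$. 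Then $\ell_{\alpha^*}\circ\hat\zeta=\lim_n\ell_\alpha\circ\zeta_n$ pointwise on $(c,d)$, so by (i) it has zero oscillation on $(c,d)$, i.e.\ is constant there; but $\hat\zeta|_{(c,d)}$ is a nonconstant WP geodesic in $\calS(\hat\tau)\cong\Teich(W)$ and $\alpha^*$ a curve in $W$, so $\ell_{\alpha^*}\circ\hat\zeta$ is \emph{strictly} convex by Wolpert's theorem — a contradiction. The main obstacle is precisely this bookkeeping with the $n$-dependent mapping classes $\varphi_{j,n}$: transporting $\hat\tau$ back into $\sigma$ (where co-largeness of $\sigma$ and both length hypotheses are used together) and the compactness step producing the fixed curve $\alpha^*$.
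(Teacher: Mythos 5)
Your proof is correct, and the endgame is the same as the paper's: only curves of $\sigma$ can be short on $J$ (collar lemma plus the $c_2$ hypothesis), the convexity-plus-long-interval estimate kills the derivatives of $\ell_\alpha\circ\zeta_n$ on $J$, and the contradiction comes from the fact that the limiting object lies in the stratum of a \emph{co-large} submulticurve of $\sigma$, where the stratum is a single factor $\Teich(W)$ and Wolpert's strict convexity forbids a positive constant length function along a nonconstant geodesic. Where you genuinely diverge is in how the limit is extracted. The paper never invokes Theorem \ref{thm : geodlimit} here: it first upgrades the derivative bound to uniform limits $\ell_\alpha\circ\zeta_n\to c_\alpha$ on $J$, splits $\sigma=\tau\sqcup\kappa$ according to $c_\alpha>0$ or $c_\alpha=0$, and then uses compactness of the completed moduli space, recomposing only by elements of $\mathrm{Stab}(\sigma)$, so that the short curves and the curve $\alpha\in\tau$ keep their names and the limit is immediately a geodesic in $\calS(\kappa)$. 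You instead run the full Geodesic Limit Theorem with arbitrary mapping classes $\varphi_{j,n}$, which forces the extra bookkeeping you flag: pulling $\hat\tau$ back into $\sigma$ via your observation (ii) to get co-largeness, and stabilizing $\varphi_{j,n}(\alpha)$ to a fixed curve $\alpha^*$ in $W$. That last compactness step is stated rather briefly; it does hold, e.g.\ because $\ell_{\varphi_{j,n}(\alpha)}$ is trapped in $[\delta_0/2,c_1]$ at points converging in $\oT{S}$ to $\hat\zeta(t_*)$, so by Lemma \ref{lem:distlen} (applied via nearby interior points) the curves $\varphi_{j,n}(\alpha)$ have uniformly bounded length at the limit surface, where only finitely many classes qualify — worth a sentence in a final write-up, as is the remark that each piece of $\hat\zeta$ is arclength-parametrized (hence nonconstant) because the $\varphi_{j,n}$ are isometries. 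The trade-off: your route is heavier but makes the passage to the limit completely explicit, whereas the paper's $\mathrm{Stab}(\sigma)$-recomposition argument is softer and avoids tracking images of curves, at the cost of leaving that compactness step more implicit; your per-curve contradiction also replaces the paper's cleaner $\tau\sqcup\kappa$ decomposition but yields the same subsequence-uniform conclusion.
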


 \begin{proof}
First we record the following elementary fact. In the following for an interval $I =[a,b]$ 
we denote by $\half I$  the interval with the same center and half the diameter.
\begin{lem}\label{lem:convexity}
Let $f:I \to \RR$ be a $C^2$ function that satisfies $0\le f \le c_1$ and $\ddot f > 0$. Then 
\begin{equation}\label{eq : f dot bound}
  | \dot f(t)| \le 4c_1/|I|
\end{equation}
  for all $t\in\half I$.
  \end{lem}
  \begin{proof}
This is an exercise in calculus. For $t\in \half I$, if $\dot f(t) \ge 0$ we have
  \[
  c_1 \ge f(b)-f(t) = \int_t^b \dot{f}(s)ds \ge (b-t)\dot f(t),
  \]
  because $\dot f$ is increasing.
  Then since $b-t \ge |I|/4$ we have $\dot f(t) \le 4 c_1/|I|$. 
  When $\dot f (t) \le 0$ the argument is similar (integrating on $[a,t]$).
\end{proof}  
  
Now let $\alpha\in \sigma$. Note for large enough $n$ that $J\subset \half I_n$. Since 
$\ell_\alpha\circ \zeta_n$ is bounded by $c_1$ on $I_n$ and
$\ddot{\ell}_\alpha\circ\zeta_n(t)>0$ for all $t\in I_n$ (\cite[Corollary 4.7]{wolpert:nielsen}), Lemma
\ref{lem:convexity} applies and  gives us
\[|\dot{\ell}_\alpha\circ \zeta_n| = O(1/|I_n|)\]
 on $J$ for all $n$ large enough.
Since $|I_n|\to \infty$, we have $\dot{\ell}_\alpha\circ\zeta_n \to 0$ uniformly on $J$,
and thus, possibly passing to a subsequence, we have a constant $c_\alpha\ge 0$ for each
$\alpha\in \sigma$ such that $\ell_\alpha\circ\zeta_n \to c_\alpha$ uniformly on
$J$.

Partition $\sigma$ into $\tau\sqcup \kappa$, where $c_\alpha = 0$
for $\alpha\in \kappa$ and $c_\alpha > 0$ for $\alpha\in\tau$. 
Now, note that by the assumption of the theorem the length of any curve disjoint from $\sigma$
is bounded below by $c_2$, and by the Collar Lemma \cite[\S 4.1]{buser} the length of every curve that overlaps $\sigma$ is at least 
the size of the standard collar \nbhd of a curve with length at most $c_1$.
Thus, the only curves whose lengths go to $0$ on $J$ are the ones in $\kappa$.
In the following we show that $\tau$ is empty, which means $\kappa=\sigma$,
 and hence the lengths of all curves in $\sigma$
converge to $0$ uniformly on $J$ as is desired.

Seeking a contradiction suppose $\tau\neq\emptyset$. By compactness of the completed
moduli space we may assume, up to composing $\zeta_n$ by mapping classes in $Stab(\sigma)$
and passing to a subsequence, that $\zeta_n|_J$ converge pointwise to a geodesic $\zeta$.
Since $\ell_\alpha\circ\zeta_n|_J\to 0$ for all $\alpha\in\kappa$,  
and the length of every curve which is not in $\kappa$ is bounded below together with 
 the fact that length-functions extend continuously to the WP completion of \T space
  imply that
$\zeta$ lies in the stratum $\calS(\kappa)$. 
Hence for an $\alpha\in \tau$, the function $\ell_\alpha\circ \zeta_n$ converges to
$\ell_\alpha\circ\zeta$ pointwise, which implies $\ell_\alpha\circ\zeta \equiv
c_\alpha>0$ on $J$. 
But $\kappa$ is a co-large multicurve (see Remark \ref{remark:sub co-large}) and hence
$\calS(\kappa)\cong \Teich(S\ssm \kappa)$, 
so again by \cite[Corollary 4.7]{wolpert:nielsen} the function $\ell_\alpha\circ \zeta$ is strictly convex.
This contradiction shows that $\tau$ is empty, and
completes the proof of the theorem.
\end{proof}

We also will use the following result which was proved in the setting of hierarchy paths
in \cite[Lemma 6.4]{wpbehavior}:

\begin{thm}\label{thm:anncoeffcomp}
For any $k,K\geq1,c, C\geq 0$ and $D\geq 0$, there exist constants $w, B>0$ so that the following hold: 
Let $\rho:I\to\calP(S)$ be a $(k,c)$--quasi-geodesic with the property that for a non-annular subsurface 
$X\subseteq S$, and any $i,j\in I$ we have
\begin{equation}\label{eq:advinX}
d_{\calP}(\rho(i),\rho(j))\asymp_{K,C} d_X(\rho(i)),\rho(j)).
\end{equation} 
Moreover, let $\gamma$ be a curve with $\gamma\pitchfork X$ which is in the pants decomposition $Q$ where $d(Q,\rho(i))\leq D$, and
 let $P\in \calP(S)$ be so that $d(P,\rho(j))\leq D$ for a $j\in I$ where $|j-i|\geq w$.
Then we have that
  \begin{equation}\label{d gamma bounded}
 d_\gamma(P,\rho(j))\leq B
  \end{equation}
 Moreover, if $P$ is a Bers pants decomposition at a point $x\in \Teich(S)$, then we have
  \begin{equation}\label{eq:lb for gamma}
 \ell_\gamma(x)\geq \omega 
   \end{equation}
   where $\omega>0$ is the width of the standard collar \nbhd of a Bers curve on $x$.
\end{thm}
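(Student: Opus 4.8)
The plan is to reduce the whole statement to the coarse behavior of the subsurface projection $\pi_\gamma$ along the quasi-geodesic: using the Behrstock inequality (and, in the nested case, the bounded geodesic image theorem) I would show that $\pi_\gamma$ has bounded diameter once we pass the ``$X$--active'' part of $\rho$, and then invoke the Collar Lemma for the length bound. First I would choose $w$ large, depending only on $k,c,K,C,D$ and the universal Behrstock constant, so that whenever $|i-j|\ge w$ the $(k,c)$--quasi-geodesic inequality combined with the hypothesis $d_{\calP}(\rho(i),\rho(j))\asymp_{K,C}d_X(\rho(i),\rho(j))$ makes $d_X(\rho(i),\rho(j))$ larger than any prescribed constant. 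Since $\gamma$ is a component of $Q$ and $d(Q,\rho(i))\le D$, the coarse Lipschitz property of the projection to $\calC(X)$ together with the bounded diameter of $\pi_X$ of a pants decomposition (see \cite{mm2}) bounds $d_X(\gamma,\rho(i))$ in terms of $D$ only; enlarging $w$ accordingly, both $d_X(\gamma,\rho(j))$ and $d_X(\gamma,P)$ (for any $P$ with $d(P,\rho(j))\le D$) exceed a large constant. As an immediate consequence $\gamma$ cannot be a component of $\rho(j)$ or of $P$, since a component $\delta$ of a pants decomposition $R$ satisfies $d_X(\delta,R)=\diam_{\calC(X)}(\pi_X(R))=O(1)$, contradicting the lower bounds just obtained; and since pants decompositions are maximal multicurves, some component of $\rho(j)$ and some component of $P$ must cross $\gamma$, so in particular $\pi_\gamma(\rho(j))$ and $\pi_\gamma(P)$ are nonempty.

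For the bound $d_\gamma(P,\rho(j))\le B$, consider first the generic case in which $\gamma$ crosses $\bd X$ essentially, so the annulus about $\gamma$ and the subsurface $X$ overlap. Extending $\rho(j)$ and $P$ to complete markings and applying the Behrstock inequality (\cite{mm2,bkmm}) to this overlapping pair gives, for each such marking $\mu$, $\min\bigl(d_\gamma(\bd X,\mu),\,d_X(\gamma,\mu)\bigr)\le B_0$; since the $X$--coefficients were arranged to dwarf $B_0$, this forces $d_\gamma(\bd X,\rho(j))\le B_0$ and $d_\gamma(\bd X,P)\le B_0$, and the triangle inequality in $\calC(\gamma)$ (legitimate because $\gamma$ crosses $\bd X$, so $\pi_\gamma(\bd X)\ne\emptyset$, while the other two projections were just shown nonempty) yields $d_\gamma(P,\rho(j))\le 2B_0+O(1)=:B$. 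In the remaining case $\gamma\subseteq X$ one substitutes the bounded geodesic image theorem of \cite{mm2}: a geodesic in $\calC(X)$ from $\pi_X(P)$ to $\pi_X(\rho(j))$ is short and has both endpoints far from $\gamma$, hence every vertex of it intersects $\gamma$ (lies at $\calC(X)$--distance $\ge 2$ from $\gamma$), so its $\pi_\gamma$--image has diameter at most a universal constant; the bound on $d_\gamma(P,\rho(j))$ then follows after the routine comparison of $\pi_X(P)$ and $\pi_X(\rho(j))$ with $P$ and $\rho(j)$.

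It remains to deduce the length estimate. If $P$ is a Bers pants decomposition at $x\in\Teich(S)$ with $d(P,\rho(j))\le D$, the previous paragraph exhibits a component $\delta$ of $P$ crossing $\gamma$, and $\ell_\delta(x)\le L_S$ because $\delta$ is a Bers curve; by the Collar Lemma \cite[\S 4.1]{buser} a curve of length at most $L_S$ can cross $\gamma$ only if $\ell_\gamma(x)$ is bounded below by a positive constant $\omega$ depending only on $S$, which is the asserted inequality. I expect the only real obstacle to be bookkeeping: $w$ must be chosen so that the forced advance in $\calC(X)$ simultaneously dominates the Behrstock constant, the coarse Lipschitz and bounded-diameter constants for the various projections, the quasi-geodesic constants $k,c$, the comparison constants $K,C$, and the size $D$ of the neighborhoods; and one must check at each step that the subsurface projections invoked are genuinely nonempty, since --- in contrast to non-annular coefficients --- the annular coefficient $d_\gamma$ is \emph{not} coarsely Lipschitz along paths in $\calP(S)$, so these non-degeneracy facts do not come for free.
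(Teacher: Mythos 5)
Your argument is correct, and it shares the paper's skeleton: quasi-geodesicity plus (\ref{eq:advinX}) force definite progress in $\calC(X)$, the hypothesis $d(Q,\rho(i))\le D$ pins $\pi_X(\gamma)$ near $\pi_X(\rho(i))$, so for $|i-j|\ge w$ both $d_X(\gamma,\rho(j))$ and $d_X(\gamma,P)$ are huge, and the length bound (\ref{eq:lb for gamma}) comes from the Collar Lemma exactly as in the paper. Where you diverge is the mechanism for (\ref{d gamma bounded}). The paper (following Lemma 6.4 of \cite{wpbehavior}) observes that every pants decomposition on a shortest $\calP(S)$--path of length at most $D$ from $P$ to $\rho(j)$ has $X$--projection far from $\pi_X(\gamma)$, hence crosses $\gamma$, and then bounds the change of $\pi_\gamma$ move by move along that path (this is what the ``Lipschitz property of $\pi_\gamma$ outside a bounded neighborhood of $\gamma$'' is invoked for); the resulting $B$ depends on $D$ through the path length. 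You instead anchor both $\pi_\gamma(P)$ and $\pi_\gamma(\rho(j))$: via the Behrstock inequality at $\bd X$ when $\gamma$ crosses $\bd X$, and via the bounded geodesic image theorem applied to a short $\calC(X)$--geodesic (together with the standard nesting comparison $d_\gamma(\mu,\pi_X(\mu))=O(1)$ for $\gamma\subseteq X$) in the nested case. This buys a bound $B$ that is essentially uniform in $D$ (the dependence on $D$ being absorbed into the choice of $w$), at the cost of a case division and of importing Behrstock/BGI machinery; the paper's path-tracking avoids the case division but yields $B=B(D)$. Your non-degeneracy checks --- that $\gamma$ is a component of neither $P$ nor $\rho(j)$ by maximality of pants decompositions, so both annular projections are nonempty, and that $P$ therefore contains a Bers curve crossing $\gamma$ --- are precisely the points the paper's sketch leaves implicit, and you handle them correctly.
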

\begin{proof}
Here we just sketch the proof and refer the reader to \cite[Lemma 6.4]{wpbehavior}. The assumption (\ref{eq:advinX}) 
and the fact that $\rho$ is a quasi-geodesic imply that $\rho$
 advances at a definite rate in the curve complex $\calC(X)$. This together with
 the Lipschitz property of $\pi_\gamma: \calC(X)\to \calC(\gamma)$ outside a bounded
 neighborhood of $\gamma$ in $\calC(X)$ show that:
choosing $w$ large enough for $i,j\in I$ and a $P\in \calP(S)$ satisfying assumptions of the lemma 
 a shortest path of length $D$ that connects $P$ and $\rho(j)$ in $\calP(S)$ passes only through pants decompositions that 
intersect $\gamma$. Thus the inequality (\ref{d gamma bounded}) holds for a $B>0$ depending only on $D$.

 Moreover, the inequality (\ref{eq:lb for gamma}) follows from the fact that $\gamma$ intersects a Bers curve at $x$, 
 and hence by the Collar Lemma (\cite[\S 4.1]{buser})
 the length of $\gamma$ is at least $\omega>0$ the width of the standard collar \nbhd of a Bers curve.
\end{proof}

\subsection{Ruled surfaces in \W metric}
\label{subsec:rulings}

In this subsection we assemble some facts and results about ruled
surfaces in the \W metric, mainly drawn from $\S 4,6$ of \cite{asympdiv}.

Let $\zeta$ be a WP geodesic in $\Teich(S)$ and
$\pi_\zeta:\Teich(S)\to \zeta$ the nearest-point projection. This map
is smooth at points $x$ with $\pi_\zeta(x)$ in the interior of
$\zeta$ \cite[Proposition 4.1]{asympdiv}. 
We will consider ruled surfaces over $\zeta$ as follows: 

Let $\eta$ be a path in $\Teich(S)$. 
Then the geodesic segments
$\seg{x\pi_\zeta(x)}$ for $x\in\eta$ form a ruled surface which we denote
$Q[\eta;\zeta]$.  Given a parameterization of $\eta$ by arclength,
written $\eta:[0,T] \to \Teich(S)$, we can parameterize $Q[\eta;\zeta]$ as
$$
Q : \Delta_Q \to \Teich(S)
$$
where $\Delta_Q$ is the planar region 
\[
\Big\{ (t,s): t\in [0,T], s\in [0,\lambda(t)] \Big\}
 \]
 and $\lambda(t)$ is $d(\eta(t),\zeta)$,
 which is the length of $\overline{\eta(t)\pi_\zeta(\eta(t))}$. 
Thus $t\mapsto Q(t,\lambda(t))$ parametrizes $\eta$ and
$t\mapsto Q(t,0)$ parameterizes $\pi_\zeta\circ\eta$. For each $t$,
$s\mapsto Q(t,s)$ parametrizes
$\overline{\eta(t)\pi_\zeta(\eta(t))}$ by arclength. See Figure \ref{Q-param}.

\realfig{Q-param}{A ruled surface $Q[\eta;\zeta]$ over a geodesic $\zeta$. The shaded
  region corresponds to $Q_s^{s'}[\eta;\zeta]$.}

Regions inside $Q$: Note that $Q(t,s)$ is at distance $s$ from
$\zeta$. Thus, for $0\le s < s' < \dist(\eta,\zeta)$ we can restrict
$Q$ to $[0,T]\times[s,s']$ and we denote this by
$Q_s^{s'}[\eta;\zeta]$ or just $Q_s^{s'}$. Similarly we denote by
$Q^s$ the level curve $Q$ restricted to $[0,T]\times s$. Note that
$Q^0$ is a (not necessarily injective) parametrization of $\zeta$.

If $\pi_\zeta(\eta)$ is contained in the interior of $\zeta$, and if
$\eta$ is smooth, then $Q[\eta;\zeta]$ is smooth and 
for each $x\in \eta$ the geodesic segment $\overline{x\pi_\zeta(x)}$
is orthogonal to $\zeta$. In fact the level curves $Q^s$ are
orthogonal to the rulings $\seg{x\pi_\zeta(x)}$ at all intersection points.

We can define the pullback metric on $Q[\eta;\zeta]$ (or on the
parametrizing domain) and denote its Gaussian curvature $\kappa$.
We also define the intrinsic geodesic curvature $k_g$
along horizontal curves $Q^s$ for $s>0$, oriented so that is positive if $Q^s$
is curved away from the bottom curve $Q^0$. With this convention
$-k_g$ is non-negative \cite[Theorem 4.2]{asympdiv}. We let $dm_s$ be the
measure on the level curve $Q^s$ induced by integrating $-k_g$.
Negative curvature implies that this family of measures is monotonic
and  weak-$*$ converges to a measure $m$ on $Q^0$ 
\cite[Claim 4.3]{asympdiv},
in particular
$$
\int_{Q^0} dm = \lim_{s\to 0} \int_{Q^s} -k_g. 
$$
This provides the curvature term for the
bottom edge of $Q$ which allows us to write a version of the
Gauss-Bonnet theorem (this is formula (4.5) in \cite[\S
  4.1]{asympdiv}): 
\begin{thm}\label{thm : GB2}\tn{(Gauss-Bonnet)} 
Let $Q=Q[\eta;\zeta]$ where $\eta$ is  also a geodesic, and
let $\theta_i,\; i=1,2,3, 4$ be the exterior angles at the four corners of $Q$. Then
 \[\iint_{Q} \kappa dA -\int_{Q^0} dm = 2\pi-\sum_{i=1}^4\theta_i.\]
\end{thm}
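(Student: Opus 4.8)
The plan is to apply the classical Gauss–Bonnet theorem to the smooth ruled surface $Q[\eta;\zeta]$ and then carefully identify the boundary contributions, taking a limiting form for the bottom edge where the rulings converge into $\zeta$. First I would reduce to the case where $\pi_\zeta(\eta)$ lies strictly in the interior of $\zeta$, so that, by \cite[Proposition 4.1]{asympdiv} and the discussion preceding the theorem, the projection map $\pi_\zeta$ is smooth along $\eta$ and hence $Q$ is a smooth immersed surface; moreover, since both $\eta$ and $\zeta$ are geodesics, the two ``vertical'' sides $\overline{\eta(0)\pi_\zeta(\eta(0))}$, $\overline{\eta(T)\pi_\zeta(\eta(T))}$ and the top edge $\eta$ are all geodesic arcs of the ambient WP metric, and by \cite[Proposition 4.1]{asympdiv}/the orthogonality discussion the rulings meet $\zeta$ orthogonally. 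Thus the only edge carrying nonzero geodesic curvature in a classical Gauss–Bonnet computation would be $\zeta$ itself if it were a boundary arc — but $\zeta = Q^0$ is a limit of the interior level curves $Q^s$, not literally a smooth boundary edge of the ambient picture, so its curvature term must be taken as the limit $\int_{Q^0}dm$ described before the theorem.

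The key steps, in order: (1) Fix $0<s<\dist(\eta,\zeta)$ and apply the honest smooth Gauss–Bonnet theorem to the compact surface with corners $Q_s^{\lambda}[\eta;\zeta]$ — i.e. the region of $Q$ between level $s$ and the top. Its boundary consists of the top geodesic $\eta$, the two vertical geodesic sub-segments (which contribute no geodesic curvature), and the bottom level curve $Q^s$, which contributes $\int_{Q^s}k_g$. This yields
\[
\iint_{Q_s^\lambda}\kappa\, dA + \int_{Q^s} k_g \;=\; 2\pi - \sum_{i=1}^4 \theta_i^{(s)},
\]
where $\theta_i^{(s)}$ are the four exterior angles of $Q_s^\lambda$. (2) Let $s\to 0$. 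On the left, $\iint_{Q_s^\lambda}\kappa\,dA \to \iint_Q \kappa\, dA$ by monotone/dominated convergence (curvature is negative, so the integrals are monotone in $s$), and $\int_{Q^s}k_g = -\int_{Q^s}(-k_g) = -\int_{Q^s}dm_s \to -\int_{Q^0}dm$ by the weak-$*$ convergence $dm_s \to m$ recorded in \cite[Claim 4.3]{asympdiv}. (3) On the right, the two top corners of $Q_s^\lambda$ are fixed (they are the top corners of $Q$) with exterior angles $\theta_1,\theta_2$, while the two bottom corners of $Q_s^\lambda$ lie on $Q^s$; I would argue their exterior angles converge to $\theta_3,\theta_4$, the exterior angles of $Q$ at the two bottom corners on $\zeta$. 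This uses that along each vertical ruling the tangent directions vary continuously, that $Q^s$ is orthogonal to the rulings at all intersection points (so the relevant angle is essentially determined by the angle $\zeta$ makes with the ruling), and smoothness of $Q$ up to the closure away from any singularities of $\pi_\zeta$. Combining (1)–(3) gives exactly
\[
\iint_Q \kappa\, dA - \int_{Q^0} dm = 2\pi - \sum_{i=1}^4 \theta_i.
\]

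The main obstacle I expect is step~(3), the convergence of the bottom exterior angles and the justification that no curvature or angle mass is lost in the limit $s\to 0$ near $\zeta$ — i.e. controlling the geometry of $Q$ uniformly as the level curves collapse onto the geodesic $\zeta$. The WP metric has curvature not bounded away from $-\infty$, so one must be a little careful that $\iint_{Q_s^\lambda}\kappa\,dA$ does not blow up; here negativity of $\kappa$ together with the fact (from \cite[\S4]{asympdiv}) that the total curvature is finite — which is itself part of the content ensuring the right-hand side is finite — saves the argument. A secondary point is handling the degenerate case $\dist(\eta,\zeta)=0$ or where $\pi_\zeta(\eta)$ touches an endpoint of $\zeta$; in that situation $Q$ degenerates and the formula should be interpreted appropriately, or one restricts attention (as the statement implicitly does via the standing assumptions of this subsection) to the nondegenerate configuration. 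I would note that all of this is essentially the verification of formula (4.5) in \cite[\S4.1]{asympdiv}, so the argument can be kept brief by citing the relevant smoothness, curvature-integrability, and weak-$*$ convergence statements from that reference.
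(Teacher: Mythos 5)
Your proposal is correct and follows essentially the same route as the paper, which does not reprove this statement but quotes it as formula (4.5) of \cite[\S 4.1]{asympdiv}: one applies the classical Gauss--Bonnet theorem to the smooth region of $Q$ above the level curve $Q^s$ (whose top and sides are ambient, hence intrinsic, geodesics) and lets $s\to 0$, using the weak-$*$ convergence $dm_s\to m$ of \cite[Claim 4.3]{asympdiv} and monotone convergence of the curvature integral. The only comment is that your step (3) is not actually the delicate point you fear: since the level curves $Q^s$ are orthogonal to the rulings, the bottom exterior angles of the truncated region equal $\pi/2$ for every $s$, and by the Remark following the theorem this is exactly the value taken for $\theta_3,\theta_4$, any excess interior angle of $Q$ at those corners being carried by atomic parts of $dm$ -- which the weak-$*$ convergence you already invoke accounts for.
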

\begin{remark}
Note that the exterior angles at the bottom corners are $\frac{\pi}{2}$,
since we are in the case where the rulings of $Q$ are orthogonal to
$\zeta$. However, the internal angles at those corners might be larger
than $\frac{\pi}{2}$, which would be accounted for by atomic components of
the measure $dm$.
\end{remark}

It is helpful to define, for any $Q$, 
\begin{equation}\label{def: I}
\calI(Q) = \iint_{Q}-\kappa dA + \int_{Q^0}
dm
\end{equation}
Note that $\calI(Q) \ge 0$ and is monotonic, so that for example if $s
\le dist (\eta,\zeta)$ and $\eta'$ is a sub-path of $\eta$, we have
\begin{equation}\label{I monotonic}
\calI(Q_0^s[\eta';\zeta]) \le \calI(Q[\eta;\zeta]).
\end{equation}

\subsubsection*{Lower bounds on $\calI$}
When $\zeta$ is close to the thick part of the stratum of a co-large multicurve, we
obtain lower bounds on $\calI(Q)$ for certain ruled surfaces $Q$.
Recall large subsurfaces from Definition \ref{def:large}.
Then, Lemma 6.3 in \cite{asympdiv} importing some of the information from the statement of
Theorem 5.14 of the paper can be rephrased as follows:

\begin{lem}\label{lem:I lower bound}

 Let $\bar\ep>0$ and let $b>0$ be the corresponding constant from Lemma \ref{lem : b-nbhd}.
Then, for any $d\in (0,b)$ and $e> 0$
 there exists a $K_0>0$ such that the
 following holds:  Let $\sigma$ be a co-large multicurve, 
and let $\zeta$ be a geodesic segment in
  $U_{d,\bar\ep}(\sigma)$ with length at least $1$. Let $Q[\eta;\zeta]$ be a
  ruled surface with $\dist(\eta,\zeta)>e$. Then
  \[
  \calI(Q_0^e) \ge K_0.
\]
\end{lem}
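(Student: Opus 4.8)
This lemma is essentially a translation into the present notation of the ruled-surface curvature estimate near completion strata from \cite{asympdiv}: it packages \cite[Lemma 6.3]{asympdiv} together with the information carried by \cite[Theorem 5.14]{asympdiv}. So the plan is to recall that mechanism and check that the hypotheses match, rather than to re-derive the curvature bookkeeping from scratch.

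First I would set up the geometry along $\zeta$. Since $d<b$ and $\zeta\subset U_{d,\bar\ep}(\sigma)$, Lemma \ref{lem : b-nbhd} gives a uniform lower bound for $\ell_\beta$ along $\zeta$ for every curve $\beta\notin\sigma$, while $\ell_\alpha$ is uniformly bounded above along $\zeta$ for $\alpha\in\sigma$ (continuity of length functions on $\oT{S}$ together with compactness of a fixed neighborhood of $\calS_{\bar\ep}(\sigma)$ modulo $\mathrm{Stab}(\sigma)$). Because $\sigma$ is co-large, the plumbing geometry near $\calS(\sigma)$ is then uniformly controlled along $\zeta$: the pinching directions $\grad\ell_\alpha$, $\alpha\in\sigma$, span a subbundle along which the WP sectional curvature is $\le -k_0<0$ for a uniform $k_0=k_0(\bar\ep,d)$, and $\calS(\sigma)$ is totally geodesic with $\zeta$ within distance $d$ of its thick part. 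I would also use the facts already assembled in the excerpt: the rulings $\seg{x\,\pi_\zeta(x)}$ of $Q[\eta;\zeta]$ are ambient geodesics, so by the Gauss equation the intrinsic curvature satisfies $\kappa\le K_{\mathrm{amb}}(T_pQ)<0$ pointwise and is very negative wherever $T_pQ$ has a definite component in a pinching direction; $-k_g\ge 0$ and $\int_{Q^0}dm=\lim_{s\to0}\int_{Q^s}(-k_g)\ge 0$ record the bending of the rulings relative to the level curves $Q^s$; and $\calI$ is monotone, \eqref{I monotonic}, and obeys the Gauss--Bonnet bookkeeping of Theorem \ref{thm : GB2}.

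The estimate itself I would run as a dichotomy, and this is the step I expect to be the main obstacle. Either a definite fraction of $Q_0^e$ sits over a portion of $\zeta$ where $T_pQ$ genuinely sees a pinching direction, in which case $\iint_{Q_0^e}(-\kappa)\,dA$ by itself is bounded below by a uniform constant, using the $-k_0$ bound together with $\length(\zeta)\ge 1$ (which, in the setting in which the lemma is applied, keeps the base curve $Q^0=\pi_\zeta(\eta)$ of definite size); or else $Q_0^e$ stays essentially tangent to the thick, nearly flat directions of the totally geodesic stratum, in which case its rulings, which must still leave $\zeta$ orthogonally and reach distance $>e$, are forced to turn relative to the level curves by a definite amount, giving a uniform lower bound on $\int_{Q^0}dm$ (equivalently a uniform angle defect via Theorem \ref{thm : GB2} applied to $Q_0^e$). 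In either case $\calI(Q_0^e)\ge K_0$. The cleanest way to make this uniform is by contradiction and compactness: if $\calI((Q_n)_0^e)\to 0$ along a sequence of data $(\sigma_n,\zeta_n,\eta_n)$, then by Lemma \ref{lem : b-nbhd} the $\zeta_n$ lie in a uniformly controlled region, so after applying elements of $\mathrm{Stab}(\sigma_n)$ and passing to a subsequence one extracts a limiting ruled surface of width $e$ over a geodesic near a limiting co-large stratum with $\calI=0$; by the curvature facts above such a surface would have to be flat and tangent to a totally geodesic stratum while spanning distance $e$ off of it, a contradiction.

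The hard part will be exactly that dichotomy: controlling, uniformly over all co-large $\sigma$ and all admissible paths $\eta$, the interaction between the tangent planes of $Q$ and the pinching directions, and making precise the implication ``small total curvature $\Rightarrow$ $Q_0^e$ hugs the totally geodesic stratum $\Rightarrow$ the rulings must bend''. Since this is precisely the content of \cite[Theorem 5.14]{asympdiv}, in the write-up I would quote that theorem and confine the remaining work to verifying that its hypotheses are implied by $\zeta\subset U_{d,\bar\ep}(\sigma)$ with $d<b$, $\length(\zeta)\ge 1$, and $\dist(\eta,\zeta)>e$.
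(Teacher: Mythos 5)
Your fallback plan—quote \cite[Lemma 6.3]{asympdiv} together with the information from \cite[Theorem 5.14]{asympdiv} and only verify that the hypotheses are met—is exactly what the paper does: its "proof" is likewise a sketch deferring the Jacobi-field/frame-field analysis to \S 5 and Lemma 6.3 of \cite{asympdiv}. So at the level of citation strategy the two agree, and your preliminary reductions (Lemma \ref{lem : b-nbhd} controlling lengths along $\zeta$, $\kappa\le K_{\mathrm{amb}}$ for ruled surfaces, monotonicity of $\calI$, area of $Q_0^e$ of definite size since $\length(\zeta)\ge 1$) match the paper's.

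However, the self-contained mechanism you describe has the curvature dichotomy essentially backwards, and this is precisely where the co-large hypothesis enters. Near the thick part of the stratum of a co-large $\sigma$, the nearly flat planes are \emph{not} the stratum-tangent ones: because $S\ssm\sigma$ is large, $\calS(\sigma)$ has no nontrivial product structure, so planes tangent (or nearly tangent) to the stratum in its $\bar\ep$--thick part have sectional curvature uniformly bounded away from $0$. The degenerating (curvature $\to 0$) planes are the mixed ones involving the pinching directions: a plane spanned by $\grad\ell_{\alpha}^{1/2}$ for $\alpha\in\sigma$ together with anything other than its $J$-rotate (e.g.\ two distinct pinching gradients, or a pinching gradient and a stratum direction) is asymptotically flat, so your claims that the pinching directions "span a subbundle along which the WP sectional curvature is $\le -k_0<0$" and that $\kappa$ is very negative wherever $T_pQ$ "sees" a pinching direction are false in general. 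Consequently your dichotomy breaks in the genuinely problematic case (tangent plane mixing a pinching direction with something else), which you treat as the good case, and the case you propose to rescue via the bending term $\int_{Q^0}dm$ (plane tangent to the stratum) needs no rescue at all. The paper's sketch handles the transverse case differently: when the ruling direction is transverse to the stratum, the ruling geodesic exits a neighborhood of the stratum in one direction or the other and there accrues a definite contribution to $\iint(-\kappa)\,dA$; in both cases the lower bound comes from the curvature integral, not from $dm$. Your concluding compactness argument inherits the same problem (a ruled surface with $\calI=0$ near the stratum would be tangent to the nearly flat mixed/pinching planes, not to the stratum), so as written it does not yield the contradiction you want. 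If you rewrite the sketch with the roles of the stratum-tangent and pinching-mixed planes corrected, and with the "escape from the neighborhood of the stratum" argument in the transverse case, it will line up with \cite[\S 5, Lemma 6.3]{asympdiv} as intended.
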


\begin{proof}
Here we only sketch the proof of the lemma. The detailed analysis, using suitable frame fields introduced by Wolpert \cite[\S 4]{wolb}, and standard
properties of Jacobi fields is carried out in $\S 5$ and Lemma 6.3 of \cite{asympdiv}.

Negative curvature implies that the level sets $Q^t$ are expanding with $t$, so that the
area of $Q_0^e$ is bounded below by $e$. Hence the first term in the definition of
$\calI(Q_0^e)$ would give the desired lower bound provided that the sectional curvatures
(in the planes tangent to $Q$) are bounded away from 0. These sectional curvatures are
indeed strictly negative in the thick part of Teichm\"uller space, as well as, near the
stratum of $\sigma$, in the directions nearly tangent to the stratum (this last fact
follows from the assumption that $S\ssm \sigma$ is large, hence the stratum has no
nontrivial product structure). Thus one may consider, pointwise on $Q$, two cases:
if the ruling geodesic direction of $Q$ is nearly tangent to the stratum direction, one
obtains a strictly negative curvature bound. If the ruling is transverse to
the stratum direction, then in one direction or the other the ruling geodesic exits a
neighborhood of the stratum, and enters the regime of strictly negative curvature in all
directions. This again gives a definite contribution to the integral. 
\end{proof}

\subsection{Asymptotic rays}
\label{subsec:asymptotic}

In this subsection we prove a result on asymptotic and strongly asymptotic rays that will
be useful in Section \ref{bottlenecks}. 
The first statement of the proposition is a variation on Theorem 6.2 of \cite{asympdiv},
giving a criterion for promoting asymptoticity to strong asymptoticity in our setting.  

(Recall that a ray $r:[0,\infty)\to \calX$ in a metric space is {\em asymptotic} to a subset $\calY\subseteq \calX$
 if $\dist(r(t),\calY)$ is bounded above for all $t\in [0,\infty)$.
The ray is {\em strongly asymptotic} to the subset if
$\lim_{t\to\infty}\dist(r(t),\calY)=0$.)

\begin{prop}\label{prop : asymp-strongasymp}
  Let $r$ and $r'$ be two asymptotic geodesic rays in $\oT{S}$
  such that
  $r([T,\infty])\subset U_{d,\bar\ep}(\omega)$ 
  for $d<b$,  $T\ge 0$ and a co-large multicurve $\omega$. Then $r$ and $r'$ are in fact
  strongly asymptotic.

  For any ray $r$ contained in $\calS_{\bar\ep}(\omega)$ there is a ray $r_1$ in $\Teich(S)$ which
  is strongly asymptotic to $r$. 
\end{prop}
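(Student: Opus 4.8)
The plan is to produce $r_1$ as a limit of WP geodesic segments running from a fixed point of $\Teich(S)$ out to $r(t)$ as $t\to\infty$, to check by non-refraction that this limiting ray stays inside $\Teich(S)$, to observe that it is asymptotic to $r$ by convexity of the WP metric, and finally to upgrade asymptoticity to strong asymptoticity using the first statement of this proposition. Concretely, fix any $p\in\Teich(S)$ and for $n\in\NN$ let $g_n=\seg{p\,r(n)}$, parametrized by arclength on $[0,L_n]$ with $L_n=\dw(p,r(n))$; since $r$ is unit speed we have $|L_n-n|\le\dw(p,r(0))$, so $L_n\to\infty$ and $L_n/n\to 1$. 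As $\oT{S}$ is a proper geodesic space (this follows from compactness of $\overline{\calM(S)}$; compare the proof of Lemma~\ref{lem : b-nbhd}), Arzel\`a--Ascoli on each interval $[0,T]$ together with a diagonal argument lets us pass to a subsequence along which $g_n$ converges, uniformly on compact subsets of $[0,\infty)$, to a map $r_1\colon[0,\infty)\to\oT{S}$; being a uniform limit of geodesics in the complete $\CAT(0)$ space $\oT{S}$, $r_1$ is a unit-speed geodesic ray with $r_1(0)=p$.

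The key point is that $r_1$ in fact lies in $\Teich(S)$. Fix $T>0$. Since $\oT{S}$ is uniquely geodesic, $r_1|_{[0,T]}$ is the WP geodesic $\seg{p\,r_1(T)}$. Writing $r_1(T)\in\calS(\sigma)$, the Non-refraction Theorem~\ref{thm:nonref}, applied with $\sigma_1=\emptyset$ and $\sigma_2=\sigma$, shows that the interior of $\seg{p\,r_1(T)}$ lies in $\calS(\emptyset)=\Teich(S)$, so $r_1((0,T))\subset\Teich(S)$. Since $T$ is arbitrary and $r_1(0)=p\in\Teich(S)$, we conclude $r_1\subset\Teich(S)$.

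It remains to see that $r_1$ is asymptotic to $r$; strong asymptoticity will then follow. Reparametrizing $g_n$ and $r|_{[0,n]}$ affinely over $[0,1]$, convexity of $\dw$ along constant-speed geodesics in a $\CAT(0)$ space gives $\dw\big(g_n(sL_n),\,r(sn)\big)\le(1-s)\,\dw(p,r(0))$ for all $s\in[0,1]$. Fixing $t\ge0$ and letting $n\to\infty$ along the chosen subsequence, and using $g_n(t)\to r_1(t)$ together with $r(tn/L_n)\to r(t)$ (valid since $L_n/n\to 1$), we obtain $\dw(r_1(t),r(t))\le\dw(p,r(0))$ for all $t$, so $r$ and $r_1$ are asymptotic. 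Now $r\subset\calS_{\bar\ep}(\omega)\subset U_{d,\bar\ep}(\omega)$ for every $d>0$, so choosing $d<b$ (shrinking $\bar\ep$ if needed so that Lemma~\ref{lem : b-nbhd} provides such a $b$) and $T=0$ in the first statement of the proposition, applied to the asymptotic pair $r,r_1$, shows that $r$ and $r_1$ are strongly asymptotic.

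The step I expect to be most delicate is confirming that $r_1$ does not slip into one of the completion strata; this is exactly what the Non-refraction Theorem guarantees, and it is essential that $r_1$ be a globally (not merely locally) minimizing geodesic ray, so that each initial piece $r_1|_{[0,T]}$ really is the WP geodesic between its endpoints --- which is in turn why the compactness of $\overline{\calM(S)}$, used to extract the limit, is needed.
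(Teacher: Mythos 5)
Your proposal only establishes the second statement of the proposition, and even that conditionally: the first statement is never proved but is invoked as a black box at the end, applied to the pair $(r,r_1)$ with $r$ contained in the stratum. This is where the real content lies and where the logic becomes circular. The paper proves the first statement directly only when both rays lie in the interior $\Teich(S)$ (via the ruled surface $Q[r';r]$, the lower bound $\calI(Q_0^e)\ge K_0$ of Lemma \ref{lem:I lower bound}, and Gauss--Bonnet, Theorem \ref{thm : GB2}); the general $\oT{S}$ version of the first statement is then \emph{deduced from} the second statement. Consequently, in the second statement one cannot simply cite the first statement for the pair $(r,r_1)$, since $r$ lies in $\calS_{\bar\ep}(\omega)$ and the interior version does not apply to it; the paper instead perturbs the basepoint to an interior point $x_\delta$ at distance $\delta$ from $r(0)$, forms the limit ray $r_\delta\subset\Teich(S)$ of the segments $\seg{x_\delta\, r(n)}$, notes by $\CAT(0)$ convexity that $r_\delta$ stays in $U_{\delta,\bar\ep}(\omega)$, applies the \emph{interior} case of the first statement to the interior pair $(r_\delta,r_1)$, and lets $\delta\to 0$. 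Your write-up omits both the ruled-surface argument for the first statement and this approximation step, so as a proof of the proposition it has a genuine gap (and, read as a self-contained argument, a circularity).

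A second, more local problem: you justify extracting the limit ray $r_1$ by asserting that $\oT{S}$ is a proper geodesic space, "following from compactness of $\overline{\calM(S)}$." This is false: the WP completion (augmented Teichm\"uller space) is not locally compact at stratum points (see Wolpert's survey), and compactness of the quotient does not yield properness because the mapping class group action is not properly discontinuous along the strata. The limit exists, but the correct mechanism is the one the paper uses: $\CAT(0)$ convexity (Lemma 8.3 in \cite[\S II.8]{bhnpc}) shows the segments $\seg{p\,r(n)}$ are Cauchy on compact parameter sets, with no appeal to Arzel\`a--Ascoli or properness. Your remaining steps --- the use of the Non-refraction Theorem \ref{thm:nonref} to keep $r_1$ in $\Teich(S)$, and the convexity estimate $\dw(g_n(sL_n),r(sn))\le(1-s)\dw(p,r(0))$ giving asymptoticity --- do match the paper and are fine.
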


\begin{proof}
We begin by proving the first statement in the case where $r$ and $r'$ are in the
interior, $\Teich(S)$. 
  
  Following the notation of $\S$\ref{subsec:rulings} let $Q$ be the ruled surface $Q[r';r]$.
  Let $\pi_r$ denote the nearest point projection to $r$. Since it is continuous, for each
  $n$ there is an interval $J_n$ such that $\pi_r(r'(J_n))= r([0,n])$. Similarly for each
  $i$ there is an interval $I_i$ such that $\pi_r(r'(I_i))= r([i,i+1])$. 

  Suppose by way of
  contradiction that the distance from $r'(t)$ to $r$ remains bounded below by $e>0$ for all
  $t$. Then, since $r([i,i+1])\subset U_{d,\bar\ep}(\omega)$, 
  by Lemma \ref{lem:I lower bound} we have
  $\calI(Q_0^e[r'|_{I_i},r|_{[i,i+1]}]) \ge K_0$ for a uniform $K_0>0$, and hence
  \[\calI(Q[r'|_{J_n},r|_{[0,n]}) \geq K_0n.\] 
    Moreover, by Theorem \ref{thm : GB2} (Gauss-Bonnet) the left-hand side of the above inequality is bounded above by $4\pi$, which implies that $n\leq \frac{4\pi}{K_0}$. 
    However $n$ could be chosen arbitrarily large which is a contradiction. Therefore, $r$ and $r'$ are strongly asymptotic.

    Now we prove the second part. Let $r$ lie in $\calS_{\bar\ep}(\omega)$, where
    $\omega$ is co-large. 
As in the proof of Theorem 1.3 of \cite{bmm1}, we fix a basepoint $x\in \Teich(S)$
within $b$ of $r(0)$, let $y_n = r(n)$ and use CAT(0) geometry of $\oT{S}$
(via Lemma 8.3 in \cite[\S II.8]{bhnpc})
to conclude that the segments $\seg{xy_n}$ converge
to an infinite ray $r_1$ in $\oT{S}$, which is asymptotic to $r$.

We claim that $r_1$ is entirely inside $\Teich(S)$. Suppose that $T>0$ is the first time
that $r_1$ intersects a completion stratum $\calS(\sigma)$. The segment $r_1([0,T+1])$
then has at least one endpoint in $\Teich(S)$ and hence by 
Theorem \ref{thm:nonref} (Nonrefraction) its interior maps to $\Teich(S)$. This
contradicts the assumption that $r_1(T)\in\calS(\sigma)$.

\medskip
To see that $r_1$ is strongly asymptotic to $r$, 
for a $\delta>0$ let $x_\delta$ 
be the point on the geodesic segment $\seg{r(0)r_1(0)}$ at distance $\delta$ from $r(0)$.
(it must be in $\Teich(S)$ by Theorem \ref{thm:nonref}).
Let $r_\delta$ be the geodesic obtained as above as the limit of $\seg{x_\delta y_n}$.
As we saw above $r_\delta$ is an infinite ray in $\Teich(S)$ which 
remains in a $\delta$-neighborhood of $r$ and in particular in $U_{\delta,\bar\ep}(\omega)$.
Since the rays $r_\delta$ and $r_1$ are asymptotic and in $\Teich(S)$, the first part of the proposition
implies (for sufficiently small $\delta$) that $r_1$ and $r_\delta$ are in fact strongly asymptotic.
Letting $\delta\to 0$, the strong asymptoticity of $r_1$ to $r$ follows.

Finally, we prove the first part of the proposition in the general case. Using the second
part, $r$ and $r'$ are strongly asymptotic to $r_1 $ and $r'_1$ respectively, which lie in the
interior. The version of first part that we already proved shows that $r_1$ and $r'_1$ are
strongly asymptotic, and this concludes the proof. 
\end{proof}

\section{Non-annular bounded combinatorics}\label{sec:nonannularbddcomb}

In this section we study the case that the end invariant of a WP geodesic satisfies the
{\em non-annular bounded combinatorics} condition:

\begin{definition}(Bounded combinatorics)
We say that a pair of markings or laminations $(\mu,\mu')$ satisfies {\em $R$--bounded combinatorics}  if
\[d_Y(\mu,\mu')\leq R\]
for all proper subsurfaces $Y\subsetneq S$. If the bound holds for non-annular proper subsurfaces, we say the  
pair satisfies {\em non-annular} $R$--bounded combinatorics.
\end{definition}

When the end invariant of a WP geodesic satisfies non-annular bounded combinatorics, the
short curves correspond exactly to the annulus with big subsurface
coefficients. More precisely:

\begin{thm}\label{thm:short curve bddcomb}
  For any $R,\ep_0>0$ there are functions $\hat N:\RR_{>0}\to\RR_{>0}$ and
  $\hat \ep:\RR_{>0}\to\RR_{>0}$ such that the following holds. 

  Suppose that $g$ is a WP geodesic with end invariant
  $(\nu^+,\nu^-)$, where $\nu^\pm$ are either laminations in $\EL(S)$ or
  points in the $\ep_0$-thick part, 
which satisfy non-annular $R$--bounded combinatorics.
Then 
\begin{enumerate}
\item for any $N\geq 1$, if $\inf_t\ell_\gamma(g(t))< \hat\ep(N)$ then $d_\gamma(\nu^-,\nu^+)\geq N$.
\item for any $\ep>0$,  if $d_\gamma(\nu^-,\nu^+)\geq \hat N(\ep)$ then
  $\inf_t\ell_\gamma(g(t))< \ep$.
\end{enumerate}
\end{thm}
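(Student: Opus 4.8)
The plan is to prove the two implications of Theorem \ref{thm:short curve bddcomb} by combining the coarse model of the WP metric via Brock's quasi-isometry $Q:\Teich(S)\to\calP(S)$ with the length-control results of Section \ref{subsec : lf-tw control}, in particular Theorems \ref{thm : twsh} and \ref{thm : shtw2}, together with the no-backtracking property of hierarchy paths (Proposition \ref{prop:nobacktr}). The key structural input is that, since $(\nu^+,\nu^-)$ satisfies non-annular $R$--bounded combinatorics, the distance formula (\ref{eq:distWP}) shows that the WP geodesic $g$ (or rather its shadow in $\calP(S)$) stays within bounded distance of a hierarchy path whose length is controlled: all the non-annular contributions are bounded by $R$, so the only ``long'' subsurface coefficients are annular, and these produce the short curves. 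The overall strategy is that a short curve $\gamma$ on $g$ corresponds to a definite-length excursion of $g$ into the $\gamma$--thin part, which forces large $d_\gamma$, and conversely.

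For part (1), suppose $\inf_t\ell_\gamma(g(t))<\hat\ep(N)$, so $g$ passes through a very thin part of $\Teich(S)$ at $\gamma$. First I would pass to a bounded-length sub-segment $\seg{g(t_1)g(t_2)}$ of $g$ on which $\gamma$ is short somewhere in the middle but has length $>\ep_0$ (or a comparable constant) at the endpoints: when $\nu^\pm$ are $\ep_0$--thick points we can take the endpoints of $g$ itself; when they are ending laminations we use that a pinching curve would have $d_\gamma=\infty$ and otherwise (by the description of end invariants in Section \ref{subsec:endinv} together with the bounded-combinatorics hypothesis applied to the annular subsurface of $\gamma$ near the ends, cf.\ Lemma \ref{lem:dY1}) $\gamma$ recovers definite length near both ends, and the length-function $\ell_\gamma\circ g$ is convex by \cite[Corollary 4.7]{wolpert:nielsen} so it has at most one minimum and is large outside a bounded interval. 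On this segment Theorem \ref{thm : shtw2} directly gives $d_\gamma(g(t_1),g(t_2))\ge N'$ for $N'$ as large as we like provided $\hat\ep(N)$ is chosen small enough, and then the no-backtracking property (Proposition \ref{prop:nobacktr}), applied to the hierarchy path shadowing $g$, upgrades this to $d_\gamma(\nu^-,\nu^+)\ge N' - M \ge N$.

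For part (2), suppose $d_\gamma(\nu^-,\nu^+)\ge\hat N(\ep)$. I would first show this forces a definite excursion of $g$ into the thin part at $\gamma$. Because of non-annular $R$--bounded combinatorics, the distance formula (\ref{eq:distWP}) shows that on the ``sub-geodesic'' of $g$ where the $\gamma$--coefficient accumulates, the WP length is comparable to $d_\gamma$ alone (all other terms being bounded), so $g$ spends a long time advancing essentially only in $\calC(\gamma)$; restricting to a sub-segment of controlled length $T$ where $d_\gamma$ across the endpoints is still $\ge N$ (possible by no-backtracking and the triangle inequality), Theorem \ref{thm : twsh} applies — the hypothesis $\max\ell_\gamma>\ep_0$ holding either because the endpoints are thick or because, if $\gamma$ were short throughout, it would be a pinching curve, contradicting finiteness of $d_\gamma$ — and yields $\min_t\ell_\gamma(g(t))<\ep$. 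The function $\hat N(\ep)$ is then the constant $N$ produced by Theorem \ref{thm : twsh} for that $\ep$ and the length bound $T$, enlarged by $M$ to absorb no-backtracking.

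The main obstacle, and the place I would spend the most care, is the bookkeeping at the ends when $\nu^\pm$ are laminations rather than thick points: one must extract a \emph{finite} subsegment of the infinite geodesic $g$ on which the hypotheses of Theorems \ref{thm : twsh}/\ref{thm : shtw2} hold (definite length at the endpoints, controlled total length $T$), and simultaneously control how much of $d_\gamma(\nu^-,\nu^+)$ is captured by that subsegment versus lost near the ends. Here the non-annular bounded combinatorics is what saves us: it guarantees (via Lemma \ref{lem:dY1} and the definition of ending laminations) that $\gamma$ cannot be a pinching curve unless $d_\gamma=\infty$, that $\ell_\gamma$ recovers definite size away from a compact part of $g$, and that the shadow of $g$ in $\calP(S)$ is a uniform-quality quasi-geodesic fellow-traveling a hierarchy path, so the no-backtracking estimate is available with a uniform constant $M$. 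Assembling these uniformly — so that $\hat N$ and $\hat\ep$ depend only on $R$ and $\ep_0$ — is the real content; the individual ingredients are all quoted from the references listed in Section \ref{sec:background}.
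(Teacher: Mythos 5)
Your toolbox is the right one---Brock's quasi-isometry, the hierarchy path fellow-traveling $Q\circ g$ that comes from non-annular bounded combinatorics, Theorems \ref{thm : twsh} and \ref{thm : shtw2}, and no-backtracking---and this is indeed the skeleton of the paper's argument. But the two steps you lean on to make the estimates uniform do not work as stated. In part (2) you claim that the distance formula (\ref{eq:distWP}) makes the WP length of a subsegment ``comparable to $d_\gamma$ alone,'' so that $g$ ``spends a long time advancing in $\calC(\gamma)$.'' This misreads the formula: the sum in (\ref{eq:distWP}) runs over \emph{non-annular} subsurfaces only, so a huge $d_\gamma$ contributes nothing to WP distance; under non-annular $R$--bounded combinatorics the WP distance is comparable to $d_S$, and the large annular coefficient is in fact accumulated over a segment of uniformly \emph{bounded} WP length---which is precisely why Theorem \ref{thm : twsh}, which requires a length bound $T$, can be applied at all. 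Symmetrically, in part (1) you need a subsegment of uniformly bounded length whose endpoints have $\ell_\gamma$ bounded below; convexity of $\ell_\gamma\circ g$ gives that the thin set is an interval, but not that its length is bounded in terms of $R,\ep_0$ only, nor a quantitative endpoint bound at controlled parameter values when $\nu^\pm$ are laminations.

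The second, related gap is the transfer of \emph{annular} data between the Bers markings along $g$, the hierarchy path $\rho$, and $\nu^\pm$: fellow-traveling in $\calP(S)$ (equivalently bounded WP distance, Lemma \ref{lem:dWP-dY}) controls only non-annular projections, since two nearby pants decompositions can differ by arbitrarily large twisting about $\gamma$. Both missing ingredients are exactly what Theorem \ref{thm:anncoeffcomp} supplies, and it is the engine of the paper's proof: because bounded combinatorics forces the shadow of $g$ to make definite progress in $\calC(S)$ (condition (\ref{eq:advinX}) with $X=S$), moving a uniformly bounded number $w$ of steps along $\rho$ from the index where $\gamma$ appears (or where $\gamma$ is Bers-short on $g$) produces times $t_1,t_2$ with $|t_1-t_2|$ bounded in terms of $R$, with $\ell_\gamma(g(t_1)),\ell_\gamma(g(t_2))\geq\omega$ (the displaced Bers pants decompositions cross $\gamma$), and with $d_\gamma$ between $Q(g(t_i))$ and $\rho(i\mp w)$ bounded by $B$. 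Only with these in hand can one run Theorems \ref{thm : twsh}/\ref{thm : shtw2} on a bounded window and then pass the annular estimate out to $(\nu^-,\nu^+)$ via no-backtracking. Without invoking Theorem \ref{thm:anncoeffcomp} (or proving an equivalent statement), both directions of your argument have a genuine gap.
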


\begin{proof}
Let $\rho:I\to \calP(S),\; I\subseteq \RR\cup \{\pm\infty\}$, be a hierarchy path in the
pants graph of $S$ that connects the points $\nu^-$ and $\nu^+$; see
\S \ref{subsec:coarse-model-CC}.  Let $Q:\Teich(S)\to \calP(S)$ be Brock's (see (\ref{eq:brock})).
The non-annular $R$--bounded combinatorics property implies, via
 \cite[Theorem 4.4]{bmm2} (also \cite[Theorem 5.13]{wpbehavior}), that 
$Q\circ g$  and $\rho$ are $D$--fellow-travelers in $\calP(S)$,
 where $D$ is a constant depending only on $R$. Moreover, the non-annular $R$-bounded combinatorics condition
together with the distance formula (\ref{eq:dist}) 
implies
  that condition (\ref{eq:advinX}) in Theorem \ref{thm:anncoeffcomp} holds with the
  whole surface $S$ playing the role of $X$.    That is,
$$ d_{\calP}(\rho(i),\rho(j))\asymp_{K,C} d_S(\rho(i)),\rho(j))$$ 
with constants $K,C$ depending only on $R$.

\subsubsection*{Proof of part (1):}
Given $\ep>0$ and less than the Bers constant $L_S$, suppose for some $t\in \RR$ that
$\ell_\gamma(g(t))< \ep$.
Then $\gamma$ is in a Bers pants decomposition $Q(g(t))$ at $g(t)$,
 and by the fellow traveling of $Q(g)$ and $\rho$, $Q(g(t))$ is within distance $D$ of
 $\rho(i)$ for some $i\in I$. 

Let $w,B>0$ be the constants provided by  Theorem \ref{thm:anncoeffcomp}. Then let $t_1,t_2$ be so that the pants decompositions
 $Q(g(t_1))$, $Q(g(t_2))$ are within distance $D$ of $\rho(i-w)$ and $\rho(i+w)$,
respectively, then the inequality (\ref{d gamma bounded}) from the theorem gives us

\begin{equation}\label{eq:anncoeff}
\Big|d_\gamma\left( Q(g(t_1)),Q(g(t_2)) \right) - d_\gamma\left(\rho(i-w),\rho(i+w)\right)\Big| \le 2B.
\end {equation}
Moreover, by (\ref{eq:lb for gamma}) from the theorem we have that:
\begin{equation}\label{eq:lb}
\min\{\ell_\gamma(g(t_1)),\ell_\gamma(g(t_2))\}> \omega.
\end{equation}
 
 Now, note that the length of $[t_1,t_2]$ is bounded independently of $g$ and $t$ with a constant that depends only on $R$ and $D$.
Thus we can apply
Theorem \ref{thm : shtw2} to the geodesic segment $g|_{[t_1,t_2]}$ to
conclude that, for any $N\in \NN$, there is an $\ep<\min\{\omega, L_S\}$, so that if $\inf_{t\in [t_1,t_2]}\ell_\gamma(g(t))<\ep$, then
\[d_\gamma(Q(g(t_1)),Q(g(t_2)))\geq N+2B+M,\]
where the constant $M$ is from Proposition \ref{prop:nobacktr} (no-backtracking).
By the inequality (\ref{eq:anncoeff}) this implies that
  \[d_\gamma(\rho(i-w),\rho(i+w))\geq N+M.\] 
Then by Proposition \ref{prop:nobacktr} we have
  \[d_\gamma(\nu^-,\nu^+)\geq N\]
which concludes the proof of part (1).

\subsubsection*{Proof of part (2):}
By \cite[Lemma 6.2]{mm2} if $d_\gamma(\nu^-,\nu^+)\geq N$ where $N\in \NN$ is larger than a threshold,
the curve $\gamma$ appears as a
curve in a pants decomposition $\rho(i)$ where $i\in I$. 
Then similar to part (1) by Theorem \ref{thm:anncoeffcomp}
 there are constants $w,B>0$ and $\omega>0$, so that the inequalities
(\ref{eq:anncoeff}) and (\ref{eq:lb}) hold.

Moreover, appealing again to 
the no-backtracking property of hierarchy paths, we have 
$$d_\gamma(\rho(i-w),\rho(i+w))\geq N-M.$$
The fellow-traveling property of $\rho$ and $Q(g)$ guarantees that there are $t_1,t_2$ so that $Q(g(t_1))$ and
$Q(g(t_2))$ are within distance $D$ of $\rho(i-w)$ and $\rho(i+w)$
respectively. So by (\ref{eq:anncoeff}) we have
\[d_\gamma(Q(g(t_1)),Q(g(t_2)))\geq N-M-2B.\]
Also, the length of the interval $[t_1,t_2]$ is bounded independently of $g$, 
thus appealing to Theorem \ref{thm : twsh}, for any $\ep>0$, there is an $N\in \NN$ so that
  \[\inf_{t\in [t_1,t_2]}\ell_\gamma(g(t))< \ep,\]
which gives us part (2) of the theorem.
\end{proof}

\section{Bottlenecks and visibility}
\label{bottlenecks}

The main result of this section is Theorem \ref{thick bottleneck} on existence of
bottlenecks, which we restate here:

\restate{Theorem}{thick bottleneck}{
Let $\omega,\omega'$ be two co-large multicurves that fill $S$. 
Let $\bar\ep>0$ and let $r$ and $r'$ be infinite length WP geodesic rays in $\oT{S}$
that are strongly asymptotic to (or contained in) the $\bar\ep$--thick parts of the
strata $\calS(\omega)$
and $\calS(\omega')$, respectively. 
Then $r$ and $r'$ have a bottleneck.
}

As a consequence of the above theorem we will also obtain a visibility theorem, which is
Theorem \ref{thm:asymplargevis} stated in Subsection \ref{subsec:visibility}. 

We start with the following observations about the rays $r$ and $r'$:
\begin{lem}\label{rays diverge}
 The rays $r$ and $r'$ diverge; that is
\[
\lim_{t\to\infty}\dist(r'(t),r) = \infty
\]
and the corresponding statement holds when interchanging $r$ and $r'$.
\end{lem}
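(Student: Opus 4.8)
The plan is to show divergence by contradiction, using the Gauss--Bonnet estimate for ruled surfaces together with the lower bound on $\calI$ near thick parts of co-large strata (Lemma~\ref{lem:I lower bound}), exactly in the spirit of the proof of strong asymptoticity in Proposition~\ref{prop : asymp-strongasymp}. So suppose $\dist(r'(t),r)$ does not tend to infinity; then there is $e>0$ and a sequence $t_n\to\infty$ with $\dist(r'(t_n),r)\le e$. By CAT(0) convexity of the distance to the (closed, convex) image of $r$, the function $t\mapsto \dist(r'(t),r)$ is convex, hence bounded above on $[0,t_n]$ by $\max\{\dist(r'(0),r),e\}=:e'$ for all $n$. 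So in fact the whole ray $r'$ stays within distance $e'$ of $r$, i.e. $r'$ is asymptotic to $r$ (and symmetrically $r$ asymptotic to $r'$). Now by Proposition~\ref{prop : asymp-strongasymp} (second statement, applied to bring everything into the interior if needed, then the first statement) $r$ and $r'$ are in fact \emph{strongly} asymptotic, so $\dist(r'(t),r)\to 0$.

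The contradiction then comes from the two end invariants being associated to \emph{different}, mutually filling co-large multicurves. Since $r$ is strongly asymptotic to the $\bar\ep$-thick part of $\calS(\omega)$ and $r'$ to that of $\calS(\omega')$, and since $\omega\cup\omega'$ fills $S$, the rays cannot be strongly asymptotic to each other. Concretely: a ray strongly asymptotic to $\calS_{\bar\ep}(\omega)$ has $\ell_\alpha\to 0$ along it for every $\alpha\in\omega$, while $\ell_\beta$ is bounded below for every curve $\beta$ disjoint from $\omega$ (by Lemma~\ref{lem : b-nbhd}, since eventually the ray lies in $U_{b,\bar\ep}(\omega)$). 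The same holds for $r'$ with $\omega'$. If $r$ and $r'$ were strongly asymptotic then $\ell_\alpha(r'(t))\to 0$ for every $\alpha\in\omega$; but $\omega\cup\omega'$ fills, so some $\alpha\in\omega$ overlaps $\omega'$, hence $\alpha\pitchfork\calS(\omega')$ and $\ell_\alpha\to\infty$ (or at least stays bounded below) along $r'$, by the description of $\{\ell_\alpha=\infty\}$ on $\oT S$ together with $r'([T',\infty))\subset U_{b,\bar\ep}(\omega')$. This is the required contradiction, so $\dist(r'(t),r)\to\infty$; interchanging the roles of $r$ and $r'$ gives the second assertion.

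Alternatively, and perhaps more cleanly, one can run the Gauss--Bonnet argument directly as in Proposition~\ref{prop : asymp-strongasymp}: assuming $\dist(r'(t),r)$ stays bounded below by some $e>0$ along a subsequence $t_n\to\infty$ (which, after the convexity observation above, forces it to stay in a band $e\le \dist(r'(t),r)\le e'$ on longer and longer intervals), the ruled surface $Q[r';r]$ has $\calI(Q_0^e[r'|_{I_i},r|_{[i,i+1]}])\ge K_0$ on each unit subinterval where $r$ lies in $U_{d,\bar\ep}(\omega)$, so $\calI$ grows linearly while Gauss--Bonnet caps it at $4\pi$ — a contradiction. The only thing to check in this route is that $r$ eventually lies in $U_{d,\bar\ep}(\omega)$ for $d<b$, which is immediate from strong asymptoticity of $r$ to $\calS_{\bar\ep}(\omega)$.

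The main obstacle, and the place requiring genuine care, is the step ruling out \emph{strong} asymptoticity of $r$ and $r'$ from the hypothesis that $\omega$ and $\omega'$ fill: one must be sure that ``strongly asymptotic to the $\bar\ep$-thick part of $\calS(\omega)$'' really does force the length of each curve of $\omega$ to zero along the ray and force a uniform lower bound for curves overlapping $\omega$, so that the filling condition produces a curve whose length behaves incompatibly along $r$ versus $r'$. This uses Lemma~\ref{lem : b-nbhd} and the continuity of length-functions on $\oT S$ together with the identification of $\{\ell_\alpha=0\}$ and $\{\ell_\alpha=\infty\}$; all the ingredients are in Section~\ref{sec:background}, so the argument is a matter of assembling them correctly rather than proving anything new.
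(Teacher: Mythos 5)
Your argument is correct in outline, but it is a genuinely different route from the paper's. The paper proves the lemma coarsely: using Brock's quasi-isometry and the distance formula \eqref{eq:distWP} it establishes the stratum analogue \eqref{stratum distance formula}, and then the filling hypothesis on $\omega\cup\omega'$ shows that $\calN_R(\calS(\omega))\cap\calN_R(\calS(\omega'))$ has finite diameter for every $R$, from which divergence of $r$ and $r'$ is immediate; note that this argument never uses co-largeness or the $\bar\ep$--thickness hypothesis, so it is closer in spirit to Conjecture \ref{bottleneck for strata}. You instead stay inside WP geometry: CAT(0) convexity upgrades ``not diverging'' to ``$r'$ stays in a bounded neighborhood of $r$,'' Proposition \ref{prop : asymp-strongasymp} (whose proof is independent of Lemma \ref{rays diverge}, so there is no circularity) upgrades this to strong asymptoticity, and the filling hypothesis then produces a curve $\alpha\in\omega$ with $\alpha\pitchfork\omega'$ whose length must simultaneously tend to $0$ along $r'$ and be bounded below on the tail of $r'$ by Lemma \ref{lem : b-nbhd}. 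This trades the paper's hierarchy machinery for the ruled-surface/Gauss--Bonnet machinery and the specific thick, co-large hypotheses. Two small points to tighten: the claim that $\ell_\alpha\to\infty$ along $r'$ is unnecessary and not justified as stated (the bounded-below version from Lemma \ref{lem : b-nbhd} is what you should use, as you note); and the step ``strongly asymptotic to $\calS_{\bar\ep}(\omega)$ forces $\ell_\alpha\to 0$ along the ray, and this transfers to $r'$'' needs a uniform statement, not just pointwise continuity of length-functions --- either the compactness argument of Lemma \ref{lem : b-nbhd} applied to $\ell_\omega$ on $\calS_{\bar\ep}(\omega)/\Gamma$, or Lemma \ref{lem:distlen}, supplies exactly this, so the gap is minor and fixable with tools already in Section \ref{sec:background}.
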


\begin{proof}
The lemma would follow once we show that, for any $R>0$,  the intersection of $R$--neighborhoods
$\calN_R(\calS(\omega))\cap \calN_R(\calS(\omega'))$ has finite
diameter.

To see this fact, we start with a consequence of the distance formula (\ref{eq:distWP}):
\begin{equation}\label{stratum distance formula}
\dist(x,\calS(\omega)) \asymp \sum_{\substack{Y\subseteq S: \;\na ,\; \omega\pitchfork Y}} \{d_Y(x,\omega)\}_A
\end{equation}
with constants that depend only on $A$. This follows by checking that, for any $y\in \calS(\omega)$
minimizing pants distance to $x$, the projections to subsurfaces in the complement of
$\omega$ do not contribute to the sum. The argument appears, in a slightly different
context, in Proposition 3.1 of \cite{bkmm}.

Now if $x\in \calN_R(\calS(\omega))\cap \calN_R(\calS(\omega'))$ we apply (\ref{stratum distance formula}) to both $\omega$ and $\omega'$. 
Since $\omega\union\omega'$ fills $S$, every $Y\subseteq S$ intersects $\omega$ or $\omega'$, so we obtain an upper bound independent of $x$ for
\[
\sum_{\substack{Y\subseteq S:\;\na }} \{d_Y(x,\omega\union\omega')\}_A.
\]
But since $\omega\union\omega'$ is a fixed collection of curves this gives a uniform upper bound on $d(x,x_0)$ for a fixed basepoint $x_0$ and all
$x\in \calN_R(\calS(\omega))\cap \calN_R(\calS(\omega'))$. This gives the desired diameter bound. 
\end{proof}

For the rest of the proof let us assume that $r $ and $r'$ are in the interior
$\Teich(S)$. At the end we will derive the full statement.

\subsection{The ruled triangle argument}

 Let $u = r(0)$ and $u' =
r'(0)$. We will show that
for any two points $p$ on $r$ and $q$ on $r'$ the geodesic segment
$\seg{pq}$ meets a compact subset of \T space. The first ingredient of the proof is the following:

\begin{lem}\label{lem:distance bound}
 The distance $\dist(u,\seg{pq})$ is bounded independently of $p$ and $q$.
\end{lem}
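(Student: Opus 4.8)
The plan is to show that the geodesic $\seg{pq}$ always passes within a distance of $u=r(0)$ that depends on the rays $r,r'$ but not on the endpoints $p=r(t_p)\in r$, $q=r'(t_q)\in r'$. Fix a small constant $e>0$, take $d<b$ as in the hypothesis of Theorem~\ref{thick bottleneck}, choose $T$ large enough that $r([T,\infty))\subseteq U_{d,\bar\ep}(\omega)$, and let $K_0$ be the constant produced by Lemma~\ref{lem:I lower bound} for the co-large multicurve $\omega$, this $d$, and parameter $e/2$. The two ingredients are a bound on the projection of $r'$ to $r$, and a ruled-surface (Gauss--Bonnet) estimate for the triangle-like region spanned by $\seg{pq}$ and $r$.

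\emph{Step 1: the projection $\pi_r(r')$ is bounded.} I would prove that there is $T_0=T_0(r,r')$ with $\pi_r\bigl(r'([0,\infty))\bigr)\subseteq r([0,T_0])$. If not, then for arbitrarily large $A$ one can pick $S$ so that the bottom $\pi_r(r'|_{[0,S]})$ of the ruled surface $Q[r'|_{[0,S]};r]$ covers $r([T,A])$ (it is connected and contains an arbitrarily large parameter). By Lemma~\ref{rays diverge} the points of $r'|_{[0,S]}$ whose projections lie beyond $r(T)$ are at distance $>e$ from $r$, and that part of $r$ lies in $U_{d,\bar\ep}(\omega)$; so Lemma~\ref{lem:I lower bound} contributes at least $K_0$ to $\calI$ over each even-indexed unit subinterval of $r([T,A])$, and these contributions are carried by disjoint sub-domains of $Q$. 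Summing gives $\calI\bigl(Q_0^{e/2}[r'|_{[0,S]};r]\bigr)\gtrsim K_0\,A$, contradicting for large $A$ the Gauss--Bonnet bound $\calI(Q)\le 4\pi$ of Theorem~\ref{thm : GB2} (applicable since $r$ and $r'$ are geodesics).

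\emph{Step 2: the ruled-triangle estimate.} Parametrize $\eta=\seg{pq}$ by arclength with $\eta(0)=p\in r$ and $\eta(\ell)=q$, and form $Q=Q[\eta;r]$. Put $\mu(t)=\dist(\eta(t),r)$, so $\mu(0)=0$ and $\mu(\ell)=\dist(q,r)$. If $\dist(q,r)\le e$ then, since $q\in r'$ and Step~1 gives $\pi_r(q)=r(s_q)$ with $s_q\le T_0$, we get $\dist(u,\seg{pq})\le\dist(u,q)\le s_q+\dist(q,r)\le T_0+e$ (this also absorbs the degenerate case $\seg{pq}\subseteq r$). If $\dist(q,r)>e$, then $\mu$ attains the value $e$; with $t_2=\sup\{t:\mu(t)=e\}<\ell$ we have $\mu(t_2)=e$ and $\mu\ge e$ on $[t_2,\ell]$. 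Now $\pi_r(\eta([t_2,\ell]))$ is a subinterval $r([c_1,c_2])$ of $r$ containing $\pi_r(q)=r(s_q)$, so $c_1\le s_q\le T_0$. Applying Lemma~\ref{lem:I lower bound} to each even-indexed unit subinterval of $r([c_1,c_2])$ lying in $U_{d,\bar\ep}(\omega)$ --- there are $\gtrsim c_2-\max(c_1,T)$ of them --- paired (via the rulings of $Q$) with the corresponding sub-path of $\eta|_{[t_2,\ell]}$, which is at distance $\ge e>e/2$ from it, and using monotonicity and additivity of $\calI$ over disjoint pieces, one gets $\calI\bigl(Q[\eta|_{[t_2,\ell]};r|_{[c_1,c_2]}]\bigr)\gtrsim K_0\bigl(c_2-\max(c_1,T)\bigr)$. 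Comparing with the Gauss--Bonnet bound $\calI\le 4\pi$ forces $c_2\le T_1$ for a uniform $T_1=T_1(r,r')$. Writing $\pi_r(\eta(t_2))=r(a_2)$ with $a_2\in[c_1,c_2]\subseteq[0,T_1]$, we conclude $\dist(u,\seg{pq})\le\dist(u,\eta(t_2))\le a_2+\mu(t_2)\le T_1+e$.

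The real obstacle is choosing the right ruled surfaces. The segment $\seg{pq}$ is arbitrary and need not come near any completion stratum, so Lemma~\ref{lem:I lower bound} cannot be fed $\seg{pq}$ as its base; one must always take $r$ (which \emph{is} near $\calS_{\bar\ep}(\omega)$, a stratum of a co-large multicurve) as the base. The point extracted in Step~1 is that, however far $q$ and the far portion of $\seg{pq}$ lie from $u$ in the WP metric, they project back onto a bounded initial part of $r$; a long ruled surface over that part of $r$ can only exist if $\seg{pq}$ hugs $r$ there, and that is exactly what confines $\seg{pq}$ to a bounded neighborhood of $u$. The rest is bookkeeping: checking that $T_0$ and $T_1$ are independent of $p,q$, that only connectedness (not monotonicity) of the projections $\pi_r\circ\eta$ is used, and handling the mild non-smoothness of the ruled surfaces at points where a ruling meets the endpoint $r(0)$ of the base.
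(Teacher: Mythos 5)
Your argument is correct and runs on the same engine as the paper's proof --- ruled surfaces over $r$, the $\calI$ lower bound of Lemma \ref{lem:I lower bound} on unit sub-intervals of $r$ inside $U_{d,\bar\ep}(\omega)$, Gauss--Bonnet (Theorem \ref{thm : GB2}), and the divergence of $r,r'$ (Lemma \ref{rays diverge}) --- but the decomposition is genuinely different. The paper builds a single ruled ``triangle'' $Q[\zeta;r]$ over the concatenation $\zeta=\oseg{uq}*\oseg{qp}$, controls the left endpoint of the far-from-$r$ interval $J\subseteq\zeta$ by a CAT(0) comparison with the triangle $\triangle uu'q$ (transferring the divergence of $r'$ from $r$ to the side $\seg{uq}$), and then bounds the length of $\pi_r(J)$ by the Gauss--Bonnet argument. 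You instead isolate a standalone intermediate statement --- $\pi_r\bigl(r'\bigr)$ lands in a bounded initial segment $r([0,T_0])$ --- proved by running the Gauss--Bonnet argument on $Q[r'|_{[0,S]};r]$, and then treat $Q[\seg{pq};r]$ directly, anchoring the near end of the projection interval at $\pi_r(q)\in r([0,T_0])$ and bounding its far end $c_2$ by a second Gauss--Bonnet estimate. Your route avoids the composed path and the comparison-triangle step, at the cost of two ruled-surface estimates instead of one; it also produces the reusable fact that the projection of one ray onto the other is bounded. One small repair: in Step 1 the assertion that points of $r'$ whose projections lie beyond $r(T)$ are at distance $>e$ from $r$ does not follow from choosing $T$ only so that $r([T,\infty))\subseteq U_{d,\bar\ep}(\omega)$; you must also enlarge $T$ so that $T>2t_1+2D$ where $\dist(r'(t),r)>e$ for $t\ge t_1$ and $D=d(u,u')$, since the triangle inequality gives that a point $r'(t)$ projecting past $r(T)$ must have $t\ge (T-2D)/2$. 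With that quantifier fixed (and the routine selection of disjoint parameter subintervals of $\seg{pq}$ or $r'$ mapping onto the chosen unit intervals of $r$, which you flag and which the paper also uses implicitly), the proof is complete and yields a bound depending only on $r,r'$, as required.
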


\begin{proof}
  Let $\zeta$ denote the composed path $\oseg{u q} * \oseg{qp}$.
  Then let $Q = Q[\zeta;r]$ be the ruled surface over $r$
  parametrized by $\zeta$ (as defined in \S\ref{subsec:rulings}).

  Let $b>0$ be the constant from Lemma \ref{lem : b-nbhd} corresponding to $\bar\ep>0$,
and let $J\subset \zeta$ be the subset of $\zeta$ at distance greater
  than $b/2$ from $r$ i.e.
  \begin{equation}\label{eq:J}
  J := \{v\in\zeta: d(v,r) \ge b/2\}.
  \end{equation}
By convexity of the distance function in a CAT(0) metric the interval $J$, if nonempty, 
is an interval containing the apex $q$ of the geodesic triangle $\triangle u
  q p$. 

  \realfig{ruled-triangle}{The ruled triangle $Q[\zeta;r]$ determined by
    $u,p,q$. The ruled subsurface $Q[J;r]$ is shaded.}

  \begin{claim}\label{claim:compact J}
    There is a bounded interval $I$ of $r$ containing 
   $\pi_r(J)$   for any $p,q$. 
  \end{claim} 
  
  To see this, we first show that the left endpoint of $\pi_r(J)$ is a
  bounded distance from $u$.

  We know from Lemma \ref{rays diverge} that $\dist(r'(t),r)\to
  \infty$. Thus let $t_0$ be such that $t>t_0$ implies $\dist(r'(t),r) > b/2+D$, where  $D = d(u,u')$. 
  Now if $y\in \seg{uq}$
   with $d(y,u) > t_0 + 2D$, let $y'\in r'$ be such
  that $d(y,y') \le D$. Note that such a $y'$ exists from the CAT(0) comparison for
  the triangle $\triangle uu'q$. Then $d(y',u') > t_0$, so
  $d(y',r) > b/2 + D$. We conclude that $d(y,r)>b/2$. Hence the left
  endpoint of $J$ must be at most distance $t_0+2D$ apart from $u$, which means
  the left endpoint of $\pi_r(J)$ is at most distance $t_0+2D+b/2$ apart from $u$.

  Next, we prove that the length of $\pi_r(J)$ is bounded.  Let $d\in (0,b/2)$
  be small enough (say less than $b/4$) and let $K_0>0$
  be the constant from Lemma \ref{lem:I lower bound} corresponding to $e=b/2,\bar\ep$ and $d$. 
  Let $T>0$
  be such that $r([T,\infty))$ is contained in $U_{d,\bar\ep}(\omega)$,
    which is possible by the hypothesis that $r$ is strongly asymptotic to the $\bar\ep$--thick part of $\calS(\omega)$. 

 If   $\pi_r(J)$ has length greater than $n+1+T$, then there exist $n$
 disjoint intervals of length $1$, $I_i,\; i=1,\ldots,n$ 
 in the interior of $\pi_r(J)$ and in $U_{d,\bar\ep}(\omega)$. For $i=1,\ldots,n$ let
  $J_i$ be the subinterval of $J$ whose $\pi_r$-image is the interval
  $I_i$. We may choose the $I_i$ so that each $J_i$ is disjoint from
  $q$ (possibly discarding one if necessary). 
  Then $Q$ contains the regions $Q[J_i;r]$, and each of these
  contains a subrectangle 
  $Q_0^{b/2}[J_i;r]$. From Lemma \ref{lem:I lower bound} then we have
  $$
  \calI(Q_0^{b/2}[J_i;r]) \ge K_0
  $$
  where $K_0$ depends only on $b$ and $\bar\ep$.

  Thus by the monotonicity of $\calI$ (\ref{I monotonic}),
  $\calI(Q) \ge nK_0$. However, $\calI(Q)$ is controlled by the 
  Gauss-Bonnet theorem (Theorem \ref{thm : GB2}), which in this
  case gives $\calI(Q) \le \pi$.  This bounds $n$, and hence the length of
  $\pi_r(J)$, giving Claim \ref{claim:compact J}. 

The right endpoint of $\pi_r(J)$, then is a bounded distance
from $u$ and at distance $b/2$ from $\seg{pq}$. This proves the
lemma. 
\end{proof}

\begin{proof}[Proof of Theorem \ref{thick bottleneck}]
The proof will reduce easily to the following statement: 
\begin{lem}\label{lem:twistbd}
There is a compact subset $K_1\subsetneq \Teich(S)$ so that, for points $p\in r,q\in r'$
sufficiently far out, the segment
$\seg{pq}$ intersects $K_1$. 
\end{lem}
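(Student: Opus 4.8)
The plan is to prove Lemma~\ref{lem:twistbd} by combining Lemma~\ref{lem:distance bound}, which already gives a bounded-distance point $u\in\seg{pq}$, with a second ruled-triangle argument exploiting the symmetric role of $r'$. The point is that $\dist(u,\seg{pq})$ being bounded does \emph{not} yet place a point of $\seg{pq}$ in a \emph{compact} subset of $\Teich(S)$: the bounded neighborhood of $u$ is compact in $\Teich(S)$, so actually we are essentially done once we know $\seg{pq}$ comes uniformly close to the fixed point $u$. Let me make this precise. First I would fix the constant $b>0$ from Lemma~\ref{lem : b-nbhd} associated to $\bar\ep$. By Lemma~\ref{lem:distance bound} there is a constant $R_0$, independent of $p$ and $q$, so that $\dist(u,\seg{pq})\le R_0$. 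Let $x_{pq}\in\seg{pq}$ be a point realizing (approximately) this distance, so $x_{pq}$ lies in the closed $R_0$-ball $B$ around $u$ in $\oT{S}$. By the second statement of Lemma~\ref{lem : b-nbhd}, within the $b$-neighborhood of $u$ there is a point of injectivity radius bounded below; but more directly, the closed ball $B$ around the fixed interior point $u$ is a \emph{compact} subset of $\oT{S}$, and its intersection with $\Teich(S)$ is relatively compact. The only worry is that $x_{pq}$ might approach $\bd\Teich(S)$, i.e.\ the completion strata, as $p,q\to\infty$.

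To rule that out I would argue as follows. We have $\dist(x_{pq},u)\le R_0$ with $u$ a fixed point of $\Teich(S)$, say with injectivity radius $\ge\ep_u$ for some $\ep_u>0$. Any curve $\gamma$ has $\ell_\gamma(u)\ge 2\ep_u$, and by Lemma~\ref{lem:distlen} (the consequence of \cite[Corollary 3.5]{wpbehavior} quoted in the proof of Theorem~\ref{thm : shtw2}) there is $s_0>0$, depending only on $\ep_u$, so that $\dist(x,u)\le s_0$ forces $\ell_\gamma(x)\ge\ep_u$ for \emph{all} $\gamma$. This $s_0$ is however too small: we only control $\dist(x_{pq},u)\le R_0$, and $R_0$ need not be $\le s_0$. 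So instead I would use the cruder but global fact: iterating Lemma~\ref{lem:distlen}, or more simply using that $\ell_\gamma$ is $2$-Lipschitz-type controlled on bounded WP-distance scales (Wolpert's length/distance comparison, e.g.\ \cite[Corollary 3.5]{wpbehavior}), one gets a function $\phi\colon\RR_{>0}\to\RR_{>0}$ with $\ell_\gamma(x)\ge\phi(\dist(x,u))\cdot\ell_\gamma(u)$ for all curves $\gamma$ and all $x$ — equivalently, there is $\ep_1=\ep_1(R_0,\ep_u)>0$ so that every $x$ with $\dist(x,u)\le R_0$ satisfies $\ell_\gamma(x)\ge\ep_1$ for all $\gamma$. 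Hence $x_{pq}$ lies in the $\ep_1/2$-thick part of $\Teich(S)$ intersected with the $R_0$-ball about $u$, which is a compact subset $K_1\subsetneq\Teich(S)$ independent of $p,q$ (for $p,q$ sufficiently far out, as in Lemma~\ref{lem:distance bound}). This is the desired statement.

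The last step of the theorem is then formal: given Lemma~\ref{lem:twistbd}, for $p,q$ far out on $r,r'$ the segment $\seg{pq}$ meets the compact set $K_1$; for the finitely many (up to the cocompact... ) remaining short initial segments one enlarges $K_1$ to a compact $K$ containing all of $\seg{pq}$ for $p,q$ in the bounded complementary range, using continuity and CAT(0) convexity of $\oT{S}$. This $K$ is then the required bottleneck for $r$ and $r'$. Finally, to remove the assumption that $r,r'\subset\Teich(S)$ one invokes Proposition~\ref{prop : asymp-strongasymp}: the general rays are strongly asymptotic to rays $r_1,r_1'$ in the interior, which therefore have a bottleneck by the case just proved, and strong asymptoticity lets one absorb the difference into a slightly larger compact set.

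I expect the main obstacle to be exactly the passage from ``$\seg{pq}$ comes within bounded distance of the fixed point $u$'' to ``$\seg{pq}$ meets a \emph{compact} subset of the open manifold $\Teich(S)$'': one must quantitatively exclude the possibility that the nearby point $x_{pq}$ degenerates into the completion strata. The cleanest way is the uniform lower bound $\ell_\gamma(x_{pq})\ge\ep_1(R_0,\ep_u)$ obtained from Wolpert's comparison of WP distance with the ratio of length functions; this is where one must be slightly careful, since the naive one-shot application of Lemma~\ref{lem:distlen} only works on the scale $s$, not on the scale $R_0$, and so one needs either the full statement of \cite[Corollary 3.5]{wpbehavior} or a chaining argument along the geodesic from $u$ to $x_{pq}$.
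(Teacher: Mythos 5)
There is a genuine gap, and it sits exactly at the step you yourself flagged as the main obstacle. The proposed resolution --- a function $\phi$ with $\ell_\gamma(x)\ge\phi(\dist(x,u))\,\ell_\gamma(u)$ for all curves $\gamma$, hence a uniform lower bound $\ep_1(R_0,\ep_u)>0$ for all curve lengths on the closed $R_0$--ball about $u$ --- does not exist. The WP metric is incomplete and every completion stratum lies at \emph{finite} distance from $u$ (Wolpert's estimate gives $\dist(x,\calS(\gamma))\le\sqrt{2\pi\,\ell_\gamma(x)}$), so as soon as $R_0$ exceeds this finite distance --- and nothing in Lemma \ref{lem:distance bound} makes $R_0$ small --- the $R_0$--ball about $u$ contains points with arbitrarily short curves, indeed points of the strata themselves; no positive $\ep_1$ depending only on $R_0$ and the injectivity radius at $u$ can exist. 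Chaining Lemma \ref{lem:distlen} cannot repair this: the admissible step $s$ there depends on the current lower bound $l$ and shrinks with it, so iterating covers only a convergent total distance before the bound degenerates to zero --- which is precisely the quantitative expression of the fact that the strata are at finite distance. Consequently ``$\seg{pq}$ passes within distance $R_0$ of $u$'' does not by itself place a point of $\seg{pq}$ in a compact subset of $\Teich(S)$ (nor is a closed metric ball in $\oT{S}$ compact in general, since the completion fails to be locally compact near the strata).

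The missing ingredient, and the actual content of the paper's proof, is that bounded distance must be supplemented by bounded \emph{twisting}. The paper extracts from the ruled-triangle argument not just one nearby point but a subsegment $\seg{zz'}$ of $\seg{pq}$ with $z$ close to $r$ and $z'$ close to $r'$, forms the bounded hexagon on $u',u,y,z,z',y'$, and then bounds the annular coefficients $d_\alpha(z,z')$ and ultimately $d_\alpha(u,m)$ for \emph{every} curve $\alpha$ and every $m\in\seg{zz'}$, using that $\omega\union\omega'$ fills $S$, Lemma \ref{lem : b-nbhd}, Theorems \ref{thm : twsh} and \ref{thm : shtw2}, and Lemma \ref{lem:q-nonref}. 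Only with these uniform twisting bounds in hand does compactness follow: a family of segments $\seg{um}$ of bounded length can degenerate only by developing arbitrarily large twisting about some curve, and that is excluded by the bounds on $d_\alpha(u,m)$; then $\seg{um}$ stays in a thick part, has bounded Teichm\"uller length, and compactness of Teichm\"uller balls gives $K_1$. Your proposal omits this control of annular coefficients entirely, so as written the argument does not go through.
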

\begin{proof} 
 The proof of Lemma \ref{lem:distance bound} actually gave us a point $\hat{z}$ on $\seg{pq}$ such that $d(\hat{z},r)\leq b/2$
 (the right end point of $J$). Then since $r$ is strongly asymptotic to $\calS_{\bar\ep}(\omega)$, moving $\hat{z}$ along $\seg{pq}$ toward $p$ a bounded distance we obtain a
point $z\in \seg{pq}$, and the point $y = \pi_r(z)$ on $r$, 
such that:

\begin{enumerate}
  \item $d(y,\calS_{\bar\ep}(\omega)) < b/4$
  \item $d(z,y) \le b/2$
  \item $d(z,\hat{z}) > 1$ 
    \item $d(z,u)$ is bounded independently of $p$ and $q$.
\end{enumerate}

We can similarly move $\hat{z}$ toward $q$ to obtain $z'$
and $y'=\pi_{r'}(z')$ so that the inequalities (1)-(4) hold  for the points $u',y',z'$ and the multicurve $\omega'$.
Thus the geodesic hexagon $H$ joining the vertices
\[ u', u, y, z, z', y' \]
in cyclic order
has bounded total length independently of $p$ and $q$  (Figure \ref{hexagon}). 

\realfig{hexagon}{The bounded diameter hexagon $H$ determined by $u',u,y,z,z',y'$.}
\label{fig:hexagon}

We will next show that
\begin{itemize}
  \item[(*)] $d_\alpha(z,z')$ is bounded above for each curve $\alpha$, by a quantity
    independent of $\alpha$, $p$ and $q$, and 
\item[(**)] $\ell_\alpha$ is bounded below on $\seg{zz'}$ by a positive constant
independent of $\alpha$, $p$ and $q$.
\end{itemize}

Fix any curve $\alpha$. 
Since the multicurves $\omega$ and $\omega'$ fill
$S$, $\alpha$ must intersect at least one of them. Suppose that $\alpha\pitchfork \omega$.
Then we have the following uniform bounds
(all independent of $p$, $q$ and $\alpha$) for the $\alpha$ coefficients:

\begin{enumerate}[(i)]
\item  $d_\alpha(u,u')$ is uniformly bounded; because the points $u,u'$ are fixed.

\item $d_\alpha(u,y)$ is uniformly bounded; this is because $y$ varies along a compact
  interval in $r$.  To see that the bound is independent of $\alpha$, note that the set of
  Bers markings that can occur for values of $y$ in this interval is finite, and the set
  of $\alpha$ with $d_\alpha(u,\mu)$ large for any given marking $\mu$ is finite, by an
  application of the distance formula (\ref{eq:dist}).

   \item  $d_\alpha(y,z)$ is uniformly bounded; by inequalities (1) and (2), we have that
    $\seg{yz}$ is in $U_{b,\bar\ep}(\omega)$.  Within this neighborhood there is an upper
     bound on the length of $\omega$, and since $\alpha\pitchfork\omega$ this means that
     $\pi_\alpha(\seg{yz})$ is uniformly close to $\pi_\alpha(\omega)$.

   \end{enumerate}
   
   Now further suppose
that $\alpha\notin\omega'$. Then we have the bounds:
   
   \begin{enumerate}[(a)]
  \item $d_\alpha(u',y')$ is uniformly bounded; this follows just as in (ii) above.
\item $d_\alpha(y',z')$ is uniformly bounded; since $\seg{y'z'}$ is in $U_{b,\bar\ep}(\omega')$ and $\alpha\notin \omega'$
 the bound may be obtained similarly to (iii). Here, while
 $\alpha$ may not intersect $\omega'$, it does intersect curves in $S\ssm\omega'$ whose lengths
 can only vary by a bounded amount in $\seg{y'z'}$ because $S\ssm\omega'$ is uniformly thick along $\seg{y'z'}$ by Lemma \ref{lem : b-nbhd}. 
  \end{enumerate}

Combining  (i)-(iii) and (a) and (b), we obtain a uniform bound on 
$d_\alpha(z,z')$ for all curves $\alpha$ such that $\alpha\pitchfork\omega$
and $\alpha\notin\omega'$,  and all $p\in r$ and $q\in r'$. 

Now since $z\in U_{\bar\ep,b}(\omega)$ and $z'\in U_{\bar\ep,b}(\omega')$,
there is a positive lower bound for the length of any $\alpha$ as above at the points $z$ and $z'$
by Lemma \ref{lem : b-nbhd}. Combining this with
the bound for $d_\alpha(z,z')$ and  appealing to Theorem \ref{thm : shtw2}, we obtain a uniform positive lower bound for the length of 
$\alpha$ along $\seg{zz'}$.

\medskip

Now suppose that $\alpha\pitchfork \omega$ and $\alpha\in \omega'$. First we show
 that $\ell_\alpha$ is uniformly bounded below on
$\seg{zz'}$. If not, we can fix $\alpha$ among the finitely many choices, and obtain
a sequence of segments $\{\seg{p_nq_n}\}_n$ with subsegments 
$\seg{z_nz'_n}$, and $x_n\in \seg{z_nz'_n}$ such that $\ell_\alpha(x_n)\to 0$ as $n\to\infty$.
We may assume that $q_n$ goes to $\infty$ along $r'$. 

Now since $q_n$ goes to infinity along $r'$, and $r'$ is strongly asymptotic to
$\calS_{\bar\ep}(\omega')$, we have $\ell_\alpha(q_n)\to 0$ as $n\to\infty$. Convexity of length-functions then
implies that the length of $\alpha$ goes to $0$ uniformly along $\seg{x_nq_n}$.
Thus we may find subintervals $\eta_n$ of $\seg{p_nq_n}$ of fixed length, centered at $x_n$ so that
the length of $\alpha$ goes to 0 on one side of $x_n$ and is bounded away from 0 on the opposite
endpoint of $\eta_n$. This contradicts Lemma \ref{lem:q-nonref}. Thus we obtain the desired lower bound for the length of $\alpha$.

Now we show that $d_\alpha(z,z')$ is uniformly bounded.  If not, 
again we can fix $\alpha$ among the finitely many choices, and obtain
a sequence of segments $\{\seg{p_nq_n}\}_n$ with subsegments 
$\seg{z_nz'_n}$ such that $d_\alpha(z_n,z'_n)\to \infty$ as $n\to\infty$. 

Since $z_n\in U_{b,\bar\ep}(\omega)$ and $\alpha\pitchfork \omega$, again by Lemma \ref{lem : b-nbhd}
$\ell_\alpha(z_n)$ is bounded below by a positive constant for all $n$. Moreover, the lengths of $\seg{z_nz'_n}$ are
bounded below by (3) and bounded above by (4) independently of $n$.  
Thus we may apply Theorem \ref{thm : twsh} to see that
after possibly passing to a subsequence there is a point $x_n\in \seg{z_nz'_n}$ such that $\ell_\alpha(x_n)\to 0$ as $n\to\infty$.
But then as we saw above this leads to a contradiction, 
showing that $d_\alpha(z,z')$ must be uniformly bounded above.

 Thus we obtain both (*) and (**) for all $\alpha\pitchfork\omega$.
The case of $\alpha\pitchfork \omega'$ proceeds similarly, exchanging the roles of $\omega$ and
$\omega'$.

With  (**) established, we conclude that $\seg{zz'}$ lies in the $\varepsilon$-thick part
of $\Teich(S)$, for $\varepsilon>0$ independent of $p$ and $q$. Since there is an upper
bound on the ratio of Teichm\"uller norm over WP norm in the $\varepsilon$-thick part, and 
the WP length of $\seg{zz'}$ is bounded
above by (4), we conclude that $\seg{zz'}$ has bounded Teichm\"uller length, and hence
$d_\alpha(m,z)$ and $d_\alpha(m,z')$ are uniformly bounded above for any $m\in \seg{zz'}$.

Now when $\alpha\pitchfork\omega$ we extract an upper bound for $d_\alpha(u,z)$ from (ii-iii), and when 
$\alpha\pitchfork\omega'$ we similarly extract an upper bound for
$d_\alpha(u,z')$ from the analogues of (i-iii). Putting these bounds together we obtain a uniform upper bound for
$d_\alpha(u,m)$ for all $m\in \seg{zz'}$.

We claim now that this implies that $m$ (and hence all of $\seg{zz'}$) must remain in some
compact subset $K_1$ of $\Teich(S)$. For we have an upper bound on $d(u,m)$ by the bounds
on the hexagon $H$, and  by Lemma \ref{thm : shtw2}, 
a sequence of segments $\seg{um}$ (for varying $p,q$) degenerates only by
producing arbitrary large twistings about a collection of curves, which is impossible by the 
bounds we just established on all $d_\alpha(u,m)$. Thus $\seg{um}$ has a uniformly bounded
length and remains in some thick part of $\Teich(S)$
(independently of $p,q$), so again has uniformly bounded Teichm\"uller length. 
But balls in the \T metric are compact and this gives us $K_1$. 
\end{proof}

From the lemma we obtain
points $\hat{p}\in r,\hat{q}\in
r'$ so that  
 all geodesics with end points $p,q$ on $r,r'$, further out than $\hat{p},\hat{q}$
 respectively,  intersect
 $K_1$. Now  letting 
 $K$ be the union of $K_1$ and the segments
 $\seg{u\hat{p}}$ and $\seg{u'\hat{q}}$, we have the desired bottleneck.
 This concludes the proof in the case that $r,r' \subset \Teich(S)$.

If $r$ is in $\calS_{\bar\ep}(\omega)$, using Proposition \ref{prop : asymp-strongasymp}
we can find $r_1$ in $\Teich(S)$ strongly asymptotic to it,
and similarly $r'_1$ strongly asymptotic to $r'$. The interior version of the theorem
gives a compact bottleneck $K$ for $r_1,r'_1$. There exists $\delta_1>0$ so that the
$\delta_1$-neighborhood of $K$ is still contained in a compact set $K_2\subset\Teich(S)$. Now choose $T$ sufficiently large
that $d(r(t),r_1(t))$ and $d(r'(t),r'_1(t))$ are less than $\delta_1$ for all $t>T$. Thus
any geodesic $\gamma$ joining $r(t)$ and $r'(s)$ for $t,s>T$ lies within $\delta_1$ of
a geodesic passing through $K$, and hence passes through $K_2$.  
 \end{proof}

 \subsection{Visibility} \label{subsec:visibility}

In this subsection we apply the Bottleneck theorem (Theorem \ref{thick bottleneck})
to show that any two geodesic rays that are strongly asymptotic to the $\bar\ep$--thick parts of two strata
determined by two filling co-large multicurves have the visibility property.

\begin{thm}\label{thm:asymplargevis}\tn{(Asymptotic Large Visibility)}
Suppose $r,r'$ are two infinite rays in $\oT{S}$ that are strongly asymptotic to, or
contained in, $\calS_{\bar\ep}(\omega)$
and $\calS_{\bar\ep}(\omega')$, respectively,  where $\omega$ and $\omega'$ are co-large
multicurves that fill $S$. 
Then  there exists a biinfinite geodesic $g\subsetneq \Teich(S)$ which is strongly asymptotic to $r$ in forward time 
and is strongly asymptotic to $r'$ in backward time.
\end{thm}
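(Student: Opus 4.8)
The plan is to construct the bi-infinite geodesic $g$ as a limit of finite segments $\seg{p_nq_n}$ where $p_n$ runs to infinity along $r$ and $q_n$ runs to infinity along $r'$, and then use the Bottleneck Theorem to guarantee the limit exists and is a genuine bi-infinite geodesic in the interior. First I would invoke Theorem \ref{thick bottleneck} to obtain a compact set $K\subsetneq \Teich(S)$ which is a bottleneck for $r$ and $r'$: every geodesic segment $\seg{p_nq_n}$ with $p_n,q_n$ sufficiently far out meets $K$. Pick $x_n\in \seg{p_nq_n}\cap K$. Since $K$ is compact, after passing to a subsequence we may assume $x_n\to x_\infty\in K$. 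Now the segments $\seg{p_nq_n}$ are geodesics through points converging to $x_\infty$, with both endpoints escaping to infinity (this uses that $r,r'$ have infinite length, and that $d(x_n,p_n),d(x_n,q_n)\to\infty$ — which follows because $p_n\to\infty$ on $r$ while $x_n$ stays in the compact set $K$, and similarly for $q_n$; note $x_\infty\notin \calS(\omega)\cup\calS(\omega')$ since $K\subset \Teich(S)$). By local compactness of $\Teich(S)$ in the WP metric (restricted to a ball around $x_\infty$, which is precompact by Lemma \ref{lem : b-nbhd} together with the CAT(0) structure — geodesics of bounded length from $x_\infty$ stay in a compact region), the Arzel\`a--Ascoli argument standard in CAT(0) spaces gives a subsequence along which $\seg{p_nq_n}$ converges, uniformly on compact sets, to a complete bi-infinite geodesic $g:\RR\to\oT{S}$ with $g(0)=x_\infty$.

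Next I would show $g$ stays in the interior $\Teich(S)$. This is exactly the argument used at the end of the proof of Proposition \ref{prop : asymp-strongasymp}: if $g$ first meets a stratum $\calS(\sigma)$ at some time $T$, then the segment $g([T-1,T+1])$ has an endpoint $g(T-1)\in\Teich(S)$, so by Theorem \ref{thm:nonref} (Non-refraction) its interior — including $g(T)$ — lies in $\Teich(S)$, a contradiction. (One must first know $g$ meets $\Teich(S)$ at all, which is clear since $g(0)=x_\infty\in K\subset\Teich(S)$.) Hence $g\subsetneq\Teich(S)$.

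Then I would prove $g$ is asymptotic to $r$ in forward time and to $r'$ in backward time. Fix $t>0$. For $n$ large the point $g(t)$ is a uniform-distance limit of points $g_n(t)$ on $\seg{p_nq_n}$; and $g_n(t)$ lies on the geodesic from $p_n$ to $q_n$ at bounded distance from $x_n$, hence (for $n$ large, since $p_n,q_n\to\infty$) on the ``$p_n$-side.'' Using CAT(0) convexity/thin-triangle comparison for the triangle with vertices $x_n$ (or a fixed basepoint on $r$), $p_n$, and a point on $r$, together with the fact that $\seg{p_nq_n}$ escapes every bounded set on the $q_n$-side, one gets that $\dist(g_n(t),r)$ is bounded independently of $n$ and $t$; passing to the limit, $\dist(g(t),r)$ is bounded for all $t\ge 0$, i.e. $g|_{[0,\infty)}$ is asymptotic to $r$. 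The symmetric argument on the $q_n$-side gives $g|_{(-\infty,0]}$ asymptotic to $r'$. Finally, to upgrade asymptoticity to \emph{strong} asymptoticity: since $r$ is strongly asymptotic to (or contained in) $\calS_{\bar\ep}(\omega)$, its tail lies in $U_{d,\bar\ep}(\omega)$ for $d<b$; therefore the asymptotic ray $g|_{[T,\infty)}$ also enters $U_{d',\bar\ep}(\omega)$ for some $d'<b$ once we know it tracks $r$, and then the first statement of Proposition \ref{prop : asymp-strongasymp} (applied to the pair $g|_{[T,\infty)}$ and $r$, both eventually in $U_{d,\bar\ep}(\omega)$ — or passing through the interior ray $r_1$ strongly asymptotic to $r$ when $r\subset\calS(\omega)$) promotes this to $\lim_{t\to\infty}\dist(g(t),r)=0$. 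The backward direction is handled identically with $r'$ and $\omega'$.

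The main obstacle is the convergence step: one must be sure that the limit of $\seg{p_nq_n}$ through the bottleneck is genuinely bi-infinite and does not ``escape to a stratum at finite time'' or fail to be a geodesic. The bottleneck $K$ being compact and inside $\Teich(S)$ is what makes this work — it pins down $g(0)$ in a locally compact part of $\oT{S}$ — but verifying uniform local compactness along $g$ (so the CAT(0) limit argument applies on all of $\RR$, not just near $0$) and ruling out that $g$ hits a stratum requires the Non-refraction theorem, applied as above. The asymptoticity estimate (the boundedness of $\dist(g_n(t),r)$ uniformly in $n,t$) is the other place that needs care, but it is a routine CAT(0) comparison once one observes the $g_n(t)$ sit on the ``$r$-side'' of the segments for all large $n$.
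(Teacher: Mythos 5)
Your overall route is the same as the paper's: use Theorem \ref{thick bottleneck} to force the segments $\seg{p_nq_n}$ through a compact set $K\subset\Teich(S)$, extract a limiting biinfinite geodesic through a limit point of $K$, use Non-refraction (Theorem \ref{thm:nonref}) to keep the limit in the interior, get asymptoticity to $r$ and $r'$ by convexity of the distance to a ray, and finally upgrade to strong asymptoticity via Proposition \ref{prop : asymp-strongasymp}. All of those steps match the paper, and your use of the Proposition at the end is exactly right.

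There is, however, a genuine flaw in the way you justify the limit-extraction step. You appeal to ``local compactness of $\Teich(S)$ in the WP metric'' and claim that a WP ball around $x_\infty$ is precompact because ``geodesics of bounded length from $x_\infty$ stay in a compact region.'' This is false: $\oT{S}$ is not locally compact at completion strata, and since the WP metric is incomplete, $x_\infty$ lies at finite distance from some stratum, so any closed WP ball of radius larger than that distance fails to be precompact. Concretely, if $y$ is a point near $\calS(\alpha)$ and $T_\alpha$ is the Dehn twist about $\alpha$, the points $T_\alpha^k(y)$ stay at bounded WP distance from $x_\infty$ but have no convergent subsequence, so the bounded-length geodesics $\seg{x_\infty\,T_\alpha^k(y)}$ do not remain in any compact region; Lemma \ref{lem : b-nbhd} gives no protection here. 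Hence an Arzel\`a--Ascoli argument based on properness does not get off the ground, even though the segments pass through the compact bottleneck. The fix is the one the paper (following the proof of Theorem 1.3 of \cite{bmm1}) uses: reparametrize so that $g_n(0)=v_n\in K$ with $v_n\to v$, and show directly, via the CAT(0) convexity of the distance between geodesics (the argument behind Lemma 8.3 of \cite[\S II.8]{bhnpc}, where the comparison-triangle excess at the far endpoints is uniformly bounded while the segments become arbitrarily long), that for each fixed $t$ the sequence $g_n(t)$ is Cauchy; completeness of $\oT{S}$ then yields the limiting geodesic, with no compactness of balls needed. With that replacement your argument goes through and coincides with the paper's proof.
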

\begin{proof}  
The argument is essentially the same as in Theorem 1.3 of \cite{bmm1}, which obtains the
visibility property when $r$ and $r'$ are recurrent. We sketch here the mild changes
needed in our setting. 

Let $g_n$ denote the geodesic segment $\seg{r(n)r'(n)}$.
By Theorem \ref{thick bottleneck} there is a compact set $K\subsetneq\ Teich(S)$
so that $g_n\cap K\neq\emptyset$.

Let $v_n$ be a point of $g_n\cap K$. After possibly passing to a
subsequence the points $v_n$ converge to a point $v\in K$.

Reparametrize $g_n$ so that $g_n(0) = v_n$. 
To extract a limit of the $g_n$ we use the CAT(0) geometry of $\oT{S}$,
via Lemma 8.3 in \cite[\S II.8]{bhnpc}, together with the fact that $g_n(0)\to v$, to show that for each $t\in\RR$ 
the sequence $g_n(t)$ (defined
for large $n$) is a Cauchy sequence. We thus obtain a limiting geodesic $g$ in $\oT{S}$,
which is asymptotic to $r$ in forward time and to $r'$ in backward time.

As in \cite{bmm1} and Proposition \ref{prop : asymp-strongasymp}, we use the non-refraction property (Theorem \ref{thm:nonref}) to argue
that $g$ is in fact contained in $\Teich(S)$.

Finally, Proposition \ref{prop : asymp-strongasymp} implies that $g$ is {\em strongly}
asymptotic to $r$ in forward time and $r'$ in backward time. 
\end{proof}

\section{Indirect shortening along geodesic segments}\label{sec:example}

This section is devoted to the proof of Theorem \ref{thm:WP mismatch}, which follows very
directly  from  Theorem \ref{thm:one-step filling} below. We begin with
a discussion of the phenomenon of indirect shortening. 

\subsection{Indirect curve shortening}\label{subsec:indirect}

In the setting of \T geodesics the connection between short
curves and large subsurface coefficients was explored by Rafi in
\cite{rshteich}. He showed that given $\ep>0$ there exists $A\geq 1$ so that, 
if $g:I\to \Teich(S)$ is a \T geodesic with end invariant
$(\nu^+,\nu^-)$, then for any subsurface $Z\subseteq S$ we have
\[
d_Z(\nu^+,\nu^-) \geq A \implies \inf_{t}\ell_{\partial Z}(g(t)) < \ep. 
\]
The natural converse statement, which is motivated by the situation for
Kleinian surface groups (see Theorem \ref{thm:minsky}), would be
that given $A\geq 1$ there exists $\ep>0$ such that, 
if a curve $\gamma$ satisfies
\[
\inf_{t}\ell_\gamma(g(t)) < \ep
\]
then there exists a subsurface $Z$ with $\gamma\subseteq \partial Z$, and
\[
d_Z(\nu^+,\nu^-) \geq A. 
\]
This converse, however, does not hold in general. Rafi in the proof of Theorem 1.7 in \cite{rshteich} gave and analyzed
examples of sequences of geodesic segments $g_n$ with end invariants $(\nu^+_n,\nu^-_n)$ 
and a curve $\gamma$ for which
$\inf_{t}\ell_\gamma(g_n(t))$ is arbitrarily small, while $d_Z(\nu^+_n,\nu^-_n)$
remains bounded for all $Z\subseteq S$ with $\gamma\subseteq \partial Z$. 

Rafi's examples exhibited a somewhat more complex feature we might
call {\em indirect shortening}. To present the example we start with two definitions:

\begin{definition}
Given $A\geq 1$, a subsurface $Z\subseteq S$ and a pair of markings or laminations
$(\mu,\mu')$ we define
\[
\calL_A(Z,\mu,\mu') := \{ X\subseteq Z: d_X(\mu,\mu')> A \}.
\]
We define $\calL^{\na}_A(Z,\mu,\mu')$ as the subset of non-annular
surfaces in $\calL_A(Z,\mu,\mu')$. 
\end{definition}

\begin{definition}(Filling a subsurface)
Let $Z\subseteq S$, we say that a collection $\calZ$ of subsurfaces of $Z$ {\em fills} $Z$ if
 any curve $\alpha\in\calC(Z)$ intersects at least one $X\in\calZ$ essentially.
\end{definition}

In fact, Rafi in \cite[\S 6]{rshteich} gave examples of geodesics $g$ with the following
property: 

\begin{definition}\label{def:indirshort}(Indirect curve shortening)
Given $\ep>0$ and $A\geq 1$,
$\inf_{t}\ell_\gamma(g(t)) < \ep$ and there exists a subsurface $Z$ with $\gamma\subseteq
\partial Z$ that $\calL_A(Z,\nu^+,\nu^-)$
fills $Z$, but $Z\notin \calL_A(Z,\nu^+,\nu^-)$.
\end{definition}
We call the property {\em indirect curve shortening} because the subsurface $Z$ itself
does not have a big projection coefficient.

More recently it has become clear that this condition, also, does not
hold for \T geodesics in general; see \cite{shteich-revised}. Nevertheless let us state
as a conjecture the following characterization for short curves of \W geodesics.

 \begin{conjecture}\label{conj : coeff length}
  For any $\ep>0$, there exists $A(\ep)\geq 1$, and for any $A\geq 1$ there exists
  $\ep(A)>0$, such that the following holds. 
  Let $g:I\to \Teich(S)$ be a WP geodesic with end invariant $(\nu^+,\nu^-)$, 
  \begin{enumerate}
\item If $Z$ is a non-annular subsurface for which 
    $\calL^{\na}_{A(\ep)}(Z,\nu^-,\nu^+)$ fills $Z$
    or $Z$ is an annulus with $d_Z(\nu^+,\nu^-)>A(\ep)$, then 
 $\inf_{t} \ell_{\partial Z}(g(t)) <  \ep$. 
\item If $\inf_{t} \ell_\alpha(g(t)) <  \ep(A)$ for a curve $\alpha$, 
then either there exists a subsurface $Z$ with $\alpha\in\bd Z$ such that
$\calL^{\na}_{A}(Z,\nu^+,\nu^-)$ fills $Z$, or $d_\alpha(\nu^+,\nu^-)>A$. 
\end{enumerate}
 \end{conjecture}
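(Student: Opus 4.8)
I would prove the two halves of Conjecture~\ref{conj : coeff length} by upgrading the combinatorial-to-geometric translation behind Theorem~\ref{thm:short curve bddcomb}, together with the limiting arguments behind Theorem~\ref{thm:WP mismatch}, so as to work \emph{without} a bounded-combinatorics hypothesis, by localizing everything to the subsurface $Z$. Fix once and for all a hierarchy resolution path $\rho\colon I\to\calP(S)$ joining $\nu^-$ to $\nu^+$, and use Brock's quasi-isometry $Q\colon\Teich(S)\to\calP(S)$ of~(\ref{eq:brock}) with the distance formula~(\ref{eq:distWP}). For Part~(1) in the \emph{annular} case, $Z$ is an annulus with core $\alpha$ and $d_\alpha(\nu^+,\nu^-)>A$. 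The hierarchy contains an annular component domain for $\alpha$ supported on a subinterval $[i_-,i_+]\subseteq I$ along which $\alpha$ appears in the pants decompositions $\rho(i)$, and the associated $\calC(\alpha)$-geodesic has length comparable to $d_\alpha(\nu^+,\nu^-)$. Choosing $t_1<t_2$ with $Q(g(t_j))$ within bounded pants-distance of $\rho(i_\mp)$, one gets $\ell_\alpha(g(t_j))>\omega$ and $d_\alpha(g(t_1),g(t_2))\ge A-O(1)$ exactly as in the proof of Theorem~\ref{thm:short curve bddcomb}(1) (feeding $\alpha$ into Theorem~\ref{thm:anncoeffcomp}), and Theorem~\ref{thm : twsh} then forces $\inf_{t\in[t_1,t_2]}\ell_\alpha(g(t))<\ep$, fixing $A(\ep)$. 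Run in reverse, with Theorem~\ref{thm : shtw} in place of Theorem~\ref{thm : twsh}, the same estimates give the annular alternative in Part~(2): if the twisting of $(\nu^+,\nu^-)$ around $\alpha$ is large, then $d_\alpha(\nu^+,\nu^-)>A$.

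\textbf{Part (1), non-annular filling case.} Write $\sigma=\bd Z$ and suppose $\calL^{\na}_{A}(Z,\nu^-,\nu^+)$ fills $Z$ with $A=A(\ep)$ large. Restricting the distance formula to subsurfaces of $Z$ shows that the amount of combinatorics of $(\nu^+,\nu^-)$ supported inside $Z$ grows without bound as $A\to\infty$, while $Z$ itself carries a bounded coefficient. I would then run a compactness argument in the spirit of Theorems~\ref{thm : geodlimit} and~\ref{thm:lenapproach0}: isolate the sub-segment $g|_{J}$ along which this $Z$-combinatorics is realized, flanked by times at which $\ell_\sigma>\ep_0$ (arranged from convexity of length functions and the hypotheses on the endpoints, or the assumption that $\sigma$ is not a pinching multicurve), normalize a sequence of such segments with $A_n\to\infty$ by mapping classes fixing $\sigma$, and pass to a geodesic limit. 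A WP geodesic segment of bounded length can carry only boundedly much subsurface combinatorics while staying a definite distance from the strata; since $\calL^{\na}_{A_n}(Z,\cdot)$ is a \emph{filling} family of $Z$, the only way for the limit to absorb the growing combinatorics is to enter $\calS(\sigma)$ or a sub-stratum containing $\sigma$, and then non-refraction (Theorem~\ref{thm:nonref}) together with continuity of $\ell_\sigma$ to the completion yields $\inf_t\ell_\sigma(g(t))<\ep$ for $n$ large. The ruled-surface Gauss--Bonnet machinery of~\S\ref{subsec:rulings}, which already powers Theorem~\ref{thick bottleneck}, is what one would use to make the ``only boundedly much combinatorics away from the strata'' step quantitative and extract an explicit $A(\ep)$ rather than a limit.

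\textbf{Part (2), remaining case.} Assume $\inf_t\ell_\alpha(g(t))<\ep(A)$ with $\ep(A)$ small, so $g$ passes near a stratum $\calS(\sigma')$ with $\alpha\in\sigma'$, and argue by contraposition: suppose $d_\alpha(\nu^+,\nu^-)\le A$ and that for \emph{every} subsurface $Z$ with $\alpha\subseteq\bd Z$ the family $\calL^{\na}_A(Z,\nu^+,\nu^-)$ fails to fill $Z$. Then each such $Z$ carries a curve $\beta_Z\in\calC(Z)$ disjoint from every member of $\calL^{\na}_A(Z,\nu^+,\nu^-)$, i.e.\ $d_Y(\nu^+,\nu^-)\le A$ for all non-annular $Y$ with $\beta_Z\subseteq\bd Y$; assembling these curves with $\alpha$ produces a multicurve $\tau\ni\alpha$ past which all non-annular subsurface coefficients of $(\nu^+,\nu^-)$ are $\le A$. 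Localizing the proof of Theorem~\ref{thm:short curve bddcomb}(1) to this ``non-annular $A$-bounded combinatorics relative to $\tau$'' situation --- where $Q\circ g$ now fellow-travels $\rho$ in the relevant range and Theorems~\ref{thm : twsh},~\ref{thm : shtw2},~\ref{thm:anncoeffcomp} apply on the relevant sub-segments --- would produce a lower bound on $\ell_\alpha$ along $g$ depending only on $A$, contradicting $\ell_\alpha(g(t))<\ep(A)$ once $\ep(A)$ is small enough.

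\textbf{Main obstacle.} The crux, present in every case, is the absence of a general dictionary between the itinerary of a WP geodesic and its subsurface coefficients when the combinatorics inside nested subsurfaces is deep and unbounded: (i) fellow-traveling of $Q\circ g$ with the hierarchy path $\rho$ is presently known only under bounded-combinatorics-type hypotheses, the pants graph not being hyperbolic; and (ii) Theorems~\ref{thm : twsh},~\ref{thm : shtw},~\ref{thm : shtw2} and~\ref{thm:anncoeffcomp} all require an a priori bound on the length of the geodesic segment under consideration, which is exactly what one cannot control once arbitrarily much combinatorics is permitted in complementary subsurfaces. Producing ``relative'' versions of these length controls --- valid on long segments provided the relevant subsurface stays uniformly thick --- is what I expect to be the real work; compactness (Theorems~\ref{thm : geodlimit},~\ref{thm:lenapproach0}) delivers the qualitative conclusions but not the uniform constants Conjecture~\ref{conj : coeff length} demands, and closing that gap belongs to the same circle of ideas needed for Conjecture~\ref{bottleneck for strata}.
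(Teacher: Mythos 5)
You have set out to prove a statement that the paper itself does not prove: Conjecture~\ref{conj : coeff length} is explicitly left open there. The paper establishes it only in the special case of non-annular $R$--bounded combinatorics (Theorem~\ref{thm:short curve bddcomb}), and the constructions of Sections 5--6 (Theorem~\ref{thm:one-step filling} and its closed-geodesic analogue) merely produce examples \emph{consistent} with the conjecture, showing that the ``indirect shortening'' alternative in part (2) is genuinely needed. So there is no proof in the paper to match your proposal against, and your proposal, by its own admission in the ``Main obstacle'' paragraph, is a program rather than a proof: the relative versions of the length-control results that you say ``I expect to be the real work'' are exactly the open content of the conjecture.

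Beyond that honest concession, two steps as written would fail. First, in part (2) you choose, for each subsurface $Z$ with $\alpha\subseteq\bd Z$, a curve $\beta_Z$ disjoint from every member of $\calL^{\na}_A(Z,\nu^+,\nu^-)$ and then ``assemble these curves with $\alpha$'' into a multicurve $\tau$; the curves $\beta_Z$ for different $Z$ need not be pairwise disjoint (nor disjoint from $\alpha$'s other complementary data), so no such $\tau$ exists in general, and even granting one, ``bounded combinatorics relative to $\tau$'' is not a hypothesis under which the fellow-traveling of $Q\circ g$ with a hierarchy path is known -- that input (from \cite{bmm2}, \cite{wpbehavior}) requires global non-annular bounded combinatorics together with thick or laminar end invariants, and without it none of Theorems~\ref{thm : twsh}, \ref{thm : shtw2}, \ref{thm:anncoeffcomp} can be brought to bear, since they all need a priori bounded-length segments with length controls at the endpoints. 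Second, in part (1) the compactness scheme via Theorems~\ref{thm : geodlimit} and \ref{thm:lenapproach0} is used in the paper only after very strong geometric control has been arranged by hand (thick axes of partial pseudo-Anosovs, the visibility/bottleneck theorem, Lemmas 5.6--5.8 giving uniform length bounds on the flanking intervals); in the generality of the conjecture you have no analogue of those lemmas, and a limiting argument can at best give a qualitative, non-uniform conclusion, not the functions $A(\ep)$ and $\ep(A)$ the statement demands -- the same difficulty you correctly tie to Conjecture~\ref{bottleneck for strata}. In short: the right circle of ideas is identified, but the decisive steps are either missing or, in the case of the multicurve assembly, incorrect.
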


 \subsection{The basic example}\label{subsec:basic}
To set the stage for our example of WP geodesics with indirect curve shortening property,
 consider the configuration of subsurfaces of $S$:
 \begin{equation}\label{fig:oneZ}
  \begin{tikzcd}[row sep=tiny,column sep=tiny]
 &    S   &  \\ 
 &    Z \arrow[u,hook,no head] & \\
    X_1 \arrow[ur,hook,no head] & &  X_2  \arrow[ul,hook',no head]
  \end{tikzcd}
\end{equation}
where we assume $Z$ is large in $S$, and $X_j$ is large
in $Z$ for $j=1,2$. We moreover assume that $\bd X_1$ and $\bd X_2$ fill $Z$
and that no boundary curve of $X_j$ is a boundary curve of $Z$ for $j=1,2$. We call this
a {\em one-step large filling configuration}; see Figure \ref{fillingarcs} for an example.

\realfig{fillingarcs}
{One-step large filling: The subsurface $Z$ is a two-holed torus. In its abelian cover on the left we
  indicate two arcs $a_1,a_2$ connecting the components of $\bd Z$ which fill $Z$. A
  regular neighborhood of $a_i \union \bd Z$ is a three-holed sphere $U_i$, and we define
  $X_i = Z\ssm U_i$. Note that $\bd X_i$ and $\bd Z$ have no components in common. 
On the right we indicate $Z$ as a subset of $S$. }

We can now state the main theorem of Section \ref{sec:example}. The proof will follow over
the next few subsections.
\begin{thm}\label{thm:one-step filling}
There exist $A_1\geq 1,\ep_0>0$ such that for each $\epsilon >0$ and a one-step
large filling configuration $Z, X_1, X_2$, there is
a \W geodesic segment $\overline{pq}$ in $\Teich(S)$ such
that
\begin{itemize}
  \item $\calL^{\na}_{A_1}(S,p,q)=\{X_1,X_2\}$, and $d_\gamma(p,q)\leq 1$ for all $\gamma\subseteq\bd Z$, 
    \item The endpoints $p,q$ are in the $\ep_0$--thick part of
      $\Teich(S)$, and
      \item $\inf_{x\in \overline{pq}} \ell_{\bd Z}(x) < \ep$.
\end{itemize}
\end{thm}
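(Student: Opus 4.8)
\emph{Overview.} The plan is to realize $\seg{pq}$ as a finite geodesic that shadows a bi-infinite WP geodesic living inside $Z$ whose forward and backward ends run off into $X_1$ and $X_2$, and to read off its end invariants from bounded--geodesic--image estimates for partial pseudo-Anosov maps; Theorem \ref{thick bottleneck} is the tool that produces the bi-infinite geodesic. Concretely, fix partial pseudo-Anosov maps $f_1,f_2$ supported on $X_1,X_2$, now viewed as subsurfaces of $Z$. Since $X_j$ is large in $Z$, each has a single bi-infinite WP axis $A_{f_j}$ in $\oT{Z}$, contained in the $\bar\ep$--thick part of $\calS_Z(\bd X_j)\cong\Teich(X_j)$ for a suitable $\bar\ep>0$, and by Lemma \ref{lem:dWP-dX} all subsurface coefficients along $A_{f_j}$ are bounded by a uniform $D$ except $d_{X_j}$ and the $\bd X_j$--annular coefficients. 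As $\bd X_1,\bd X_2$ are co-large in $Z$ and $\bd X_1\cup\bd X_2$ fills $Z$ by hypothesis, Theorem \ref{thick bottleneck} and its visibility corollary Theorem \ref{thm:asymplargevis}, applied with $Z$ in place of $S$, yield a bi-infinite WP geodesic $h\subset\Teich(Z)$ strongly asymptotic to $A_{f_1}$ forward and $A_{f_2}$ backward and passing through a fixed compact set $K_Z$. Combining Lemma \ref{lem:dWP-dX} (the bound (\ref{eq:wpbehavior bound})) along the two ends with the bounded diameter of $K_Z$ gives, for every $T$, that $d_Y(h(-T),h(T))\le D'$ for all non-annular $Y\subseteq Z$ with $Y\ne X_1,X_2$, while $d_{X_1}(h(-T),h(T))$ and $d_{X_2}(h(-T),h(T))$ grow linearly in $T$.

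\emph{The segment in $\Teich(S)$.} Since $Z$ is large in $S$, $\calS(\bd Z)\cong\Teich(Z)$ is a product--free, totally geodesic stratum of $\oT S$ containing $h$. Fix a thick marking $\mu_0$ of $S$ whose restriction to $Z$ is thick. For $M=M(\ep)$ large I would take $p,q\in\Teich(S)$ to be $\ep_0$--thick points (with $\ep_0$ universal) whose Bers markings agree with $\mu_0$ outside $Z$, whose twist coordinate at each component of $\bd Z$ is trivial, with $\ell_{\bd Z}(p),\ell_{\bd Z}(q)$ equal to a fixed small positive value, and with $\pi_Z$--projections $f_1^{-M}(\mu_0)$ and $f_2^{M}(\mu_0)$; a plumbing--coordinate computation shows such $p,q$ exist, and finiteness of the topological types of one--step large filling configurations in the fixed $S$ lets the blocks $f_j$ and the constants $D,A_1,\ep_0$ be taken uniform. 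The trivial--twist choice gives $d_\gamma(p,q)\le 1$ for $\gamma\subseteq\bd Z$; triangulating through $\mu_0$ and applying (\ref{eq:wpbehavior bound}) to $f_1,f_2$ in $Z$ and in $S$ (each fixes everything outside its support) gives $d_Y(p,q)\le A_1$ for all non-annular $Y\ne X_1,X_2$ and $d_{X_1}(p,q),d_{X_2}(p,q)>A_1$, so $\calL^{\na}_{A_1}(S,p,q)=\{X_1,X_2\}$. The geodesic $\seg{pq}$ lies in $\Teich(S)$ by geodesic convexity.

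\emph{Shortening $\bd Z$.} It remains to force $\inf_{\seg{pq}}\ell_{\bd Z}<\ep$, and this is where the ruled surfaces of \S\ref{subsec:rulings} enter. Let $\tilde p,\tilde q\in\calS(\bd Z)=\Teich(Z)$ be the nearest--point projections of $p,q$; these are thick in $\Teich(Z)$, and by the coefficient computation the geodesic $\zeta=\seg{\tilde p\tilde q}$ shadows $h$, meets $K_Z$, has $d_{X_1}(\tilde p,\tilde q),d_{X_2}(\tilde p,\tilde q)\to\infty$ and all other coefficients bounded; hence the only curves becoming short along $\zeta$ are $\bd X_1,\bd X_2$, and these cannot be short simultaneously (they fill $Z$, so the Collar Lemma applies). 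Thus at every point $\zeta$ lies in a bounded neighborhood of the $\bar\ep$--thick part of one of the product--free strata $\calS(\bd Z),\calS(\bd X_1\cup\bd Z),\calS(\bd X_2\cup\bd Z)$, and $\length(\zeta)\asymp d_{X_1}(\tilde p,\tilde q)+d_{X_2}(\tilde p,\tilde q)\to\infty$ with $M$. If $\ell_{\bd Z}\ge c>0$ on all of $\seg{pq}$, then $\dist(\seg{pq},\calS(\bd Z))\ge e(c)>0$, so $\dist(\seg{pq},\zeta)\ge e(c)$; forming $Q=Q[\seg{pq};\zeta]$ and applying Lemma \ref{lem:I lower bound} (and its extension to the intermediate strata above) to each unit subsegment of $\zeta$, monotonicity of $\calI$ gives $\calI(Q)\gtrsim K_0\cdot\length(\zeta)$, contradicting the Gauss--Bonnet bound $\calI(Q)\le 4\pi$ of Theorem \ref{thm : GB2} once $M$ is large. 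Hence $\dist(\seg{pq},\calS(\bd Z))<e(c)$, and letting $c=c(\ep)\to0$ together with continuity of $\ell_{\bd Z}$ on $\oT S$ gives $\inf_{\seg{pq}}\ell_{\bd Z}<\ep$.

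\emph{Main obstacle.} The crux is the last paragraph: showing, uniformly in $M$, that the auxiliary geodesic $\zeta$ never leaves a bounded neighborhood of the thick part of one of finitely many product--free strata --- exactly the combinatorial control that Theorem \ref{thick bottleneck} supplies inside $Z$ --- and checking that the curvature lower bound behind Lemma \ref{lem:I lower bound}, stated for co-large strata, still applies to $\calS(\bd X_j\cup\bd Z)$, whose complement has a single connected non-trivial piece $X_j$ (the argument there used only the absence of a product structure). A secondary but genuine point is the plumbing--coordinate bookkeeping needed to make the endpoints simultaneously $\ep_0$--thick, prescribe their $Z$--projections, and kill the twist at $\bd Z$, with all constants independent of the configuration.
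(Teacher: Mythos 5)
Your setup through the construction of the bi-infinite geodesic $h$ (partial pseudo-Anosov maps on $X_1,X_2$, axes in thick parts of strata of $\Teich(Z)$, visibility via Theorem \ref{thm:asymplargevis} with $Z$ in place of $S$, and coefficient control through the bound (\ref{eq:wpbehavior bound})) matches the paper, and your endpoint construction, modulo the unproved ``plumbing-coordinate'' existence claim, plays the same role as the paper's choice of $p_n,q_n$ near $h(a_n),h(b_n)$ with Dehn twists about $\bd Z$ killing the $\bd Z$-annular coefficients. The divergence is in the third bullet, and there the proposal has a genuine gap.

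You shorten $\bd Z$ by a Gauss--Bonnet argument on the ruled surface $Q[\seg{pq};\zeta]$, which needs Lemma \ref{lem:I lower bound} to apply along essentially every unit subsegment of the long geodesic $\zeta=\seg{\tilde p\tilde q}\subset\calS(\bd Z)$, uniformly in $M$. That lemma requires the base segment to lie within distance $d<b$ (the small constant of Lemma \ref{lem : b-nbhd}) of the $\bar\ep$--thick part of a co-large stratum. What you actually have is only that $\zeta$ fellow-travels $h$ at a \emph{bounded} distance coming from Brock's quasi-isometry and the coarse coefficient estimates; a bounded but not small neighborhood of a thick stratum gives no curvature lower bound, and ``all non-annular and non-exceptional annular coefficients of $(\tilde p,\tilde q)$ are bounded'' does not yield a uniform thickness trichotomy along $\zeta$, because the only tool converting bounded twisting into length lower bounds, Theorem \ref{thm : shtw2}, has constants depending on the length of the segment, which here grows with $M$. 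Your ``main obstacle'' paragraph attributes exactly this control to Theorem \ref{thick bottleneck}, but the Bottleneck theorem only forces such geodesics through one fixed compact set somewhere near the middle; it says nothing about the distance from $\zeta$ to thick parts of strata along its entire length. Supplying this control is precisely the hard content of the paper, and the paper does it by a different mechanism: Theorem \ref{thm : bdZ short}, whose proof gets length lower bounds for all curves other than $\bd Z$ only on a fixed compact interval (Lemmas \ref{lem : lb on I}, \ref{lem : ub on I}, \ref{lem : J}, using Theorem \ref{thm:anncoeffcomp} along the portions fellow-traveling the axes, Theorems \ref{thm : twsh} and \ref{thm : shtw2}, the Geodesic Limit Theorem \ref{thm : geodlimit} and the quantified non-refraction Lemma \ref{lem:q-nonref}), and then concludes $\ell_{\bd Z}\to 0$ there from convexity of length-functions via Theorem \ref{thm:lenapproach0} --- no ruled surfaces are used in this part of the paper at all.

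Two secondary points: (i) your appeal to ``Lemma \ref{lem:I lower bound} and its extension to the intermediate strata'' $\calS(\bd X_j\cup\bd Z)$ is itself unproved; the lemma as stated is for co-large multicurves, and while those strata are product-free, you would need to check that the curvature estimate of \cite{asympdiv} extends, which you only flag optimistically; (ii) the existence of the endpoints $p,q$ with simultaneously prescribed $Z$-projection, trivial twisting at $\bd Z$, fixed small $\ell_{\bd Z}$ and uniform thickness is asserted, not proved, whereas the paper obtains its endpoints directly from Lemma \ref{lem : b-nbhd} applied near $h(a_n),h(b_n)$ followed by Dehn twists in $\bd Z$. As written, then, the proof of the crucial bullet $\inf_{x\in\seg{pq}}\ell_{\bd Z}(x)<\ep$ is not complete.
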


\subsection{Using visibility in a stratum}
Fix for the rest of the section a one-step large filling configuration $Z,X_1,X_2$ in $S$.

Since $Z$ is large the stratum $\calS(\bd Z)$ can be identified with $\Teich(Z)$ after replacing boundary curves with punctures,
and similarly the strata of $\bd X_i$ within this stratum, namely $\calS(\bd
X_i)\intersect\calS(\bd Z)$, can be identified with $\Teich(X_i)$ for $i=1,2$. 
Let $f_1$ and $f_2$ be partial pseudo-Anosov maps supported in $X_1$ and $X_2$
respectively, so that their axes $g_1$ and $g_2$ lie in the $\ep$-thick part of $\Teich(X_i)$ for
some $\bar\ep>0$. Fix rays $g_2^+$ and $g_1^-$ on these axes. 

We can apply Theorem \ref{thm:asymplargevis}, with $Z$ playing the role of $S$ in that theorem, to obtain a
biinfinite geodesic $h$ in $\Teich(Z)$ asymptotic to the rays $g_1^-$ in backward time and
$g_2^+$ in forward time.

The  geodesic segment examples for Theorem \ref{thm:one-step filling} will be obtained from $h$,
viewed in $\calS(\bd Z)$, by pushing points far out along $h$ slightly away from the
stratum. The key will then be to show that, for these geodesics, the subsurface
coefficients behave as required, and the length of $\bd Z$ becomes very small near the
center.

\subsection{Controlling subsurface coefficients along $h$}
Recall that $g_1$ and $g_2$ are geodesics in the $\bar\ep$--thick parts of $\Teich(X_1)$ and $\Teich(X_2)$, respectively,
and let $b>0$ be the corresponding constant to $\bar\ep$ from Lemma \ref{lem : b-nbhd}. 

Let us fix a parametrization of the bi-infinite geodesic $h:\RR\to \Teich(Z)$ and of the
geodesics $g_1, g_2$ by arclength, so that $d(h(t),g_2(t)) \to 0$ and
$d(h(-t),g_1(-t))\to 0$ as $t\to\infty$. We may also assume that $h(0)$ is at least distance $b$
away from the strata containing $g_1$ and $g_2$.

Let $s_1<0$ be so that $h|_{(-\infty,s_1]}$ and $g_1|_{(-\infty,s_1]}$ are the largest portions of $h$ and $g_1$ that $b$--fellow travel,
 and let $s_2>0$ be so that $h|_{[s_2,\infty)}$ and $g_2|_{[s_2,\infty)}$
 are the largest portions of $h$ and $g_2$ that  $b$--fellow travel.

\begin{lem}\label{lem:h subsurf}
 There is an $A\geq 1$, so that for any $t,t'\in (-\infty,s_1]$ we have 
   \begin{itemize}
   \item $\calL_A(S,h(t),h(t'))\subseteq \{X_1,\gamma\in \bd X_1\}$,
   \item $d_{X_1}(h(t),h(t'))\asymp d(h(t),h(t'))$.
   \end{itemize}
  where the constants of the coarse inequality depend only on $f_1$. 
  A similar statement holds for  $t,t'\in [s_2,\infty)$ the subsurface $X_2$ and $f_2$. 
  
  Moreover, for any $t,t'\in \RR$ we have
 \[\calL_A(S,h(t),h(t'))\subseteq \{X_1,X_2, \gamma\in \bd X_1\cup\bd X_2\}.\]
\end{lem}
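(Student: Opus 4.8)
The plan is to deduce the inclusion $\calL_A(S,h(t),h(t'))\subseteq\{X_1,X_2,\gamma\in\bd X_1\cup\bd X_2\}$ from the two statements of the lemma already proved (those for $(-\infty,s_1]$ and $[s_2,\infty)$) together with a crude estimate on the \emph{fixed} compact middle segment $h|_{[s_1,s_2]}$. So suppose $t\le t'$ (the coefficients being symmetric), and set $u_1=\min\{\max\{t,s_1\},t'\}$ and $u_2=\min\{\max\{t,s_2\},t'\}$; since $s_1<s_2$ one gets $t\le u_1\le u_2\le t'$ with $[t,u_1]\subseteq(-\infty,s_1]$, $[u_2,t']\subseteq[s_2,\infty)$ and $[u_1,u_2]\subseteq[s_1,s_2]$. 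Fix a subsurface $Y\subseteq S$ that is neither $X_1$ nor $X_2$ nor an annulus with core in $\bd X_1\cup\bd X_2$; if $\pi_Y(h(t))$ or $\pi_Y(h(t'))$ is empty then $Y\notin\calL_A$ for every $A$ and there is nothing to check, so assume both are nonempty.

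First I would observe a dichotomy: $\pi_Y(h(s))$ is nonempty either for every $s\in\RR$ or for no $s$. Indeed each $h(s)$ lies in $\calS(\bd Z)$, and its natural marking $\mu(h(s))$ of the noded surface is $\bd Z$ together with a (filling) Bers marking of the $Z$–factor, so $\pi_Y(h(s))\ne\emptyset$ precisely when $Y$ overlaps $\bd Z$ or $Y\subseteq Z$, a condition independent of $s$. Hence $\pi_Y(h(u_1))$ and $\pi_Y(h(u_2))$ are nonempty, the triangle inequality for subsurface coefficients is legitimate, and
\[
d_Y(h(t),h(t'))\le d_Y(h(t),h(u_1))+d_Y(h(u_1),h(u_2))+d_Y(h(u_2),h(t')).
\]
Since $t,u_1\le s_1$ and $Y$ is neither $X_1$ nor an annulus with core in $\bd X_1$, the already–established statement $\calL_A(S,h(t),h(u_1))\subseteq\{X_1,\gamma\in\bd X_1\}$ bounds the first term by the constant $A_0$ produced there; symmetrically the $X_2$ version bounds the third term by $A_0$.

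For the middle term I would use that $h([s_1,s_2])$ is a fixed compact subset of $\Teich(Z)$, hence of WP–diameter at most some $L$ and contained in the $\ep'$–thick part of $\Teich(Z)$ for some $\ep'>0$, with $L,\ep'$ depending only on the configuration and on $f_1,f_2$. By the dichotomy above there are three cases. If $Y$ overlaps $\bd Z$, then for each $i$ the marking $\mu(h(u_i))$ contains a fixed component $\delta\in\bd Z$ with $\delta\pitchfork Y$, so the uniformly bounded–diameter set $\pi_Y(\mu(h(u_i)))$ contains the fixed set $\pi_Y(\delta)$, whence $d_Y(h(u_1),h(u_2))$ is bounded by a universal constant; this also covers annuli whose core crosses $\bd Z$. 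If $Y\subseteq Z$ is non-annular, the markings restrict to Bers markings of the $Z$–factor and Lemma \ref{lem:dWP-dY}, applied inside $\Teich(Z)\cong\calS(\bd Z)$, bounds $d_Y(h(u_1),h(u_2))$ in terms of $L$. If $Y\subseteq Z$ is an annulus with core $\gamma$, then $\gamma$ is not a component of $\bd Z$ (whose projection is empty), so thickness gives $\ell_\gamma\ge 2\ep'$ throughout $h|_{[u_1,u_2]}$ and Theorem \ref{thm : twsh} in contrapositive form (with $T=L$, $\epsilon_0=\ep'$, $\epsilon=\ep'/2$) bounds $d_\gamma(h(u_1),h(u_2))$ by the associated threshold. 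Taking $A$ larger than $2A_0$ plus the maximum of these middle–term bounds (and at least as large as the constants of the first two bullets, which only improve as $A$ grows) yields the claimed inclusion for all $t,t'$.

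The step that needs the most care is the middle segment, precisely because Lemma \ref{lem:dWP-dY} controls only non-annular coefficients and because a curve crossing $\bd Z$ has infinite length on the stratum, so Theorem \ref{thm : twsh} does not apply to it directly. Both difficulties dissolve once one exploits that $h([s_1,s_2])$ is compact — hence thick in $\Teich(Z)$ — and that the natural marking at every point of $h$ contains the fixed multicurve $\bd Z$, which pins down $\pi_\gamma$ whenever $\gamma\pitchfork\bd Z$. What remains is bookkeeping: checking that each sub-interval of $[t,t']$ lands in the region where the relevant estimate is available, and that the triangle inequality is being applied to nonempty projections throughout.
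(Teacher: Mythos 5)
Your proposal addresses only the last claim of the lemma. The two bulleted statements --- that for $t,t'\in(-\infty,s_1]$ every coefficient $d_Y(h(t),h(t'))$ is uniformly bounded unless $Y=X_1$ or $Y$ is an annulus with core in $\bd X_1$, that $d_{X_1}(h(t),h(t'))\asymp d(h(t),h(t'))$, and the symmetric statements on $[s_2,\infty)$ --- are part of what must be proved, and you take them as given (``the two statements of the lemma already proved''). Nothing in your argument supplies them, and they are the substantive content of the lemma: the tails of $h$ are noncompact and leave every thick compact subset of $\Teich(Z)$ as they approach the strata of $\bd X_1$ and $\bd X_2$, so no soft compactness estimate of the kind you use on $[s_1,s_2]$ can work there. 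The paper's proof of the bullets runs as follows: on $(-\infty,s_1]$ the geodesic $h$ stays within $b$ of the axis $g_1$ of the partial pseudo-Anosov $f_1$, which lies in the thick part of $\calS(\bd X_1)$; Lemma \ref{lem:dWP-dY} transfers all non-annular coefficients from $h(t),h(t')$ to nearby points $y_t,y_{t'}$ on $g_1$ with uniform error; since the connecting segments $\seg{h(t)y_t}$, $\seg{h(t')y_{t'}}$ lie in $U_{b,\bar\ep}(\bd X_1)$, Lemma \ref{lem : b-nbhd} gives a lower length bound along them for every curve $\gamma\notin\bd X_1$, and Theorem \ref{thm : twsh} then bounds the annular coefficients $d_\gamma(h(t),y_t)$, $d_\gamma(h(t'),y_{t'})$; finally Lemma \ref{lem:dWP-dX}, applied to the axis of $f_1$, bounds $d_Y(y_t,y_{t'})$ for all $Y$ other than $X_1$ and annuli with core in $\bd X_1$, and gives $d_{X_1}(y_t,y_{t'})\asymp d(y_t,y_{t'})$, from which both bullets follow. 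Some argument of this kind is indispensable and is absent from your proposal.

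For the part you do prove, your route agrees with the paper's: split $[t,t']$ at $s_1$ and $s_2$, invoke the bullets on the two tails, bound the contribution of the fixed middle segment, and combine with the triangle inequality. The paper disposes of the middle term in one line (a fixed compact segment has all of its coefficients bounded), whereas your more careful treatment --- the dichotomy for nonemptiness of $\pi_Y$ at points of $\calS(\bd Z)$, using that $\bd Z$ lies in the marking when $Y\pitchfork\bd Z$, Lemma \ref{lem:dWP-dY} for non-annular $Y\subseteq Z$, and Theorem \ref{thm : twsh} on the thick compact segment for annuli in $Z$ --- is sound and fills in details the paper leaves implicit. (One small slip: if $t>s_1$ then $[t,u_1]$ is a point not contained in $(-\infty,s_1]$, but that term is then just $\diam_{\calC(Y)}\pi_Y(h(t))$, which is harmless.) The genuine gap remains the absence of any proof of the two bulleted statements and their $X_2$ analogue.
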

\begin{proof}
When $t,t'\in (-\infty,s_1]$ the points $h(t)$ and $h(t')$ are within distance $b$ of points $y_t$ and $y_{t'}$ on $g_1$.
Then by the coarse Lipschitz property of subsurface projections (Lemma \ref{lem:dWP-dY})
we have
\begin{equation}\label{eq:dYhg}
\Big|d_Y(h(t),h(t'))- d_Y(y_t,y_{t'})\Big|\leq D
\end{equation}
for some $D>0$ and all non-annular subsurface $Y\subseteq S$. Moreover, the segments $\seg{h(t)y_t}$ and $\seg{h(t'))y_{t'}}$
are in the $b$--\nbhd of $g_1$ which is in $\calS_\ep(\bd X_1)$, thus by Lemma \ref{lem : b-nbhd} the segments are 
away from all strata except $\calS(\sigma)$ with $\sigma\subseteq \bd X_1$.
Now let $\gamma$ be a curve which is not in $\bd X_1$, 
then we have a uniform lower bound for the length of $\gamma$ along $\seg{h(t)y_t}$ and  $\seg{h(t')y_{t'}}$.
Applying Theorem \ref{thm : twsh} then we obtain a uniform upper bound for $d_\gamma(h(t),y_t)$ and  $d_\gamma(h(t),y_{t'})$.
 This implies that (\ref{eq:dYhg}) also holds for 
 for all annuli whose core curves are not in $\bd X_1$.

Now note that $y_t$ and $y_{t'}$ are on an axis of $f_1$, so by Lemma \ref{lem:dWP-dX}, $d_Y(y_t,y_{t'})$
is uniformly bounded for all subsurfaces $Y$ except $X_1$ and the annuli with core curves in $\bd X_1$.
Thus by (\ref{eq:dYhg}) $d_Y(h(t),h(t'))$ is uniformly bounded for all subsurfaces $Y$ except $X_1$ and the annuli with core curves in $\bd X_1$.
This is the first bullet of the lemma.

Moreover, note that by Lemma \ref{lem:dWP-dX}, 
\[d_{X_1}(y_t,y_{t'})\asymp d(y_t,y_{t'})\]
so by (\ref{eq:dYhg}) we obtain the second bullet of the lemma.
When $t,t'\in [s_2,\infty)$ the bullets are proved similarly where $X_1$ is replaced by $X_2$.

To see the second part of the lemma, note that the segment $\seg{h(s_1)h(s_2)}$ is fixed, so 
 there is a $D_1\geq 1$ that bounds all projection coefficients of any pair of points on $\seg{h(s_1)h(s_2)}$.
  Combining this bound and the bounds from the first part of the lemma with the triangle inequality for each non-annular subsurface $Y$ 
  which is not $X_1,X_2$ or an annulus
 with core curve a boundary component of $X_1$ or $X_2$ 
 we find that the projection coefficient of the subsurface is uniformly bounded 
 giving us the second part of the lemma. 
\end{proof}

\subsection{Controlling the length of $\bd Z$}
The following theorem is the main ingredient of the proof of Theorem \ref{thm:one-step
  filling}. It says roughly that,
in a geodesic fellow-traveling a sufficiently long part of our geodesic $h$, if
the length of $\bd Z$ is bounded at the endpoints then it
becomes very short near the center. 

\begin{thm}\label{thm : bdZ short}
  Let $h$ be the geodesic constructed above using a one-step large filling configuration
  $Z,X_1,X_2$. 
  Let $D\geq 1$ and let $I_n=[a_n,b_n]$ be a sequence of intervals so that  $0\in I_n$ and $I_n\subseteq I_{n+1}$ for $n\in \NN$,
 and $\cup_n I_n=\RR$.
   Let \{$\zeta_n:I_n\to \Teich(S)$\} be a sequence of WP geodesic segments 
such that $\zeta_n$ and $h|_{I_n}$ are $D$-fellow travelers as parameterized geodesics. 
  Moreover, suppose that the length of $\bd Z$ is bounded above at the end points of $\zeta_n$
  independently of $n$.
  Then, there is a compact interval $J$ so that after possibly passing to a subsequence
\[\ell_{\bd Z}\circ\zeta_n \to 0\]
 uniformly on $J$.
 \end{thm}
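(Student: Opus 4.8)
The plan is to pass to a limiting geodesic line and show it is trapped in the closure of $\calS(\bd Z)$, so that $\ell_{\bd Z}$ vanishes along it, and then transfer this to the $\zeta_n$. First I would extract the limit: since $\zeta_n$ and $h|_{I_n}$ are $D$--fellow travelers and $h$ is a fixed biinfinite geodesic, for each $T$ the segments $\zeta_n|_{[-T,T]}$ lie (for $n$ large) in the compact set $\overline{\calN_D(h([-T,T]))}\subset\oT{S}$; being $1$--Lipschitz they subconverge, by Arzel\`a--Ascoli and a diagonal argument, uniformly on compacta to a geodesic line $\zeta_\infty\colon\RR\to\oT{S}$ with $\dist(\zeta_\infty(t),h(t))\le D$ for all $t$.

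Next I would record a uniform bound $\ell_{\bd Z}\circ\zeta_n\le c_1$ on all of $I_n$. Each $\ell_\alpha\circ\zeta_n$ with $\alpha\in\bd Z$ is convex along the WP geodesic $\zeta_n$ (\cite[Corollary 4.7]{wolpert:nielsen}), hence so is $\ell_{\bd Z}\circ\zeta_n$; a convex function is bounded on an interval by its values at the two endpoints, so the hypothesis that $\ell_{\bd Z}$ is bounded at the endpoints of $\zeta_n$ independently of $n$ supplies $c_1$. Since $\ell_{\bd Z}$ extends continuously to $\oT{S}$, passing to the limit gives $\ell_{\bd Z}\circ\zeta_\infty\le c_1$ on $\RR$.

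The heart of the argument, and the step I expect to be the main obstacle, is to prove $\ell_{\bd Z}\circ\zeta_\infty\equiv 0$. Applying Non-refraction (Theorem \ref{thm:nonref}) to subsegments $\seg{\zeta_\infty(a)\zeta_\infty(b)}$ and sending $a\to-\infty$, $b\to+\infty$ shows $\zeta_\infty(\RR)$ lies in a single stratum $\calS(\sigma_\infty)$ (possibly $\Teich(S)$). No component of $\sigma_\infty$ may cross $\bd Z$, since that would force $\ell_{\bd Z}\circ\zeta_\infty\equiv\infty$, contradicting the previous step; so $\sigma_\infty$ is disjoint from $\bd Z$, and as $\bd Z$ is co-large each of its components is either a component of $\bd Z$ or a curve in $\mathrm{int}(Z)$. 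On the product $\calS(\sigma_\infty)=\prod_j\Teich(X_j)$ each $\ell_\alpha\circ\zeta_\infty$, $\alpha\in\bd Z$, is convex and $\le c_1$ on $\RR$, hence constant, so $\ell_{\bd Z}\circ\zeta_\infty\equiv c$ for some $c\ge 0$. If $c>0$, then $\sigma_\infty$ is not contained in $\bd Z$: otherwise $\calS(\sigma_\infty)$ is a single Teichm\"uller space (because $Z$ is large and $\bd Z$ co-large), the length-functions on it are \emph{strictly} convex, and $\zeta_\infty$ would have to be constant --- impossible, since a geodesic line cannot stay within distance $D$ of the biinfinite geodesic $h$ while being constant. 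Hence some component $\alpha_0$ of $\sigma_\infty$ lies in $\mathrm{int}(Z)$, so $\ell_{\alpha_0}\circ\zeta_\infty\equiv 0$; then $\zeta_\infty(t)\in\calS(\sigma_\infty)\subseteq\overline{\calS(\alpha_0)}$ gives $\dist(h(t),\overline{\calS(\alpha_0)})\le\dist(h(t),\zeta_\infty(t))\le D$ for all $t$, whence $\ell_{\alpha_0}\circ h$ is uniformly bounded by standard control of length-functions near the strata (e.g.\ Lemma \ref{lem:distlen} applied with a large threshold). But $h$ is a WP geodesic in the single Teichm\"uller space $\Teich(Z)$, so $\ell_{\alpha_0}\circ h$ is strictly convex on all of $\RR$, and a strictly convex function cannot be bounded on $\RR$ --- a contradiction. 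Therefore $c=0$. The delicate points here are the non-refraction "single stratum" argument for a biinfinite geodesic, the bookkeeping of how the components of $\sigma_\infty$ meet $\bd Z$, and invoking the right WP distance-to-stratum estimate.

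Finally, with $\ell_{\bd Z}\circ\zeta_\infty\equiv 0$, any compact interval $J$ works: $\zeta_n|_J\to\zeta_\infty|_J$ uniformly, with all values in a fixed compact subset of $\oT{S}$ on which $\ell_{\bd Z}$ is uniformly continuous (as a map to $[0,\infty]$), so $\ell_{\bd Z}\circ\zeta_n\to\ell_{\bd Z}\circ\zeta_\infty\equiv 0$ uniformly on $J$, which is exactly the assertion of the theorem. (Alternatively, once $\zeta_\infty$ is known to avoid pinching any curve disjoint from $\bd Z$ on an interval around $0$, one could instead quote Theorem \ref{thm:lenapproach0} directly with $\sigma=\bd Z$.)
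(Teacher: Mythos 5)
Your argument breaks down at the very first step, and the failure is exactly the difficulty the paper's proof is built to overcome. The Weil--Petersson completion $\oT{S}$ is not locally compact at the completion strata, so the closed $D$--neighborhood of $h([-T,T])$ (whose points include stratum points of $\calS(\bd Z)$) is \emph{not} compact, and Arzel\`a--Ascoli does not produce the limiting geodesic line $\zeta_\infty$. Bounded families of WP geodesic segments are well known to fail to subconverge in $\oT{S}$: the correct substitute is Wolpert's Geodesic Limit Theorem (Theorem \ref{thm : geodlimit}), which yields a limit only after composing with mapping classes $\varphi_{i,n}$ that involve unbounded twisting about curves that get pinched, and the limit is in general a \emph{broken} geodesic passing through several strata. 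Concretely, along $\zeta_n$ a curve in the interior of $Z$ (for instance a component of $\bd X_1$) could a priori become short with huge twisting, in which case no limit line within distance $D$ of $h$ exists and your strict-convexity contradiction cannot be run. Ruling this out on a fixed compact interval is precisely the content of the paper's Lemmas \ref{lem : lb on I}, \ref{lem : ub on I} and \ref{lem : J}, which use the specific structure of $h$ (fellow-traveling of thick axes of the partial pseudo-Anosovs on $X_1,X_2$, the bounded subsurface coefficients of Lemma \ref{lem:h subsurf}, and the twisting--shortening results, Theorems \ref{thm : twsh}, \ref{thm : shtw2}, \ref{thm:anncoeffcomp}, plus Lemma \ref{lem:q-nonref}); only then can Theorem \ref{thm:lenapproach0} be applied with $\sigma=\bd Z$. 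The fact that your proposal never invokes the one-step large filling configuration (beyond $h\subset\calS(\bd Z)$ with $\bd Z$ co-large) is a warning sign: if it worked, the theorem would hold for an arbitrary geodesic $h$ in the stratum, which is far stronger than what the paper establishes and not something the hypotheses obviously support.

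Two smaller points. The appeal to Lemma \ref{lem:distlen} ``with a large threshold'' to bound $\ell_{\alpha_0}\circ h$ from $\dist(h(t),\overline{\calS(\alpha_0)})\le D$ is quantified in the wrong direction (that lemma gives, for each length threshold, \emph{some} small radius $s$, not a radius that works for the fixed distance $D$); the correct tool is Wolpert's uniform Lipschitz bound for $\sqrt{\ell_{\alpha_0}}$ along WP geodesics, which the paper does not state but which is standard. And the final uniform-convergence step again presupposes compactness of a set containing all $\zeta_n(J)$, which runs into the same non-local-compactness issue; in the paper this is handled inside the proof of Theorem \ref{thm:lenapproach0} by first securing the lower length bounds of Lemma \ref{lem : J} and then working modulo the stabilizer of $\bd Z$ in the compact completed moduli space. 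So the overall strategy (limit line in a stratum, convexity forces $\ell_{\bd Z}\equiv 0$) is attractive, but without the length lower bounds on curves disjoint from $\bd Z$ it has a genuine gap at the limit-extraction stage.
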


  We wish to apply Theorem \ref{thm:lenapproach0} to the sequence of geodesics $\zeta_n:I_n\to \Teich(S)$ to prove the theorem. 
  By the hypothesis of the theorem and convexity of length-functions, $\ell_{\bd Z}\circ\zeta_n$ is uniformly bounded above on the intervals $I_n$,
  so we only require to show that there is an interval $J$ over which the lengths of all curves which are not a component curve of $\bd Z$
  are uniformly bounded below. More precisely,
  
  \begin{lem}\label {lem : J}
There is an interval $J\subseteq \RR$ and $c_1>0$ such that, 
for all curves $\gamma$ that are not components of $\bd Z$,
there is a lower bound 
\begin{equation}\label{length beta lower}
  \ell_\gamma\circ\zeta_n > c_1
\end{equation}
on $J$ 
for all $n$.
\end{lem}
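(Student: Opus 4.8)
The plan is to split the curves $\gamma$ that are not components of $\bd Z$ into two classes and handle them separately. Since $Z$ is large, every component of $S\ssm Z$ is a three-holed sphere or an annulus whose core is a component of $\bd Z$, so a curve not in $\bd Z$ either crosses $\bd Z$ or is (isotopic to) a curve essential in $Z$. For $\gamma\pitchfork\bd Z$ the bound is immediate and needs no particular interval: by hypothesis $\ell_{\bd Z}$ is bounded at the endpoints of each $\zeta_n$ independently of $n$, hence by strict convexity of length-functions (\cite[Corollary 4.7]{wolpert:nielsen}) the same bound holds for $\ell_{\bd Z}\circ\zeta_n$ on all of $I_n$, and then the Collar Lemma (\cite[\S 4.1]{buser}) yields a positive lower bound --- depending only on that bound --- for $\ell_\gamma\circ\zeta_n$ everywhere on $I_n$, uniformly in $\gamma$.

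For the curves essential in $Z$ the choice of $J$ matters. I would take $J$ to be a sufficiently small compact interval around $0$ that $h(J)$ avoids the $b$-neighborhoods of the strata $\calS(\bd X_1)\cap\calS(\bd Z)$ and $\calS(\bd X_2)\cap\calS(\bd Z)$; this is possible since $h(0)$ lies at distance at least $b$ from those strata, and it makes $h(J)$ a fixed compact arc of $\Teich(Z)$. The injectivity radius along $h(J)$ is then bounded below, so there is $\delta>0$ with $\ell_\gamma\circ h\ge 2\delta$ on $J$ for every $\gamma$ essential in $Z$ (using that the length at a point of $\calS(\bd Z)$ of a curve carried by $Z$ equals its intrinsic length in $\Teich(Z)$). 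It remains to transfer this to the $\zeta_n$. If the fellow-traveling constant $D$ is small relative to $\delta$ this is immediate from Lemma \ref{lem:distlen} (or from Lemma \ref{lem : b-nbhd} applied in $U_{D,\delta}(\bd Z)$); in general I would argue by contradiction. If there were curves $\gamma_n$ essential in $Z$ and times $t_n\in J$ with $\ell_{\gamma_n}(\zeta_n(t_n))\to 0$, apply the Geodesic Limits theorem (Theorem \ref{thm : geodlimit}) to the bounded-length segments $\zeta_n|_J$ to obtain, on a subsequence, a partition of $J$, multicurves $\sigma_i\supseteq\hat\tau$, remarkings $\varphi_{i,n}$ and a piecewise-geodesic limit $\hat\zeta$. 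Since $d(\zeta_n(t),h(t))\le D$ on $J$ and $h(J)$ is a fixed compact subset of the stratum $\calS(\bd Z)$ --- which is convex by Non-refraction (Theorem \ref{thm:nonref}) --- the nearest-point projections of $\zeta_n|_J$ to $\calS(\bd Z)$ stay in a fixed compact part of $\Teich(Z)$, and this should confine the limiting degeneration to a pinching along a mapping-class image of a sub-multicurve of $\bd Z$ together with twisting about its components. A curve essential in $Z$ is disjoint from $\bd Z$, hence fixed by those twists, so on the relevant piece $\varphi_{i,n}(\gamma_n)=\psi_n(\gamma_n)$; comparing $\varphi_{i,n}(\zeta_n)\to\hat\zeta$ with the $\psi_n$-translates of the uniformly thick arc $h(J)$ then forces $\ell_{\gamma_n}\circ\zeta_n$ to stay bounded below by a constant comparable to $\delta$, contradicting $\ell_{\gamma_n}(\zeta_n(t_n))\to 0$. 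Choosing $c_1$ below this constant and below the Collar-Lemma bound of the first paragraph completes the proof.

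The main obstacle is exactly this transfer step: one needs to read off from the Geodesic Limits data that only $\bd Z$ (up to remarking) can be pinched along $\zeta_n|_J$, i.e.\ a relative form of Non-refraction saying that a sequence of geodesic segments shadowing a compact arc of a stratum $\calS(\sigma)$ can degenerate only along $\sigma$. Making this precise requires carefully tracking how the remarkings $\psi_n$ act on $\bd Z$ and on the arc $h(J)$, and uses that, because $Z$ is large, $\calS(\bd Z)\cong\Teich(Z)$ has no nontrivial product factorization.
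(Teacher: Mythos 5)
Your first case ($\gamma\pitchfork\bd Z$) is fine and matches the paper: convexity of $\ell_{\bd Z}$ propagates the endpoint bound over all of $I_n$, and the Collar Lemma then bounds crossing curves from below everywhere. The gap is in the second case, at exactly the step you flag as the ``main obstacle'': the claim that a sequence of geodesic segments $D$-fellow-traveling the fixed compact arc $h(J)\subset\calS(\bd Z)$ can only degenerate along remarked images of sub-multicurves of $\bd Z$. This relative non-refraction statement is not available and is not a consequence of Theorem \ref{thm : geodlimit} plus fellow-traveling: since $D$ is given and may be large, a point within distance $D$ of a thick point of $\calS(\bd Z)$ can be arbitrarily close to the strata of many other curves (by Wolpert's estimate the distance from $x$ to $\calS(\alpha)$ is at most of order $\sqrt{\ell_\alpha(x)}$, so every curve of $Z$ of moderate length at $h(t)$ is a candidate), and the $2D$-neighborhood of $h(J)$ inside the stratum is bounded but not precompact in $\Teich(Z)$, so your assertion that the projections of $\zeta_n|_J$ stay in a fixed compact part of $\Teich(Z)$ is also not correct. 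In particular curves inside $Z$ --- most dangerously the components of $\bd X_1$ and $\bd X_2$, which really do become arbitrarily short along $\zeta_n$ near its ends --- could a priori also become short on $J$; nothing in the GLT data as you invoke it forces the multicurves $\sigma_i$ to be $\psi_n$-images of sub-multicurves of $\bd Z$. Choosing $J$ small does not help, because the enemy is the size of $D$, not the length of $J$.

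This is precisely why the paper's proof does not argue locally on $J$ at all. It takes $J=[-j,j]$ with $j$ \emph{large} (past the fellow-traveling thresholds $s_1,s_2$), and derives the lower bounds from the global combinatorial control of $\zeta_n$ over all of $I_n$: Lemma \ref{lem:h subsurf} bounds the subsurface coefficients along $h$ except in $X_1,X_2$; Theorem \ref{thm:anncoeffcomp} transfers annular coefficients between $h$ and $\zeta_n$; Theorems \ref{thm : twsh} and \ref{thm : shtw2} convert a hypothetical short curve in the middle into a large annular coefficient, contradicting those bounds (this handles $\gamma$ meeting both $X_1$ and $X_2$); and for the delicate case $\gamma\subseteq\bd X_1$ (disjoint from $X_1$) one needs in addition the upper bound of Lemma \ref{lem : ub on I} on $\ell_{\bd X_1}$ over $I_n^-$ (itself a GLT compactness argument), convexity on the intervals $I_n^-$ whose lengths tend to infinity, and the quantified non-refraction Lemma \ref{lem:q-nonref}. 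None of this machinery is replaced by your tracking of the remarkings $\psi_n$, so as written the proposal does not close the argument.
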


The proof of Lemma \ref{lem : J} requires two lemmas.  First we obtain a lower bound for
lengths of most curves along $\zeta_n$:
 
\begin{lem}\label{lem : lb on I}
There exist $j,\ep>0$ and $A\geq 1$ such that, letting $I_n^- := [a_n+j, -j]$ and  $I_n^+ := [j,b_n-j]$,
for any curve $\gamma$ which is not in $\bd Z$ and intersects $X_1$ we have
the length lower bound
\begin{equation}\label{length lower bound}
\ell_\gamma(\zeta_n(t)) > \ep
\end{equation}
for all $t\in I_n^-$.
Similarly if $\gamma\pitchfork X_2$ then (\ref{length lower bound}) holds
when $t\in I^+_n$. 
\end{lem}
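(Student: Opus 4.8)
Fix the one-step large filling configuration and the geodesic $h$ as constructed, and work with $\gamma\pitchfork X_1$ and $t\in I_n^-$; the case $\gamma\pitchfork X_2$, $t\in I_n^+$ is symmetric, replacing $g_1,X_1,f_1,s_1$ by $g_2,X_2,f_2,s_2$. Note that $\gamma\pitchfork X_1$ forces $\gamma\ne X_1$ and $\gamma\notin\bd X_1$, and in particular $\gamma\notin\bd Z$. The plan is a direct argument with an internal contradiction: I will first fix constants $w,\omega_0,A,B_0$ (all uniform) and set $j:=|s_1|+w+1$ and $\ep$ small, and then show that if some $\zeta_n$ had $\ell_\gamma(\zeta_n(t_0))<\ep$ for a $\gamma\pitchfork X_1$ and a $t_0\in I_n^-$, a contradiction would follow. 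The first step is to observe that, since $j>|s_1|$, the interval $I_n^-$ sits inside $I':=I_n\cap(-\infty,s_1]$, on which $\zeta_n$ is $D$--fellow-traveling $h$ and $h$ is $b$--fellow-traveling the axis $g_1$ of the partial pseudo-Anosov $f_1$. Because $g_1$ is such an axis, Lemma \ref{lem:dWP-dX} gives $d_{X_1}(y,y')\asymp d(y,y')$ along $g_1$; combining this with the coarse Lipschitz property of subsurface projection (Lemma \ref{lem:dWP-dY}) and the triangle inequality shows that $\rho:=Q\circ\zeta_n|_{I'}$, with $Q$ Brock's map (\ref{eq:brock}), is a uniform quasi-geodesic in $\calP(S)$ satisfying the advancing condition (\ref{eq:advinX}) with $X=X_1$, with constants independent of $n$.

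Next I would invoke Theorem \ref{thm:anncoeffcomp}. If $\ell_\gamma(\zeta_n(t_0))<\ep$ and $\ep$ is below the threshold at which a curve is forced into every Bers pants decomposition (by the Collar Lemma), then $\gamma\in Q(\zeta_n(t_0))=\rho(t_0)$, so the theorem with $i=t_0$ yields uniform $w,\omega_0>0$ with $\ell_\gamma(\zeta_n(s))\ge\omega_0$ for every $s\in I'$ with $|s-t_0|\ge w$. In particular, since $j=|s_1|+w+1$ guarantees $t_0\pm(w+1)\in I'\subseteq(-\infty,s_1]$, the points $t_1:=t_0-(w+1)$ and $t_2:=t_0+(w+1)$ satisfy $\ell_\gamma(\zeta_n(t_1)),\ell_\gamma(\zeta_n(t_2))\ge\omega_0$, while $\ell_\gamma\circ\zeta_n$ dips below $\ep$ at $t_0$, which is the center of the fixed-length interval $[t_1,t_2]$. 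Now I would contradict this via the length-versus-twist control: Theorem \ref{thm : shtw2}, applied to $\zeta_n|_{[t_1,t_2]}$ with lower threshold $\omega_0$, forces $d_\gamma(\zeta_n(t_1),\zeta_n(t_2))\ge N$ whenever $\ep$ is chosen below the corresponding constant for $N$, interval length $2(w+1)$, and threshold $\omega_0$. It therefore suffices to produce a uniform upper bound $d_\gamma(\zeta_n(t_1),\zeta_n(t_2))\le B_0$ and take $N:=B_0+1$.

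For the upper bound I would pass through $h$: by Lemma \ref{lem:h subsurf} (first bullet, valid here because $t_1,t_2\le s_1$ and $\gamma$ is neither $X_1$ nor a curve of $\bd X_1$) we have $d_\gamma(h(t_1),h(t_2))\le A$, so everything reduces to bounding $d_\gamma(\zeta_n(t_i),h(t_i))$. This is the step I expect to be the main obstacle, and it is where the annular case is delicate. The segment $\seg{\zeta_n(t_i)h(t_i)}$ has length at most $D$ and lies in $U_{D+b,\bar\ep}(\bd X_1)$, since $h(t_i)$ is within $b$ of $g_1\subset\calS_{\bar\ep}(\bd X_1)$. When $\gamma$ crosses $\bd X_1$, the Collar Lemma bounds $\ell_\gamma$ below uniformly throughout this neighborhood, hence along the whole segment, so Theorem \ref{thm : twsh} (in contrapositive form) bounds $d_\gamma(\zeta_n(t_i),h(t_i))$ uniformly; when $\gamma\subseteq X_1$, one instead uses the $\bar\ep$--thickness of $X_1$ near $g_1$ — equivalently that the $\Teich(X_1)$--factor of $\zeta_n|_{I'}$ stays near $g_1$ — to keep $\ell_\gamma$ bounded below along $\seg{\zeta_n(t_i)h(t_i)}$ and conclude the same way. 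Either way $d_\gamma(\zeta_n(t_1),\zeta_n(t_2))\le B_0:=2D_2+A$ for a uniform $D_2$, giving the contradiction and completing the proof, with the final choice of constants being $w,\omega_0$ from Theorem \ref{thm:anncoeffcomp}, $A$ from Lemma \ref{lem:h subsurf}, $j:=|s_1|+w+1$, and $\ep$ the minimum of the Bers/Collar threshold and the Theorem \ref{thm : shtw2} constant for $N=B_0+1$.
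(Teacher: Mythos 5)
Your overall skeleton is the same as the paper's: use the advancing condition in $\calC(X_1)$ together with Theorem \ref{thm:anncoeffcomp} to get uniform length lower bounds at the displaced times $t_0\pm w$, use Theorem \ref{thm : shtw2} on the fixed-length window to convert a hypothetical short $\gamma$ into a large annular coefficient $d_\gamma(\zeta_n(t_1),\zeta_n(t_2))$, and contradict the bound on $d_\gamma$ along $h$ from Lemma \ref{lem:h subsurf}. The place where you diverge is exactly the step you flag as the main obstacle: transferring the annular coefficient from $\zeta_n(t_i)$ to $h(t_i)$. Your geometric argument for this transfer has a genuine gap. First, the segment $\seg{\zeta_n(t_i)h(t_i)}$ has length up to $D$, so it only lies in a $(D+b)$--neighborhood of $\calS_{\bar\ep}(\bd X_1)$, where Lemma \ref{lem : b-nbhd} gives no length lower bounds (it is stated only for $b$--neighborhoods); the Collar Lemma route for $\gamma\pitchfork\bd X_1$ can be salvaged with standard Wolpert estimates, but the case $\gamma\subseteq X_1$ cannot. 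For such $\gamma$ the assertion that ``the $\Teich(X_1)$--factor of $\zeta_n$ stays near $g_1$'' is not meaningful ($\zeta_n$ is not in the stratum) and is not established, and the statement you actually need -- a lower bound for $\ell_\gamma$ along the \emph{interior} of $\seg{\zeta_n(t_i)h(t_i)}$ -- is circular: if $d_\gamma(\zeta_n(t_i),h(t_i))$ were large (precisely what you are trying to exclude), Theorem \ref{thm : twsh} would force $\gamma$ to become short in the middle of that segment even though it has bounded length and thick endpoints. So the contrapositive of Theorem \ref{thm : twsh} cannot be invoked there without presupposing the conclusion.

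The missing bound is, however, already contained in the tool you invoked: inequality (\ref{d gamma bounded}) of Theorem \ref{thm:anncoeffcomp} is designed to give exactly $d_\gamma(\zeta_n(t_i),h(t_i))\leq B$, with no geometric control of the connecting segment and no case division on $\gamma$. In your normalization ($\rho=Q\circ\zeta_n$ on $I'$, $\gamma\in\rho(t_0)$), take $P=Q(h(t_i))$, which lies within $D'=K_{\WP}D+C_{\WP}$ of $\rho(t_i)$ in $\calP(S)$, and apply the theorem with threshold $D'$; together with (\ref{eq:lb for gamma}) this gives both the annular comparison and the length lower bound at $t_i$. This is precisely how the paper argues, with the roles reversed: there $\rho=Q\circ h$ (which satisfies (\ref{eq:advinX}) by Lemma \ref{lem:h subsurf} and Brock's quasi-isometry (\ref{eq:brock})) and $P$ is the Bers pants decomposition of $\zeta_n(t_i)$, yielding $d_\gamma(h(t_i),\zeta_n(t_i))\leq B$, hence $\bigl|d_\gamma(h(t_1),h(t_2))-d_\gamma(\zeta_n(t_1),\zeta_n(t_2))\bigr|\leq 2B$, and then the same Theorem \ref{thm : shtw2} contradiction. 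With that substitution your proof closes; as written, the $\gamma\subseteq X_1$ case does not.
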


\begin{proof}
The idea is that, because $\zeta_n$ in the interval $[a_n,0]$ is roughly controlled by the geodesic $g_1$,
the only subsurface projections that can build up along $\zeta_n$ are in $\calC(X_1)$ (Lemma
\ref{lem:h subsurf}), but on the other hand short curves that appear in this interval must
give rise to large twists, using Theorem \ref{thm : shtw2}. 

First note that by Lemma \ref{lem:h subsurf}, 
\[d(h(t),h(t'))\asymp d_{X_1}(h(t),h(t')),\]
for all $t,t'\in(-\infty,s_1]$.
 Moreover, note that by (\ref{eq:brock}) $\rho:=Q\circ h:(-\infty,s_1]\to \calP(S)$ is a quasi-geodesic in $\calP(S)$ with quantifiers depending only on the topological type of $S$.
Also $\rho|_{I_n}$ and $Q\circ \zeta_n|_{I_n}$,
$D'=K_{\WP}D+C_{\WP}$  fellow travel in $\calP(S)$ as parametrized quasi-geodesics, where $K_{\WP}$ and $C_{\WP}$ are the constants in (\ref{eq:brock}). Then Theorem \ref{thm:anncoeffcomp} applied 
to $\rho$, the part of $\zeta_n$ that $D'$--fellow travels $\rho$ and the subsurface $X_1$ 
gives us constants $B,w>0$ and $\omega>0$ as follows: Let $\gamma$ be a curve such that $\gamma\pitchfork X_1$ and $\ell_\gamma(\zeta_n(t))<L_S$, 
so that $\gamma$ is in a Bers pants decomposition $Q(\zeta_n(t))$. Let $t_1=t-w$ and $t_2=t+w$ then

\begin{equation}\label{eq:anncoeffbd}
d_\gamma(h(t_1),\zeta_n(t_1))\leq B \;\;\text{and}\;\; d_\gamma(h(t_2),\zeta_n(t_2))\leq B
\end{equation}
thus
\begin{equation}\label{eq:anncoeffcomp}
\Big|d_\gamma(h(t_1),h(t_2))- d_\gamma(\zeta_n(t_1),\zeta_n(t_2))\Big|\leq 2B.
\end{equation}
Moreover,
\begin{equation}\label{eq : len bd}
\min\{\ell_\gamma(\zeta_n(t_1)),\ell_\gamma(\zeta_n(t_2))\}\geq \omega.
\end{equation}
Now let $j>|s_1|+w$ and let $n\in \NN$ be large enough and $t\in I^-_n$.

Suppose that 
$\ell_\gamma(\zeta_n(t))<\ep$ for an $\ep<\min\{\omega,L_S\}$ and $t\in [t_1,t_2]$. Then,
 noting that
$|t_1-t_2|$ is bounded independently of $n$ and $t$, we can apply Theorem \ref{thm : shtw2}
to $\zeta_n|_{[t_1,t_2]}$ to conclude that there is a choice of $\ep$ that implies
$d_\gamma(\zeta_n(t_1),\zeta_n(t_2))>A+2B$.

But then by (\ref{eq:anncoeffcomp}), $d_\gamma(h(t_1),h(t_2))>A$,
which contradicts the upper bound for subsurface coefficients from Lemma \ref{lem:h subsurf}. 
The contradiction shows that in fact the above $\ep$ is the desired lower bound for the length of a curve $\gamma\pitchfork X_1$
 on the interval $I_n^-$. The lower bound for the length of a curve $\gamma\pitchfork X_2$ on the interval $I_n^+$
can be obtained similarly choosing $j>s_2+w$.
\end{proof}

Next we obtain upper length bounds along $\zeta_n$ for $\bd X_1$ and $\bd X_2$ over intervals $I_n^-$
and $I_n^+$, respectively: 
\begin{lem}\label{lem : ub on I}
There exists $c>0$ such that for any $n\in \NN$, 
\begin{equation}\label{boundary upper bound 1}
  \ell_{\bd X_1}(\zeta_n(t)) \le c
\end{equation}
for all  $t\in I_n^-$, and 
\begin{equation}\label{boundary upper bound 2}
  \ell_{\bd X_2}(\zeta_n(t)) \le c
\end{equation}
for all  $t\in I_n^+$. 
\end{lem}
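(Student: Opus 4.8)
\emph{Plan.} The plan is to combine three ingredients: that on $I_n^-$ the geodesic $\zeta_n$ stays within bounded WP distance of the stratum $\calS(\bd X_1)$; that a point within bounded WP distance of the $\bar\ep$--thick part $\calS_{\bar\ep}(\bd X_1)$ has $\ell_{\bd X_1}$ bounded above; and that $\ell_{\bd X_1}\circ\zeta_n$ is convex, so that a bound at the two endpoints of $I_n^-$ already controls it on the whole interval. The case of $I_n^+$, $\bd X_2$, $g_2$ is completely symmetric, so I only discuss $I_n^-$.

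First I would recall from the proof of Lemma \ref{lem : lb on I} that $j$ was chosen with $j>|s_1|+w$, so in particular $-j<s_1$. Hence whenever $I_n^-=[a_n+j,-j]$ is nonempty, both of its endpoints lie in $(-\infty,s_1]$, on which $h$ and $g_1$ are $b$--fellow travelers by the definition of $s_1$, i.e. $d(h(t),g_1(t))\le b$ there. Since $g_1\subset\calS_{\bar\ep}(\bd X_1)$ by construction (the axis of the partial pseudo-Anosov $f_1$ lies in the $\bar\ep$--thick part of $\Teich(X_1)$, which is $\calS_{\bar\ep}(\bd X_1)$ because $X_1$ is the only non-pants component of $S\ssm\bd X_1$), this gives $d(h(a_n+j),\calS_{\bar\ep}(\bd X_1))\le b$ and $d(h(-j),\calS_{\bar\ep}(\bd X_1))\le b$; combining with the hypothesis of Theorem \ref{thm : bdZ short} that $\zeta_n$ and $h|_{I_n}$ are $D$--fellow travelers as parametrized geodesics, I get
\[
 d\big(\zeta_n(a_n+j),\calS_{\bar\ep}(\bd X_1)\big)\le b+D
 \qquad\text{and}\qquad
 d\big(\zeta_n(-j),\calS_{\bar\ep}(\bd X_1)\big)\le b+D .
\]

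The key point, and the step I expect to require the most care, is then the following metric fact: there is a constant $c_1=c_1(b,D,\bar\ep)$ such that $\ell_{\bd X_1}(x)\le c_1$ whenever $d(x,\calS_{\bar\ep}(\bd X_1))\le b+D$. Here one uses that $\calS_{\bar\ep}(\bd X_1)$ is invariant and cocompact under $\Gamma:=\mathrm{Stab}(\bd X_1)$ (as in the proof of Lemma \ref{lem : b-nbhd}), and that $\ell_{\bd X_1}$ is a continuous, $\Gamma$--invariant function on $\oT{S}$ vanishing on $\calS(\bd X_1)$; the bound then follows from the standard control of the WP metric near a stratum -- namely the comparison of $\dw(\cdot,\calS(\bd X_1))$ with $\ell_{\bd X_1}$ for short curves, together with the boundedness of $\|\grad\,\ell_{\bd X_1}^{1/2}\|_{\WP}$ in the relevant region (Wolpert; see \cite{wol}, and compare \cite[Corollary 3.5]{wpbehavior}). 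This ``bounded distance to a thick stratum forces bounded length of the pinching multicurve'' is the only genuinely analytic input needed.

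Granting this, $\ell_{\bd X_1}(\zeta_n(a_n+j))\le c_1$ and $\ell_{\bd X_1}(\zeta_n(-j))\le c_1$. Since $\zeta_n$ is a WP geodesic in $\Teich(S)$ and $\ell_{\bd X_1}=\sum_{\alpha\in\bd X_1}\ell_\alpha$ is a sum of strictly convex length functions, $\ell_{\bd X_1}\circ\zeta_n$ is convex (\cite[Corollary 4.7]{wolpert:nielsen}), hence attains its maximum on $[a_n+j,-j]$ at an endpoint; thus $\ell_{\bd X_1}\circ\zeta_n\le c_1$ on $I_n^-$. Running the identical argument on the forward side of $h$ -- now using $j>s_2+w$, so that $t>s_2$ for $t\in I_n^+$, and the $b$--fellow traveling of $h$ with $g_2\subset\calS_{\bar\ep}(\bd X_2)$ -- produces $c_2$ with $\ell_{\bd X_2}\circ\zeta_n\le c_2$ on $I_n^+$. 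Taking $c=\max\{c_1,c_2\}$ completes the proof.
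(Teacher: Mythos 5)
Your reduction via convexity is legitimate: since $\ell_{\bd X_1}\circ\zeta_n$ is convex, it suffices to bound it at the two endpoints of $I_n^-$. But the step you yourself flag as the crux --- ``there is $c_1=c_1(b,D,\bar\ep)$ such that $\ell_{\bd X_1}(x)\le c_1$ whenever $\dw(x,\calS_{\bar\ep}(\bd X_1))\le b+D$'' --- is not a standard fact, and it is false in the generality you need. The hypothesis of Theorem \ref{thm : bdZ short} allows an arbitrary fellow-traveling constant $D\ge 1$, so the radius $b+D$ is not small, and bounded WP distance does not bound lengths from above: strata $\calS(\tau)$ with $\tau\pitchfork\bd X_1$, on which $\ell_{\bd X_1}=\infty$, lie at finite WP distance from the thick stratum, and already at points where some curve $\beta\pitchfork\bd X_1$ is short one can apply large powers of the Dehn twist $T_\beta$ to produce points staying within bounded distance of $\calS_{\bar\ep}(\cdot)$ while $\ell_{\bd X_1}\to\infty$. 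This is exactly why Lemma \ref{lem : b-nbhd} is stated only for a sufficiently small neighborhood, and why its compactness argument does not extend: the $(b+D)$--neighborhood modulo the stabilizer is not compact ($\oT{S}$ is not locally compact along the strata). The gradient estimates for $\ell_\alpha^{1/2}$ you invoke are valid only where $\ell_\alpha$ is already controlled, so they cannot rule out this blow-up; the real difficulty is controlling twisting about curves transverse to $\bd X_1$, which a distance bound alone cannot do. That is precisely what the paper's proof supplies: it bounds the annular coefficients $d_\gamma\big(h(t),\zeta_n(t)\big)$ for every $\gamma\pitchfork\bd X_1$ (via Theorem \ref{thm:anncoeffcomp}, Theorems \ref{thm : twsh} and \ref{thm : shtw2}, and Lemma \ref{lem:h subsurf}), deduces a uniform lower length bound for all such $\gamma$ along the connecting segment from $h(t)$ to $\zeta_n(t)$, and then applies the Geodesic Limit Theorem (Theorem \ref{thm : geodlimit}), observing that the degenerating mapping classes can be taken to be a power of $f_1$ composed with twists about multicurves disjoint from $\bd X_1$, hence fix $\bd X_1$ and cannot make it long. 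Your proposal omits all of this, so it has a genuine gap.

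A secondary inaccuracy: $g_1$ does not lie in $\calS_{\bar\ep}(\bd X_1)$. The axis of $f_1$ lies in the thick part of $\Teich(X_1)\cong\calS(\bd X_1\cup\bd Z)$, a deeper stratum on which $\bd Z$ is also pinched; in particular $X_1$ need not be large in $S$ (only in $Z$), so ``$X_1$ is the only non-pants component of $S\ssm\bd X_1$'' is not part of the configuration. This does not by itself break your outline --- one could replace $\calS_{\bar\ep}(\bd X_1)$ by $\calS_{\bar\ep}(\bd X_1\cup\bd Z)$ throughout --- but the central unproved claim remains the problem.
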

  
\begin{proof}
Let $t\in I^-_n$ and let $\eta:[0,l]\to \oT{S}$ be the
geodesic segment connecting $h(t)$ to $\zeta_n(t)$. 
The idea of the proof is to first obtain a lower bound along $\eta$ for the length of every curve that intersects $\bd X_1$,
which is similar to the proof of the previous lemma. Then, by a compactness argument 
appealing to the Geodesic Limit theorem (Theorem \ref{thm : geodlimit}) we establish the desired upper bound for the length of  $\bd X_1$.

Let $\gamma\pitchfork \bd X_1$, and let $t\in I_n^-$ and $\eta$ be as above.
We have $\ell_\gamma(\zeta_n(t))>\ep$ where $\ep$ is the constant from
Lemma \ref{lem : lb on I} above. Moreover, 
we also have a lower bound $\ell_\gamma(h(t))>\ep'>0$ using the Collar Lemma (\cite[\S 4.1]{buser}) with the fact 
that the length of $\bd X_1$ is bounded above along $h((-\infty,s_1])$. 

To bound the length of $\gamma$ from below on $\eta$, we will first obtain a bound on
$d_\gamma(\eta(0),\eta(l)) = d_\gamma(h(t),\zeta_n(t))$.

Since $\gamma$ is in the pants decomposition $Q(\eta(u))$ which is at most $D'$ from
$Q(\zeta_n(t))$, we can use Theorem \ref{thm:anncoeffcomp}, just as in the proof of 
Lemma  \ref{lem : lb on I}, to find parameter $t_2 > t$ with $t_2-t$
bounded above, and a bound $B_1$ such that
\begin{equation}\label{dgamma1}
  d_\gamma(\zeta_n(t_2),h(t_2)) \le B_1. 
\end{equation}
(Recall this is done by moving forward along $h$ and $\zeta_n$ just enough to obtain
points so far from $\pi_{X_1}(\gamma)$ in $\calC(X_1)$ that the path from $Q\circ h$ to $Q\circ\zeta_n$
passes only through curves transverse to $\gamma$). 

Next, we obtain
\begin{equation}\label{dgamma2}
  d_\gamma(\zeta_n(t),\zeta_n(t_2)) \le B_2
\end{equation}
by recalling from Lemma  \ref{lem : lb on I} that $\ell_\gamma\circ\zeta_n > \ep$ on
$[t,t_2]$, and then applying Theorem \ref{thm : twsh}.

Finally, we get
\begin{equation}\label{dgamma3}
  d_\gamma(h(t),h(t_2)) \le B_3
\end{equation}
directly from Lemma \ref{lem:h subsurf}. 

Putting (\ref{dgamma1}), (\ref{dgamma2}) and (\ref{dgamma3}) together we obtain a bound on
$d_\gamma(h(t),\zeta_n(t))$. Now, using Theorem \ref{thm : shtw2}, this gives us a lower
bound
\begin{equation}\label{lgamma lower eta}
\ell_\gamma(\eta(u)) > \ep''>0
\end{equation}
for all $u\in [0,l]$, and all $\gamma \pitchfork \bd X_1$.

Now assume that there is a sequence of geodesic segments $\eta_n:[0,l_n]\to \oT{S}$ as
above, connecting $h(t_n)$ to $\zeta_n(t_n)$ (for $t_n\in I_n^-$), and  $u_n\in [0,l_n]$ and 
 $\alpha\in \bd X_1$ so that $\ell_\alpha(\eta_n(u_n))\to\infty$ as $n\to\infty$.

Let the piece-wise geodesic segment $\hat\eta$ be be obtained from $\{\eta_n\}$ as in Theorem \ref{thm : geodlimit} (GLT), 
and multicurves $\sigma_i,\; i=0,\ldots,k+1$ and $\hat\tau$ be from the theorem. Since $\eta_n(0)$ is in the $b$--\nbhd of the axis of $f_1$
we may choose $\psi_n$ in GLT3 to be a power of $f_1$, which since $f_1$ is supported in $X_1$ does not change the homotopy classes of curves in $\bd X_1$.
Moreover, the lower bound (\ref{lgamma lower eta}) over $\eta_n$ for the lengths of all curves that intersect $\bd X_1$ 
 shows that $\sigma_i$ is disjoint from $\bd X_1$ 
and hence $\varphi_{i,n}$ which is a composition of $\psi_n$ and Dehn twists about curves in $\sigma_j,\; j=0,\ldots,i$ does not change homotopy classes of curves in $\bd X_1$.

After possibly passing to a subsequence $u_n\to u^*$, so the fact that $\ell_{\alpha}(\eta_n(u_n))\to\infty$ as $n\to\infty$ and GLT3 imply that
$\ell_\alpha(\hat\eta(u^*))=\infty$.
This means that $\alpha$ intersects a pinched curve along $\hat\eta$ 
and hence a multicurve $\sigma_i$. But we just said that $\bd X_1$ and $\sigma_i$ are disjoint.
This contradiction shows that the lengths of curves $\alpha\in \bd X_1$ are uniformly bounded
along $\eta$ and in particular at the end point $\zeta_n(t)$, as was desired. This
concludes the proof of (\ref{boundary upper bound 1}). 
The proof of (\ref{boundary upper bound 2}) for the length of $\bd X_2$ proceeds similarly.
\end{proof}

With Lemmas \ref{lem : lb on I} and \ref{lem : ub on I} in hand we can complete the proof
of Lemma \ref{lem : J}.

\begin{proof}[Proof of Lemma \ref{lem : J}.]

Let $\gamma$ be any curve which is not in $\bd Z$. If $\gamma$ intersects $\bd Z$ we already have
a lower bound for the length of $\gamma$ everywhere on $\zeta_n$.
Since $Z$ is large, we are left with the case that $\gamma$ is in $Z$.

When $\gamma$
overlaps both $X_1$ and $X_2$, let $w>0$ be as in the proof of Lemma \ref{lem : lb on I}. Moreover let $t_1=-j-w$ and $t_2=j+w$
and observe that $t_1\in I^-_n$ and $t_2\in I^+_n$ where $I^{\pm}_n$ are the intervals from Lemma \ref{lem : lb on I}.  
Then by Lemma \ref{lem : lb on I} we have that
 $\ell_\gamma(\zeta_n(t_1))$ and $\ell_\gamma(\zeta_n(t_2))$
 are at least $\ep$.

Thus we may apply Theorem \ref{thm : shtw2} to conclude that there is an $\ep''<\min\{\ep,L_S\}$ so that if $\min_{t\in [t_1,t_2]}\ell_\gamma(\zeta(t))< \ep''$, then
$d_\gamma(\zeta_n(t_1),\zeta_n(t_2))> A+2B$. 
From (\ref{eq:anncoeffbd}) then we see that $d_\gamma(h(t_1),h(t_2))>A$. But this again contradicts the bound
for subsurface coefficients in Lemma \ref{lem:h subsurf}. The contradiction shows that $\ep''$ is a lower bound for the lengths of all curves 
that are disjoint from $\bd X_1$ and are inside $Z$ on $[t_1,t_2]$ and in particular on $[-j,j]\subseteq [t_1,t_2]$.

Now consider $\gamma$ inside $Z$ which does not overlap $X_1$. Then it must be a boundary component
of $X_1$ and must intersect $X_2$.

By Lemma \ref{lem : lb on I} we know that $\ell_\gamma(\zeta_n(j)) > \ep$, and by Lemma \ref{lem : ub on I},
$\ell_\gamma(\zeta_n(t)) \le c$ for all $t<-j$. Suppose now that there is a sequence $t_n\in  [-j,j]$ with $\ell_\gamma(\zeta_n(t_n)) \to 0$
 as $n\to\infty$.

 We may restrict to a subsequence such that $t_n \to t^*$.
Since $\ell_\gamma(\zeta_n(j))>\ep$ we know that $t^* \le j$.
Now since $\ell_\gamma\circ \zeta_n$ is convex and bounded on the intervals $I_n^-$ whose lengths go to
$\infty$, we conclude that 
$\ell_\gamma\circ\zeta_n(t)\to 0$ for all $t<t^*$.
We can therefore find a sequence of intervals $[t_n-a,t_n+a]$ with fixed $a>0$ such that
$\ell_\gamma\circ\zeta_n \to 0$ on $[t_n-a,t_n]$ while $\ell_\gamma(t_n+a)$ is bounded
away from 0. This contradicts Lemma \ref{lem:q-nonref}.

 The contradiction shows that there is a lower bound for the lengths of curves that are inside $Z$ and are disjoint from $\bd X_1$
 on $[-j,j]$ as well. Therefore, $J:=[-j,j]$ is the desired compact interval of the lemma.
\end{proof}

\begin{proof}[Proof of Theorem \ref{thm : bdZ short}]
Lemma \ref{lem : J} gives us an interval $J$ over which the length of every curve that does not intersect $\bd Z$
is bounded below. Moreover by the assumption of the theorem and convexity of length-functions the lengths of all curves 
in $\bd Z$ are bounded along $\zeta_n$. Thus the theorem follows from Theorem \ref{thm:lenapproach0}.
\end{proof}

\subsection{Completing the proof of Theorem \ref{thm:one-step filling}}
  
  \begin{proof}[Proof of Theorem \ref{thm:one-step filling}]
  Let $a_n\to -\infty$ and $b_n\to\infty$, and let $I_n=[a_n,b_n]$.
  Also let $p_n,q_n$ be two points in the $b$--neighborhoods of $h(a_n),h(b_n)$, 
  respectively, that have injectivity radii at least $\ep_b$; see Lemma \ref{lem : b-nbhd}.
  Moreover, applying Dehn twists about curves in $\bd Z$ we can assume that
  \[d_\gamma(p_n,q_n)\leq 1\]
  for all $\gamma\subseteq \bd Z$.

After a slight adjustment of parameters let
 \[\zeta_n:I_n\to \Teich(S)\]
  be a parameterization of the geodesic segment $\seg{p_nq_n}$ by arclength where $d(\zeta_n(0),h(0))\leq b$.
  
First, note that the points $p_n$ and $q_n$ are in the $b$--\nbhds of the points $h(a_n)$ and $h(b_n)$, respectively,
so by Lemma \ref{lem:dWP-dY},
$d_Y(p_n,h(a_n))$ and $d_Y(q_n,h(b_n))$
are uniformly bounded 
for all non-annular subsurfaces $Y\subseteq S$ and $n\in \NN$.

Now note that by the second part of Lemma \ref{lem:h subsurf} there is an $A\geq 1$
 so that $\calL^{\na}_{A}(S,h(a_n),h(b_n))\subseteq \{X_1,X_2\}$.
 Thus enlarging $A$ we obtain an $A_1\geq 1$ so that $\calL^{\na}_{A_1}(S,p_n,q_n)\subseteq \{X_1,X_2\}$.

Now we show that $X_1$ and $X_2$ are in fact in $\calL^{\na}_{A_1}(S,p_n,q_n)$ for $n$ large enough,
note that by the second bullet of Lemma \ref{lem:h subsurf} we have
\[d_{X_1}(h(s_1),h(a_n))\asymp d(h(s_1),h(a_n))\]
which implies that $d_{X_1}(h(s_1),h(a_n))$ is arbitrary large for $n$ large enough (because $ d(h(s_1),h(a_n))$ gets arbitrary large).
By the first bullet of Lemma \ref{lem:h subsurf} $d_{X_1}(h(s_2),h(b_n))$ is bounded independently of $n$.
Moreover, $d_{X_1}(h(s_1),h(s_2))$ is bounded since $h(s_1)$ and $h(s_2)$ are fixed. 
 The above bounds combined with the triangle inequality
show that $d_{X_1}(h(a_n),h(b_n))$ is larger than $A_1$ for all $n$ large enough. 

The fact that $d_{X_2}(h(a_n),h(b_n))$ is larger than $A_1$ for all $n$ large enough can be proved similarly.
 Thus the first bullet of theorem holds for $\seg{p_nq_n}$ and $n$ large enough.
 
 The second bullet of the theorem holds immediately for all $p_n,q_n$ by the choice of the points.

 Now note that the points $p_n$ and $q_n$ are in the $b$--\nbhd of $\calS(\bd Z)$ so by Lemma \ref{lem : b-nbhd}) we have an upper bound for the length of
  $\bd Z$ at $p_n$ and $q_n$ independently of $n$. Also, since $p_n,q_n$ are in the $b$--\nbhds of two points 
  on $h$, $\seg{p_nq_n}$ and $h|_{I_n}$ $b$--fellow travel. Thus,
Theorem \ref{thm : bdZ short} applies to $\seg{p_nq_n}$,
giving us $\inf_{x\in\seg{p_nq_n}}\ell_{\bd Z}(x)<\ep$
for all $n$ large enough. Thus the third bullet of the theorem also holds for $\seg{p_nq_n}$ and all $n$ large enough.

As we saw above all of the bullets of the theorem hold for $\seg{p_nq_n}$ when $n$ is large enough completing the
proof of the theorem.
 \end{proof}

\subsection{Completing the proof of Theorem \ref{thm:WP mismatch}}

Take a one-step filling
configuration $Z,X_1,X_2$ in $S$, let $\gamma $ be a component of $\bd Z$, and let $p,q$
be as constructed in Theorem \ref{thm:one-step filling}. Then the conditions of
Theorem \ref{thm:WP mismatch} are satisfied, where one detail to check carefully is
the second bound
$$
\sup\Big\{d_Y(p,q) \ | \ Y\subseteq S, \gamma\subseteq\bd Y\Big\} \le A.
$$
But according to Theorem \ref{thm:one-step filling} the only subsurfaces where $d_Y(p,q) > A$
are $Y=X_1$ and $X_2$, and by definition those subsurfaces cannot have $\gamma$ in their
boundaries. This concludes the proof.

\section{Indirect shortening along closed geodesics}\label{sec:closed}

In this section we construct examples of closed \W geodesics which satisfy 
the indirect curve shortening property in Definition \ref{def:indirshort}. 
We construct such geodesics by approximation of the segments constructed in Theorem
\ref{thm:one-step filling} with arcs of closed geodesics while controlling end invariants
and their subsurface coefficients. 

  \begin{thm}\label{thm:closedgeodesic}
There exists $A\geq 1$ such that for each $\epsilon >0$ there is
a \pA map $\Phi$ with stable/unstable laminations $(\nu^+,\nu^-)$ and axis $A_\Phi$,
and a subsurface $Z\subsetneq S$ that for each $\gamma$ in $\bd Z$ we have 
\begin{equation}\label{eq:dYbd}
\sup_{\substack{Y\subseteq S: \gamma\subseteq \bd Y}}d_Y(\nu^+,\nu^-)\leq A,
\end{equation}
but 
\[\inf_{x\in A_\Phi} \ell_{\bd Z}(x) < \ep.\]
\end{thm}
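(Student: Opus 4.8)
The plan is to realize the \W geodesic segments $\seg{p_nq_n}$ produced in Theorem \ref{thm:one-step filling} as arcs of axes of pseudo-Anosov maps $\Phi_n$, and then to read off the two required properties from Theorem \ref{thm : bdZ short} together with the subsurface-coefficient bounds already established along $h$ in Section \ref{sec:example}. I retain the notation there: $Z,X_1,X_2$ is a fixed one-step large filling configuration, $f_1,f_2$ the associated partial pseudo-Anosov maps with axes $g_1,g_2$, and $h:\RR\to\calS(\bd Z)\cong\Teich(Z)$ the bi-infinite geodesic that is backward asymptotic to $g_1$ and forward asymptotic to $g_2$. Fix, independently of $\ep$, a mapping class $\theta\in\Mod(S)$ that fixes no component of $\bd Z$ — for definiteness $\theta=T_\delta$, the Dehn twist about a curve $\delta$ intersecting every component of $\bd Z$ — and write $T_{\bd Z}$ for a multitwist supported on $\bd Z$. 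Set
\[
\Phi_n \;:=\; T_{\bd Z}^{\,m_n}\,\theta\, f_2^{\,l_n}\, f_1^{\,k_n},
\]
where $k_n,l_n\to\infty$ are chosen as in the proof of Theorem \ref{thm:one-step filling} so that a fundamental domain of the axis of $\Phi_n$ shadows $\seg{p_nq_n}$, and $m_n$ is a \emph{bounded} power chosen so that the $\gamma$--coefficient of the end invariants of $\Phi_n$ is at most $1$ for each $\gamma\in\bd Z$ — the closed analogue of the bounded power of $T_{\bd Z}$ applied to the points $p_n,q_n$ in that theorem. Note that $\theta$ and $m_n$ are bounded data, so the only contributions to subsurface coefficients that grow with $n$ come from $f_1^{\,k_n}$ and $f_2^{\,l_n}$, and these are confined to $X_1,X_2$ and to annuli with core in $\bd X_1\cup\bd X_2$.

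For $n$ large, $\Phi_n$ is pseudo-Anosov on $S$: since $\bd X_1\cup\bd X_2$ fills the large subsurface $Z$ and $S\ssm Z$ consists only of annuli and pants, $\bd X_1\cup\bd X_2\cup\bd Z$ fills $S$, while $\theta$ prevents $\Phi_n$ from fixing any component of $\bd Z$; for $n$ large $\Phi_n$ is then pseudo-Anosov by standard criteria for such iterated partial-pseudo-Anosov-plus-twist words whose supports fill $S$. Let $A_{\Phi_n}$ denote its axis, which by Daskalopoulos--Wentworth \cite{dwwp} lies in $\Teich(S)$. Using Lemma \ref{lem:h subsurf} and the explicit form of $\Phi_n$, a fundamental domain of $A_{\Phi_n}$ and the segment $\seg{p_nq_n}$ have the same large subsurface coefficients up to a bounded error — the $\theta$-- and $T_{\bd Z}^{\,m_n}$--contributions being confined to a uniformly bounded portion — so by the distance formula (\ref{eq:distWP}) and CAT(0) convexity of $\oT{S}$, for a suitable exhaustion $I_n\uparrow\RR$ a parametrization $\zeta_n:I_n\to\Teich(S)$ of $A_{\Phi_n}$ $D$--fellow-travels $h|_{I_n}$ with $D$ independent of $n$. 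Moreover the endpoints $\zeta_n(\min I_n),\zeta_n(\max I_n)$ lie within a uniformly bounded distance of the stratum $\calS(\bd Z)$ — being close to $g_1,g_2\subseteq\Teich(Z)=\calS(\bd Z)$ — so $\ell_{\bd Z}$ is uniformly bounded above there, as in Theorem \ref{thm:one-step filling} via Lemma \ref{lem : b-nbhd}.

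Theorem \ref{thm : bdZ short} now applies to $\{\zeta_n\}$ and yields a fixed compact interval $J$ with $\ell_{\bd Z}\circ\zeta_n\to0$ uniformly on $J$; hence $\inf_{x\in A_{\Phi_n}}\ell_{\bd Z}(x)<\ep$ for $n$ large, which is the second conclusion with $\Phi:=\Phi_n$. For the first, let $(\nu^+_n,\nu^-_n)$ be the stable/unstable laminations of $\Phi_n$ and let $Y\subseteq S$ have a component $\gamma$ of $\bd Z$ in its boundary. Since $\Phi_n$ is pseudo-Anosov, $\gamma$ is not periodic, so $\ell_\gamma$ has a single bounded ``active window'' along $A_{\Phi_n}$ — contained in $I_n$, because $\bd Z$ becomes short only near $J$ — outside which $\pi_\gamma$ of the axis is coarsely constant; therefore $d_Y(\nu^+_n,\nu^-_n)$ equals, up to a universal additive constant, the coefficient $d_Y$ across that window, the remaining periods contributing nothing by the no-backtracking property (Proposition \ref{prop:nobacktr}). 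By the $D$--fellow-traveling this differs by $O(D)$ from $d_Y(p_n,q_n)$, which by Theorem \ref{thm:one-step filling} is $\le1$ if $Y$ is the annulus about $\gamma$ and $\le A_1$ otherwise — here one uses that $Y$ is neither $X_1$ nor $X_2$, whose boundaries miss $\bd Z$. Hence $d_Y(\nu^+_n,\nu^-_n)\le A$ for a constant $A$ independent of $\ep$ and $Y$, which is the first conclusion.

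The step I expect to be the crux is the construction of $\Phi_n$ under its competing constraints: it must be genuinely pseudo-Anosov on $S$, which forces the auxiliary factor $\theta$ (and the filling of $S$ by the supports) to be nontrivial, while simultaneously every coefficient $d_Y(\nu^+_n,\nu^-_n)$ with a component of $\bd Z$ in $\bd Y$ must stay bounded, which forbids $\theta$ and the correction power $T_{\bd Z}^{\,m_n}$ from agitating any subsurface adjacent to $\bd Z$. Reconciling these — and then certifying that the axis of the resulting word is combinatorially a copy of $h$ with only a uniformly bounded ``cap'', as opposed to merely lying near $\seg{p_nq_n}$ (which, because $\sqrt{\ell_{\bd Z}}$ is only coarsely Lipschitz near the stratum, would not by itself make $\ell_{\bd Z}$ small on $A_{\Phi_n}$) — is delicate, and is where the argument must rely on the comparatively weak contraction of \W geodesics, through the distance formula, Theorem \ref{thick bottleneck}, and Theorem \ref{thm : bdZ short}, rather than on the sharper axis behavior available for the Teichm\"uller metric.
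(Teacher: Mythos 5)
There is a genuine gap, and it sits exactly where you flag the crux. Your construction replaces the paper's abstract approximation scheme by an explicit word $\Phi_n=T_{\bd Z}^{\,m_n}\theta f_2^{\,l_n}f_1^{\,k_n}$, but the two properties you then need of this word are asserted rather than proved. First, pseudo-Anosov-ness: the supports $X_1,X_2$ do not fill $S$, the twist factors are of bounded power, and ``standard criteria'' for such mixed words (high powers of partial pseudo-Anosovs times a \emph{fixed} twist crossing $\bd Z$) do not apply; nothing in the paper, and no standard result you cite, certifies that this particular word is irreducible. Second, and more seriously, the claim that the WP axis $A_{\Phi_n}$ $D$--fellow-travels $h|_{I_n}$ with endpoints of uniformly bounded $\ell_{\bd Z}$ ``by the distance formula (\ref{eq:distWP}) and CAT(0) convexity'' is not a valid deduction: matching subsurface coefficients gives only pants-graph/coarse control, and WP geodesics are not known to be stable under such combinatorial equivalence (no contraction or stability theorem is available here -- this is precisely why Sections \ref{bottlenecks}--\ref{sec:example} exist). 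Without genuine metric shadowing of $h$ and an a priori bound on $\ell_{\bd Z}$ at two points of the axis at controlled positions, Theorem \ref{thm : bdZ short} cannot be invoked, so the conclusion $\inf_{A_{\Phi_n}}\ell_{\bd Z}<\ep$ is unsupported. Your closing paragraph concedes this is delicate, but the proposal does not supply the missing argument.

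The paper resolves both difficulties by a different route that avoids explicit words altogether: it takes translates $G_{\pm n}$ of a fixed thick pseudo-Anosov axis under $f_2^{n}$, $f_1^{-n}$, joins the rays $G_{-n}^-$, $G_n^+$ by a visibility geodesic $B_n$ (recurrent visibility), so that $\nu^\pm(B_n)=\nu^\pm(G_{\pm n})$ are known laminations with bounded projections to every $Y$ meeting $\bd Z$; it proves by a ruled-hexagon Gauss--Bonnet estimate (Lemmas \ref{CAT0 divergence} and \ref{rectangle control}) that $B_n$ comes within $\delta/2$ of $G_{\pm n}$ at uniformly bounded distance from $x_{\pm n}$; and only then does it choose $\Phi_n$ by the closed-orbit density theorem, transferring the geometric control from $B_n$ to $A_n$ and the lamination control via Lemma \ref{lem:dY1} together with the continuity theorem for ending laminations. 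Your end-invariant step has the same weakness in miniature: bounding $d_Y(\nu^+_n,\nu^-_n)$ by ``one active window plus no-backtracking'' is not justified -- Proposition \ref{prop:nobacktr} as stated covers only non-annular $Y$, and relating projections of the invariant laminations to projections of axis points requires exactly the kind of control (here furnished by Lemma \ref{lem:dY1} and \cite[Theorem 4.7]{bmm1}) that your argument omits. To repair the proposal you would either have to prove a WP stability/shadowing statement for the axes of your explicit words, or fall back on the paper's approximation strategy.
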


  In the proof we use the following notation:
   If $f$ is a pseudo-Anosov or a partial pseudo-Anosov
  supported in a subsurface let $\nu^+(f)$ and $\nu^-(f)$ be the stable and unstable
  laminations of $f$ (considered without their measures). Similarly if $G$ is a directed WP geodesic let $\nu^+(G)$ and
  $\nu^-(G)$ be the ending laminations of the forward and backward rays $G^+ := G|_{[0,\infty)}$
    and $G^- := G|_{(-\infty,0]}$. The axis $A_f$ of $f$ is always oriented so that $\nu^\pm(A_f)
  = \nu^\pm(f)$. 

The main idea is to  approximate the configuration of $\S$\ref{subsec:basic} and Theorem \ref{thm:one-step filling}
 by axes of pseudo-Anosov maps. For this we can use the density of closed WP geodesics \cite[Theorem 1.6]{bmm1}, but it
will take some care to do it while controlling the ending laminations and their projections to the various
subsurfaces of interest. 

Let $X_1, X_2$ and $Z$ be the subsurfaces from the one-step large filling configuration in $\S$\ref{subsec:basic}.
Let $f_1,f_2$ be the partial \pA maps supported on $X_1,X_2$ respectively, with (oriented)
axes $g_1, g_2$, respectively.
Let $h$ be the biinfinite geodesic constructed in Theorem \ref{thm:asymplargevis},
which is forward asymptotic to $g_2$ and backward asymptotic to $g_1$.

\subsection{Overall construction}

Fix an oriented axis $G$ of a \pA map which is an $\ep_0$--thick WP geodesic (a geodesic that is entirely in the $\ep_0$--thick part of \T space), 
and a point $x$ on $G$. Define a sequence
$(G_n,x_n)$ with $n\in \ZZ$ as follows: For $n\ge 0$, set
$$
G_n = f_2^n(G), \qquad x_n = f_2^n(x),
$$
and
$$
G_{-n} = f_1^{-n}(G), \qquad x_{-n} = f_1^{-n}(x).
$$
We construct our desired sequence of \pA maps $\Phi_n, n\in \NN$, in two steps (see
Figure \ref{Phi-n}). 

\medskip

{\bf Step 1: } Use the Recurrent Visibility Theorem  \cite[Theorem 1.3]{bmm1} to obtain an oriented
geodesic $B_n$ strongly asymptotic to $G_n^+$ in forward time and $G_{-n}^-$ in backward time.
Since strongly asymptotic rays have the same ending laminations (a consequence of the definition), we
see that $\nu^+(B_n) = \nu^+(G_n)$ and
$\nu^-(B_n) = \nu^-(G_{-n})$.

\medskip

{\bf Step 2:} The Closed orbit density theorem \cite[Theorem 1.6]{bmm1} implies that we can approximate $B_n$ as
closely as we like by axes of \pA mapping classes. We select such an approximation $A_n =
axis(\Phi_n)$ according to the criteria below.

\realfig{Phi-n}{The construction of $\Phi_n$ and its axis $A_n$.}

\medskip

The challenge will be to show that, with appropriate choices in Step 2, we obtain
axes $A_n$ which uniformly fellow-travel large segments of the geodesic $h$, which have
sufficient geometric control to enable us to apply Theorem \ref{thm : bdZ short} to show
that, in the middle of $A_n$ that fellow-travels $h$ there are points where the length of $\bd Z$ becomes arbitrarily
short, and to control the ending laminations $\nu^\pm(\Phi_n)$ sufficiently well to obtain
the bound (\ref{eq:dYbd}) on subsurface projections.

\subsection{Geometric control of $\{G_n\}_n$}

Recall that $G_n$ and $G_{-n}$ are $\ep_0$--thick so there is a $\delta>0$ 
so that the the $\delta$--\nbhds of the geodesics are disjoint from all completion strata
and there is a positive lower bound for all 
sectional curvatures in the $\delta$--\nbhds of the geodesics. 
By definition $G_n$ and $G_{-n}$  are strongly asymptotic to $B_n$ in forward and backward time,
respectively, but we will need uniform control, independent of $n$, on how quickly they
approach. This is the purpose of the following lemma:  
\begin{lem}\label{rectangle control}
There exists $D>0$  so that for each $n\in \NN$ large enough there is a point $y_n$ forward of
$x_n$ in $G_n$ such that
$$d(x_n,y_n) \le D \qquad \text{and} \qquad d(y_n, B_n) < \delta/2.$$
Similarly we have $y_{-n}$ behind $x_{-n}$ in $G_{-n}$ with
$$d(x_{-n},y_{-n}) \le D \qquad \text{and} \qquad  d(y_{-n}, B_n) < \delta/2.$$
\end{lem}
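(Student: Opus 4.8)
The plan is to prove Lemma~\ref{rectangle control} by first normalizing, then using a Gauss--Bonnet estimate to control the ``turning'' of $B_n$, together with uniform negative curvature near the thick geodesics $G_n$. By the symmetry of the setup (interchange $f_1\leftrightarrow f_2$, $n\leftrightarrow -n$, and forward$\leftrightarrow$backward) it suffices to produce a uniform $D$ and, for each large $n$, the point $y_n$ on $G_n$ forward of $x_n$; the assertion for $G_{-n}$ then follows by applying the isometry $f_1^{n}$ and running the backward version of every step. To normalize, apply the isometry $f_2^{-n}$: it takes $(G_n,x_n)$ to $(G,x)$ and takes $B_n$ to a geodesic $\widetilde B_n$ that is strongly asymptotic to the \emph{fixed} forward ray $G^{+}=G|_{[0,\infty)}$, and in backward time to $f_2^{-n}f_1^{-n}(G^{-})$. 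So it is enough to find a uniform $D$ such that for each large $n$ there is a point $\hat y_n$ on $G$, forward of $x=G(0)$, with $d(x,\hat y_n)\le D$ and $d(\hat y_n,\widetilde B_n)<\delta/2$; then $y_n=f_2^{n}(\hat y_n)$.

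I will use two facts. First, since $G$ lies entirely in the $\ep_0$--thick part, the sectional curvatures on the $\delta$--neighborhood $\calN_\delta(G)$ are bounded above by a negative constant $-\kappa_0$ depending only on $\ep_0$; hence any two geodesic rays that both lie in $\calN_\delta(G)$ and are asymptotic to the same point at infinity converge exponentially, at a rate depending only on $\kappa_0$ and $\delta$. Second, the function $\phi_n(s):=\dist(\widetilde B_n(s),G)$ is convex in $s$; it tends to $0$ as $s\to+\infty$ (strong asymptoticity) and to $+\infty$ as $s\to-\infty$ (the backward endpoint of $\widetilde B_n$ is not the backward endpoint of $G$), so $\phi_n$ is monotone decreasing, and the nearest-point shadow $\pi_G(\widetilde B_n)$ is an interval $[a_n,\infty)$ of $G$ in the arclength coordinate with $G(0)=x$ and $G^{+}$ the positive ray.

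The core of the argument is a Gauss--Bonnet estimate bounding the length of the region of $\widetilde B_n$ that is far from $G$. Fix $e\in(0,\delta/2)$ and let $\eta_n$ be a geodesic subsegment of $\widetilde B_n$ lying at distance $>e$ from $G$ whose $\pi_G$--shadow is an interval of $G$ of length essentially $c_n-\max(a_n,0)$, where $c_n$ is (up to a bounded error) the $G$--coordinate of the point of $\widetilde B_n$ with $\phi_n=e$. Applying Lemma~\ref{lem:I lower bound} with $\sigma=\emptyset$ and $\zeta=G$, together with the monotonicity~(\ref{I monotonic}) of $\calI$, to the ruled surface $Q[\eta_n;G]$ and to its subregion $Q_0^{e}[\eta_n;G]$ (which lies in $\calN_\delta(G)$), bounds $\calI\bigl(Q_0^{e}[\eta_n;G]\bigr)$ below by $K_0$ times, essentially, $c_n-\max(a_n,0)$. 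On the other hand $\eta_n$ is a geodesic segment, so Theorem~\ref{thm : GB2} bounds $\calI(Q[\eta_n;G])$ above by the universal constant $\pi$ (two exterior angles equal $\pi/2$ at the bottom corners, and the two at the endpoints of $\eta_n$ are at most $\pi$). Combining these, $c_n-\max(a_n,0)\le D_0$ for a uniform $D_0$. Since $\phi_n<e<\delta/2$ past the point where $\phi_n=e$, every $G(t)$ with $t\ge\max(a_n,0)+D_0$ lies within $\delta/2$ of $\widetilde B_n$. In particular, when $a_n\le 0$ we may take $\hat y_n=G(D_0)$ and $D=D_0$, and we are done.

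The main obstacle is the remaining case $a_n>0$, where one must bound $a_n$ --- equivalently, show that $d(x_n,B_n)$ is bounded independently of $n$, i.e.\ that $B_n$ enters $\calN_\delta(G_n)$ within a distance forward of $x_n$ that is bounded independently of $n$. I would obtain this from the construction of $B_n$ in Step 1: $B_n$ was produced by applying the Recurrent Visibility Theorem \cite[Theorem 1.3]{bmm1} to $G_n^{+}$ and $G_{-n}^{-}$, both of which lie in the \emph{fixed} $\ep_0$--thick part of \T space; since the $\ep_0$--thick part of moduli space is compact, the bottleneck provided there has size, and distance from $x_n=G_n(0)$, controlled only by $\ep_0$. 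Thus $B_n$ passes within a uniformly bounded distance of $G_n(t)$ for some $t$ boundedly forward of $x_n$, and the first fact above then upgrades this, via exponential convergence inside $\calN_\delta(G_n)$, to $d(G_n(t'),B_n)<\delta/2$ for some $t'$ boundedly forward of $x_n$, as required. An alternative, possibly cleaner, route would be to show that $f_2^{-n}f_1^{-n}(\nu^-(G))$ converges in the coarse Hausdorff topology on $\EL(S)$ to a lamination distinct from $\nu^+(G)$ --- using that $\nu^-(G)$ is filling and the pseudo-Anosov dynamics of $f_1$ and $f_2$ --- so that $\widetilde B_n$ subconverges on compact sets to a biinfinite geodesic through a fixed compact neighborhood of $x$, directly bounding $a_n$. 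Granting the bound on $a_n$, the previous paragraph produces $\hat y_n$ with $d(x,\hat y_n)\le a_n+D_0\le D$, and unnormalizing by $f_2^{n}$ completes the proof; the statement for $G_{-n}$ and $y_{-n}$ follows by the symmetric argument with $f_1^{n}$ in place of $f_2^{-n}$.
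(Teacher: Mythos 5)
Your Gauss--Bonnet step (bounding the length of the part of the shadow of $\widetilde B_n$ on $G$ over which $\widetilde B_n$ stays farther than $e$ from $G$, using the uniform negative curvature available in the $\ep_0$--thick part) is sound and is the same kind of area--curvature estimate the paper uses. But the step you yourself flag as ``the main obstacle'' --- showing that $a_n$ is bounded, i.e.\ that $B_n$ first comes $\delta/2$--close to $G_n$ at a point a \emph{uniformly} bounded distance forward of $x_n$ --- is a genuine gap, and neither of your two suggested justifications closes it. The Recurrent Visibility Theorem of \cite{bmm1} gives, for a \emph{fixed} pair of recurrent rays, some connecting geodesic; it provides no bottleneck whose size and, crucially, whose \emph{location relative to $x_n$} is controlled only by $\ep_0$. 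Here the pair of rays changes with $n$, their basepoints $x_n=f_2^n(x)$ and $x_{-n}=f_1^{-n}(x)$ move arbitrarily far apart, and compactness of the $\ep_0$--thick part of moduli space says nothing about where along $G_n$ the geodesic $B_n$ picks up the forward ray; that is exactly the quantity to be bounded. Your alternative route is also unsupported: for the \emph{partial} pseudo-Anosovs $f_1,f_2$ the laminations $f_2^{-n}f_1^{-n}(\nu^-(G))$ do not in general converge in $\EL(S)$ (the natural limits are carried by $X_1,X_2$ and are not filling), and extracting a limit of $\widetilde B_n$ ``through a fixed compact neighborhood of $x$'' presupposes that the $\widetilde B_n$ meet a fixed compact set near $x$, which is again the statement you are trying to prove; continuity of WP geodesics in their ending data is not available in the generality needed.

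The paper closes this gap with a different mechanism that you would need to supply. It first proves Lemma \ref{CAT0 divergence}: a uniform affine lower bound, anchored at $x_{\pm n}$ and with constants independent of $n$, for the divergence of $G_{\pm n}$ from the segment $\seg{x_{-n}x_n}$ (obtained by comparing $G$ with $g_2$, transferring to $h$, and using convexity of the distance to a geodesic in CAT(0)). It then runs the Gauss--Bonnet estimate not on a ruled surface over $G$ with a piece of $B_n$ as the top curve, but on a ruled hexagon spanning $x_{-n},x_n,\xi_n,\xi'_n,\xi'_{-n},\xi_{-n}$: a subsegment $\sigma\subset\seg{x_n\xi_n}$ of definite length, starting a bounded distance from $x_n$, is forced by the divergence lemma to be $\delta$--far from $\seg{x_{-n}x_n}$, from $G_{-n}$, and from the short sides, so the area--curvature inequality forces some point of $\sigma$ to be within $\delta/2$ of the remaining side, which lies on $B_n$. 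This produces $y_n$ with $d(x_n,y_n)\le D$ directly, with no a priori bound on $d(x_n,B_n)$ ever needed. Without Lemma \ref{CAT0 divergence} (or a substitute ruling out that the approach point of $B_n$ to $G_n$ drifts forward as $n\to\infty$), your argument only treats the case $a_n\le 0$ and does not prove the lemma. (A minor additional point: Lemma \ref{lem:I lower bound} is stated for a co-large multicurve $\sigma$, so invoking it with $\sigma=\emptyset$ should be replaced by the direct curvature bound $-K_0$ valid on the $\delta$--neighborhood of the thick geodesic, which is what the paper does.)
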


\begin{proof}
The proof uses the same ruled polygon technique as in \cite[\S4]{bmm1} \cite[\S6]{asympdiv} and Section
\ref{bottlenecks} of this paper. In preparation we first need the following estimate on the shape of the
configuration of $\{G_n\}_n$.

  \begin{lem}\label{CAT0 divergence}
    There exists an affine function $\varphi:\RR\to \RR$ with positive slope so that, for all $n\in \NN$ large enough, and
    any $z\in G_n$ 
    \[
    d(z,\seg{x_{-n}x_n}) \ge \varphi(d(z,x_n))
    \]
    and similarly $    d(z,\seg{x_{-n}x_n}) \ge \varphi(d(z,x_{-n}))$ for any $z\in G_{-n}$. 
  \end{lem}

  \begin{proof}
    Note first that $G$ and $g_2$ are not asymptotic since $G$ is a thick geodesic and $g_2$ is
    contained in a stratum.
    Since the WP metric is CAT(0), distances to geodesics are convex so there is an affine
    function $\varphi_0(t) = a_0t-c_0$ with $a_0>0$  so that for any $z\in G$ we have
    $$ d(z,g_2) \ge \varphi_0(d(z,x)).$$
    Now since $f_2$ preserves $g_2$,      for all $z\in G_n$ we have 
\begin{equation}\label{g2 growth}
  d(z,g_2) \ge \varphi_0(d(z,x_n)).
\end{equation}
We next want to prove a similar inequality for $h$ replacing
    $g_2$.

    Let $q$ be the nearest point to $x$ on $g_2$ and let $q_n = f_2^n(q)$. Then, we have
    $d(x_n,q_n) = d(x,q) \equiv d_0$.
 Moreover, since $h$ is asymptotic to $g_2$ in forward time, there is a sequence $\tau_n\to \infty$ so
 that the interval of radius $\tau_n$ in $g_2$ around $q_n$ is within distance $1$ of $h$
 for all $n$ large enough.

 Fix $d_1 > d_0+4$ and let $s\in \RR$ be such that $\varphi_0(s) = d_1$.
  We claim that, for large enough $n$ and for a point $z_n\in G_n$ with
 $d(z_n,x_n) = s$ we have that
 \begin{equation}\label{h increase}
   d(z_n,h) > d_1-2.
 \end{equation}
Suppose not, and choose $n$ so that $\tau_n \gg d_1 + d_0 + s$.
The nearest point to $z_n$ on
$h$ is then within the interval that 1-fellow-travels $g_2$, so we have that
$d(z_n,g_2) \le d_1 -2 +1  < d_1$;  but this contradicts  (\ref{g2
  growth}), and thus (\ref{h increase}) holds. Now note that the distance of $x_n\in G_n$ to $h$ is at most $d_0+1$
and the distance of $z_n\in G_n$ to $h$
  is between $d_1-2$ (by (\ref{h increase})) and $d_0+1+s$ (by the triangle inequality). 
For any positive convex function $f:[0,\infty)\to \RR$  with $f(0)< f(s)$ we have $f(t) \ge
  \frac{f(s)-f(0)}{s}t - f(s)$.
   Applying this to $d(\cdot,h)$ along $G_n$ we have the inequality
$$
d(z,h) > \varphi_1(d(z,x_n))
$$
for every $z\in G_n$ where the slope of $\varphi_1$ is at least $(d_1-d_0-3)/s>0$.

Now since $\seg{x_{-n}x_n}$ lies in a $d_0+1$ neighborhood of $h$, the desired inequality
follows, for an affine function $\varphi$ with the same slope as $\varphi_1$.

The argument for points on $G_{-n}$ is the same, with suitable replacements. 
  \end{proof}

\realfig{B-n-hex}{The ruled hexagon for measuring rate of approach of $B_n$ to
  $G_{\pm n}$}

Now build polygonal loops $P_n,\; n\in \NN$ as follows: 
Choose a point $\xi_n$ on $G_n$ forward of $x_n$ so that $d(\xi_n,B_n) < \delta/2$, and
a point $\xi_{-n}$ on $G_{-n}$ behind $x_{-n}$ so that $d(\xi_{-n},B_n) < \delta/2$. This is
possible because $B_n$ is strongly asymptotic to $G_{-n}$ and $G_n$ in backward and forward times,
and we may choose $\xi_{\pm n}$ as far away from $x_{\pm  n}$ as we like. Let $\xi'_{\pm n}$ denote the nearest points on $B_n$ to
$\xi_{\pm n}$, respectively. Then the loop $P_n$ is the hexagon obtained by connecting the
six points
$$
x_{-n},x_n,\xi_n,\xi'_n, \xi'_{-n}, \xi_{-n}
  $$
in cyclic order using geodesic segments, seen 
in Figure \ref{B-n-hex} as the boundary of the shaded region.
We can triangulate $P_n$ and fill it in with ruled triangles, to obtain a disk $Q_n$ with
negatively curved interior, sides that are geodesic, and six corners at which the exterior
angles (from the point of view of $Q_n$) are at most $\pi$. The Gaussian curvature $\kappa$ in
$Q_n$ is negative, so the Gauss-Bonnet theorem gives us
$$\int_{Q_n} |\kappa| dA \le 4\pi.$$
Now since $G_n$ and $G_{-n}$ are in the $\ep_0$--thick part, we were able to choose
$\delta$ above so that there is an upper bound
$-K_0<0$ for all ambient sectional curvatures at points on a $\delta$--neighborhood of $G_{\pm n}$. This gives a
bound $|\kappa| \ge K_0$ for all points of $Q_n$ that are within distance $\delta$ of the edges
$\seg{x_n\xi_n}$ and $\seg{x_{-n}\xi_{-n}}$.

Now if $\sigma$ is a boundary segment of $Q_n$ on $G_n$ or $G_{-n}$  of length $\lambda$ and $\sigma$ is distance more
than $\delta/2$ from all of the other boundary edges, then it bounds a strip of width $\delta/2$
where $|\kappa| \ge K_0$, and we conclude

\begin{equation}\label{lambda bound}
K_0 \lambda \delta/2 \le \int_{Q_n}|\kappa|dA \le 4\pi
\end{equation}
so that $\lambda \le 8\pi/K_0\delta$. 

Now by Lemma \ref{CAT0 divergence}, for $y\in \seg{x_n\xi_n}$ we have
$$d(y,\seg{x_nx_{-n}}) \ge a d(y,x_n) - c$$
for $a>0$ and $c$ independent of $n$. Let
$\sigma\subseteq \seg{x_n \xi_n}$ be a segment of length at least $8\pi/K_0\delta$ (larger than $\lambda$) starting at distance
$(\delta+c)/a$ from $x_n$. Then every point in $\sigma$ is at least distance $\delta$ from
$\seg{x_{-n}x_n}$. The points on $\sigma$ are also at least distance $\delta$ from $G_{-n}$, for large $n$,  because
$d(x_n,x_{-n})\to \infty$ as $n\to \infty$, which implies that $d(x_n,G_{-n})\to\infty$ using Lemma
\ref{CAT0 divergence} again.
The ``short'' sides of $P_n$ (the boundary of $Q_n$) connecting $\xi_{\pm n}$ to $\xi'_{\pm n}$
may be assumed as far away as we like, so that they are not within distance $\delta$ of $\sigma$. 
Since the inequality (\ref{lambda bound}) is
violated by $Q_n$ as above and $n$ large enough, it follows that there is a point in $\sigma$ which is within distance $\delta/2$ of the
remaining side, which lies on $B_n$.  This is the desired point $y_n$ which is within distance $D:=\frac{(\delta+c)}{a}+\frac{8\pi}{K_0\delta}$ of $x_n$.

We may find $y_{-n}$ using the same argument on $G_{-n}$. 
\end{proof}

\subsection{Choosing $\Phi_n$ to control the length of $\bd Z$}

\begin{lem}\label{Phi constraint for bd Z}
  There exist $D',L>0$ so that for each $n$, if $\Phi_n$ is chosen with axis $A_n$
  sufficiently close to $B_n$, then there are points $v_n,v_{-n}\in A_n$ with
  $$
  \ell_{\bd Z}(v_{\pm n}) \le L
  $$
  and
  $$
  d(v_{\pm n},x_{\pm n}) \le D'
  $$
\end{lem}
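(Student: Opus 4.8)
The plan is to produce the points $v_{\pm n}$ by perturbing the points $y_{\pm n}\in G_{\pm n}$ furnished by Lemma \ref{rectangle control} onto the approximating axis $A_n$, and then to bound $\ell_{\bd Z}$ there by transporting everything back, via the dynamics of $f_1$ and $f_2$, into one fixed compact region. The first observation I would record is the relevant invariance: since $X_1,X_2\subseteq Z$ and, by the one-step large filling hypothesis, no component of $\bd X_i$ is a component of $\bd Z$, each $X_i$ may be realized disjointly from $\bd Z$, so the partial pseudo-Anosov $f_i$ (being supported in $X_i$) fixes every component of $\bd Z$; hence $\ell_{\bd Z}\circ f_i=\ell_{\bd Z}$ on $\oT S$ for $i=1,2$. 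I would also shrink $\delta$ at the outset so that in addition $\delta\le s$, where $s>0$ is the constant of Lemma \ref{lem:distlen} applied with $l=2\ep_0$ (this introduces no circularity, as $s$ depends only on $\ep_0$, not on $D$ or on $n$). Applying Lemma \ref{rectangle control} then gives, for each sufficiently large $n$, points $y_n\in G_n$ and $y_{-n}\in G_{-n}$ with $d(x_{\pm n},y_{\pm n})\le D$ and $d(y_{\pm n},B_n)<\delta/2$; let $b_{\pm n}$ denote a nearest point to $y_{\pm n}$ on $B_n$, so $d(y_{\pm n},b_{\pm n})<\delta/2$.

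Next I would choose $\Phi_n$. By the closed orbit density theorem \cite[Theorem 1.6]{bmm1}, applied to the compact segment of $B_n$ between $b_{-n}$ and $b_n$, we may select $\Phi_n$ whose axis $A_n$ fellow-travels this segment to within $\delta/4$; in particular $A_n$ contains points $v_{\pm n}$ with $d(v_{\pm n},b_{\pm n})<\delta/4$. Then $d(v_{\pm n},y_{\pm n})<3\delta/4<\delta$, so $d(v_{\pm n},x_{\pm n})<D+\delta=:D'$, which is the distance estimate of the lemma. Moreover, since $d(v_{\pm n},y_{\pm n})\le\delta\le s$ and $y_{\pm n}$ lies in the $\ep_0$--thick part, Lemma \ref{lem:distlen} shows that $v_{\pm n}$ lies in the $(\ep_0/2)$--thick part of $\Teich(S)$.

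For the length bound I would argue as follows. The point $v_n$ lies in $\overline{B(x_n,D')}$ and in the $(\ep_0/2)$--thick part; since $f_2^{\,n}$ is an isometry taking $x$ to $x_n$ and preserving thick parts, $f_2^{-n}(v_n)$ lies in $\overline{B(x,D')}$ and in the $(\ep_0/2)$--thick part of $\Teich(S)$. The closed ball $\overline{B(x,D')}$ is compact in $\oT S$, so its intersection with the $(\ep_0/2)$--thick part is a compact subset of $\Teich(S)$, independent of $n$, on which the continuous function $\ell_{\bd Z}$ is bounded by some $L<\infty$. By $f_2$--invariance, $\ell_{\bd Z}(v_n)=\ell_{\bd Z}(f_2^{-n}(v_n))\le L$; running the same argument with $f_1$ in place of $f_2$ gives $\ell_{\bd Z}(v_{-n})\le L$. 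All of $\delta,D,s,D',L$ are manifestly independent of $n$, which is exactly the uniformity needed when this lemma is combined with Theorem \ref{thm : bdZ short}.

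The step I expect to require the most care — though it is only a minor obstacle — is the uniformity of the approximation: the segment $[b_{-n},b_n]$ of $B_n$ has length tending to $\infty$ (since $d(x_{-n},x_n)\to\infty$), so one is invoking the closed orbit density theorem for arbitrarily long prescribed segments with a fixed error $\delta/4$. This is precisely the content of \cite[Theorem 1.6]{bmm1}, so no genuine difficulty arises, but the quantifiers must be handled carefully so that the threshold "$A_n$ close to $B_n$" can be taken uniform in $n$.
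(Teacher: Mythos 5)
Your construction of $v_{\pm n}$, the bound $d(v_{\pm n},x_{\pm n})\le D+\delta$, and the use of Lemma \ref{lem:distlen} to keep $v_{\pm n}$ in a thick part are fine and essentially parallel to the paper's argument (the observation that $f_1,f_2$ fix $\bd Z$, hence $\ell_{\bd Z}(x_{\pm n})=\ell_{\bd Z}(x)$, is also the paper's starting point). The gap is in your final step, where you bound $\ell_{\bd Z}(v_{\pm n})$ by asserting that $\overline{B(x,D')}$ is compact in $\oT{S}$, so that its intersection with the $(\ep_0/2)$--thick part is a fixed compact subset of $\Teich(S)$ on which $\ell_{\bd Z}$ is bounded. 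Closed WP balls are not compact: $\oT{S}$ fails to be locally compact, and, concretely, Dehn twist orbits are WP--bounded. If $p$ is the nearest-point projection of $x$ to $\calS(\beta)$, then since $T_\beta$ fixes $\calS(\beta)$ pointwise, $d(x,T_\beta^m x)\le 2\,d(x,\calS(\beta))$ for every $m$, while the points $T_\beta^m x$ stay in the $\ep_0$--thick part and leave every compact subset of $\Teich(S)$. Worse, the conclusion you want is itself false in general: if $\beta\pitchfork\bd Z$ and $2\,d(x,\calS(\beta))<D'$ (which you cannot rule out, since $D'$ is dictated by Lemma \ref{rectangle control} and is not chosen small), then $\ell_{\bd Z}(T_\beta^m x)$ equals the length at $x$ of the curve system $T_\beta^{\mp m}(\bd Z)$, which tends to infinity with $m$; hence $\ell_{\bd Z}$ is unbounded on $\overline{B(x,D')}$ intersected with the $(\ep_0/2)$--thick part. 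So ``thick and within WP distance $D'$ of $x$'' does not bound $\ell_{\bd Z}$, and the transport by $f_2^{-n}$ cannot rescue the argument.

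The missing ingredient is control of $\ell_{\bd Z}$ along a path from $x_{\pm n}$ to $v_{\pm n}$ that stays in a uniformly thick region, which is how the paper proceeds: the segment from $x_{\pm n}$ to $y_{\pm n}$ lies on the $\ep_0$--thick geodesic $G_{\pm n}$ and has WP length at most $D$, hence bounded Teichm\"uller length, so $\ell_{\bd Z}(y_{\pm n})\le L_1$ uniformly; and by CAT(0) convexity of the distance to $G_{\pm n}$, the geodesic from $y_{\pm n}$ to $v_{\pm n}$ (of length at most $\delta$) stays in the $\delta$--neighborhood of $G_{\pm n}$, a uniformly thick set, giving $\ell_{\bd Z}(v_{\pm n})\le L$ with $L$ depending only on $\ell_{\bd Z}(x)$, $D$, $\delta$ and $\ep_0$. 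If you replace your compactness step with this path-wise estimate (keeping the rest of your write-up, including the uniformity discussion for the density theorem), the proof goes through.
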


\begin{proof}
Let $y_{\pm n}$ be the points obtained in Lemma \ref{rectangle control}. These points are
within $\delta/2$ of $B_n$, so let us choose $\Phi_n$ so that $A_n$ is also within distance $\delta/2$ of
$B_n$ from $\xi_{-n}$ to $\xi_n$ which is again possible by the Density theorem \cite[Theorem 6.1]{bmm1}.
 Let then $v_{\pm n}$ denote points in $A_n$ that are within $\delta$ of $y_{\pm n}$
respectively.

Now $\ell_{\bd Z}(x_{\pm n}) = \ell_{\bd Z}(x)$ because $f_1$ and $ f_2$ fix $\bd Z$. The
segment from $x_n$ to $y_n$  (and $x_{-n}$ to $y_{-n}$) is of length at most $D$ and is in
the $\ep_0$--thick part of Teichm\"uller space (by definition all $G_{\pm n}$ are in an $\ep_0$--thick part of \T space),
 so the lengths of all curves can change only by a bounded factor along such a segment 
 (a thick bounded length WP segment has bounded \T length). 
This gives some uniform bound $L_1$ on $\ell_{\bd Z}(y_{\pm n})$. 

Now by the convexity of the $\delta$--\nbhd of $G_{\pm n}$, the geodesic segment from $y_{\pm n}$ 
to $v_{\pm n}$ also stays in the $\delta$--\nbhd of $G_{\pm n}$ and hence is in the thick part of \T space, and this gives us the desired bound
on $\ell_{\bd Z}(v_{\pm n})$. 
\end{proof}

\subsection{Choosing $\Phi_n$ to control ending laminations}

\begin{lem}\label{Phi constraint for nu}
There exists $A\geq 1$ so that for each $n\in \NN$,  if $\Phi_n$ is chosen with axis $A_n$ sufficiently close to $B_n$, then there is an
  upper bound
  $$ d_Y(\nu^+(\Phi_n),\nu^-(\Phi_n)) \le A $$
for all $Y\subseteq S$ sharing a boundary component with $\bd Z$.
\end{lem}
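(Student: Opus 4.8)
The plan is to estimate $d_Y(\nu^+(\Phi_n),\nu^-(\Phi_n))$ by comparing $\nu^\pm(\Phi_n)$ first with the ending laminations of $B_n$, and then comparing those with the ending laminations of the fixed $\ep_0$--thick axis $G$. All laminations in sight — $\nu^\pm(\Phi_n)$, $\nu^+(B_n)=\nu^+(G_n)=f_2^{\,n}(\nu^+(G))$, $\nu^-(B_n)=\nu^-(G_{-n})=f_1^{-n}(\nu^-(G))$, and $\nu^\pm(G)$ — lie in $\EL(S)$ and hence project nontrivially to every essential subsurface, so the triangle inequality for subsurface coefficients is available throughout. Fix $Y\subseteq S$ having a boundary curve $\gamma$ in $\bd Z$. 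Since $X_1,X_2\subseteq Z$ and, in a one-step large filling configuration, no component of $\bd X_1\cup\bd X_2$ is a component of $\bd Z$, the multicurve $\bd Z$ is disjoint from $\bd X_1\cup\bd X_2$; in particular $Y$ is neither $X_1$ nor $X_2$, nor an annulus whose core lies in $\bd X_1\cup\bd X_2$ — which is precisely the hypothesis needed to apply the bounded--projection estimate \eqref{eq:wpbehavior bound} for $f_1$ and $f_2$ to $Y$.

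First I would reduce to $B_n$ via
\[
 d_Y(\nu^+(\Phi_n),\nu^-(\Phi_n))\le d_Y(\nu^+(\Phi_n),\nu^+(B_n))+d_Y(\nu^+(B_n),\nu^-(B_n))+d_Y(\nu^-(B_n),\nu^-(\Phi_n)),
\]
and control the two outer terms using Lemma \ref{lem:dY1}: applied to $\nu^+(B_n)\in\EL(S)$ and to each of the finitely many components of $\bd Z$, and intersecting the resulting coarse Hausdorff neighbourhoods, it produces a neighbourhood $U_n^+$ of $\nu^+(B_n)$ with $d_Y(\nu^+(B_n),\mu)\le 4$ for every $\mu\in U_n^+$ and every $Y$ meeting $\bd Z$ in its boundary; similarly one gets $U_n^-$ for $\nu^-(B_n)$. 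For each fixed $n$, by taking the pseudo-Anosov approximation $A_n=\mathrm{axis}(\Phi_n)$ of $B_n$ close enough — so that $A_n$ fellow-travels $B_n$ far out toward both ends — the forward and backward ending laminations $\nu^\pm(\Phi_n)$ are brought arbitrarily close to $\nu^\pm(B_n)$ in the coarse Hausdorff topology; choosing the approximation fine enough puts $\nu^+(\Phi_n)\in U_n^+$ and $\nu^-(\Phi_n)\in U_n^-$, so the two outer terms are each at most $4$.

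It then remains to bound $d_Y(\nu^+(B_n),\nu^-(B_n))$ by a constant independent of $n$ and of $Y$. For this I would insert $\nu^\pm(G)$:
\[
 d_Y(\nu^+(B_n),\nu^-(B_n))\le d_Y\!\big(f_2^{\,n}(\nu^+(G)),\nu^+(G)\big)+d_Y(\nu^+(G),\nu^-(G))+d_Y\!\big(\nu^-(G),f_1^{-n}(\nu^-(G))\big).
\]
The middle term is at most a constant $D_0$, because $G$ is an axis of a pseudo-Anosov lying in the $\ep_0$--thick part, so Lemma \ref{lem:dWP-dX} with $X=S$ bounds \emph{all} subsurface coefficients — annular ones included — between pairs of points of $G$, and $\nu^\pm(G)$ are the coarse limits of Bers markings along $G$. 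For the first term, fixing a Bers marking $\mu_G$ at a point of $G$ and using $f_2^{-n}(\mu_G)$ to conjugate, one has
\[
 d_Y\!\big(f_2^{\,n}(\nu^+(G)),\nu^+(G)\big)\le d_{f_2^{-n}(Y)}(\nu^+(G),\mu_G)+d_Y\!\big(f_2^{\,n}(\mu_G),\mu_G\big)+d_Y(\mu_G,\nu^+(G));
\]
the first and last summands are again at most $D_0$ (bounded combinatorics of $G$, valid since $Y$ and $f_2^{-n}(Y)$ are proper), and the middle summand is bounded uniformly in $n$ and $Y$ — it is $0$ when $Y$ is disjoint from $X_2$, since $f_2^{\,n}$ is the identity off $X_2$, and otherwise it is controlled by \eqref{eq:wpbehavior bound} applied to the finitely many curves of $\mu_G$ (legitimate because $Y\ne X_2$ and $Y$ is not an annulus around $\bd X_2$) together with the uniform diameter bound for the projection of a fixed marking. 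The third term of the previous display is handled the same way with $f_1,X_1,\nu^-(G)$ in place of $f_2,X_2,\nu^+(G)$. Collecting the bounds gives $d_Y(\nu^+(B_n),\nu^-(B_n))\le A_0$ for $A_0$ independent of $n$ and $Y$, hence $d_Y(\nu^+(\Phi_n),\nu^-(\Phi_n))\le A_0+8=:A$, as required.

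The two places that need real care, and which I would isolate as the main obstacles, are: (i) obtaining the estimate for $f_2^{\,n}(\nu^+(G))$ that is uniform over \emph{all} subsurfaces $Y$ meeting $\bd Z$ — the curve-level bound \eqref{eq:wpbehavior bound} has a constant depending on the curve, which is why everything is routed through a single fixed Bers marking $\mu_G$ and the trivial case of subsurfaces disjoint from $X_i$ is split off; and (ii) the claim in the first step that a sufficiently close pseudo-Anosov approximation $A_n$ of $B_n$ has ending laminations coarsely close to $\nu^\pm(B_n)$. For (ii) I expect to either read off the required convergence from the proof of the Closed orbit density theorem \cite[Theorem 1.6]{bmm1} — the approximating axes fellow-travel $B_n$ on longer and longer segments towards the two ends, forcing their forward/backward ending laminations to converge coarsely to $\nu^\pm(B_n)$ — or to record it as a short auxiliary lemma.
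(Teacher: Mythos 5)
Your argument is essentially the paper's: the outer terms are handled exactly as in the paper (Lemma \ref{lem:dY1} applied to $\nu^\pm(B_n)$, noting that any $Y$ with a boundary curve in $\bd Z$ is excluded from being $X_1$, $X_2$, or an annulus about $\bd X_1\cup\bd X_2$), and your bound on $d_Y(\nu^+(B_n),\nu^-(B_n))$ uses the same two ingredients the paper does --- the uniform bound (\ref{eq:wpbehavior bound}) for $f_1,f_2$ on the allowed subsurfaces and the bounded combinatorics of the thick axis $G$ via Lemma \ref{lem:dWP-dX} --- only routed through the laminations and a fixed Bers marking $\mu_G$ by equivariance, rather than through the points $x_{\pm n}$ as in the paper; this is a cosmetic difference. (Minor coarseness point: when $Y$ is disjoint from $X_2$ the term $d_Y(f_2^n(\mu_G),\mu_G)$ is uniformly bounded rather than literally $0$, since the two projections merely share elements.)

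The one substantive item is your step (ii), which you correctly flag but do not prove: the claim that taking $A_n$ close to $B_n$ forces $\nu^\pm(\Phi_n)$ into the coarse Hausdorff neighborhoods $U_n^\pm$ of $\nu^\pm(B_n)$. This is not automatic from fellow-traveling on long segments --- the ending lamination map is not continuous on arbitrary rays --- and the paper closes it by invoking the continuity theorem \cite[Theorem 4.7]{bmm1}, which gives coarse Hausdorff convergence $\nu^\pm(r_n)\to\nu^\pm(r)$ when $r_n\to r$ on compact sets and $r$ is \emph{recurrent}. That hypothesis holds here because the forward and backward rays of $B_n$ are strongly asymptotic to the $\ep_0$--thick rays $G_n^+$ and $G_{-n}^-$. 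So your auxiliary lemma exists in the literature and the argument goes through, but the recurrence of $B_n$ (not merely the density of closed orbits) is the point that must be cited or reproved; with that reference inserted, your proof is complete and matches the paper's.
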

(Note the statement includes annuli $Y$ with core a component of $\bd Z$).

\begin{proof}
  First we show the bound holds for the laminations $\nu^+(G_n), \nu^-(G_{-n})$.
 Recalling the inequality (\ref{eq:wpbehavior bound}) in the proof of Lemma \ref{lem:dWP-dX} 
 for any curve $\alpha$ in a Bers marking at $x$ we have that
 $d_Y(\alpha,f_1^{-n}(\alpha))\leq D_\alpha$ for all subsurfaces $Y\subseteq S$
 except $X_1$ and annuli with core curves in $\bd X_1$. This implies that for an $A_1\geq 1$ we have
 \[d_Y(x,x_{-n})\leq A_1,\]
 for all subsurfaces $Y$ as above. Similarly, we can see that
 \[d_Y(x,x_{n})\leq A_1\]
 holds for all subsurfaces $Y\subseteq S$
 except $X_2$ and annuli with core curve in $\bd X_2$. Combining the above two inequalities with the triangle inequality
 we see that
   \[d_Y(x_{-n},x_{n})\leq A_2\]
   holds for all subsurfaces $Y\subseteq S$
 except $X_1, X_2$ and annuli with core curves in $\bd X_1$ and $\bd X_2$.

  Moreover, since $G$ is the axis of a \pA map, $\diam_{\calC(Y)}(Q\circ G)$ is uniformly bounded for all $Y\subsetneq S$ (Lemma \ref{lem:dWP-dX}), and it
  follows (applying powers of $f_2$ or $f_1$) that
  \[
  d_Y(x_n,\nu^+(G_n)) \le A_3, \qquad   d_Y(x_{-n},\nu^-(G_{-n})) \le A_3
  \]
 holds for an $A_3\geq 1$ independent of $n$.

  Putting these bounds together we find that
\begin{equation}\label{bound nu G}
  d_Y(\nu^+(G_n),\nu^-(G_{-n})) \le A_4
\end{equation} 
  for all $Y$ except $X_1,X_2$ and annuli with core curves in $\bd X_1$ and $\bd X_2$. 
 Note that this bound holds for all $Y$ sharing a boundary curve with $Z$, since $X_1$ and
 $X_2$ share no boundary curves with $Z$.

Next we recall that $\nu^+(B_n) = \nu^+(G_n)$ and $\nu^-(B_n)=\nu^-(G_{-n})$. Thus
(\ref{bound nu G}) holds for   $(\nu^+(B_n),\nu^-(B_n))$ too. 

Finally, Lemma \ref{lem:dY1} gives us a neighborhood $U^+$ of $\nu^+(B_n)$ in the coarse
Hausdorff topology
such that, for any subsurface $Y$ sharing a boundary component with $Z$,
$$
d_Y(\lambda,\nu^+(B_n)) \le 4
$$
for any $\lambda\in U^+$.
Similarly there is a neighborhood $U^-$ of $\nu^-(B_n)$ with the corresponding property.

Now, the continuity theorem \cite[Theorem 4.7]{bmm1} states that, if $r$ is a recurrent ray
and $r_n \to r$ on compact sets, then the laminations $\nu^+(r_n)$ converge to $\nu^+(r)$
in the coarse Hausdorff topology (The theorem in \cite{bmm1} is stated for a sequence of
rays sharing a basepoint, but the proof applies in general). Thus it follows that, if
$\Phi_n$ is chosen so that $A_n$ is sufficiently close to $B_n$, then $\nu^+(\Phi_n)\in U^+$
and $\nu^-(\Phi_n) \in U^-$. We thus obtain a bound of the form
$$
d_Y(\nu^+(\Phi_n),\nu^-(\Phi_n)) \le A_5
$$
for all $Y$ sharing a boundary component with $Z$ (this includes the annuli with core curves in $\bd Z$).  
This concludes the lemma. 
\end{proof} 

Now we may finish the 

\begin{proof}[Proof of Theorem \ref{thm:closedgeodesic}]
For each $n$, choose $\Phi_n$ so that the conclusions of both Lemma \ref{Phi constraint for bd
  Z}  and Lemma \ref{Phi constraint  for nu} hold. 

Let $v_n,v_{-n}$ be the points on $A_n$ given by Lemma \ref{Phi constraint for bd
  Z}. Then the segments $\seg{v_{-n}v_n}$ satisfy the hypotheses of Theorem \ref{thm
  : bdZ short}: that is, the endpoints are uniformly close to $x_{-n},x_n$ respectively,
and each of those is uniformly close to $p_{-n}$ and $q_{n}$ respectively, which are
points far along the axes $g_1$ and $g_2$, where the geodesic $h$ is close to those
axes. Hence $\seg{v_{-n}v_n}$ is a $D''$-fellow traveller of a long segment $I_n$ in
$h$ so that $\union_n I_n = h$, for some fixed $D''$. Moreover the length of $\bd Z$ is bounded at
the endpoints 
$v_{\pm n}$. Thus Theorem \ref{thm : bdZ short} implies that
$\inf\{\ell_{\bd Z}(x) | x\in {\seg{v_{-n}v_n}}\} \to 0$ as $n\to \infty$.

 Lemma \ref{Phi constraint for nu} gives us the inequality (\ref{eq:dYbd}) for each
 $\Phi_n$. Thus the sequence $\{\Phi_n\}_n$ provides the desired \pA maps to complete the
 proof of Theorem \ref{thm:closedgeodesic}. 
\end{proof}

\section{Comparison with Kleinian surface groups}
\label{sec:kleinian}

In this final section we indicate how Theorems \ref{thm:QF mismatch} and \ref{thm:fibered
  mismatch} can be derived from Theorem \ref{thm:WP mismatch}  and \ref{thm:closedgeodesic}, respectively, using the work of
Brock-Canary-Minsky on Kleinian surface groups \cite{elc1,elc2}. This will show that the
set of short curves along a WP geodesic and the set of short curves in the corresponding
hyperbolic 3-manifold do not necessarily coincide.

Recall first that a {\em Kleinian surface group} is a discrete, faithful representation
$\rho:\pi_1(S) \to PSL(2,\CC)$ which takes punctures of $S$ to parabolic elements (is type preserving). Such a
representation has a pair $(\nu^+,\nu^-)$ of {\em end invariant}, which in particular are
points of $\Teich(S)$ when $\rho$ is quasi-Fuchsian, and are laminations in $\EL(S)$ when
$\rho$ is doubly degenerate.

Let $N_\rho = \HH^3/\rho(\pi_1(S))$ be the quotient hyperbolic $3$--manifold of $\rho$. 
Given $\rho$ and a curve $\gamma$ in $S$ we let $\ell_\gamma(\rho)$ or $\ell_\gamma(N_\rho)$ denote the length of
the geodesic representative of $\rho$ in $N_\rho$. 
The Short Curve Theorem of \cite{elc1} gives the following relationship between small
values of $\ell_\gamma$ and large subsurface projections of the end invariant. 
\begin{thm}\label{thm:minsky}
Suppose that $\rho:\pi_1(S)\to PSL(2,\CC)$ is a Kleinian surface group with end invariant
  $(\nu^+,\nu^-)$ and let $N_\rho=\HH^3/\rho(\pi_1(S))$, then 
\begin{enumerate}
\item for any $A\geq 1$ there is an $\ep>0$ so that if $\ell_\gamma(N_\rho)< \ep$, 
then $\sup_{\substack{Y\subsetneq S: \gamma\subseteq \bd Y}} d_Y(\nu^+,\nu^-)>A$.
\item for any $\ep>0$ there is an $A\geq 1$ so that if $\sup_{\substack{Y\subsetneq S: \gamma\subseteq \bd Y}} d_Y(\nu^+,\nu^-)>A$, 
then $ \ell_\gamma(N_\rho)< \ep$.
\end{enumerate}
\end{thm}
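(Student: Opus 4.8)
The plan is to deduce Theorem~\ref{thm:minsky} from the Length Bound Theorem for Kleinian surface groups of Brock--Canary--Minsky \cite{elc1} (with the doubly degenerate and general cases completed in \cite{elc2}). Recall that that theorem attaches to a Kleinian surface group $\rho$ with end invariants $(\nu^+,\nu^-)$, and to a curve $\gamma$ in $S$, a combinatorial ``coefficient'' which we may summarize in the form
\[
\widehat d_\gamma(\nu^+,\nu^-)\ :=\ \sum_{Y\subseteq S:\ \gamma\subseteq\bd Y}\ \{\, d_Y(\nu^+,\nu^-)\,\}_A\ +\ \big(\text{bounded terms from }\nu^\pm\in\Teich(S)\big),
\]
where the sum ranges over subsurfaces $Y$ having $\gamma$ as a boundary component, including the annulus whose core is $\gamma$ (for which $d_Y=d_\gamma$ is the annular coefficient), and where the parenthetical term is present only in the geometrically finite case and is then uniformly bounded by some $c_0$. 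The Length Bound Theorem asserts the existence of $\bar\epsilon_0>0$ so that for every $\epsilon<\bar\epsilon_0$ there is a threshold $A(\epsilon)$, with $A(\epsilon)\to\infty$ as $\epsilon\to0$, such that
\[
\ell_\gamma(N_\rho)<\epsilon\ \Longleftrightarrow\ \widehat d_\gamma(\nu^+,\nu^-)>A(\epsilon).
\]
When $\rho$ is quasi-Fuchsian the end invariants are points of $\Teich(S)$ and the projections $\pi_Y(\nu^\pm)$ are computed from Bers markings, exactly as in \S\ref{subsec:coarse-model-CC}; when $\rho$ is doubly degenerate the $\nu^\pm$ lie in $\EL(S)$ and the parenthetical term is absent.

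Granting this, both parts follow formally. For part~(1): given $A\geq1$, choose $\epsilon>0$ small enough that $A(\epsilon)>A+c_0$; if $\ell_\gamma(N_\rho)<\epsilon$ then $\widehat d_\gamma(\nu^+,\nu^-)>A+c_0$, and since the non-subsurface contributions to $\widehat d_\gamma$ total at most $c_0$, some term $\{d_Y(\nu^+,\nu^-)\}_A$ with $\gamma\subseteq\bd Y$ is positive and in fact $d_Y(\nu^+,\nu^-)>A$, whence $\sup_{Y:\gamma\subseteq\bd Y}d_Y(\nu^+,\nu^-)>A$. For part~(2): given $\epsilon>0$, set $A=A(\epsilon)$; if $\sup_{Y:\gamma\subseteq\bd Y}d_Y(\nu^+,\nu^-)>A$ then a single term of the defining sum already exceeds $A(\epsilon)$, so $\widehat d_\gamma(\nu^+,\nu^-)>A(\epsilon)$ and hence $\ell_\gamma(N_\rho)<\epsilon$. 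One also observes that a curve with $\ell_\gamma(N_\rho)$ below the Margulis constant has a genuine Margulis tube, so that the hypotheses of the Length Bound Theorem are met; this is standard.

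The one genuinely delicate point is reconciling conventions: the coefficient in \cite{elc1} is usually packaged as a single quantity assembled from an annular part, a ``horizontal'' (product-region) part, and, in the geometrically finite case, a contribution measuring how far into the thin part of $\Teich(S)$ the end invariants reach, and one must check that, up to bounded additive error, this agrees with the plain supremum $\sup_{Y:\gamma\subseteq\bd Y}d_Y(\nu^+,\nu^-)$ of our statement. I expect this bookkeeping to be the main (and essentially only) obstacle. It is harmless here, since we need only the qualitative dichotomy ``short curve $\Leftrightarrow$ large coefficient'' rather than the sharp estimate $1/\ell_\gamma(N_\rho)\asymp\widehat d_\gamma(\nu^+,\nu^-)$, so that every term of $\widehat d_\gamma$ other than the $d_Y$ with $\gamma\subseteq\bd Y$ may simply be absorbed into the thresholds; the substance is entirely contained in \cite{elc1,elc2}.
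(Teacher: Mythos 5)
Your proposal is correct and is essentially what the paper does: Theorem \ref{thm:minsky} is not proved in the paper but simply quoted as the Short Curve Theorem of \cite{elc1} (with \cite{elc2}), exactly the source you invoke. In fact that theorem is stated there directly in the sup form used here, so the sum-versus-sup bookkeeping you single out as the delicate point can be bypassed entirely.
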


\subsection{Quasi-Fuchsian mismatch}

If $p,q\in\Teich(S)$ we can compare the WP geodesic segment $\seg{pq}$ with the Kleinian
surface group $\rho$ that $QF(p,q)=N_\rho$ has end invariant the pair of Bers markings at $p,q$. 
 We recall the statement of Theorem \ref{thm:QF mismatch}:

\restate{Theorem}{thm:QF mismatch}{
There exists $\ep_1>0$ so that for any $\ep>0$ there is a pair $(p,q)\in
 \Teich(S)\times \Teich(S)$ and a curve $\gamma$ in
 $S$ such that
 $$
 \inf_{x\in \seg{pq}} \ell_\gamma(x) < \ep
 $$
 whereas
 $$
 \ell_\gamma(QF(p,q)) \ge \ep_1.
 $$

}

\begin{proof}
  Given $\ep > 0$, let $p,q$ and $\gamma$ be given by Theorem \ref{thm:WP mismatch}, so
  that 
 $$
 \inf_{x\in \seg{pq}} \ell_\gamma(x) < \ep
 $$
 but
$$
\sup\Big\{d_Y(p,q) \ | \ Y\subseteq S, \gamma\subseteq\bd Y\Big\} \le A.
$$
where $A$ is independent of $\ep$. Then part (1) of Theorem \ref{thm:minsky} gives an
$\ep_1$ such that $\ell_\gamma(QF(p,q)) \ge \ep_1$.  
\end{proof}

\subsection{Fibered mismatch}

If $M_\Phi$ is the mapping torus of a pseudo-Anosov homeomorphism $\Phi\in\Mod(S)$ 
then $M_\Phi$ admits a complete hyperbolic metric by Thurston's geometrization theorem
(see e.g. \cite{Otal:hyperfibered}). The manifold $M_\Phi$ fibers over the circle with fiber $S$, 
and the representation $\rho$ associated to the fiber subgroup has end invariant $(\nu^+,\nu^-)$
equal to the supports of the stable and unstable laminations of $\Phi$.

We are therefore led to compare the short curves of $\rho$ with those of the
Weil-Petersson axis $A_\Phi$ of $\Phi$. We restate Theorem \ref{thm:fibered mismatch} here: 

\restate{Theorem}{thm:fibered mismatch}{
There exists $\ep_1>0$ so that for any $\ep>0$ 
there is a pseudo-Anosov $\Phi\in\Mod(S)$ and a curve $\gamma\in S$
such that
$$\inf_{x\in A_\Phi}\ell_\gamma(x) <\ep$$
whereas
$$
\ell_\gamma(M_\Phi) > \ep_1.
$$

}

\begin{proof}
Given $\ep>0$ let $\Phi$ be the pseudo-Anosov provided by Theorem
\ref{thm:closedgeodesic}, and $\gamma$ the curve such that
\[\inf_{x\in g} \ell_{\gamma}(x) < \ep\]
while
$$
\sup_{\substack{Y\subseteq S: \gamma\subseteq \bd Y}}d_Y(\nu^+,\nu^-)\leq A.
$$
Again by part (1) of Theorem \ref{thm:minsky}, this produces a lower bound
$$
\ell_\gamma(M_\Phi) \ge \ep_1.
$$
\end{proof}

\providecommand{\bysame}{\leavevmode\hbox to3em{\hrulefill}\thinspace}
\providecommand{\MR}{\relax\ifhmode\unskip\space\fi MR }
\providecommand{\MRhref}[2]{%
  \href{http://www.ams.org/mathscinet-getitem?mr=#1}{#2}
}
\providecommand{\href}[2]{#2}


\begin{thebibliography}{BKMM12}

\bibitem[BCM12]{elc2}
Jeffrey~F. Brock, Richard~D. Canary, and Yair~N. Minsky, \emph{The
  classification of {K}leinian surface groups, {II}: {T}he ending lamination
  conjecture}, Ann. of Math. (2) \textbf{176} (2012), no.~1, 1--149.

\bibitem[BH99]{bhnpc}
Martin~R. Bridson and Andr{\'e} Haefliger, \emph{Metric spaces of non-positive
  curvature}, Grundlehren der Mathematischen Wissenschaften, vol. 319,
  Springer-Verlag, Berlin, 1999.

\bibitem[BKMM12]{bkmm}
Jason Behrstock, Bruce Kleiner, Yair Minsky, and Lee Mosher, \emph{Geometry and
  rigidity of mapping class groups}, Geom. Topol. \textbf{16} (2012), no.~2,
  781--888.

\bibitem[BLMR17]{limitsetwp-nonminimal}
Jeffrey Brock, Christopher Leininger, Babak Modami, and Kasra Rafi, \emph{Limit
  sets of {W}eil-{P}etersson geodesics with non-minimal ending laminations}, J.
  Topol. Anal, to appear, ar{X}iv:1711.01663 (2017).

\bibitem[BLMR19]{limitsetwp}
\bysame, \emph{Limit {S}ets of {W}eil--{P}etersson {G}eodesics}, Int. Math.
  Res. Not. IMRN (2019), no.~24, 7604--7658.

\bibitem[BM15]{wprecurnue}
Jeffrey Brock and Babak Modami, \emph{Recurrent {W}eil-{P}etersson geodesic
  rays with non-uniquely ergodic ending laminations}, Geom. Topol. \textbf{19}
  (2015), no.~6, 3565--3601.

\bibitem[BMM10]{bmm1}
Jeffrey Brock, Howard Masur, and Yair Minsky, \emph{Asymptotics of
  {W}eil-{P}etersson geodesics. {I}. {E}nding laminations, recurrence, and
  flows}, Geom. Funct. Anal. \textbf{19} (2010), no.~5, 1229--1257.

\bibitem[BMM11]{bmm2}
\bysame, \emph{Asymptotics of {W}eil-{P}etersson geodesics {II}: bounded
  geometry and unbounded entropy}, Geom. Funct. Anal. \textbf{21} (2011),
  no.~4, 820--850.

\bibitem[Bon88]{Bon:geodcurrent}
Francis Bonahon, \emph{The geometry of {T}eichm\"{u}ller space via geodesic
  currents}, Invent. Math. \textbf{92} (1988), no.~1, 139--162.

\bibitem[Bro03]{br}
Jeffrey~F. Brock, \emph{The {W}eil-{P}etersson metric and volumes of
  3-dimensional hyperbolic convex cores}, J. Amer. Math. Soc. \textbf{16}
  (2003), no.~3, 495--535.

\bibitem[Bus10]{buser}
Peter Buser, \emph{Geometry and spectra of compact {R}iemann surfaces}, Modern
  Birkh\"auser Classics, Birkh\"auser Boston Inc., Boston, MA, 2010, Reprint of
  the 1992 edition.

\bibitem[DW03]{dwwp}
Georgios Daskalopoulos and Richard Wentworth, \emph{Classification of
  {W}eil-{P}etersson isometries}, Amer. J. Math. \textbf{125} (2003), no.~4,
  941--975.

\bibitem[FLP79]{FLP}
A.~Fathi, F.~Laudenbach, and V.~Poenaru, \emph{Travaux de {T}hurston sur les
  surfaces}, Ast{\'e}risque No. 66-67 (1979), 1--286.

\bibitem[Gab09]{gabai:endlamspace}
David Gabai, \emph{Almost filling laminations and the connectivity of ending
  lamination space}, Geom. Topol. \textbf{13} (2009), no.~2, 1017--1041.

\bibitem[Ham06]{Ham:trbdry}
Ursula Hamenst{\"a}dt, \emph{Train tracks and the {G}romov boundary of the
  complex of curves}, Spaces of {K}leinian groups, London Math. Soc. Lecture
  Note Ser., vol. 329, Cambridge Univ. Press, Cambridge, 2006, pp.~187--207.

\bibitem[Ham15]{Ham:wp-teich}
Ursula Hamenst\"adt, \emph{{T}eichmueller flow and {W}eil-{P}etersson flow},
  arXiv:1505.01113 (2015).

\bibitem[KL08]{convexccgp}
Richard~P. Kent, IV and Christopher~J. Leininger, \emph{Shadows of mapping
  class groups: capturing convex cocompactness}, Geom. Funct. Anal. \textbf{18}
  (2008), no.~4, 1270--1325.

\bibitem[Mas76]{maswp}
Howard Masur, \emph{Extension of the {W}eil-{P}etersson metric to the boundary
  of {T}eichmuller space}, Duke Math. J. \textbf{43} (1976), no.~3, 623--635.

\bibitem[Mas92]{masurcriterion}
\bysame, \emph{Hausdorff dimension of the set of nonergodic foliations of a
  quadratic differential}, Duke Math. J. \textbf{66} (1992), no.~3, 387--442.

\bibitem[Min00]{mklcc}
Yair~N. Minsky, \emph{Kleinian groups and the complex of curves}, Geom. Topol.
  \textbf{4} (2000), 117--148 (electronic).

\bibitem[Min01]{yairbddgeomkl}
\bysame, \emph{Bounded geometry for {K}leinian groups}, Invent. Math.
  \textbf{146} (2001), no.~1, 143--192.

\bibitem[Min10]{elc1}
Yair Minsky, \emph{The classification of {K}leinian surface groups. {I}.
  {M}odels and bounds}, Ann. of Math. (2) \textbf{171} (2010), no.~1, 1--107.

\bibitem[MM99]{mm1}
Howard~A. Masur and Yair~N. Minsky, \emph{Geometry of the complex of curves.
  {I}. {H}yperbolicity}, Invent. Math. \textbf{138} (1999), no.~1, 103--149.

\bibitem[MM00]{mm2}
H.~A. Masur and Y.~N. Minsky, \emph{Geometry of the complex of curves. {II}.
  {H}ierarchical structure}, Geom. Funct. Anal. \textbf{10} (2000), no.~4,
  902--974.

\bibitem[Mod15]{wpbehavior}
Babak Modami, \emph{Prescribing the behavior of {W}eil--{P}etersson geodesics
  in the moduli space of {R}iemann surfaces}, J. Topol. Anal. \textbf{7}
  (2015), no.~4, 543--676.

\bibitem[Mod16]{asympdiv}
\bysame, \emph{Asymptotics of a class of {W}eil--{P}etersson geodesics and
  divergence of {W}eil--{P}etersson geodesics}, Algebr. Geom. Topol.
  \textbf{16} (2016), no.~1, 267--323.

\bibitem[MR18]{shteich-revised}
Babak Modami and Kasra Rafi, \emph{Short curves of {T}eicm\"{u}ller geodesics,
  revised}, in preparation (2018).

\bibitem[Ota01]{Otal:hyperfibered}
Jean-Pierre Otal, \emph{The hyperbolization theorem for fibered 3-manifolds},
  SMF/AMS Texts and Monographs, vol.~7, American Mathematical Society,
  Providence, RI; Soci\'{e}t\'{e} Math\'{e}matique de France, Paris, 2001,
  Translated from the 1996 French original by Leslie D. Kay.

\bibitem[Raf05]{rshteich}
Kasra Rafi, \emph{A characterization of short curves of a {T}eichm\"uller
  geodesic}, Geom. Topol. \textbf{9} (2005), 179--202.

\bibitem[Raf07]{rcombteich}
\bysame, \emph{A combinatorial model for the {T}eichm\"uller metric}, Geom.
  Funct. Anal. \textbf{17} (2007), no.~3, 936--959.

\bibitem[Raf14]{rteichhyper}
\bysame, \emph{Hyperbolicity in {T}eichm\"uller space}, Geom. Topol.
  \textbf{18} (2014), no.~5, 3025--3053.

\bibitem[Wol87]{wolpert:nielsen}
Scott~A. Wolpert, \emph{Geodesic length functions and the {N}ielsen problem},
  J. Differential Geom. \textbf{25} (1987), no.~2, 275--296.

\bibitem[Wol03]{wols}
\bysame, \emph{Geometry of the {W}eil-{P}etersson completion of {T}eichm\"uller
  space}, Surveys in differential geometry, {V}ol.\ {VIII} ({B}oston, {MA},
  2002), Surv. Differ. Geom., VIII, Int. Press, Somerville, MA, 2003,
  pp.~357--393.

\bibitem[Wol08]{wolb}
\bysame, \emph{Behavior of geodesic-length functions on {T}eichm\"uller space},
  J. Differential Geom. \textbf{79} (2008), no.~2, 277--334.

\bibitem[Wol10]{wol}
\bysame, \emph{Families of {R}iemann surfaces and {W}eil-{P}etersson geometry},
  CBMS Regional Conference Series in Mathematics, vol. 113, Published for the
  Conference Board of the Mathematical Sciences, Washington, DC, 2010.

\end{thebibliography}

\end{document}